\documentclass[11pt,reqno]{amsart}

\usepackage[T1]{fontenc}
\usepackage{lmodern}
\usepackage{amsmath,amssymb,amsthm,mathtools}
\usepackage{mathrsfs}
\usepackage{enumitem}
\usepackage{microtype}
\usepackage[colorlinks=true,linkcolor=blue,citecolor=blue,urlcolor=blue,pagebackref=true]{hyperref}

\newtheorem{theorem}{Theorem}[section]
\newtheorem{proposition}[theorem]{Proposition}
\newtheorem{lemma}[theorem]{Lemma}
\newtheorem{corollary}[theorem]{Corollary}
\theoremstyle{definition}

\newtheorem{example}[theorem]{Example}
\theoremstyle{remark}
\newtheorem{remark}[theorem]{Remark}

\newcommand{\Hom}{\operatorname{Hom}}
\newcommand{\End}{\operatorname{End}}

\DeclareMathOperator{\Sym}{Sym}
\DeclareMathOperator{\Mat}{Mat}
\DeclareMathOperator{\Ann}{Ann}

\numberwithin{equation}{section}
\allowdisplaybreaks[2]
\hypersetup{pdfauthor={Zhi-Wei Li},pdftitle={Root-power quotients of nil-Hecke algebras and Weyl invariant theory}}
\title[Root-power quotients and Weyl invariants]{Root-power quotients of nil-Hecke algebras\protect\\ and Weyl invariant theory}

\author{Zhi-Wei Li}
\address{School of Mathematics and Statistics, Jiangsu Normal University,
Xuzhou 221116, Jiangsu, P. R. China}
\email{zhiweili@jsnu.edu.cn}
\subjclass[2020]{20C08, 13A50, 13D02, 14M15, 16E30}
\keywords{nil-Hecke algebra, root-power quotient, rational Cherednik algebra, Weyl invariant, apolarity, discriminant, Calogero--Moser space}
\date{}

\begin{document}
\begin{abstract}
We study nil-Hecke quotients defined by powers of root coordinates.
We identify their central ideals through the determinant components
of root-orbit power ideals. Apolarity and discriminant removal reduce
the graded dual centers to directional bounds on invariant polynomials.
We determine their Hilbert series for every exponent and root orbit in
all irreducible crystallographic types except $E_6,E_7,E_8$.
Relative discriminants compare centers for reflection subgroups and
transfer equivariant resolutions. Coordinate-root centers in types
$B$ and $C$ are Grassmannian cohomology rings. We classify
self-injectivity in type $A$, in rank two, and in type $F_4$,
and determine the first nonsemisimple centers in the remaining
classical families. For an explicit open set of reflection parameters,
the polynomial image of the rational Cherednik algebra at $t=0$
is the full nil-Hecke algebra. This realizes our algebras as
Cherednik quotients and their finite centers as closed subschemes
on a section of the generalized Calogero--Moser space.
\end{abstract}
\maketitle
\clearpage
\setcounter{tocdepth}{2}
\tableofcontents
\clearpage

\section{Introduction}\label{sec:introduction}

We study the quotient of a polynomial nil-Hecke algebra obtained by
imposing a power of one root coordinate. Its center is a quotient
of the Weyl invariant ring. The problem is to identify the defining
ideal and determine how the root system and the root orbit control
the resulting finite-dimensional algebra.

The central ideal has two complementary descriptions: by polynomial
content and the determinant component of a Weyl-orbit power ideal,
and by apolar conditions on the projective root orbit. Removing the
Weyl discriminant converts the graded dual of the center into a
directional-degree bound on invariant polynomials. We solve this
bound for every exponent and root orbit in all irreducible
crystallographic types except $E_6,E_7,E_8$. Relative discriminants
compare the centers for reflection subgroups, and the differential
action recovers multiplication and socles from their inverse systems.

We also identify these algebras as quotients of rational Cherednik
algebras at $t=0$, through their polynomial representation.
Their centers define closed subschemes on a specified section
of the generalized Calogero--Moser space. A formal exponential
comparison gives their completed zero-residue quiver Demazure
interpretation. The polynomial proofs are independent of the
general QDA construction and its mixed-residue PBW theory.
\vskip5pt
\noindent\textbf{Root coordinates and differentiation.}
Let $\Phi$ be a finite reduced crystallographic root system with simple
roots $\Delta=\{\alpha_i:i\in I\}$, positive roots $\Phi^+$, Weyl group
$W$, and root lattice $Q=\mathbb Z\Phi$.  Put
\[
 V=\mathbb C\otimes_{\mathbb Z}Q,\qquad
 P=\Sym_{\mathbb C}(V),\qquad S=P^W,
 \qquad T=\Sym_{\mathbb C}(V^*).
\]
For $\lambda\in Q$, write
$x_\lambda=1\otimes\lambda\in V=P_1\subseteq P$, and extend
this notation $\mathbb C$-linearly to $\lambda\in V$.
These elements have degree one and satisfy
$x_{\lambda+\mu}=x_\lambda+x_\mu$. In particular,
$P=\mathbb C[x_{\alpha_i}:i\in I]$.
The algebra $T$ is the polynomial algebra on $V$, whereas
$P$ is naturally the polynomial algebra on $V^*$.
Write $S_+=\bigoplus_{k>0}S_k$ and $P_+=\bigoplus_{k>0}P_k$.
For a finite-dimensional graded vector space $E$, its Hilbert
series is $H_E(q)=\sum_{k\geq0}(\dim_{\mathbb C}E_k)q^k$.

We fix a $W$-invariant positive-definite inner product on the real
root space and extend it complex bilinearly to $(\, ,\,)$ on $V$.
The resulting identification $V\simeq V^*$ sends $x_\lambda$
to the linear function $v\mapsto(\lambda,v)$ and identifies
$P$ with $T$. For $\beta\in V$, directional differentiation on $T$
is the derivation
\[
 \mathsf d_\beta\colon T\longrightarrow T,\qquad
 (\mathsf d_\beta F)(v)
 =\left.\frac{d}{dt}\right|_{t=0}F(v+t\beta)
 \quad(F\in T,\ v\in V).
\]
Thus $\mathsf d_\beta(\ell)=\ell(\beta)$ for $\ell\in V^*$.
On $P$ we use the same notation for the transported derivation;
it satisfies
\[
 \mathsf d_\beta(x_\lambda)=(\lambda,\beta)
 \quad(\lambda,\beta\in V).
\]
For a monomial $p=x_{\beta_1}\cdots x_{\beta_k}\in P$, put
\[
 p(\mathsf d)=\mathsf d_{\beta_1}\cdots\mathsf d_{\beta_k},
 \qquad 1(\mathsf d)=\operatorname{id},
\]
and extend linearly to all $p\in P$. Since the directional
derivatives commute and depend linearly on their directions,
this defines an algebra homomorphism from $P$ to the
constant-coefficient differential operators on $T$, and also on $P$
under the chosen identification. These are ordinary derivatives,
not divided-difference operators.

Put $\Delta_\Phi=\prod_{\beta\in\Phi^+}x_\beta$ and
$N=|\Phi^+|$. Its image in $T$ is the dual discriminant
\[
 \Delta_\Phi^\vee(v)=\prod_{\beta\in\Phi^+}(\beta,v).
\]

Let $M_f$ denote multiplication by $f\in P$. The polynomial
nil-Hecke algebra has its classical endomorphism realization
$NH_W=\operatorname{End}_S(P)$, recalled in
Lemma~\ref{lem:completed-nilhecke}. For a root $\alpha$ and $m\geq1$,
put
\[
 \begin{gathered}
 A_{\alpha,m}=NH_W/\langle M_{x_\alpha^m}\rangle,
 \qquad Z(\alpha,m)=S/J^{\rm pol}_{\alpha,m},\\
 J^{\rm pol}_{\alpha,m}
   =(\varphi(x_\alpha^m):\varphi\in\operatorname{Hom}_S(P,S)).
 \end{gathered}
\]
The brackets denote a two-sided ideal. The invariant ring has
its polynomial grading. Matrix identifications below are algebra
isomorphisms; homogeneous bases give the corresponding degree
shifts on matrix entries.
Also put
\[
 I_{\alpha,m}=(x_{w\alpha}^m:w\in W)\subseteq P,
 \qquad
 \Sigma_\alpha=\{[w\alpha]:w\in W\}\subseteq\mathbb P(V).
\]
The following theorem collects the conceptual core of the paper.  The
individual assertions are proved in
Theorems~\ref{rc:thm:cyclotomic}, \ref{str:thm:alternation},
\ref{str:thm:rootpower-geometry}, \ref{str:thm:discriminant-removal},
and Proposition~\ref{geo:prop:flag-threshold}.

\begin{theorem}[Root-power center theorem]\label{intro:thm:rootpower}
Assume that $\Phi$ is irreducible.  For every root $\alpha$ and every
$m\geq1$ the following statements hold.
\begin{enumerate}[label=\textup{(\roman*)}]
\item The quotient is a full matrix algebra over its center:
\[
 A_{\alpha,m}
 \cong
 \Mat_{|W|}\bigl(Z(\alpha,m)\bigr).
\]
Moreover
\begin{equation}\label{intro:eq:alternation}
 \Delta_\Phi J^{\rm pol}_{\alpha,m}
 =I_{\alpha,m}\cap P^{\det}.
\end{equation}
\item If $\mathfrak p_\xi\subseteq T$ is the homogeneous ideal of
$\xi\in\Sigma_\alpha$, then
\begin{equation}\label{intro:eq:fatpoint-dual}
 Z(\alpha,m)_k^*
 \cong
 \left(
   \bigcap_{\xi\in\Sigma_\alpha}
   \mathfrak p_\xi^{\max\{N+k-m+1,0\}}
 \right)_{N+k}^{\det}.
\end{equation}
Equivalently, if
\[
 \mathcal D_{\alpha,m}
 =\{F\in T^{\det}:\mathsf d_{w\alpha}^{\,m}F=0
                    \text{ for all }w\in W\},
\]
then $J^{\rm pol}_{\alpha,m}=\Ann_S(\mathcal D_{\alpha,m})$.
\item Let
\[
 \nu_\alpha
 =1+\#\{\beta\in\Phi^+:(\alpha,\beta)\neq0\}.
\]
If $J(\alpha)$ is the set of simple coroots orthogonal to the dominant
root in $W\alpha$, then
\[
 \nu_\alpha
 =1+\dim_{\mathbb C}(\mathbf G/\mathbf P_{J(\alpha)}),
 \qquad
 A_{\alpha,m}\neq0
 \Longleftrightarrow m\geq\nu_\alpha.
\]
At the threshold,
\[
 A_{\alpha,\nu_\alpha}
 =A_{\alpha,\nu_\alpha+1}
 \cong\Mat_{|W|}(\mathbb C).
\]
\item For $m\geq\nu_\alpha$, writing $\Delta_\Phi^\vee$ for the dual
discriminant, one has
\begin{equation}\label{intro:eq:disc-removal}
 \mathcal D_{\alpha,m}
 =\Delta_\Phi^\vee
 \{H\in T^W:\deg_{w\alpha}H\leq m-\nu_\alpha
                  \text{ for all }w\in W\}.
\end{equation}
Thus the Hilbert function of the center is reduced to a finite
invariant-theoretic directional-degree problem.
\end{enumerate}
\end{theorem}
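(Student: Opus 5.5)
The plan is to work throughout with the classical fact (Lemma~\ref{lem:completed-nilhecke}) that $P$ is a free $S$-module of rank $|W|$, so $NH_W=\End_S(P)\cong\Mat_{|W|}(S)$.

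\textbf{Part (i).} Two-sided ideals of a matrix algebra $\Mat_n(S)$ are $\Mat_n(J)$ for ideals $J\subseteq S$. The ideal generated by $M_{x_\alpha^m}$ is $\Mat_{|W|}(J)$, where $J$ is generated by all matrix entries of $M_{x_\alpha^m}$ in every $S$-basis. Equivalently, $J$ is generated by the elements $\varphi(x_\alpha^m p)$ with $\varphi\in\Hom_S(P,S)$ and $p\in P$. Since $\varphi\circ M_p\in\Hom_S(P,S)$, this is exactly $J^{\rm pol}_{\alpha,m}$, which gives the matrix isomorphism.

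For \eqref{intro:eq:alternation} I will use the standard description
\[
\Hom_S(P,S)=\{\,f\mapsto \textstyle\sum_w \det(w)\,w(gf)/\Delta_\Phi : g\in P\,\},
\]
that is, the antisymmetrizer $\mathcal A$ divided by $\Delta_\Phi$. This holds because $\Delta_\Phi^{-1}\mathcal A$ is the top Demazure operator and $P$ is Frobenius over $S$. Then $\Delta_\Phi J^{\rm pol}=\mathcal A(x_\alpha^m P)\cdot S$. Applying $\mathcal A$ to $x_{w\alpha}^m P$ shows this equals $\mathcal A(I_{\alpha,m})$, and $\mathcal A(I_{\alpha,m})=I_{\alpha,m}\cap P^{\det}$ because $\mathcal A$ is a $W$-equivariant projection, up to the factor $|W|$, onto $P^{\det}$, and $I_{\alpha,m}$ is $W$-stable.

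\textbf{Part (ii).} Take graded duals under the apolarity pairing $P\times T\to\mathbb C$ given by $(p,F)\mapsto p(\mathsf d)F|_0$. The dual of $P/I_{\alpha,m}$ is the inverse system $\{F:\mathsf d_{w\alpha}^mF=0\ \forall w\}$. Restricting to the $\det$-isotypic part and using (i), multiplication by $\Delta_\Phi$ identifies $Z(\alpha,m)_k=S_k/J_k$ with $(P/I_{\alpha,m})^{\det}_{N+k}$. Hence $Z_k^*\cong(\mathcal D_{\alpha,m})_{N+k}$, and $J^{\rm pol}=\Ann_S(\mathcal D_{\alpha,m})$. The fat-point form follows from the classical Emsalem--Iarrobino lemma: a degree-$d$ form $F$ satisfies $\mathsf d_\xi^mF=0$ if and only if $F\in\mathfrak p_\xi^{d-m+1}$.

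\textbf{Part (iv), then (iii).} An element $F\in T^{\det}$ is divisible by $\Delta_\Phi^\vee$, say $F=\Delta_\Phi^\vee H$ with $H\in T^W$. I then compare the order of vanishing along the line $[\xi]$, $\xi=w\alpha$. By (ii) the condition reads: $F$ vanishes to order at least $\deg F-m+1$ at $\xi$, i.e.\ $\deg_\xi F\le m-1$, where $\deg_\xi$ is the degree in the direction $\xi$ (the degree of $F(v+t\xi)$ in $t$ for generic $v$). Now $\deg_\xi\Delta_\Phi^\vee=\#\{\beta>0:(\beta,\xi)\ne0\}=\nu_\alpha-1$, and directional degree is additive on products since the top coefficients multiply. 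So the condition becomes $\deg_\xi H\le m-\nu_\alpha$, which gives \eqref{intro:eq:disc-removal}.

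For (iii): $A_{\alpha,m}\ne0$ if and only if $\mathcal D_{\alpha,m}\neq0$, if and only if $H=1$ is admissible, i.e.\ $m\ge\nu_\alpha$. For $m<\nu_\alpha$, $\deg_\xi F\ge\nu_\alpha-1$ for every nonzero $F$. At $m=\nu_\alpha$ and $m=\nu_\alpha+1$, the admissible $H$ have directional degree at most $1$. A nonconstant $W$-invariant has positive degree at least $2$ in directions spanning $V$, and $W\alpha$ spans $V$ by irreducibility. Hence only constants survive and $Z=\mathbb C$.

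The equality $\nu_\alpha-1=\dim \mathbf G/\mathbf P_{J(\alpha)}$ follows because the positive roots not orthogonal to the dominant root are exactly the roots outside the Levi subsystem of $J(\alpha)$, counted with sign. Here $\#\{\beta\in\Phi^+:(\beta,\theta)\ne0\}$ is $W$-invariant in $\theta$, since $|\{\beta\in\Phi:(\beta,\theta)\neq0\}|/2$ is.

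\textbf{Main obstacle.} I expect the hardest step to be justifying the directional-degree reformulation: that $\mathsf d_\xi^mF=0$ is equivalent to $\deg_\xi F\le m-1$ (this is immediate from the Taylor expansion in $t$, but it must be matched carefully with the fat-point exponent), and that invariant $H$ of degree $\ge2$ cannot have $\deg_\xi H\le1$ for all $\xi\in W\alpha$. For the latter I would argue that $\deg_\xi H\le1$ means $\mathsf d_\xi^2H=0$. A nonconstant $H$ of positive degree with $\mathsf d_{w\alpha}^2H=0$ for all $w$ would force the $W$-invariant quadratic forms $\mathsf d_{w\alpha}^2$ to annihilate $H$. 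Their span contains the Laplacian, and an iterated-derivative argument on a top-degree component gives a contradiction with the $W$-invariance of $H$.
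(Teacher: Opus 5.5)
Your arguments for (i), (ii), (iv), for the root count $\nu_\alpha-1=\dim_{\mathbb C}\mathbf G/\mathbf P_{J(\alpha)}$, and for the nonvanishing criterion in (iii) follow the paper's proof. The ingredients are the same: content as entry ideal, the divided alternation $\Delta_\Phi^{-1}\sum_w\det(w)w$ as Frobenius form, $W$-equivariant apolarity on the determinant component, and additivity of directional degree with $\deg_\xi\Delta_\Phi^\vee=\nu_\alpha-1$. For $J^{\rm pol}_{\alpha,m}=\Ann_S(\mathcal D_{\alpha,m})$ you should also record the adjunction $\langle\Delta_\Phi sf,F\rangle=\langle\Delta_\Phi f,s(\mathsf d)F\rangle$, but that is routine.

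The gap is in the last claim of (iii), $A_{\alpha,\nu_\alpha}=A_{\alpha,\nu_\alpha+1}$. You reduce it to the assertion that a nonconstant $H\in T^W$ cannot satisfy $\deg_{w\alpha}H\le1$ for all $w$, and your sketch does not prove this. The operators $\mathsf d_{w\alpha}^2$ are not individually $W$-invariant. What you extract from them is that their sum, the Laplacian, kills $H$, and harmonicity does not force constancy. In type $A_2$, for example, the degree-three basic invariant is harmonic, since its Laplacian is an invariant linear form and hence zero. The ``iterated-derivative argument on a top-degree component'' is left unspecified, and that is exactly where the work would have to be done.

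The missing idea is reflection parity. For $\xi\in W\alpha$, write points as $u+\tau\xi$ with $u\perp\xi$. Since $s_\xi(u+\tau\xi)=u-\tau\xi$ and $H$ is $s_\xi$-invariant, $H(u+\tau\xi)$ is even in $\tau$, so $\deg_\xi H$ is even. Thus $\deg_\xi H\le1$ forces $\mathsf d_\xi H=0$. By invariance this holds along all of $W\alpha$, which spans $V$, so $H$ is constant. Hence $Z(\alpha,\nu_\alpha)=Z(\alpha,\nu_\alpha+1)=\mathbb C$. You should then note that both content ideals equal $S_+$, so the two two-sided ideals of $NH_W\cong\Mat_{|W|}(S)$ coincide. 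That gives the asserted equality of quotients, not merely an isomorphism.

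The paper argues differently. It shows only $\mathcal E_{\alpha,0}=\mathbb C$. It then gets equality with the next exponent from the operator identity $\partial_\alpha x_\alpha^{2a+1}+x_\alpha^{2a+1}\partial_\alpha=2x_\alpha^{2a}$ (Theorem~\ref{str:thm:parity}), together with the evenness of $\nu_\alpha$. Once patched, your route does not need the evenness of $\nu_\alpha$. The paper's route gives $A_{\alpha,2a}=A_{\alpha,2a+1}$ for every $a$.
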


\begin{proof}
The matrix quotient and the definition of the content ideal are
Theorem~\ref{rc:thm:cyclotomic}; the equality
\eqref{intro:eq:alternation} is Theorem~\ref{str:thm:alternation}.
The apolar description and annihilator statement are
Theorem~\ref{str:thm:rootpower-geometry}.  The geometric threshold is
Proposition~\ref{geo:prop:flag-threshold}, together with the matrix
criterion in Theorem~\ref{rc:thm:cyclotomic}.  Finally,
\eqref{intro:eq:disc-removal} is
Theorem~\ref{str:thm:discriminant-removal}, and the first nonzero
matrix quotient is Corollary~\ref{str:cor:first-rootpower}.
\end{proof}

The theorem reduces a two-sided ideal problem in an operator algebra
to a finite calculation in the invariant ring. The nil-Hecke endomorphism theorem supplies
the matrix description, while alternation identifies the precise
central ideal. The apolar and discriminant descriptions then give
both graded dimensions and a differential action that determines
the center algebra. This last distinction matters: an identification
of inverse-system vector spaces alone does not determine products
or a Frobenius form.

For a long root, the integer $\nu_\alpha$ is
$2h^\vee(\Phi)-2$, the dimension of the minimal nonzero nilpotent orbit.
For a short root the corresponding statement uses the dual root system; see
Corollary~\ref{geo:cor:minimal-orbit} and~\cite{WangMin}.  The divided-difference relation also gives the canonical parity equality
\[
 A_{\alpha,2a}=A_{\alpha,2a+1}\qquad(a\geq1),
\]
proved in Theorem~\ref{str:thm:parity}.

\subsection*{Hilbert series outside the exceptional \texorpdfstring{$E$}{E} types}
The directional-degree problem in
Theorem~\ref{intro:thm:rootpower} can be solved uniformly for
every root orbit in the classical types, $F_4$, and $G_2$. The relevant invariant generators have
two gradings: their polynomial degrees $d_j$, which grade the
center, and auxiliary weights $\omega_j$, which measure half their
directional degrees along the chosen root. Suitable generators make
the directional-degree bound a bound on the weighted degree of a
monomial.

\begin{theorem}[All irreducible types except $E_6,E_7,E_8$]
\label{intro:thm:classical}
For the types and root orbits listed below, the center
vanishes for $m<\nu_\alpha$. In the nonzero range its Hilbert
series is determined by
\begin{equation}\label{intro:eq:classical-generating-function}
 \sum_{r\geq0}H_{Z(\alpha,\nu_\alpha+2r)}(q)t^r
 =\frac{1}{(1-t)\prod_j(1-t^{\omega_j}q^{d_j})}.
\end{equation}
The pairs $(d_j,\omega_j)$ and thresholds are
\[
\begin{array}{c|c|l}
 \Phi\text{ and root orbit}&\nu_\alpha&(d_j,\omega_j)\\ \hline
 A_n\ (n\geq1)&2n&(j,1),\quad 2\leq j\leq n+1\\
 B_n\text{ short},\ C_n\text{ long}
     &2n&(2j,1),\quad 1\leq j\leq n\\
 B_n\text{ long},\ C_n\text{ short}
     &4n-4&(2,1),(4,1),(2j,2)\quad(3\leq j\leq n)\\
 D_n\ (n\geq4)&4n-6
     &(2,1),(4,1),(n,1),(2j,2)\quad(3\leq j\leq n-1)\\
 F_4&16&(2,1),(6,2),(8,2),(12,3)\\
 G_2&6&(2,1),(6,2).
\end{array}
\]
The two $B_n,C_n$ rows have $n\geq2$, and the $F_4,G_2$
rows apply to both root lengths. Repeated pairs are retained, so the $D_4$ row has two pairs $(4,1)$. The same
Hilbert series occurs at exponent $\nu_\alpha+2r+1$, and
\[
 \dim A_{\alpha,m}=|W|^2\dim Z(\alpha,m).
\]
\end{theorem}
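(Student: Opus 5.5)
Everything reduces to Theorem~\ref{intro:thm:rootpower}. The vanishing for $m<\nu_\alpha$ is part (iii), and the parity equality $A_{\alpha,2a}=A_{\alpha,2a+1}$ (Theorem~\ref{str:thm:parity}) gives the same series at exponent $\nu_\alpha+2r+1$; this uses that $\nu_\alpha$ is even in every listed row. The formula $\dim A_{\alpha,m}=|W|^2\dim Z(\alpha,m)$ is the matrix isomorphism in (i).

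For the main formula with $m=\nu_\alpha+2r$, I combine (ii) and (iv). They give $Z(\alpha,m)_k^*\cong(\mathcal D_{\alpha,m})_{N+k}$, and multiplication by $\Delta_\Phi^\vee$ identifies this with the degree-$k$ part of
\[
 \mathcal H_{2r}=\{H\in T^W:\deg_{w\alpha}H\le 2r\text{ for all }w\in W\}.
\]
Since $H$ is $W$-invariant, $\deg_{w\alpha}H=\deg_\alpha H$, so only the single direction $\alpha$ matters. Here $\deg_\alpha H$ is the degree of $t\mapsto H(v+t\alpha)$ for generic $v$.

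The core step is to choose, type by type, homogeneous basic invariants $f_1,\dots,f_n$ of degrees $d_j$ such that
\[
 \deg_\alpha f_j=2\omega_j
\]
and the directional leading forms are algebraically independent. The leading form of $f_j$ is the coefficient of $t^{2\omega_j}$ in $f_j(v+t\alpha)$, which is a polynomial in $v$. Independence makes $\deg_\alpha$ additive on monomials with no cancellation among monomials of the same weighted degree. It follows that $\mathcal H_{2r}$ has basis the monomials $\prod f_j^{a_j}$ with $\sum a_j\omega_j\le r$. Odd directional degrees never appear, which is a second check on the parity statement.

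Summing over $r$ with the factor $1/(1-t)$, which accounts for the inequality, yields the stated generating function. The hard part is this two-part verification in each type:
\begin{enumerate}
\item[(a)] find generators meeting the degree bound $\deg_\alpha f_j\le 2\omega_j$ with equality;
\item[(b)] prove the independence of the leading forms.
\end{enumerate}

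I work case by case with explicit models.

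In type $A_n$, take $\alpha=e_1-e_2$ and elementary or power sums in the translates $e_i-\bar e$. A change of generators should make each invariant quadratic in the direction $\alpha$, for instance by using symmetric functions of $x_3,\dots$ together with $(x_1-x_2)^2$ and $x_1+x_2$.

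In types $B_n$ and $C_n$:
\begin{itemize}
\item For the coordinate root $\alpha=e_1$, the power sums $\sum x_i^{2j}$ can be rearranged into elementary symmetric functions $e_j(x_1^2,\dots,x_n^2)$. These are linear in $x_1^2$, giving $\omega_j=1$.
\item For $\alpha=e_1+e_2$, the same generators have $\deg_\alpha=4$ except in degrees $2$ and $4$. In those degrees, combinations linear in the quantity $(x_1+x_2)$ should be found, giving weight $1$.
\end{itemize}

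In type $D_n$, treat $\alpha=e_1+e_2$ the same way as in the second case, and note that the Pfaffian $x_1\cdots x_n$ has $\deg_\alpha=2$.

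For $F_4$ and $G_2$, I use explicit generator models and compute the leading coefficients directly. Both root lengths are handled by the outer symmetry and duality.

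Independence is verified by restricting to a slice transverse to $\alpha$, where the leading forms become basic invariants of the stabilizer subgroup or of a related group. A Jacobian check there completes the argument. I expect the real obstacle to be exactly this independence, together with the non-obvious choice of generators achieving the minimal directional degrees, especially in $F_4$.
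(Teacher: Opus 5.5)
Your overall route is the paper's. Discriminant removal turns the dual center at $m=\nu_\alpha+2r$ into $\mathcal E_{\alpha,2r}$, and generators with prescribed directional degrees give a weighted monomial basis (Proposition~\ref{str:prop:weighted-invariants}). There are, however, two gaps.

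\textbf{The independence criterion is false as stated.} Step (b) would fail because the $l$ leading forms can never be algebraically independent. The top $t$-coefficient of $H(v+t\alpha)$ is unchanged under $v\mapsto v+s\alpha$. So every leading form is a function on the $(l-1)$-dimensional space $V/\mathbb C\alpha$. In fact the quadratic invariant $(v,v)$ always has the constant leading form $(\alpha,\alpha)$. A Jacobian check of all $l$ forms on a transverse slice therefore cannot succeed.

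What the no-cancellation step actually needs is weaker: no nonzero polynomial that is homogeneous for the weights $\omega_j$ may vanish at $(a_1,\dots,a_l)$. The paper obtains this from three facts: $\omega_1=1$, $a_1\in\mathbb C^\times$, and algebraic independence of $a_2,\dots,a_l$ only. In a weighted-homogeneous polynomial the exponent of the first variable is determined by the others. Substituting a nonzero constant for it therefore cannot cancel monomials. Your slice check is the right tool, but it should be applied to $a_2,\dots,a_l$, which live in $\mathbb C[\alpha^\perp]^{W_\alpha}$.

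\textbf{The case work is largely missing, and some hints point the wrong way.} This is where the actual content lies.
\begin{itemize}
\item In type $A_n$ no change of generators is needed, and power sums do not work: $\sum_i z_i^j$ has directional degree at least four once $j\geq4$. Symmetric functions of $x_3,\dots$ are not $W$-invariant; they describe where leading coefficients live, not generators of $T^W$. The elementary $e_j$ themselves are multilinear, so translation along $\epsilon_1-\epsilon_2$ gives degree exactly two, with leading coefficient $-e'_{j-2}$.
\item In $B_n,C_n,D_n$ the degree-four generator has to be chosen as $g_4=p_1^2-4p_2$. For this combination the $t^4$ and $t^3$ terms cancel.
\item In $G_2$ the sextic generator must be proportional to $Q_\alpha^2$, where $Q_\alpha$ is the cubic vanishing on the root lines of the orbit. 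Adding any multiple of $u^3$ raises the directional degree to six.
\item In $F_4$, which you rightly flag as the obstacle, nothing is supplied. The paper needs the triality-adapted generators $f_6=p_3+\tfrac1{24}p_1g$, $f_8=g^2+192\eta^2$, and $f_{12}=g^3-576g\eta^2$, with $g=p_1^2-4p_2$. These have directional degrees $4,4,6$, and the paper adds an explicit independence check on their leading coefficients.
\item You never verify the threshold values $\nu_\alpha$ in the table. They follow from counting positive roots not orthogonal to $\alpha$.
\end{itemize}
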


\begin{proof}
Proposition~\ref{str:prop:weighted-invariants} gives the
generating function from a weighted invariant basis. The bases
and thresholds are established in
Theorem~\ref{str:thm:an-centers},
Corollary~\ref{str:cor:coordinate-hilbert},
Theorems~\ref{str:thm:bd-hilbert} and~\ref{str:thm:f4-hilbert},
and Corollary~\ref{str:cor:root-orbit-ranktwo}.
Parity and the matrix
description are Theorems~\ref{str:thm:parity}
and~\ref{rc:thm:cyclotomic}.
\end{proof}

Thus a single rational generating function describes every exponent
in each root orbit covered by the theorem. At $q=1$ it gives the dimensions:
for type $A_n$ and the coordinate-root $B_n,C_n$ families,
\[
 \dim Z(\alpha,m)=\binom an,\qquad a=\lfloor m/2\rfloor\geq n.
\]
For the remaining $B_n,C_n$ orbits and type $D_n$, the
dimension sequences are quasipolynomials of degree $n$ and
period dividing two; explicit finite sums appear in
Corollary~\ref{str:cor:bd-dimensions}. In type $F_4$, the center
dimensions have generating function
\[
 \frac1{(1-t)^2(1-t^2)^2(1-t^3)}
\]
and form a degree-four quasipolynomial of period dividing six
(Corollary~\ref{str:cor:f4-dimensions}). These formulas treat
all exponents simultaneously.

\subsection*{Algebra structures and failure of duality}
The Hilbert series do not by themselves determine multiplication.
Discriminant removal transports the differential action to
\[
 \mathscr D_s(H)=(\Delta_\Phi^\vee)^{-1}
                         s(\mathsf d)(\Delta_\Phi^\vee H),
 \qquad s\in S,\ H\in T^W.
\]
Proposition~\ref{str:prop:conjugated-action} recovers the
content ideal as the annihilator of the finite invariant inverse
system under this action. This supplies a way to compute products
and socles after the Hilbert series is known.

In type $A_n$, $n\geq2$, the center is Gorenstein exactly
when $a=n$. Thus every nonsemisimple root-power quotient in
these types fails to be self-injective
(Theorem~\ref{str:thm:an-centers}). For short roots in $B_n$
and long roots in $C_n$, the behavior is different:
\[
 Z(\alpha,m)\cong H^*(\operatorname{Gr}(n,a);\mathbb C)
 \qquad(a\geq n).
\]
Every nonzero center in these coordinate-root families is a complete
intersection, and every corresponding matrix quotient is symmetric
(Theorem~\ref{str:thm:bc-grassmannian}). Although these families
have the same center dimensions as type $A_n$, their graded
algebra structures have different duality properties.

The first exponent pair above the semisimple quotient makes the
distinction among the remaining classical roots explicit.
For $m=\nu_\alpha+2,\nu_\alpha+3$,
Theorem~\ref{str:thm:first-bd-centers} gives
\[
\begin{array}{c|c|c}
 \Phi\text{ and root orbit}&Z(\alpha,m)&\text{generator degrees}\\ \hline
 B_n\text{ long},\ C_n\text{ short}
   &\mathbb C[u]/(u^3)&2\\
 D_4&\mathbb C[u,v,w]/(u,v,w)^2&2,4,4\\
 D_n\ (n\geq5)&\mathbb C[u,v]/(u^3,uv,v^2)&2,n.
\end{array}
\]
The type-$D_n$ centers therefore have socle dimension two for
$n\geq5$, and three for $n=4$. In particular, these quotients
cannot be Morita equivalent to nonzero ordinary cyclotomic
nil-Hecke algebras. The latter have Grassmannian centers and are
symmetric. The existence of a flag-variety interpretation of the
threshold consequently does not imply a Poincar\'e-duality
interpretation of every center.

In rank two we go further and determine presentations, minimal
resolutions, dimensions, and socles in every exponent, for both
root lengths. Type $G_2$ is linked to type $A_2$ by an
odd-part transfer. With $a=\lfloor m/2\rfloor$, its center
has dimension $\lfloor(a-1)^2/4\rfloor$ for $a\geq3$,
and is Gorenstein exactly for $3\leq a\leq5$.
In type $B_2/C_2$, both root lengths give Grassmannian
centers. Type $A_1$ has center
$\mathbb C[c_2]/(c_2^a)$ for every $a\geq1$.
The complete rank-two self-injectivity classification is
Corollary~\ref{str:cor:symmetry}.

The exceptional calculation in type $F_4$ also determines the
complete self-injectivity range. For either root length,
\[
 \begin{array}{c|c}
 m&Z(\alpha,m)\\ \hline
 16,17&\mathbb C\\
 18,19&\mathbb C[u]/(u^2)\\
 20,21&\mathbb C[u]/(u^5)
 \end{array}
 \qquad\deg u=2,
\]
and every center with $m\geq22$ is non-Gorenstein
(Theorem~\ref{str:thm:f4-symmetry}). The proof uses triality
for the invariant generators and the conjugated Laplacian
for multiplication. Thus the exceptional-type result includes
an algebra classification in the entire self-injective range.

\subsection*{Reflection subgroups and structural computation}
The comparison with type $A_2$ in the $G_2$ calculation is a
case of a general relative-discriminant theorem. Let
$W'\subseteq W$ be a reflection subgroup with
$W'\alpha=W\alpha$, put $S_W=P^W$, $S_{W'}=P^{W'}$, and
write $\Delta_W=\Delta_{W'}\delta$.

\begin{theorem}[Relative-discriminant comparison]
\label{intro:thm:transfer}
For every $m\geq1$,
\[
 J^W_{\alpha,m}
   =(J^{W'}_{\alpha,m}:\delta)\cap S_W.
\]
Here the ideals are the polynomial content ideals for their
respective invariant rings. If $W'$ is normal, let $\chi$ be
the character of $W/W'$ on $\delta$. Multiplication by
$\delta$ gives
\[
 Z_W(\alpha,m)_k
  \cong Z_{W'}(\alpha,m)_{k+\deg\delta}^{\chi}
\]
as $S_W$-modules, with the indicated degree shift.
Equivariant free resolutions transfer by taking this character
component and lowering degrees by $\deg\delta$; the resulting
resolutions may require cancellations to become minimal.
\end{theorem}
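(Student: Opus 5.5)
The plan is to derive everything from the alternation identity \eqref{intro:eq:alternation} of Theorem~\ref{intro:thm:rootpower}(i), applied once for $W$ and once for $W'$. The key observation is that $I_{\alpha,m}$ depends only on the orbit: since $W'\alpha=W\alpha$, the ideals $(x_{w\alpha}^m:w\in W)$ and $(x_{w\alpha}^m:w\in W')$ coincide as ideals of $P$. Write $I$ for this common ideal, and $P^{\det_W}$, $P^{\det_{W'}}$ for the sign-isotypic components. Theorem~\ref{str:thm:alternation} for each group gives
\[
 \Delta_W J^W_{\alpha,m}=I\cap P^{\det_W},\qquad
 \Delta_{W'}J^{W'}_{\alpha,m}=I\cap P^{\det_{W'}}.
\]
This identification holds because $\Phi'^+\subseteq\Phi^+$ up to sign, so $\Delta_{W'}$ is the $W'$-discriminant.

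\textbf{Step 1 (the colon formula).} Take $s\in S_W$. Then $\Delta_W s=\Delta_{W'}(\delta s)$. Since $\det_W$ restricts to $\det_{W'}$, every $W$-anti-invariant is $W'$-anti-invariant. Hence if $s\in J^W$, then $\Delta_{W'}\delta s\in I\cap P^{\det_{W'}}$. Dividing by $\Delta_{W'}$ (a nonzerodivisor, since $P^{\det_{W'}}=\Delta_{W'}S_{W'}$) gives $\delta s\in J^{W'}$. Conversely, suppose $s\in S_W$ and $\delta s\in J^{W'}$. Then $\Delta_W s=\Delta_{W'}\delta s\in I$, and it is $W$-anti-invariant because $s$ is invariant. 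So $\Delta_W s\in I\cap P^{\det_W}$, and therefore $s\in J^W$. This proves $J^W=(J^{W'}:\delta)\cap S_W$.

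\textbf{Step 2 (normal case).} Now assume $W'\trianglelefteq W$. Then $W/W'$ acts on $S_{W'}$, preserving $J^{W'}$: the ideal is intrinsically defined by $W'$ and $W$ normalizes $W'$, or alternatively use the alternation description, since $I$ is $W$-stable. The element $\delta=\Delta_W/\Delta_{W'}$ transforms by a character $\chi$: $\Delta_W$ transforms by $\det$, and $w\Delta_{W'}=\pm\Delta_{W'}$ because $w$ permutes $\Phi'$. One checks $\det|_{W'}$ cancels, so $\chi$ is a character of $W/W'$.

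The main point is the identity $P^{\det_W}=\Delta_W S_W$, together with the corresponding fact that the $\chi$-component of $S_{W'}$ equals $\delta S_W$. This follows because an element $f\in S_{W'}^{\chi}$ gives $\Delta_{W'}f\in P^{\det_W}=\Delta_W S_W$. Then $I\cap P^{\det_W}$ is the $W/W'$-isotypic $\chi$-part of $\Delta_{W'}(J^{W'})$. By the Reynolds projector, since $W/W'$ is finite and we work over $\mathbb C$, taking $\chi$-components is exact on $S_W$-modules. Therefore
\[
 \bigl(S_{W'}/J^{W'}\bigr)^{\chi}=\delta S_W/(\delta S_W\cap J^{W'})=\delta S_W/\delta J^W ,
\]
where the last equality uses Step 1. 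Multiplication by $\delta$ is injective, so this is isomorphic to $Z_W(\alpha,m)$ with the shift by $\deg\delta$.

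\textbf{Step 3 (resolutions).} Take a $W/W'$-equivariant graded free resolution $F_\bullet\to Z_{W'}$ over $S_{W'}$. Regard it over $S_W$. Each $S_{W'}$ is free over $S_W$ by Chevalley's theorem, and its isotypic components $S_{W'}^{\psi}$ are $S_W$-direct summands. Taking $\chi$-components is exact, so it yields a complex of $S_W$-modules resolving $Z_{W'}^{\chi}\cong Z_W[-\deg\delta]$. The terms are $(S_{W'}\otimes U)^{\chi}$ for representations $U$.

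The expected obstacle is showing these terms are free over $S_W$. The $\chi$-component of $S_{W'}\otimes U$ is a sum of $S_{W'}^{\chi\psi^{-1}}$-type pieces, and these need not be free in general. I would first handle this by asserting freeness only when the equivariant resolution has generators on which $W/W'$ acts by characters, so that each piece is $S_{W'}^{\psi'}$, which is a free $S_W$-module (a direct summand of a graded free module is free). Lowering degrees by $\deg\delta$ then gives the resolution. Minimality may fail because the $S_W$-ranks of these summands exceed one, which accounts for the possible cancellations.
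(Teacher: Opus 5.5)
Your Steps 1 and 2 are correct and follow the paper's proof of Theorem~\ref{str:thm:subgroup-transfer}. The colon formula is the same chain $f\in J_W\iff\Delta_Wf\in I\iff\Delta_{W'}\delta f\in I\iff\delta f\in J_{W'}$, and the identification of $Z_W(\alpha,m)$ with the $\chi$-component uses the same exactness argument. Your proof of $S_{W'}^\chi=\delta S_W$ is slightly cleaner than the paper's: you observe that $\Delta_{W'}f$ is $W$-anti-invariant and hence lies in $\Delta_WS_W$. The paper instead computes $\chi$ on reflections and then divides by the hyperplane forms of the reflections in $W\setminus W'$.

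The gap is in Step 3. The statement transfers every $\Gamma$-equivariant free resolution $F_i=S_{W'}\otimes_{\mathbb C}E_i$, with no condition on the generator spaces. You prove that $F_i^\chi$ is $S_W$-free only when each $E_i$ is a sum of characters of $\Gamma=W/W'$, and you assert that freeness may fail otherwise.

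This restriction is a genuine loss when $\Gamma$ is nonabelian. For $W(D_4)\subset W(F_4)$ one has $\Gamma\cong\mathfrak S_3$, and equivariant generator spaces can contain its two-dimensional irreducible representation; the triality action on $g,\eta$ is already of this kind.

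The worry is also unfounded, and the fix is the principle you quote in parentheses. Since $\Gamma$ fixes $S_W$ pointwise, the projector $e_\chi=|\Gamma|^{-1}\sum_{\gamma\in\Gamma}\chi(\gamma)^{-1}\gamma$ is $S_W$-linear on $F_i$. Hence $F_i^\chi=e_\chi F_i$ is a graded $S_W$-direct summand of $F_i$ for arbitrary $E_i$. Moreover, $F_i$ is graded free over $S_W$, because $S_{W'}$ is an $S_W$-summand of the free module $P$ via $W'$-averaging. A finitely generated graded projective module over the polynomial ring $S_W$ is free.

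The paper makes this explicit and also identifies the generators. A $W$-stable graded space $\mathcal H_W$ of coinvariant lifts gives $P\cong S_W\otimes\mathcal H_W$. Hence $S_{W'}\cong S_W\otimes\mathcal H_W^{W'}$ as graded $\Gamma$-modules, and therefore $F_i^\chi\cong S_W\otimes(\mathcal H_W^{W'}\otimes E_i)^\chi$.

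Finally, the possible failure of minimality does not come from ranks exceeding one. It comes from nonzero scalar (degree-zero) entries that the transferred differentials can acquire after restriction of scalars to $S_W$.
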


\begin{proof}
Apply Theorem~\ref{str:thm:subgroup-transfer} to the common
orbit ideal and use Corollary~\ref{str:cor:resolution-transfer}.
\end{proof}

This gives uniform comparisons for $A_2\subset G_2$,
$D_n\subset B_n$, and $D_4\subset F_4$.
Proposition~\ref{str:prop:leading-stabilizer} embeds the
associated graded directional filtration into the polynomial
invariants of the root stabilizer, with one extra grading
variable. Proposition~\ref{str:prop:betti-complex} computes
all graded Betti numbers and the socle from a finite complex
of the conjugated differential operators. These statements
provide the structural tools used in the explicit calculations
of the later sections.

\subsection*{A rational Cherednik interpretation}

The nil-Hecke quotient has a further interpretation through
the polynomial representation of a rational Cherednik algebra.
Let $\mathfrak h=V^*$, so $P=\mathbb C[\mathfrak h]$,
and write $H_{t,\mathbf k}(W)$ for the rational Cherednik
algebra with reflection parameters $\mathbf k$ in the
convention of~\eqref{ch:eq:relations}. At $t=0$ its
polynomial action uses only the divided differences
$\partial_\alpha=(1-s_\alpha)/x_\alpha$.
The standard Dunkl construction is recalled in
Section~\ref{ch:sec:interpretation}.

\begin{theorem}[Cherednik quotients and central support]
\label{intro:thm:cherednik}
Assume $k_\alpha>0$ real for every root and put
$\mathfrak a_{\mathbf k}
=\operatorname{Ann}_{H_{0,\mathbf k}(W)}P$.
The polynomial representation induces
\[
 H_{0,\mathbf k}(W)/\mathfrak a_{\mathbf k}
 \cong NH_W,
\]
and hence
\[
 \frac{H_{0,\mathbf k}(W)}
 {\mathfrak a_{\mathbf k}+\langle x_\alpha^m\rangle}
 \cong\operatorname{Mat}_{|W|}(Z(\alpha,m)).
\]
The action on $1\in P$ gives a split surjection
\[
 \chi_{\mathbf k}:Z(H_{0,\mathbf k}(W))\longrightarrow S.
\]
Thus $\operatorname{Spec}Z(\alpha,m)$ is a closed
subscheme of the section $\operatorname{Spec}S$ in the
generalized Calogero--Moser space, with defining ideal
$\chi_{\mathbf k}^{-1}(J^{\rm pol}_{\alpha,m})$.
\end{theorem}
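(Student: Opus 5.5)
The proof has three steps: identify the image of the polynomial representation, pass to the root-power quotient, and produce the central character $\chi_{\mathbf k}$.

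\textbf{Image of the polynomial representation.} At $t=0$ the Dunkl operators are
\[
D_y=-\sum_{\alpha\in\Phi^+}k_\alpha\,\alpha(y)\,\partial_\alpha ,
\]
and $H_{0,\mathbf k}(W)$ acts on $P$ through $x\in\mathfrak h^*$, $w\in W$ and the $D_y$. Each of these commutes with multiplication by $S$: the divided differences $\partial_\alpha$ are $S$-linear, and $W$ fixes $S$ pointwise. Hence the image lies in $\End_S(P)=NH_W$ (Lemma~\ref{lem:completed-nilhecke}). For surjectivity, recall that $NH_W$ is generated over $P$ by the operators $\partial_{\alpha_i}$, $i\in I$. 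Since $s_\beta=1-x_\beta\partial_\beta$, the image contains $W$ and hence $P\rtimes W$. It then suffices to recover one $\partial_{\alpha_i}$ from the $D_y$, $W$ and $P$, since conjugation by $W$ gives the remaining $\partial_\beta$.

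For this I would use a filtration argument. Let $\mathcal A\subseteq NH_W$ be the image algebra. The commutator
\[
[D_y,x]=-\sum_{\alpha\in\Phi^+}k_\alpha\,\alpha(y)\alpha^\vee(x)\,s_\alpha
\]
lies in $\mathbb CW$, so $\mathcal A$ is $D_y$ adjoined to $P\rtimes W$. Write
\[
\partial_\alpha=x_\alpha^{-1}(1-s_\alpha)
\]
inside the localization $P[\Delta_\Phi^{-1}]\rtimes W$, which contains $NH_W$. Then $D_y$ equals $\sum_w c_w(y)\,w$ with rational coefficients $c_w(y)$. Choose $y$ so that $\alpha_i(y)\neq0$ while $\beta(y)=0$ for the other roots in a suitable set. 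Multiplying $D_y$ by $x_{\alpha_i}$ and projecting onto the $s_{\alpha_i}$-component gives
\[
-k_{\alpha_i}\alpha_i(y)\,s_{\alpha_i}+\text{(lower terms)}.
\]
The projection onto this component is realized by averaging with idempotents built from $P\rtimes W$, which requires denominators, so it must be done carefully inside $\End_S(P)$.

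A cleaner route is a rank count. The algebra $NH_W$ is free of rank $|W|^2$ over $S$, and it is the twisted group algebra $P[\Delta^{-1}]\rtimes W$ cut down to $\End_S(P)$. I would show the following: $\mathcal A$ contains $P\rtimes W$; $\mathcal A$ is an $S$-submodule; $\mathcal A[\Delta^{-1}]=NH_W[\Delta^{-1}]$; and $\mathcal A$ and $NH_W$ agree in each degree via the Hilbert series of the symmetrized generators $D_y$. The equality $\mathcal A=NH_W$ then follows from graded Nakayama applied modulo $S_+$. Modulo $S_+$ the quotient $NH_W/S_+NH_W$ is the finite nil-Hecke algebra $\Mat_{|W|}(\mathbb C)$, acting on the coinvariant algebra $P/S_+P$. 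It therefore suffices that the images of $\mathcal A$ act irreducibly on the coinvariants, and Burnside's theorem then gives surjectivity. Irreducibility is where positivity of $k_\alpha$ enters. The coinvariants are generated by $x$ from $1$, so the module is cyclic. The lowering operators $D_y$ must reach $1$ from any nonzero vector, in particular from the socle, which is spanned by the image of $\Delta_\Phi$. I would argue this by noting that, for real positive $\mathbf k$, the contravariant form
\[
\langle p,q\rangle=(p(D)q)(0)
\]
on the coinvariants is nondegenerate. The form is given by Hermitian-type positivity; this is the main obstacle, and I would try a reduction to the leading degree, using the fact that
\[
\Delta_\Phi(D)\Delta_\Phi=c_{\mathbf k}\neq0,
\]
and that $c_{\mathbf k}$ is a positive combination of the products of the $k_\alpha$.

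\textbf{The root-power quotient.} Given $H_{0,\mathbf k}/\mathfrak a_{\mathbf k}\cong NH_W$, the image of the two-sided ideal $\langle x_\alpha^m\rangle$ is $\langle M_{x_\alpha^m}\rangle$. Theorem~\ref{rc:thm:cyclotomic} then gives the matrix description of the quotient.

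\textbf{The central character.} For $z$ central in $H_{0,\mathbf k}$, its image is central in $NH_W$, whose center is $S$ because $NH_W\cong\Mat_{|W|}(S)$. Define
\[
\chi_{\mathbf k}(z)=z\cdot1 .
\]
This equals the central element evaluated on $1$, so $\chi_{\mathbf k}$ is an algebra map. It is split by the inclusion $S=P^W\subseteq Z(H_{0,\mathbf k})$, which is standard at $t=0$. Its kernel contains $\mathfrak a_{\mathbf k}\cap Z$, so $\chi_{\mathbf k}$ factors through the center of $NH_W$. Finally, $\chi_{\mathbf k}^{-1}(J^{\rm pol}_{\alpha,m})$ cuts out $\operatorname{Spec}Z(\alpha,m)$ inside the section $\operatorname{Spec}S\hookrightarrow\operatorname{Spec}Z(H_{0,\mathbf k})$.
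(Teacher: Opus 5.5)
Your reduction of surjectivity is the same as the paper's. The image $\mathcal A$ is a graded $S$-submodule of the finite free module $NH_W$, graded Nakayama reduces $\mathcal A=NH_W$ to the closed fibre $NH_W/S_+NH_W\cong\End_{\mathbb C}(\mathcal R_W)$, and Burnside reduces that to irreducibility of $\mathcal R_W$ under $H_{0,\mathbf k}(W)$. Your earlier sketches (projecting onto an $s_{\alpha_i}$-component, comparing Hilbert series of ``symmetrized generators'') are not arguments, and this reduction supersedes them.

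The genuine gap is the step you yourself flag as the main obstacle: you never prove that $\mathcal R_W$ is irreducible for real $k_\alpha>0$. Every submodule is an ideal of the Gorenstein ring $\mathcal R_W$, hence contains its socle $\mathbb C\Delta_\Phi$. So it does suffice to show $\Delta_\Phi(D)\Delta_\Phi\neq0$, but that nonvanishing is the whole content of the step, and you only assert it. There is no positivity statement at $t=0$ that you can simply invoke. The classical positivity of the Dunkl pairing is a statement at $t\neq0$. Degenerating it to $t=0$, i.e.\ taking the part of top degree in $\mathbf k$, preserves only semidefiniteness, not nondegeneracy. Your claim that $c_{\mathbf k}$ is a positive combination of monomials in the $k_\alpha$ is likewise unsupported. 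It would follow from a product formula of Macdonald--Mehta--Opdam type for $\Delta_\Phi(T_{\mathbf k})\Delta_\Phi$, but you neither cite nor prove one.

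The missing idea is an Euler-type identity at $t=0$. For dual bases $x_1,\dots,x_r$ and $\xi_1,\dots,\xi_r$,
\[
 \sum_{j=1}^r x_jD_{\xi_j}=-C_{\mathbf k},\qquad
 C_{\mathbf k}=\sum_{\alpha\in\Phi^+}k_\alpha(1-s_\alpha),
\]
since $\sum_jx_j\alpha(\xi_j)=x_\alpha$ cancels each denominator in the Dunkl operator. The element $C_{\mathbf k}$ is central in $\mathbb CW$ and acts on $\tau\in\operatorname{Irr}(W)$ by the scalar $a_\tau(\mathbf k)$. For $k_\alpha>0$ this scalar is positive whenever $\tau\neq\mathbf 1$, because
\[
 \langle C_{\mathbf k}v,v\rangle=\tfrac12\sum_\alpha k_\alpha\|v-s_\alpha v\|^2
\]
for a $W$-invariant Hermitian form. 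This is where positivity genuinely enters: on $W$-modules, not on the Cherednik module.

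Now the $D_\xi$ commute and lower degree. Hence any nonzero submodule $U\subseteq\mathcal R_W$ contains a nonzero $v$ killed by all of them. Then $C_{\mathbf k}v=0$, so $v\in\mathcal R_W^W=\mathbb C\cdot1$, and $U\supseteq P\cdot1=\mathcal R_W$.

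The remaining steps of your proposal are fine. Your central-character argument uses surjectivity to land in $Z(NH_W)=S$. The paper's argument works for every $\mathbf k$: a central element commutes with every $M_f$, so it acts as multiplication by $z\cdot1$, and $z\cdot1$ is $W$-invariant.
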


\begin{proof}
These are Theorem~\ref{ch:thm:surjectivity},
Corollary~\ref{ch:cor:quotients}, and
Proposition~\ref{ch:prop:central-section}.
\end{proof}

Theorem~\ref{ch:thm:surjectivity} gives the larger explicit
open parameter set~\eqref{ch:eq:parameter-hyperplanes}.
Its proof first tests irreducibility on the coinvariant
fiber, using the central element
$\sum_{\alpha>0}k_\alpha(1-s_\alpha)$, and then lifts
surjectivity by graded Nakayama. This connects the center
calculations with the polynomial representation while
keeping their invariant-theory proofs independent.
The specialization $t=0$ is essential for this finite-dimensional
interpretation with the prescribed Weyl-group action; see
Proposition~\ref{ch:prop:trace-obstruction}.

\vskip5pt
\noindent\textbf{Relation to existing work and scope.}
The nil-Hecke endomorphism theorem and its relation to flag
varieties are classical~\cite{BGG,Demazure,KostantKumar,KumarFlag}.
The harmonic description of coinvariants is due to
Steinberg~\cite{Steinberg}. The correspondence between powers
of linear forms and fat-point inverse systems is also classical;
see Emsalem--Iarrobino and Ardila--Postnikov
\cite{EmsalemIarrobino,ArdilaPostnikovPower}.
Crowley--Larson interpret arrangement power-ideal inverse systems
as spaces of sections of line bundles~\cite{CrowleyLarsonPower}.
We use these established inputs to identify the determinant
component selected by a root-power content ideal and then remove
the discriminant. The resulting invariant directional filtration
is the computational object of this paper.

Ordinary strand-power cyclotomic nil-Hecke algebras have matrix
descriptions over Grassmannian cohomology rings
\cite{Lauda,HuLiang}. Root coordinates give different quotients.
In type $A_n$, $n\geq2$, the centers are non-Gorenstein beyond
the first nonzero exponent pair. The coordinate-root families in
$B_n,C_n$ recover Grassmannian rings, while other root orbits
have different Hilbert functions and socles. Our main results
identify these ideals, compute the Hilbert series outside the
three exceptional $E$ types, transfer centers and resolutions
through reflection subgroups, and determine the stated algebra
structures and self-injectivity ranges.

Geometric descriptions of centers of cyclotomic quiver Hecke
algebras occur in~\cite{WebsterCenters,SVVCenter,HuLinCenter}.
No identification of our root-power centers with those geometric
centers is assumed. The non-Gorenstein examples in particular
require care with any proposed cohomological interpretation.
Section~\ref{ii:sec:comparison} gives the zero-residue
interpretation needed here and states the precise comparison
hypothesis for transfer to a cyclotomic Hecke orbit summand.
The Dunkl representation is classical~\cite{EtingofMa}.
Our Cherednik comparison is a consequence of that representation
and the nil-Hecke endomorphism theorem, with a direct proof of
the parameter condition. Central supports of $t=0$ Verma
modules have been studied by Bellamy~\cite{BellamyVerma};
we identify the specific content ideals on the polynomial
section and do not require a description of the full space.
Liu's comparison~\cite{LiuKZ} concerns a broader
residue-dependent setting and is not used in the proofs of the
center formulas in this paper.

The scope of the classification is precise. Hilbert series are
determined in every exponent for $A,B,C,D,F_4,G_2$. Complete
algebra presentations in every exponent are obtained for the
coordinate-root $B_n,C_n$ families and in rank two. We give
the first nonsemisimple centers in the remaining classical
families and all $F_4$ centers in the self-injective range.
The finite differential complex is a method for calculating
higher-rank relations and socles; no unproved uniform rank
formula is asserted.

\vskip5pt
\noindent\textbf{Organization.}
Section~\ref{ii:sec:nilhecke} constructs the polynomial quotients.
Section~\ref{ii:sec:threshold} proves the exact threshold and its
partial-flag interpretation. Sections~\ref{str:sec:content}
and~\ref{str:sec:structural-transfer} give apolarity,
discriminant removal, subgroup transfer, and differential models.
Section~\ref{ch:sec:interpretation} gives the Cherednik quotient
and its central-support interpretation.
The classical, $F_4$, and rank-two calculations are in
Sections~\ref{str:sec:classical-centers}, \ref{str:sec:f4},
and~\ref{str:sec:ranktwo}. The final section,
Section~\ref{ii:sec:comparison}, gives the completed zero-residue
interpretation and records the precise hypothesis under which the
root-power quotients transfer to a cyclotomic Hecke orbit summand.

Unless a statement specifies otherwise, the type calculations
assume that $\Phi$ is irreducible. The general content and
reflection-subgroup constructions also allow reducible subgroups.
We reserve $I_{\alpha,m}$ for the orbit power ideal in $P$ and
$J^{\rm pol}_{\alpha,m}$ for its content ideal in $S$.
The superscript $\det$ denotes the determinant isotypic component;
it does not in general denote a subalgebra.

\section{Polynomial nil-Hecke algebras and root-power quotients}
\label{ii:sec:nilhecke}

We first construct the quotients in the polynomial setting. This
makes their dependence on the root system explicit and separates
their invariant-theory description from any Hecke comparison.
Throughout the paper the ground field is $\mathbb C$.

\subsection{The endomorphism algebra}

Use the root data and polynomial rings $P=\operatorname{Sym}(V)$,
$S=P^W$ of the introduction, and put $d=|W|$.
For $f\in P$, let $M_f$ be multiplication by $f$. The simple
divided differences are
\[
 \partial_i(f)=\frac{f-s_i(f)}{x_{\alpha_i}}.
\]
The numerator is divisible by $x_{\alpha_i}$, so these are
operators on $P$. They are $S$-linear. The polynomial nil-Hecke
algebra $NH_W$ is the subalgebra of $\operatorname{End}_{\mathbb C}(P)$
generated by the $M_f$ and the $\partial_i$.

\begin{lemma}[The classical nil-Hecke endomorphism theorem]
\label{lem:completed-nilhecke}
The extension $P/S$ is finite free of rank $d$, and the polynomial
action identifies
\[
 NH_W=\operatorname{End}_S(P)\cong\operatorname{Mat}_d(S).
\]
Let $\widehat S$ be the $S_+$-adic completion and let
$\widehat P=\mathbb C[[x_{\alpha_i}:i\in I]]$.
Then $\widehat P=\widehat S\otimes_S P$ and
\[
 \widehat S\otimes_S NH_W
 \cong\operatorname{End}_{\widehat S}(\widehat P).
\]
The endomorphism identifications are canonical; a matrix
identification requires a choice of basis.
\end{lemma}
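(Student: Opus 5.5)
The plan is to assemble the classical inputs (Chevalley's theorem and the Demazure--BGG description of divided differences) in three steps: freeness, the endomorphism identification, and completion.

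\emph{Freeness.} Since $W$ acts on $V$ as a finite reflection group, Chevalley's theorem gives $S=\mathbb C[f_1,\dots,f_r]$ with $f_j$ homogeneous of degrees $d_j$ and $\prod_j d_j=d$. Because $P$ is Cohen--Macaulay and finite over the polynomial ring $S$, it is a free $S$-module. Its Hilbert series $\prod_j(1-q^{d_j})/(1-q)^r$ evaluates at $q=1$ to $\prod d_j=d$, so the rank is $d$. A homogeneous basis can be taken to be lifts of a basis of the coinvariant algebra $P/S_+P$; for instance, the Schubert polynomials $\partial_{w^{-1}w_0}(\Delta_\Phi)$, suitably normalized.

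\emph{Endomorphism identification.} Every $M_f$ and every $\partial_i$ is $S$-linear, so $NH_W\subseteq\End_S(P)\cong\Mat_d(S)$. For the reverse inclusion I would use the Demazure/BGG argument. The operators $\partial_w$ ($w\in W$), defined via reduced words, are well defined and satisfy the nil-Coxeter relations. The pairing $\langle f,g\rangle=\partial_{w_0}(fg)\in S$ is a perfect $S$-bilinear pairing on $P$, with $\partial_{w_0}=\Delta_\Phi^{-1}\sum_w\det(w)\,w$ up to a scalar. One then picks dual bases $\{b_u\}$ and $\{b_u^\vee\}$, with $\langle b_u,b_v^\vee\rangle=\delta_{uv}$. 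The operator
\[
E_{uv}=M_{b_u}\circ\partial_{w_0}\circ M_{b_v^\vee}
\]
sends $b_{v'}$ to $\delta_{vv'}\,b_u$, so it is the elementary matrix. Since the $E_{uv}$ lie in $NH_W$ and span $\End_S(P)$ over $S=\{M_s\}$, we get $NH_W=\End_S(P)$.

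The main obstacle is perfectness of the pairing. I would deduce it from the triangularity $\partial_{w_0}(\mathfrak S_u\mathfrak S_{w_0v})\equiv\delta_{uv}$ modulo lower terms in Bruhat order, and then apply graded Nakayama. Alternatively, one can reduce modulo $S_+$, where the coinvariant algebra is a Poincar\'e duality algebra with socle spanned by the class of $\Delta_\Phi$. Nakayama then upgrades nondegeneracy over $\mathbb C$ to perfectness over $S$.

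\emph{Completion.} The $S_+$-adic and $P_+$-adic topologies on $P$ agree, since $P$ is finite over $S$ and $\sqrt{S_+P}=P_+$. Hence $\widehat S\otimes_S P\cong\widehat P$, the tensor product being exact on finite modules. Because $P$ is finite free, $\End$ commutes with the flat base change: $\widehat S\otimes_S\End_S(P)\cong\End_{\widehat S}(\widehat S\otimes_SP)=\End_{\widehat S}(\widehat P)$. The identification $NH_W=\End_S(P)$ involves no choices, which gives canonicity. The matrix form depends on the chosen basis $\{b_u\}$, and changing the basis conjugates by $\mathrm{GL}_d(S)$.
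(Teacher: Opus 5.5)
Your proposal is correct and follows the same outline as the paper: lift a coinvariant basis to a homogeneous $S$-basis of $P$, use the classical nil-Hecke theorem for $NH_W=\End_S(P)$, and handle completion by noting that the $S_+$-adic and $P_+$-adic topologies on $P$ agree and that $\End$ commutes with base change for a finite free module. The only difference is that you write out the Kostant--Kumar argument (elementary matrices $M_{b_u}\circ\partial_{w_0}\circ M_{b_v^\vee}$ built from the perfect pairing $\partial_{w_0}(fg)$), which the paper simply cites; your reduction-mod-$S_+$ proof of perfectness is the same argument the paper gives later for $\operatorname{tr}_\Phi$ in Theorem~\ref{str:thm:alternation}.
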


\begin{proof}
The polynomial invariant ring is generated by homogeneous basic
invariants, and the coinvariant basis lifts to an $S$-basis of $P$.
The divided-difference action gives the full endomorphism algebra;
this is the classical nil-Hecke theorem of~\cite{KostantKumar}.
See also~\cite[Proposition 4.10(iv)]{Wojciechowski} for this
formulation. The origin is the only point above the origin of the
invariant quotient, so completion of $P$ over $S$ is its completion
at $P_+$. Finite freeness allows endomorphisms to commute with
this base change, proving the completed statement.
\end{proof}

The grading is the polynomial grading: $\deg x_\alpha=1$ and
$\deg\partial_i=-1$. A homogeneous $S$-basis of $P$ produces
a matrix algebra with the corresponding row and column degree
shifts. Whenever an ordinary matrix algebra is displayed below,
the assertion is an algebra isomorphism; the grading on its center
is always the polynomial grading inherited from $S$.

\subsection{Content and simultaneous quotients}
\label{rc:sec:cyclotomic}

\begin{lemma}[Content of a multiplication operator]
\label{str:lem:content}
Let $T_0$ be a commutative ring and $B$ a commutative $T_0$-algebra
that is finite free over $T_0$. For $b\in B$, define
\[
 c_{T_0}(b)=\bigl(\varphi(b):\varphi\in\operatorname{Hom}_{T_0}
                                          (B,T_0)\bigr).
\]
If $b=\sum_j b_jv_j$ in a $T_0$-basis, then
$c_{T_0}(b)=(b_j)_j$. This is also the ideal generated by the
entries of the matrix of multiplication by $b$.
It is independent of the basis and commutes with base change.
\end{lemma}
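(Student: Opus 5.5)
The plan is to work entirely with a fixed $T_0$-basis $v_1,\dots,v_r$ of $B$, and then deduce basis independence and base change from the intrinsic definition. Write $b=\sum_j b_jv_j$ with $b_j\in T_0$.

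\emph{The coordinate description.} Let $v_1^*,\dots,v_r^*\in\operatorname{Hom}_{T_0}(B,T_0)$ be the dual coordinate functionals. These give the two inclusions.
\begin{itemize}
\item Since $v_j^*(b)=b_j$, each $b_j$ lies in $c_{T_0}(b)$.
\item Conversely, any $\varphi\in\operatorname{Hom}_{T_0}(B,T_0)$ satisfies $\varphi(b)=\sum_j b_j\varphi(v_j)$, which lies in $(b_j)_j$.
\end{itemize}
Hence $c_{T_0}(b)=(b_j)_j$.

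\emph{The matrix description.} Let $M$ be the matrix of multiplication by $b$ in this basis, with entries $m_{ij}=v_i^*(bv_j)$. Each entry is $\varphi(b)$ for the functional $\varphi=v_i^*\circ M_{v_j}$, so every $m_{ij}$ lies in $c_{T_0}(b)$. For the reverse inclusion, $B$ is a commutative $T_0$-algebra, so $1\in B$ can be written as $1=\sum_k e_kv_k$ with $e_k\in T_0$. Then
\[
 b_i=v_i^*(b\cdot 1)=\sum_k e_k\,m_{ik},
\]
so each $b_i$ lies in the ideal generated by the entries. Thus the entry ideal equals $(b_j)_j=c_{T_0}(b)$.

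\emph{Basis independence and base change.} Basis independence is immediate, because the definition of $c_{T_0}(b)$ via all functionals makes no reference to a basis. For a ring map $T_0\to T_1$, put $B_1=T_1\otimes_{T_0}B$. This is free over $T_1$ with basis $1\otimes v_j$, and $1\otimes b=\sum_j \bar b_j(1\otimes v_j)$, where $\bar b_j$ is the image of $b_j$. By the coordinate description over $T_1$,
\[
 c_{T_1}(1\otimes b)=(\bar b_j)_j=c_{T_0}(b)T_1 .
\]

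No step is a real obstacle. The only point needing care is the reverse inclusion for the matrix description, where one must use the unit of $B$ to recover the coordinates $b_i$ from the entries $m_{ik}$.
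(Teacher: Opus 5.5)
Your proof is correct and follows the paper's argument: dual coordinate functionals give $c_{T_0}(b)=(b_j)_j$, and the multiplication-matrix entries lie in this ideal. Applying the matrix to the coordinate vector of $1$ recovers $(b_j)_j$, and the transported basis $1\otimes v_j$ gives base change. The only cosmetic difference is that you place each entry in $c_{T_0}(b)$ as the value of the functional $v_i^*\circ M_{v_j}$ at $b$, whereas the paper notes that the coordinates of $bv_k$ are $T_0$-combinations of the $b_j$.
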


\begin{proof}
The coordinate functionals form the dual basis, so their values
generate $(b_j)_j$. Every coordinate of $bv_k$ belongs to that
ideal. Conversely, applying the multiplication matrix to the
coordinate vector of $1$ gives the vector $(b_j)_j$. This proves
equality with the entry ideal. A transported basis gives the
same coordinate vector after any base change.
\end{proof}

Let $\Xi\subseteq\Phi$ be nonempty and choose integers
$m_\alpha\geq1$ for $\alpha\in\Xi$. Put
\[
 \begin{aligned}
 A_{\Xi,\mathbf m}
   &=NH_W/\langle M_{x_\alpha^{m_\alpha}}:\alpha\in\Xi\rangle,\\
 J^{\rm pol}_{\alpha,m}&=c_S(x_\alpha^m),\qquad
 J^{\rm pol}_{\Xi,\mathbf m}
   =\sum_{\alpha\in\Xi}J^{\rm pol}_{\alpha,m_\alpha}.
 \end{aligned}
\]
Angle brackets mean the generated two-sided ideal. For one root
we write $A_{\alpha,m}$ and
$Z(\alpha,m)=S/J^{\rm pol}_{\alpha,m}$.
We also use $J_{\alpha,m}=J^{\rm pol}_{\alpha,m}\widehat S$
for the completed ideal. The coinvariant algebra is
\[
 \mathcal R_W=P/S_+P.
\]
It is a $d$-dimensional graded local algebra. Define
$\nu_\alpha$ to be the nilpotence exponent of $x_\alpha$ in
$\mathcal R_W$.

\begin{theorem}[Matrix quotient and nonvanishing]
\label{rc:thm:cyclotomic}
There is an algebra isomorphism
\begin{equation}\label{rc:eq:cyclotomic-matrix}
 A_{\Xi,\mathbf m}
 \cong\operatorname{Mat}_d(S/J^{\rm pol}_{\Xi,\mathbf m}).
\end{equation}
The quotient is nonzero precisely when
\begin{equation}\label{rc:eq:criterion}
 m_\alpha\geq\nu_\alpha\qquad(\alpha\in\Xi).
\end{equation}
If $W\Xi$ spans $V$, its center is finite-dimensional and,
when nonzero, is a graded local algebra with residue field
$\mathbb C$. In that case the quotient has a unique simple
module, namely $\mathcal R_W$, of dimension $d$.
Moreover, completion at $S_+$ does not change this quotient.
The spanning condition is automatic for a single root in an
irreducible root system.
\end{theorem}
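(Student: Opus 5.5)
The plan is to use Lemma~\ref{lem:completed-nilhecke}, which gives $NH_W=\operatorname{End}_S(P)\cong\operatorname{Mat}_d(S)$ after fixing a homogeneous $S$-basis $v_1,\dots,v_d$ of $P$, and then to run the standard description of two-sided ideals in a matrix ring. Every two-sided ideal of $\operatorname{Mat}_d(S)$ has the form $\operatorname{Mat}_d(J)$, where $J\subseteq S$ is the ideal generated by all entries of all its elements. Hence $\langle M_{x_\alpha^{m_\alpha}}:\alpha\in\Xi\rangle=\operatorname{Mat}_d(J)$, with $J$ the ideal generated by the entries of the matrices of multiplication by $x_\alpha^{m_\alpha}$. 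By Lemma~\ref{str:lem:content} (with $T_0=S$, $B=P$), those entries generate exactly $c_S(x_\alpha^{m_\alpha})=J^{\rm pol}_{\alpha,m_\alpha}$. Summing over $\alpha\in\Xi$ gives $J=J^{\rm pol}_{\Xi,\mathbf m}$, and \eqref{rc:eq:cyclotomic-matrix} follows. For the graded statement one only notes that the basis is homogeneous, so the matrix entries of a homogeneous operator are homogeneous up to the shifts.

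For the nonvanishing criterion, observe that $A_{\Xi,\mathbf m}\neq0$ iff $J^{\rm pol}_{\Xi,\mathbf m}\neq S$. All generators are homogeneous, so this is equivalent to $J^{\rm pol}_{\Xi,\mathbf m}\subseteq S_+$, i.e. $J^{\rm pol}_{\alpha,m_\alpha}\subseteq S_+$ for every $\alpha$. Since content commutes with base change (Lemma~\ref{str:lem:content}), reducing along $S\to S/S_+=\mathbb C$ gives $c_S(x_\alpha^m)\otimes_S\mathbb C=c_{\mathbb C}(\bar x_\alpha^m)$ in $\mathcal R_W$. This is zero iff $\bar x_\alpha^m=0$ in $\mathcal R_W$, i.e. iff $m\geq\nu_\alpha$. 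Concretely, the coordinates $b_j$ of $x_\alpha^m$ lie in $S_+$ iff the image of $x_\alpha^m$ in $\mathcal R_W$ vanishes. This proves \eqref{rc:eq:criterion}.

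The finiteness assertion is the step I expect to need the most care. Here $J^{\rm pol}_{\Xi,\mathbf m}\supseteq J^{\rm pol}_{\alpha,m}$ contains the coordinates of $x_{w\alpha}^{m}$ for all $w\in W$, because $c_S$ is $W$-stable: $w$ acts $S$-linearly on $P$, and $\varphi\circ w\in\operatorname{Hom}_S(P,S)$. Hence $J^{\rm pol}P\supseteq I$, where $I$ is the ideal generated by the $x_{w\alpha}^{m_\alpha}$ (every element $b$ lies in $c_S(b)P$). If $W\Xi$ spans $V$, then $P/I$ is finite-dimensional, since the radical of $I$ contains a basis of linear forms. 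So $P/J^{\rm pol}P$ is finite-dimensional, and as $P$ is free over $S$, $S/J^{\rm pol}\hookrightarrow P/J^{\rm pol}P$ is finite-dimensional too. When nonzero, $S/J^{\rm pol}$ is a nonnegatively graded algebra with degree-zero part $\mathbb C$ and nilpotent positive part, hence graded local. The unique simple module of $\operatorname{Mat}_d(Z)$ with $Z$ local is $Z/\mathfrak m\otimes\mathbb C^d$, which under the endomorphism realization is $P\otimes_S\mathbb C=\mathcal R_W$, of dimension $d$.

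Finally, completion: by Lemma~\ref{lem:completed-nilhecke}, $\widehat S\otimes_S NH_W\cong\operatorname{Mat}_d(\widehat S)$, and the same argument gives quotient $\operatorname{Mat}_d(\widehat S/J\widehat S)$. Since $S/J$ is finite-dimensional and $S_+$-primary, $\widehat S/J\widehat S=S/J$. In the nonfinite case one uses instead that $S/J$ is graded, and its completion is faithful on the matrix quotient. For a single root in an irreducible system, $W\alpha$ spans $V$, because the span is a nonzero $W$-stable subspace of an irreducible representation.
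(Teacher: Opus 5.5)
Your proof is correct and follows essentially the same route as the paper: the matrix-unit description of two-sided ideals combined with Lemma~\ref{str:lem:content}, reduction modulo $S_+$ for the nonvanishing criterion, and the orbit power ideal contained in $PJ^{\rm pol}_{\Xi,\mathbf m}$ for finiteness. You use freeness of $P$ over $S$ where the paper uses Reynolds averaging to get $S/J\hookrightarrow P/PJ$, and either works; finite-dimensionality then gives the completion claim. Drop your final remark about the non-spanning case, however: there completion genuinely changes the quotient. For example, $A_1\times A_1$ with one root and exponent $2a$ gives $\mathbb C[c_1,c_2]/(c_1^a)$ versus $\mathbb C[[c_1,c_2]]/(c_1^a)$. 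The completion assertion is meant, and proved in the paper, only under the spanning hypothesis.
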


\begin{proof}
In a matrix algebra over a commutative ring, the two-sided ideal
generated by a matrix $F=(f_{ij})$ is the matrix algebra over
its entry ideal: indeed,
$E_{ai}FE_{jb}=f_{ij}E_{ab}$. Apply this to the multiplication
matrices and use Lemma~\ref{str:lem:content}. This proves
\eqref{rc:eq:cyclotomic-matrix} and identifies its center.

Each content ideal is homogeneous. Their sum is proper if and
only if it is contained in $S_+$, which holds exactly when each
multiplication matrix becomes zero modulo $S_+$.
Reduction of the endomorphism algebra gives
$\operatorname{End}_{\mathbb C}(\mathcal R_W)$.
Multiplication by an element of the unital algebra $\mathcal R_W$
is zero if and only if the element itself is zero, as can be
checked on $1$. This proves~\eqref{rc:eq:criterion}.

Write $J=J^{\rm pol}_{\Xi,\mathbf m}$. The ideal $PJ$ contains
$x_\alpha^{m_\alpha}$ and all its Weyl translates: content is
unchanged by the $S$-linear action of $W$ on $P$.
If $W\Xi$ spans $V$, choose a basis of $V$ from these root
directions. A positive power of each corresponding coordinate
belongs to $PJ$, so $P/PJ$ is finite-dimensional. Averaging
over $W$ gives $PJ\cap S=J$, whence $S/J$ embeds in $P/PJ$.
Thus $S/J$ is finite-dimensional. Its positive-degree ideal
is nilpotent, and its degree-zero part is $\mathbb C$ when the
ring is nonzero. This proves the local assertion and shows that
$S/J\cong\widehat S/J\widehat S$.

The matrix algebra over a nonzero local Artinian ring has a
unique simple module of dimension $d$. Reduction of its natural
polynomial representation identifies that module with
$P/S_+P=\mathcal R_W$. The criterion already proved ensures
that all imposed powers kill this module. Finally, base change
of~\eqref{rc:eq:cyclotomic-matrix} to $\widehat S$ preserves it,
which proves the completion assertion.
\end{proof}

\begin{example}[Rank one]\label{ex:a1-centre}
For $P=\mathbb C[x]$ and $S=\mathbb C[c]$, $c=x^2$, use the
$S$-basis $1,x$. If $a=\lfloor m/2\rfloor$, the coordinates
of $x^m$ generate $(c^a)$. Therefore
\[
 Z(\alpha,m)=\mathbb C[c]/(c^a),\qquad
 A_{\alpha,m}\cong\operatorname{Mat}_2(\mathbb C[c]/(c^a)),
 \qquad\deg c=2.
\]
The quotient is zero for $m=1$ and nonzero for $m\geq2$.
\end{example}

\section{Exact thresholds and partial flag varieties}
\label{ii:sec:threshold}
\subsection{The root-coordinate exponent}
We now evaluate the exponent in \eqref{rc:eq:criterion}.
Use the bilinear form and the differential notation fixed in the
introduction. In this subsection the operators act on $P$ through
$\mathsf d_v(x_\lambda)=(\lambda,v)$. In particular,
$f(\mathsf d)$ acts on the discriminant
\[
 \Delta_\Phi=\prod_{\beta\in\Phi^+}x_\beta\in P.
\]
Directional differentiation is distinct from the divided difference
$\partial_\alpha=(1-s_\alpha)/x_\alpha$.

\begin{lemma}[Differential annihilator]\label{rc:lem:annihilator}
With constant-coefficient differentiation defined by the chosen
inner product,
\begin{equation*}\label{rc:eq:annihilator}
 \Ann(\Delta_\Phi)
 :=\{f\in\Sym(V):f(\mathsf d)\Delta_\Phi=0\}
 =\langle\Sym(V)^W_+\rangle.
\end{equation*}
\end{lemma}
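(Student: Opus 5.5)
We prove the two inclusions separately, after a reduction. This is Steinberg's harmonic description of coinvariants. The reduction: $\Ann(\Delta_\Phi)$ is a homogeneous ideal of $\Sym(V)$, because $f(\mathsf d)g(\mathsf d)=(fg)(\mathsf d)$ and the operators preserve homogeneity up to degree shifts. Hence it suffices to compare the two ideals degree by degree.

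\emph{Inclusion $\supseteq$.} Let $s\in\Sym(V)^W_+$ be homogeneous. Under the $W$-invariant identification $P\simeq T$, the operator $s(\mathsf d)$ commutes with the $W$-action, since $w\,\mathsf d_\beta\,w^{-1}=\mathsf d_{w\beta}$ and the form is $W$-invariant. As $\Delta_\Phi$ lies in the determinant component, so does $s(\mathsf d)\Delta_\Phi$. Its degree $N-\deg s$ is strictly less than $N$. Every element of $P^{\det}$ is divisible by each $x_\beta$: it is anti-invariant under $s_\beta$, so it vanishes on the hyperplane $\beta^\perp$. The $x_\beta$ for $\beta\in\Phi^+$ are pairwise non-associate, so $P^{\det}=\Delta_\Phi S$. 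A nonzero element of $P^{\det}$ therefore has degree at least $N$, which forces $s(\mathsf d)\Delta_\Phi=0$. The ideal property then gives $\langle\Sym(V)^W_+\rangle\subseteq\Ann(\Delta_\Phi)$.

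\emph{Inclusion $\subseteq$.} This is the substantive step and the main obstacle. Put $\mathcal H=\{g(\mathsf d)\Delta_\Phi:g\in\Sym(V)\}$. The map $\Sym(V)/\Ann(\Delta_\Phi)\to\mathcal H$, $g\mapsto g(\mathsf d)\Delta_\Phi$, is a linear isomorphism. By the first inclusion, $\Sym(V)/\Ann(\Delta_\Phi)$ is a quotient of $\mathcal R_W=P/S_+P$, which has dimension $d=|W|$ by Lemma~\ref{lem:completed-nilhecke}. So it suffices to show $\dim\mathcal H\geq d$.

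\emph{The dimension bound.} I would use the apolarity pairing $\langle f,g\rangle=(f(\mathsf d)g)(0)$ on homogeneous pieces. It is nondegenerate and $W$-invariant, because the inner product is positive definite on the real form. Let $\mathcal H'$ be the space of $W$-harmonic polynomials, meaning those killed by every $s(\mathsf d)$ with $s\in S_+$. Under the pairing, $\mathcal H'$ is the orthogonal complement of the ideal $S_+P$ in each degree, so $\dim\mathcal H'=\dim\mathcal R_W=d$. It remains to show that $\mathcal H'$ is generated by $\Delta_\Phi$ under differentiation, i.e.\ $\mathcal H'\subseteq\mathcal H$.

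\emph{Reduction to the top degree.} Take $h\in\mathcal H'$ homogeneous and use nondegeneracy in degree $\deg h$. It is enough to show that $\langle f,h\rangle=0$ for every $f\in\Ann(\Delta_\Phi)$ of that degree. The top degree of $\mathcal R_W$ is $N$, and the socle of $\mathcal R_W$ is one-dimensional, spanned by the class of $\Delta_\Phi$. I would argue this via Chevalley: $\mathcal R_W$ is a graded complete intersection with Hilbert series $\prod_j[d_j]_q$, whose top degree is $\sum(d_j-1)=N$, and Poincaré duality gives the socle statement. Consequently $\mathcal H'_N=\mathbb C\Delta_\Phi$. For lower degrees, fix $f$ with $f(\mathsf d)\Delta_\Phi=0$ and $h\in\mathcal H'_k$. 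By Poincaré duality, $h$ is of the form $g(\mathsf d)\Delta_\Phi$ modulo the harmonic perpendicular structure, so $\langle f,h\rangle$ reduces to $\langle fg,\Delta_\Phi\rangle$ up to scalar, and this vanishes because $f(\mathsf d)\Delta_\Phi=0$.

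Cleanly, Gorensteinness of $\mathcal R_W$ shows that $\Ann(\Delta_\Phi)/S_+P$ is an ideal of $\mathcal R_W$ not containing the socle. In a Gorenstein Artinian local ring every nonzero ideal meets the socle, so this ideal is zero, giving $\Ann(\Delta_\Phi)=S_+P=\langle\Sym(V)^W_+\rangle$. This route also makes the dimension count above unnecessary.
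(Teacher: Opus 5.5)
Your final ``clean'' argument is the paper's proof. The inclusion $\langle\Sym(V)^W_+\rangle\subseteq\Ann(\Delta_\Phi)$ comes from divisibility of anti-invariants by $\Delta_\Phi$, and the reverse inclusion holds because $\Ann(\Delta_\Phi)/S_+P$ is an ideal of the Artinian complete intersection $\mathcal R_W$ that misses its one-dimensional degree-$N$ socle, hence is zero.

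The one step to repair is the justification of ``misses the socle'', which Gorensteinness alone does not give. It follows from $\Delta_\Phi\neq0$ and perfectness of the apolar pairing in degree $N$, which is what your $\mathcal H'_N=\mathbb C\Delta_\Phi$ encodes. The paper instead uses positivity of $\Delta_\Phi(\mathsf d)\Delta_\Phi$ in real orthonormal coordinates, which also proves your unjustified claim that the class of $\Delta_\Phi$ spans the socle. The harmonic ``reduction to the top degree'' paragraph is circular as written and should simply be dropped.
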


\begin{proof}
This is the classical harmonic description of coinvariants;
see~\cite{Steinberg}. A short proof in the present real reflection
setting is as follows. If $p$ is an invariant of positive degree,
$p(\mathsf d)\Delta_\Phi$ is anti-invariant of degree less than
$N$. Every anti-invariant polynomial is divisible by the product
of the reflecting linear forms $\Delta_\Phi$, so this derivative
vanishes. The invariant ideal therefore lies in the annihilator.

The quotient by homogeneous basic invariants is an Artinian
graded complete intersection, with one-dimensional socle in
degree $N$. The class of $\Delta_\Phi$ in this quotient is
nonzero: otherwise the inclusion just proved would give
$\Delta_\Phi(\mathsf d)\Delta_\Phi=0$. In real orthonormal
coordinates the latter scalar is a positive sum of squared
coefficients times factorials, so it is nonzero. Thus this
class spans the socle. Any nonzero ideal in an Artinian local
ring meets its socle (multiply a nonzero element by a maximal
possible power of the maximal ideal). The kernel of the map
$\bar f\mapsto f(\mathsf d)\Delta_\Phi$ is an ideal and cannot
meet that socle. It is zero, proving the reverse inclusion.
\end{proof}

\begin{theorem}[Exact root-coordinate threshold]\label{rc:thm:threshold}
For every root $\alpha$,
\begin{equation}\label{rc:eq:exact-threshold}
 \nu_\alpha
 =1+\#\{\beta\in\Phi^+:(\alpha,\beta)\ne0\}
 =N-\#(\Phi^+\cap\alpha^\perp)+1.
\end{equation}
The exponent is constant on Weyl orbits of roots.
\end{theorem}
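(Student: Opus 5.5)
We must show that $\nu_\alpha$, the nilpotence exponent of $x_\alpha$ in $\mathcal R_W$, equals $1+\#\{\beta\in\Phi^+:(\alpha,\beta)\neq0\}$. The plan is to use Lemma~\ref{rc:lem:annihilator}, which makes the coinvariant algebra into its harmonic model: $\bar f\mapsto f(\mathsf d)\Delta_\Phi$ is injective on $\mathcal R_W$. Hence $x_\alpha^k=0$ in $\mathcal R_W$ if and only if $\mathsf d_\alpha^k\Delta_\Phi=0$, using $x_\alpha(\mathsf d)=\mathsf d_\alpha$. So $\nu_\alpha$ is the least $k$ with $\mathsf d_\alpha^k\Delta_\Phi=0$, which is one more than the degree of $\Delta_\Phi$ along the direction $\alpha$.

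To compute that degree, restrict to the line: $t\mapsto\Delta_\Phi(v+t\alpha)$ for $v\in V$. Under the identification $P\simeq T$, each factor $x_\beta$ becomes $v\mapsto(\beta,v)$. Along the line,
\[
 (\beta,v+t\alpha)=(\beta,v)+t(\beta,\alpha).
\]
This factor is constant in $t$ when $(\beta,\alpha)=0$ and has degree exactly one otherwise. So $\Delta_\Phi(v+t\alpha)$ is a polynomial in $t$ of degree at most $r:=\#\{\beta\in\Phi^+:(\alpha,\beta)\neq0\}$. Its $t^r$ coefficient is
\[
 \prod_{(\beta,\alpha)\neq0}(\beta,\alpha)\cdot\prod_{\beta\in\Phi^+\cap\alpha^\perp}(\beta,v).
\]
This is a nonzero polynomial in $v$, since $\Delta_\Phi$ is a product of nonzero linear forms. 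By Taylor's formula, $\mathsf d_\alpha^k\Delta_\Phi(v)$ is $k!$ times the $t^k$ coefficient. Therefore $\mathsf d_\alpha^{r}\Delta_\Phi\neq0$ and $\mathsf d_\alpha^{r+1}\Delta_\Phi=0$, which gives $\nu_\alpha=r+1$.

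The second expression in the theorem follows by counting $\Phi^+=\{(\alpha,\beta)\ne0\}\sqcup(\Phi^+\cap\alpha^\perp)$.

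Weyl invariance holds because $w$ acts on $\mathcal R_W$ as an algebra automorphism sending $x_\alpha$ to $x_{w\alpha}$. Alternatively, the count is invariant: $\#\{\beta\in\Phi:(\alpha,\beta)\neq0\}$ is $W$-invariant since the form is $W$-invariant, and the positive count is half of it because $\beta\mapsto-\beta$ preserves the condition.

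The only delicate step is the first reduction. It requires that $x_\alpha^k$ vanishes in $\mathcal R_W$ exactly when its harmonic image vanishes, which is the injectivity in Lemma~\ref{rc:lem:annihilator}. It also requires checking that $x_\alpha(\mathsf d)=\mathsf d_\alpha$ under the fixed identification. Both follow directly from the setup in the introduction, so I do not expect a genuine obstacle.
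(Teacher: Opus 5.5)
Your proposal is correct and follows the paper's proof essentially step for step. Both use Lemma~\ref{rc:lem:annihilator} to turn vanishing of $x_\alpha^k$ in $\mathcal R_W$ into $\mathsf d_\alpha^k\Delta_\Phi=0$, compute the exact $t$-degree of $\Delta_\Phi(v+t\alpha)$ from its explicit nonzero leading coefficient, and derive Weyl invariance either from the coinvariant automorphism or from the $W$-invariant root count.
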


\begin{proof}
Let $k=\#\{\beta>0:(\alpha,\beta)\ne0\}$.
The polynomial in $t$
\[
 \Delta_\Phi(x+t\alpha)
 =\prod_{\beta>0}\bigl((\beta,x)+t(\beta,\alpha)\bigr)
\]
has degree exactly $k$. Its leading coefficient is the nonzero
polynomial
\[
 \left(\prod_{\substack{\beta>0\\(\beta,\alpha)\ne0}}
 (\beta,\alpha)\right)
 \prod_{\substack{\beta>0\\(\beta,\alpha)=0}}(\beta,x).
\]
Therefore $\mathsf d_\alpha^{k+1}\Delta_\Phi=0$ and
$\mathsf d_\alpha^k\Delta_\Phi\ne0$. Lemma~\ref{rc:lem:annihilator}
identifies these statements with
$x_\alpha^{k+1}=0$ and $x_\alpha^k\ne0$ in
$\mathcal R_W$. This proves \eqref{rc:eq:exact-threshold}.
Weyl invariance follows either
from the counting formula or from the action on the coinvariant
algebra.
\end{proof}

\subsection{Partial flag varieties and minimal nilpotent orbits}
\label{geo:subsec:threshold}

The root count has an equivalent geometric meaning.
Let $\mathbf G$ be the adjoint complex semisimple group with
root system $\Phi$, choose a maximal torus $\mathbf T$ and a
Borel subgroup $\mathbf B$, and write $\mathbf P_J$ for the
standard parabolic attached to $J\subseteq I$.
We use the characteristic-map convention
\begin{equation}\label{geo:eq:characteristic}
 P/S_+P\xrightarrow{\sim}H^*(\mathbf G/\mathbf B;\mathbb C),
 \qquad
 x_\lambda\longmapsto
 c_1(\mathbf G\times^{\mathbf B}\mathbb C_{-\lambda}).
\end{equation}
Polynomial degree $k$ corresponds to cohomological degree $2k$;
see~\cite{BGG,Demazure,KumarFlag}. The following observation
also gives a geometric proof of Theorem~\ref{rc:thm:threshold}.

\begin{proposition}[The partial-flag interpretation]
\label{geo:prop:flag-threshold}
Let $0\ne\lambda\in Q$ and let $\lambda^+$ be the unique
dominant element of $W\lambda$. Set
\[
 J_\lambda=\{i\in I:
       \langle\lambda^+,\alpha_i^\vee\rangle=0\},
 \qquad
 d_\lambda=\dim_{\mathbb C}
       (\mathbf G/\mathbf P_{J_\lambda}).
\]
The exact nilpotence exponent of the class of $x_\lambda$
in $P/S_+P$ is
\begin{equation}\label{geo:eq:flag-threshold}
 \min\{m\geq1:x_\lambda^m=0\text{ in }P/S_+P\}
 =d_\lambda+1
 =1+\#\{\beta\in\Phi^+:(\lambda,\beta)\ne0\}.
\end{equation}
In particular, the root-power quotient $A_{\alpha,m}$ is
nonzero precisely when $m>d_\alpha$.
\end{proposition}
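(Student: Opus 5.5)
The plan is to prove the two equalities in \eqref{geo:eq:flag-threshold} separately: an algebraic one, obtained by repeating the argument of Theorem~\ref{rc:thm:threshold} for an arbitrary weight, and a dimension count, obtained from standard parabolic combinatorics.

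\textbf{Algebraic equality.} The proof of Theorem~\ref{rc:thm:threshold} never used that $\alpha$ is a root. For $0\neq\lambda\in Q$ set $k=\#\{\beta\in\Phi^+:(\lambda,\beta)\neq0\}$. Then $t\mapsto\Delta_\Phi(x+t\lambda)$ has degree exactly $k$ in $t$, and its leading coefficient is a nonzero multiple of $\prod_{\beta>0,\,(\beta,\lambda)=0}(\beta,x)$. Hence $\mathsf d_\lambda^{k+1}\Delta_\Phi=0$ and $\mathsf d_\lambda^k\Delta_\Phi\neq0$. Lemma~\ref{rc:lem:annihilator} turns these into $x_\lambda^{k+1}=0$ and $x_\lambda^k\neq0$ in $P/S_+P$, so the nilpotence exponent is $k+1$.

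\textbf{Geometric equality.} It remains to show $d_\lambda=k$. The count $k$ is $W$-invariant: replacing $\lambda$ by $w\lambda$ permutes $\Phi$, and the condition $(\lambda,\beta)\neq0$ is stable under $\beta\mapsto-\beta$. So we may take $\lambda=\lambda^+$ dominant. Then $(\lambda^+,\beta)\geq0$ for every positive root $\beta$. Write $\beta=\sum_i c_i\alpha_i$ with $c_i\geq0$. We get $(\lambda^+,\beta)=0$ exactly when $c_i=0$ for all $i\notin J_\lambda$, that is, when $\beta\in\Phi_{J_\lambda}^+$. This uses that each $(\lambda^+,\alpha_i)$ is a positive multiple of $\langle\lambda^+,\alpha_i^\vee\rangle\geq0$. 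Therefore $k=N-|\Phi^+_{J_\lambda}|$. On the other hand, $\dim\mathbf G/\mathbf P_J=\dim\mathfrak g/\mathfrak p_J$ equals the number of positive roots outside $\Phi_J$, which is $N-|\Phi_J^+|$. Hence $d_\lambda=k$.

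\textbf{Optional geometric check.} The characteristic map \eqref{geo:eq:characteristic} gives a second proof. Up to the sign convention, $x_{\lambda^+}$ is the pullback of the first Chern class of an ample line bundle on $\mathbf G/\mathbf P_{J_\lambda}$. Its top power is nonzero and the next power vanishes for dimension reasons. Since pullback $H^*(\mathbf G/\mathbf P_J)\to H^*(\mathbf G/\mathbf B)$ is injective, this gives exponent $d_\lambda+1$ for $\lambda^+$. The $W$-action on coinvariants transfers this to $\lambda$, because the nilpotence exponent is a $W$-invariant.

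\textbf{Consequence for $A_{\alpha,m}$.} The final assertion follows from the criterion \eqref{rc:eq:criterion} of Theorem~\ref{rc:thm:cyclotomic} with $\Xi=\{\alpha\}$ and $\nu_\alpha=d_\alpha+1$.

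\textbf{Main obstacle.} The only delicate step is the identification $\{\beta>0:(\lambda^+,\beta)=0\}=\Phi^+_{J_\lambda}$. It requires the nonnegative-coefficient expansion of positive roots and the fact that $\langle\lambda^+,\alpha_i^\vee\rangle$ and $(\lambda^+,\alpha_i)$ have the same sign. Both are standard, so the proof is largely a reorganization of Theorem~\ref{rc:thm:threshold}.
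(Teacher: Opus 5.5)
Your proof is correct, but its main step takes a different route from the paper's.

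\textbf{The paper's route.} The paper derives the exponent $d_\lambda+1$ geometrically. It reduces to $\lambda^+$ via the Weyl action on coinvariants. It then writes $\overline{x}_{\lambda^+}=p^*\eta$ with $\eta=c_1(\mathcal L_{\lambda^+})$ ample on $\mathbf G/\mathbf P_{J_\lambda}$, and proves injectivity of $p^*$ by averaging over $W_J$. Finally it gets $\eta^d\neq0$ from the positive degree of a very ample embedding and $\eta^{d+1}=0$ from dimension. Only afterwards does it turn $d$ into the root count, using exactly the parabolic combinatorics you give.

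\textbf{Your route.} You get the exponent $k+1$ directly from the discriminant argument of Theorem~\ref{rc:thm:threshold} and Lemma~\ref{rc:lem:annihilator}. That argument indeed never uses that the direction is a root. Geometry then enters only through $\dim\mathbf G/\mathbf P_J=N-|\Phi_J^+|$.

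\textbf{What each route buys.} Yours is shorter and more elementary. It needs no ampleness criterion and no characteristic presentation of $H^*(\mathbf G/\mathbf P_J)$. It needs no reduction to the dominant conjugate for the exponent itself, and it works for any direction $\lambda\in V$. However, it rests on Steinberg's harmonic description. The paper's argument is presented as an independent geometric proof of Theorem~\ref{rc:thm:threshold}. It also shows the threshold is the top nonvanishing power of an ample class, namely $c_1(\mathcal L_{\alpha^+})^{d_\alpha}\neq0=c_1(\mathcal L_{\alpha^+})^{d_\alpha+1}$. Theorem~\ref{str:thm:rootpower-geometry}(iii) later quotes that fact from this proposition.

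\textbf{Recovering the Chern-class statement.} Your optional check plus your algebraic result would recover this even without ampleness. Once $\overline{x}_{\lambda^+}=p^*\eta$ and $p^*$ is injective, $\overline{x}_{\lambda^+}^{\,d}\neq0$ forces $\eta^d\neq0$.
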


\begin{proof}
Write $\overline{x}_\mu$ for the class of $x_\mu$ in
$P/S_+P$. The action of $W$ on $P$ preserves the ideal
$S_+P$, since every element of $S=P^W$ is $W$-invariant.
Choose $w\in W$ such that $w\lambda=\lambda^+$. The induced
automorphism of the coinvariant algebra satisfies
\(
 w(\overline{x}_\lambda^{\,m})
 =\overline{x}_{\lambda^+}^{\,m}
(m\geq0).
\)
Thus $\overline{x}_\lambda$ and
$\overline{x}_{\lambda^+}$ have the same nilpotence exponent.
It suffices to calculate the exponent for the dominant weight
$\lambda^+$.

Set $J=J_\lambda$, $X=\mathbf G/\mathbf P_J$, and
$d=d_\lambda=\dim_{\mathbb C}X$. Let
\(
 p\colon \mathbf G/\mathbf B\longrightarrow X
\)
be the natural projection. We first specify the line bundle
used in the argument. Since $\mathbf G$ is adjoint and
$\lambda^+\in Q=X^*(\mathbf T)$, the weight $\lambda^+$
is an algebraic character of $\mathbf T$. By the definition
of $J$,
\(
 \langle\lambda^+,\alpha_j^\vee\rangle=0 (j\in J).
\)
These equalities are precisely the condition for this character
to extend to a character
\[
 \chi_{\lambda^+}\colon \mathbf P_J\longrightarrow\mathbb C^\times.
\]
Indeed, its restriction to the maximal torus of the derived
subgroup of the standard Levi subgroup is trivial. It therefore
defines a character of the Levi quotient by its derived
subgroup, and extends to $\mathbf P_J$ by being trivial on
the unipotent radical.

Let $\mathbb C_{-\lambda^+}$ be the one-dimensional
$\mathbf P_J$-module on which $h\in\mathbf P_J$ acts by
$\chi_{\lambda^+}(h)^{-1}$. Define
\[
 \mathcal L_{\lambda^+}
 =\mathbf G\times^{\mathbf P_J}\mathbb C_{-\lambda^+}.
\]
Explicitly, its total space is the quotient of
$\mathbf G\times\mathbb C$ by the equivalence relation
\[
 (g,z)\sim\bigl(gh,\chi_{\lambda^+}(h)z\bigr),
 \qquad h\in\mathbf P_J,
\]
and its projection is $[g,z]\mapsto g\mathbf P_J$.
This is the homogeneous line-bundle convention compatible
with the characteristic map in~\eqref{geo:eq:characteristic}.

Dominance and the definition of $J$ give
\(
 \langle\lambda^+,\alpha_i^\vee\rangle>0 (i\notin J).
\)
The ampleness criterion for homogeneous line bundles on a
partial flag variety therefore shows that
$\mathcal L_{\lambda^+}$ is ample on $X$; see
\cite{KumarFlag}. Here the vanishing of the pairings on $J$
ensures that the bundle is defined on $\mathbf G/\mathbf P_J$,
and strict positivity on the remaining simple coroots gives
ampleness there.

Put $\eta=c_1(\mathcal L_{\lambda^+})\in H^2(X;\mathbb C)$.
Pulling back the associated bundle gives
\[
 p^*\mathcal L_{\lambda^+}
 \cong\mathbf G\times^{\mathbf B}\mathbb C_{-\lambda^+}.
\]
The characteristic-map convention and naturality of the first
Chern class consequently identify
\[
 \overline{x}_{\lambda^+}
 =c_1(p^*\mathcal L_{\lambda^+})
 =p^*\eta
 \quad\text{in}\quad
 P/S_+P\cong H^*(\mathbf G/\mathbf B;\mathbb C).
\]
As in~\eqref{geo:eq:characteristic}, a polynomial of degree
$k$ corresponds to a cohomology class of degree $2k$.

We next check that $p^*$ is injective. The characteristic
presentations for the full and partial flag varieties give
\[
 H^*(X;\mathbb C)
 \cong P^{W_J}/S_+P^{W_J},
 \qquad
 H^*(\mathbf G/\mathbf B;\mathbb C)
 \cong P/S_+P,
\]
and identify $p^*$ with the map induced by
$P^{W_J}\hookrightarrow P$; see~\cite{KumarFlag}.
Its injectivity is equivalent to
\(
 P^{W_J}\cap S_+P=S_+P^{W_J}.
\)
To prove the nontrivial inclusion, take
$f\in P^{W_J}\cap S_+P$ and write
$f=\sum_{k=1}^q s_k g_k$ with $s_k\in S_+$ and $g_k\in P$.
Apply the averaging operator
\[
 \operatorname{Av}_J(g)
 =\frac{1}{|W_J|}\sum_{v\in W_J}v(g).
\]
Since $f$ and the $s_k$ are $W_J$-invariant, we obtain
\[
 f=\operatorname{Av}_J(f)
   =\sum_{k=1}^q s_k\operatorname{Av}_J(g_k)
   \in S_+P^{W_J}.
\]
The reverse inclusion is immediate. This proves injectivity
of $p^*$.

We now calculate the exact exponent of $\eta$. By ampleness,
there exists an integer $a\geq1$ such that
$\mathcal L_{\lambda^+}^{\otimes a}$ is very ample. Let
\(
 \iota\colon X\hookrightarrow\mathbb P^n
\)
be an associated projective embedding, so that
$\iota^*\mathcal O_{\mathbb P^n}(1)
\cong\mathcal L_{\lambda^+}^{\otimes a}$.
The degree of the embedded projective variety is positive,
and hence
\[
 a^d\int_X\eta^d
 =\int_X c_1(\iota^*\mathcal O_{\mathbb P^n}(1))^d
 =\deg\iota(X)>0.
\]
In particular, $\eta^d\ne0$. On the other hand, $X$ has
complex dimension $d$, so
\(
 \eta^{d+1}\in H^{2d+2}(X;\mathbb C)=0.
\)
Using the injectivity of $p^*$ and the identity
$\overline{x}_{\lambda^+}=p^*\eta$, we conclude that
\(
 \overline{x}_{\lambda^+}^{\,d}\ne0\) and \(\overline{x}_{\lambda^+}^{\,d+1}=0.
\)
No smaller positive power can vanish, since such vanishing
would force the $d$-th power to vanish as well. Therefore
the exact nilpotence exponent of $\overline{x}_{\lambda^+}$,
and hence of $\overline{x}_\lambda$, is $d+1$.

It remains to express $d$ as a root count. Let $\Phi_J$ be
the root subsystem generated by $\{\alpha_j:j\in J\}$,
and put $\Phi_J^+=\Phi_J\cap\Phi^+$. The tangent space of
$X$ at the identity coset is
$\operatorname{Lie}(\mathbf G)/\operatorname{Lie}(\mathbf P_J)$.
Its root-space decomposition has one one-dimensional summand
for each root in $\Phi^-\setminus\Phi_J^-$. Thus
\(
 d=|\Phi^+|-|\Phi_J^+|.
\)
For a positive root $\beta=\sum_{i\in I}n_i\alpha_i$,
where $n_i\geq0$, dominance gives
\[
 (\lambda^+,\beta)
 =\sum_{i\in I}n_i(\lambda^+,\alpha_i),
 \qquad
 (\lambda^+,\alpha_i)
 =\frac{(\alpha_i,\alpha_i)}{2}
       \langle\lambda^+,\alpha_i^\vee\rangle\geq0.
\]
The latter quantity is zero exactly when $i\in J$.
Consequently, $(\lambda^+,\beta)=0$ if and only if every
simple root in the support of $\beta$ belongs to $J$,
which is equivalent to $\beta\in\Phi_J^+$. Hence
\[
 d=\#\{\beta\in\Phi^+:(\lambda^+,\beta)\ne0\}.
\]
This count is unchanged when $\lambda^+$ is replaced by
$\lambda$. Indeed, for either vector $\mu$, the count equals
\[
 \frac12\#\{\beta\in\Phi:(\mu,\beta)\ne0\},
\]
because nonorthogonal roots occur in opposite pairs. The
latter number is $W$-invariant, since $W$ permutes $\Phi$
and preserves the inner product. We have therefore proved
\[
 \min\{m\geq1:\overline{x}_\lambda^{\,m}=0\}
 =d_\lambda+1
 =1+\#\{\beta\in\Phi^+:(\lambda,\beta)\ne0\}.
\]

Finally, for a root $\alpha$, the definition of $\nu_\alpha$
in the coinvariant algebra identifies it with $d_\alpha+1$. Applying that theorem with $\Xi=\{\alpha\}$
gives
\[
 A_{\alpha,m}\ne0
 \quad\Longleftrightarrow\quad
 m\geq\nu_\alpha=d_\alpha+1
 \quad\Longleftrightarrow\quad m>d_\alpha.
\]
This proves the final assertion.
\end{proof}
\begin{corollary}[Minimal nilpotent orbits and nonvanishing]
\label{geo:cor:minimal-orbit}
Assume that $\Phi$ is irreducible. Let $\mathfrak g_\Phi$
be the complex simple Lie algebra with root system $\Phi$,
and let $h^\vee(\Phi)$ be its dual Coxeter number.
If $\alpha$ is long, then
\begin{equation}\label{geo:eq:long-orbit}
 \nu_\alpha
 =\dim_{\mathbb C}\mathcal O_{\min}(\mathfrak g_\Phi)
 =2h^\vee(\Phi)-2.
\end{equation}
If $\Phi$ has two root lengths and $\alpha$ is short, then
\begin{equation}\label{geo:eq:short-orbit}
 \nu_\alpha
 =\dim_{\mathbb C}\mathcal O_{\min}(\mathfrak g_{\Phi^\vee})
 =2h^\vee(\Phi^\vee)-2.
\end{equation}
Here $\mathcal O_{\min}$ denotes the minimal nonzero
nilpotent orbit, and $\mathfrak g_{\Phi^\vee}$ is the
Langlands-dual simple Lie algebra. In either case
$A_{\alpha,m}\ne0$ exactly when $m$ is at least the
displayed orbit dimension.
\end{corollary}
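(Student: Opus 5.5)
By Proposition~\ref{geo:prop:flag-threshold}, $\nu_\alpha=d_\alpha+1$ with $d_\alpha=\dim_{\mathbb C}\mathbf G/\mathbf P_{J(\alpha)}$, where $\alpha^+$ is the dominant root in $W\alpha$. The plan is to identify $\mathbf G/\mathbf P_{J(\alpha)}$ with the projectivized orbit $\mathbb P(\mathcal O)$ of a highest-weight vector in a suitable representation. Then $\dim\mathcal O=d_\alpha+1=\nu_\alpha$, because $\mathcal O$ is a cone of dimension one more than its projectivization. The final nonvanishing assertion then follows directly from the criterion $m\geq\nu_\alpha$ in Proposition~\ref{geo:prop:flag-threshold}.

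\emph{Long roots.} The dominant long root is the highest root $\theta$, the highest weight of the adjoint representation. Let $e_\theta$ be a highest root vector. The minimal nilpotent orbit is $\mathcal O_{\min}=\mathbf G\cdot e_\theta$, and it is a cone. The stabilizer of the line $[e_\theta]$ is the parabolic $\mathbf P_J$ with $J=\{i:\langle\theta,\alpha_i^\vee\rangle=0\}=J(\alpha)$. Hence $\mathbb P(\mathcal O_{\min})\cong\mathbf G/\mathbf P_{J(\alpha)}$ and $\dim\mathcal O_{\min}=d_\alpha+1=\nu_\alpha$.

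For the numerical value, I count $\#\{\beta>0:(\theta,\beta)\neq0\}$ directly. For $\beta>0$ with $\beta\neq\theta$, one has $\langle\beta,\theta^\vee\rangle\in\{0,1\}$. The roots with value $1$ are paired by $\beta\mapsto\theta-\beta$, which gives $\sum_{\beta>0,\beta\ne\theta}\langle\beta,\theta^\vee\rangle=\langle2\rho,\theta^\vee\rangle-2=2(h^\vee-1)-2$. Adding $\beta=\theta$ gives $k=2h^\vee-3$, so $\nu_\alpha=2h^\vee-2$. This uses the standard identity $\langle\rho,\theta^\vee\rangle=h^\vee-1$; alternatively one can cite~\cite{WangMin}.

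\emph{Short roots.} Let $\Phi$ have two root lengths and let $\alpha$ be short. Under $\beta\mapsto\beta^\vee=2\beta/(\beta,\beta)$, short roots of $\Phi$ correspond to long roots of $\Phi^\vee$. This bijection preserves positivity and orthogonality, since $(\alpha^\vee,\beta^\vee)$ is a nonzero multiple of $(\alpha,\beta)$, and the Weyl groups coincide. Therefore
\[
\#\{\beta\in\Phi^+:(\alpha,\beta)\neq0\}=\#\{\gamma\in(\Phi^\vee)^+:(\alpha^\vee,\gamma)\neq0\}.
\]
By Theorem~\ref{rc:thm:threshold}, $\nu_\alpha(\Phi)=\nu_{\alpha^\vee}(\Phi^\vee)$. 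Applying the long-root case to the long root $\alpha^\vee$ of $\Phi^\vee$ gives \eqref{geo:eq:short-orbit}.

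The main obstacle is the standard identification $\mathbb P(\mathcal O_{\min})\cong\mathbf G/\mathbf P_{J}$ together with the stabilizer computation. The root-count argument avoids geometry but relies on $\langle\rho,\theta^\vee\rangle=h^\vee-1$ and the fact that $\langle\beta,\theta^\vee\rangle\le1$ for $\beta\neq\theta$. I would cite these as classical facts rather than reprove them.
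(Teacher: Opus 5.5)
Your proposal is correct and follows the paper's proof. For long roots you identify $\mathbf G/\mathbf P_{J_\theta}$ with the adjoint variety $\mathbb P(\mathcal O_{\min})$, and for short roots you pass to the long coroot in $\Phi^\vee$ through the positivity- and orthogonality-preserving bijection $\beta\mapsto\beta^\vee$. The only difference is that you derive $2h^\vee-2$ from the explicit count using $\langle\rho,\theta^\vee\rangle=h^\vee-1$ and $\langle\beta,\theta^\vee\rangle\in\{0,1\}$, whereas the paper cites Wang's theorem and notes that his Lemma~1 gives the same root count.
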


\begin{proof}
For a long root, its dominant Weyl conjugate is the
highest root $\theta$. The homogeneous space
$\mathbf G/\mathbf P_{J_\theta}$ is the adjoint variety
$\mathbb P(\mathcal O_{\min}(\mathfrak g_\Phi))$.
Thus its dimension is one less than the orbit dimension,
and Proposition~\ref{geo:prop:flag-threshold} gives the
first equality in~\eqref{geo:eq:long-orbit}. The second
is the classical formula of~\cite[Theorem 1]{WangMin};
its Lemma 1 also gives the same equality directly from
the root count in Theorem~\ref{rc:thm:threshold}.

For a short root, $\alpha^\vee$ is long in $\Phi^\vee$.
The bijection $\beta\mapsto\beta^\vee$ preserves positive
roots and orthogonality. The root-count formula therefore
gives
$\nu_\alpha(\Phi)=\nu_{\alpha^\vee}(\Phi^\vee)$.
Apply the long-root case to $\Phi^\vee$. The assertion
about $A_{\alpha,m}$ is
Theorem~\ref{rc:thm:cyclotomic}.
\end{proof}

For reference, these formulas give the following thresholds:
\begin{equation*}\label{geo:eq:threshold-table}
 \begin{array}{c|c|c}
 \Phi&\nu_{\rm long\ (or\ only)}&\nu_{\rm short}\\ \hline
 A_r&2r&--\\
 B_r&4r-4&2r\\
 C_r&2r&4r-4\\
 D_r&4r-6&--\\
 E_6&22&--\\
 E_7&34&--\\
 E_8&58&--\\
 F_4&16&16\\
 G_2&6&6
 \end{array}
\end{equation*}
The usual irreducible ranges are understood:
$r\geq1$ for $A_r$, $r\geq2$ for $B_r,C_r$,
and $r\geq4$ for $D_r$.
The formulas concern the polynomial root-power quotients.
Their completed zero-residue interpretation is given in
Section~\ref{ii:sec:comparison}; transfer to a Hecke summand
uses the hypotheses of Corollary~\ref{rc:cor:sharp-hecke}.
The geometric input is classical; the conclusion identifies
its numerical invariant with the threshold for our
root-power quotients.

\begin{example}\label{rc:ex:thresholds}
In type $A_r$, the root $\epsilon_1-\epsilon_2$ is nonorthogonal
to itself and to the $2(r-1)$ positive roots involving exactly
one of $1,2$. Hence $\nu_\alpha=2r$ for every root.
For $r\geq2$, the partial flag variety for the highest root is
the incidence variety
\[
 \{(L,H):L\subseteq H\subseteq\mathbb C^{r+1},
             \ \dim L=1,\ \dim H=r\}.
\]
It has dimension $r+(r-1)=2r-1$, which gives the same
exponent by Proposition~\ref{geo:prop:flag-threshold}.
For $r=1$ the variety is $\mathbb P^1$ and the exponent
is two. The rank-two values are
\[
 \begin{array}{c|cccc}
 \text{type}&A_1\times A_1&A_2&B_2&G_2\\ \hline
 \nu_\alpha&2&4&4&6.
 \end{array}
\]
For a single-root quotient the spanning hypothesis of
Theorem~\ref{rc:thm:cyclotomic} is automatic in irreducible type.
In reducible type a coordinate must be imposed in every
component to obtain the stated finiteness conclusion.
\end{example}

\section{Root-power quotients, Weyl invariants, and root-orbit geometry}
\label{str:sec:content}

The polynomial nil-Hecke algebra is a matrix algebra over Weyl
invariants. We first identify the central ideal of a root-power
quotient by content and alternation, and prove equality at successive
even and odd exponents. We then use apolarity to describe its center
by vanishing conditions on the projective Weyl orbit of the root.
The resulting theorem also gives the partial-flag nonvanishing
criterion and a differential criterion for the center to be Gorenstein.
We then remove the discriminant from the inverse system and
identify the first nonzero quotient in every irreducible type.
Throughout this section the ground field is $\mathbb C$.

\subsection{Content and alternation}
Put
\[
 P=\Sym(V),\qquad S=P^W,\qquad d=|W|,
 \qquad \Delta_\Phi=\prod_{\beta\in\Phi^+}x_\beta.
\]
Here $x_\alpha=1\otimes\alpha\in V\subseteq P$, linear
coordinates have degree one, and $S_+$ is the ideal of
positive-degree invariants. The notation $\widehat S$ means
the $S_+$-adic completion from
Lemma~\ref{lem:completed-nilhecke}. We distinguish an ideal
in $S$ from its extension to
$\widehat S$ by a superscript ${\rm pol}$ when needed.

Define
\[
 J^{\rm pol}_{\alpha,m}=c_S(x_\alpha^m),\qquad
 I_{\alpha,m}=(x_{w\alpha}^m:w\in W)\subseteq P.
\]
For a $W$-module of polynomials, a superscript $\det$ means the
anti-invariant subspace. Let
\[
 \operatorname{Alt}(f)=\sum_{w\in W}\det(w)w(f),\qquad
 \operatorname{tr}_\Phi(f)=\frac{\operatorname{Alt}(f)}{\Delta_\Phi}.
\]
Every anti-invariant polynomial is divisible by $\Delta_\Phi$,
so $\operatorname{tr}_\Phi$ takes values in $S$. This divided-alternation functional is
$S$-linear and homogeneous of degree $-N$.

\begin{theorem}[Content and the anti-invariant part of an orbit ideal]
\label{str:thm:alternation}
For every root $\alpha$ and integer $m\geq1$,
\begin{align}
 J_{\alpha,m}&=J^{\rm pol}_{\alpha,m}\widehat S,
 \notag\\
 J^{\rm pol}_{\alpha,m}
 &=\bigl(\operatorname{tr}_\Phi(x_\alpha^m f):f\in P\bigr),
 \label{str:eq:content-trace}\\
 \Delta_\Phi J^{\rm pol}_{\alpha,m}
 &=I_{\alpha,m}\cap P^{\det}.
 \label{str:eq:content-orbit}
\end{align}
In particular, the cyclotomic center can be computed from one
coordinate vector or from the anti-invariant part of the orbit ideal.
If $\Phi$ is irreducible, $S/J^{\rm pol}_{\alpha,m}$ is
finite-dimensional and equals its completion.
\end{theorem}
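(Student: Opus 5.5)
The plan is to prove \eqref{str:eq:content-trace} first and derive the other assertions from it. The key input is that $\operatorname{tr}_\Phi\colon P\to S$ is a perfect $S$-linear trace. Concretely, the pairing $(f,g)\mapsto\operatorname{tr}_\Phi(fg)$ on the free $S$-module $P$ is nondegenerate over $S$, so every $\varphi\in\Hom_S(P,S)$ has the form $\varphi=\operatorname{tr}_\Phi(f\,\cdot\,)$ for a unique $f\in P$. I would prove this by reducing modulo $S_+$. There $\operatorname{tr}_\Phi$ induces the functional on $\mathcal R_W$ that extracts the socle coefficient in degree $N$: the socle is spanned by the class of $\Delta_\Phi$ (proof of Lemma~\ref{rc:lem:annihilator}), and $\operatorname{tr}_\Phi(\Delta_\Phi)=|W|\neq0$. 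Since $\mathcal R_W$ is a Poincar\'e duality algebra (a complete intersection with one-dimensional socle), the reduced pairing is perfect. Choose a homogeneous $S$-basis of $P$. The Gram determinant is then a homogeneous element of $S$ whose image in $S/S_+=\mathbb C$ is nonzero, so it is a nonzero constant and the pairing is perfect over $S$. Given this, Lemma~\ref{str:lem:content} yields
\[
 J^{\rm pol}_{\alpha,m}=\bigl(\operatorname{tr}_\Phi(x_\alpha^mf):f\in P\bigr),
\]
which is \eqref{str:eq:content-trace}.

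For \eqref{str:eq:content-orbit}, note first that $\Delta_\Phi\operatorname{tr}_\Phi(x_\alpha^mf)=\operatorname{Alt}(x_\alpha^mf)=\sum_w\det(w)\,x_{w\alpha}^m\,w(f)$. This element is anti-invariant and lies in $I_{\alpha,m}$. Since $I_{\alpha,m}\cap P^{\det}$ is an $S$-module, the inclusion $\subseteq$ follows. Conversely, let $g\in I_{\alpha,m}\cap P^{\det}$ and write $g=\sum_w x_{w\alpha}^m f_w$. Then
\[
 g=\tfrac1{|W|}\operatorname{Alt}(g)=\tfrac1{|W|}\sum_w\operatorname{Alt}(x_{w\alpha}^mf_w).
\]
Each term satisfies $\operatorname{Alt}(x_{w\alpha}^mf_w)=\det(w)\operatorname{Alt}(x_\alpha^m\,w^{-1}f_w)$ by reindexing the sum. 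Hence
\[
 g=\Delta_\Phi\sum_w \pm\operatorname{tr}_\Phi\bigl(x_\alpha^m w^{-1}f_w\bigr)\in\Delta_\Phi J^{\rm pol}_{\alpha,m}.
\]
This is the routine step; it uses only the $S$-linearity of $\operatorname{Alt}$ and the fact that $P^{\det}=\Delta_\Phi S$.

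The completed statement $J_{\alpha,m}=J^{\rm pol}_{\alpha,m}\widehat S$ is the definition of $J_{\alpha,m}$. It is consistent with the content computed over $\widehat S$, because content commutes with base change (Lemma~\ref{str:lem:content}) and $\widehat P=\widehat S\otimes_SP$ (Lemma~\ref{lem:completed-nilhecke}). The final sentence is Theorem~\ref{rc:thm:cyclotomic} with $\Xi=\{\alpha\}$: in irreducible type $W\alpha$ spans $V$, so $S/J^{\rm pol}_{\alpha,m}$ is finite-dimensional and agrees with its completion.

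The main obstacle is the perfectness of the trace form over $S$, specifically the identification of $\operatorname{tr}_\Phi$ modulo $S_+$ with the socle functional. The vanishing of $\operatorname{tr}_\Phi$ below degree $N$ is automatic from degrees. In degree $N$, I would check that $\operatorname{tr}_\Phi$ kills $(S_+P)_N$; this holds by $S$-linearity, since $\operatorname{tr}_\Phi$ has degree $-N$ and so sends $(S_+P)_N$ into $(S_+)_0=0$. I would then evaluate $\operatorname{tr}_\Phi(\Delta_\Phi)=|W|$. If that route were unclear, the fallback would be the classical statement that $\operatorname{tr}_\Phi$ generates $\Hom_S(P,S)$ as a $P$-module, which is the Gorenstein property of $P$ over $S$.
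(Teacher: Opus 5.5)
Your proposal is correct and follows essentially the paper's proof: you establish perfectness of the divided-alternation pairing by reducing modulo $S_+$ to the Poincar\'e duality pairing on $\mathcal R_W$ and using a Gram-determinant argument, and you obtain the orbit identity from the reindexing $\operatorname{Alt}(x_{w\alpha}^mf)=\det(w)\operatorname{Alt}(x_\alpha^m w^{-1}f)$. The only minor differences are these: you deduce that the determinant is constant directly from homogeneity plus nonzero reduction, whereas the paper also computes its degree as $0$ from the symmetric coinvariant Hilbert series; and you cite Theorem~\ref{rc:thm:cyclotomic} for finiteness instead of repeating the averaging argument.
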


\begin{proof}
The first assertion is Lemma~\ref{str:lem:content} applied to the
finite free extension $P/S$ and its completion. The pairing
\[
 P\times P\longrightarrow S,\qquad
 (f,g)\longmapsto\operatorname{tr}_\Phi(fg)
\]
is perfect. We recall why the polynomial, rather than just completed,
statement holds. Choose a homogeneous $S$-basis of $P$ lifting a
basis of the coinvariant algebra. Modulo $S_+$ the displayed
pairing is the perfect top-degree pairing of that graded complete
intersection, with socle generated by $\Delta_\Phi$; see
Lemma~\ref{rc:lem:annihilator} and the classical nil-Hecke pairing
in~\cite{KostantKumar}. If $N=|\Phi^+|$, the sum of the basis
degrees is $dN/2$, by the symmetry of the coinvariant Hilbert
series. Consequently the determinant of the pairing matrix is
homogeneous of degree $2(dN/2)-dN=0$. Its nonzero specialization
at $S_+$ makes it a nonzero constant. Thus every $S$-linear
functional on $P$ is $f\mapsto\operatorname{tr}_\Phi(gf)$
for a unique $g\in P$.
This proves~\eqref{str:eq:content-trace}.

The ideal $I_{\alpha,m}$ is $W$-stable. In characteristic zero its
anti-invariant part is the image of $\operatorname{Alt}$ on that
ideal. For each $w\in W$,
\[
 \operatorname{Alt}(x_{w\alpha}^m f)
 =\det(w)\operatorname{Alt}(x_\alpha^m w^{-1}(f)).
\]
Its image is therefore generated, as an $S$-module, by the
alternants on the right of~\eqref{str:eq:content-trace}. This gives
\eqref{str:eq:content-orbit}.

Finally, $I_{\alpha,m}\subseteq P J^{\rm pol}_{\alpha,m}$,
because $x_\alpha^m$ belongs to this extended content ideal and
the latter is $W$-stable. In irreducible type the orbit $W\alpha$
spans $V$. The only common zero of its powers is the origin, so
$P/I_{\alpha,m}$ is finite-dimensional. The Reynolds operator
gives $P J^{\rm pol}_{\alpha,m}\cap S=J^{\rm pol}_{\alpha,m}$.
It follows that $S/J^{\rm pol}_{\alpha,m}$ is finite-dimensional.
It is a homogeneous quotient supported at $S_+$, so completion
does not change it.
\end{proof}

\subsection{Parity of root-power ideals}

We retain the polynomial quotient
$A_{\alpha,m}=NH_W/\langle M_{x_\alpha^m}\rangle$.
The following equality holds before completion.

\begin{theorem}[Equality at successive even and odd exponents]
\label{str:thm:parity}
For every root $\alpha$ and every $a\geq1$,
\begin{equation*}\label{str:eq:parity}
 \langle M_{x_\alpha^{2a}}\rangle
 =\langle M_{x_\alpha^{2a+1}}\rangle
 \quad\text{in }NH_W,
 \qquad J_{\alpha,2a}=J_{\alpha,2a+1}.
\end{equation*}
The natural quotient map $A_{\alpha,2a+1}\to A_{\alpha,2a}$
is an isomorphism. The same equality holds for the polynomial
content ideals. In a quotient imposing several root-coordinate
powers, each exponent at least two can independently be replaced
by its largest even predecessor without changing the quotient.
\end{theorem}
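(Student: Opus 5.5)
The plan is to argue directly in $NH_W=\operatorname{End}_S(P)$, using Lemma~\ref{lem:completed-nilhecke}, and then read off the statement about content ideals from the matrix description in Theorem~\ref{rc:thm:cyclotomic}. One inclusion is immediate: $M_{x_\alpha^{2a+1}}=M_{x_\alpha}M_{x_\alpha^{2a}}$, so $\langle M_{x_\alpha^{2a+1}}\rangle\subseteq\langle M_{x_\alpha^{2a}}\rangle$. All the work is in the reverse inclusion. For that I will exhibit $M_{x_\alpha^{2a}}$ explicitly as an element of the two-sided ideal generated by $M_{x_\alpha^{2a+1}}$.

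The key device is the divided difference $\partial_\alpha=(1-s_\alpha)/x_\alpha$ for the (not necessarily simple) root $\alpha$. Both $s_\alpha$ and $\partial_\alpha$ are $S$-linear endomorphisms of $P$, so both lie in $NH_W=\operatorname{End}_S(P)$. For $f\in P$, using $s_\alpha(x_\alpha)=-x_\alpha$ and that $2a+1$ is odd, we get
\[
 \partial_\alpha\bigl(x_\alpha^{2a+1}f\bigr)
 =\frac{x_\alpha^{2a+1}f+x_\alpha^{2a+1}s_\alpha(f)}{x_\alpha}
 =x_\alpha^{2a}\bigl(f+s_\alpha(f)\bigr),
\]
and also
\[
 x_\alpha^{2a+1}\partial_\alpha(f)=x_\alpha^{2a}\bigl(f-s_\alpha(f)\bigr).
\]
Adding the two identities gives the operator identity
\[
 \partial_\alpha M_{x_\alpha^{2a+1}}+M_{x_\alpha^{2a+1}}\partial_\alpha
 =2M_{x_\alpha^{2a}}.
\]
Since we work over $\mathbb C$, this places $M_{x_\alpha^{2a}}$ in $\langle M_{x_\alpha^{2a+1}}\rangle$, so the two ideals coincide. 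The quotient map $A_{\alpha,2a+1}\to A_{\alpha,2a}$ is then the identity on a common quotient, hence an isomorphism.

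For the content ideals I will use the matrix identification \eqref{rc:eq:cyclotomic-matrix}. Under it, the two-sided ideal generated by $M_{x_\alpha^m}$ is $\operatorname{Mat}_d(J^{\rm pol}_{\alpha,m})$, and the entry ideal of a matrix ideal determines it. Equality of the operator ideals therefore forces $J^{\rm pol}_{\alpha,2a}=J^{\rm pol}_{\alpha,2a+1}$. Extending scalars to $\widehat S$ gives $J_{\alpha,2a}=J_{\alpha,2a+1}$, using the first line of Theorem~\ref{str:thm:alternation}.

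For several roots, the displayed identity holds separately for each $\alpha\in\Xi$. So in the sum of two-sided ideals defining $A_{\Xi,\mathbf m}$, any generator $M_{x_\alpha^{2a+1}}$ with $a\ge1$ may be replaced by $M_{x_\alpha^{2a}}$ without changing the ideal, independently of the other generators. I do not expect a real obstacle. The one point to check carefully is that $\partial_\alpha$ for a nonsimple root really lies in $NH_W$. This follows because it is $S$-linear, or alternatively because it is a $W$-conjugate of a simple $\partial_i$, with $W\subseteq NH_W$ also $S$-linear.
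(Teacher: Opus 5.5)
Your proposal is correct and takes essentially the same approach as the paper's proof. You use the anticommutator identity $\partial_\alpha M_{x_\alpha^{2a+1}}+M_{x_\alpha^{2a+1}}\partial_\alpha=2M_{x_\alpha^{2a}}$ for the nontrivial inclusion and multiplication by $x_\alpha$ for the other, then derive the polynomial, completed, and several-root statements from the content/matrix description, base change, and equality of sums, just as the paper does.
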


\begin{proof}
Let $x=x_\alpha$ and let
$\partial_\alpha=(1-s_\alpha)/x$ act on $P$ or its completion.
This operator belongs to the nil-Hecke endomorphism algebra:
one can conjugate a simple divided difference by a Weyl operator.
Since $s_\alpha(x)=-x$, its mixed relation gives
\begin{equation}\label{str:eq:parity-identity}
 \partial_\alpha x^{2a+1}+x^{2a+1}\partial_\alpha=2x^{2a}
\end{equation}
as an identity of operators. Thus the two-sided ideal generated
by $x^{2a+1}$ contains $x^{2a}$. The opposite containment follows
by multiplication by $x$. Lemma~\ref{str:lem:content}, and the
polynomial nil-Hecke endomorphism theorem, give equality of the
polynomial content ideals as well. Base change gives equality
of the completed ideals. Equality of sums proves the last assertion.
\end{proof}

\begin{corollary}\label{str:cor:even-threshold}
The exact exponent $\nu_\alpha$ of
Theorem~\ref{rc:thm:threshold} is even. Equivalently,
$\#\{\beta\in\Phi^+:(\alpha,\beta)\ne0\}$ is odd.
\end{corollary}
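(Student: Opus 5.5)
The plan is to derive evenness of $\nu_\alpha$ directly from Theorem~\ref{str:thm:parity}, combined with the nonvanishing criterion of Theorem~\ref{rc:thm:cyclotomic} for $\Xi=\{\alpha\}$. That criterion says $A_{\alpha,m}\neq0$ exactly when $m\geq\nu_\alpha$. We also need $\nu_\alpha\geq2$. By Theorem~\ref{rc:thm:threshold}, $\nu_\alpha=1+k$ with $k=\#\{\beta\in\Phi^+:(\alpha,\beta)\ne0\}$, and $k\geq1$ because $\pm\alpha$ itself is counted.

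Suppose, for a contradiction, that $\nu_\alpha$ is odd. Then $\nu_\alpha=2a+1$ with $a\geq1$. Theorem~\ref{str:thm:parity} gives an isomorphism $A_{\alpha,2a+1}\cong A_{\alpha,2a}$. The left side is nonzero, since $2a+1\geq\nu_\alpha$. The right side is zero, since $2a<\nu_\alpha$. This contradiction shows that $\nu_\alpha$ is even.

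The equivalent formulation then follows at once from $\nu_\alpha=1+k$: $\nu_\alpha$ is even exactly when $k$ is odd.

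As a sanity check, one can also argue directly in the coinvariant algebra $\mathcal R_W$. There the identity~\eqref{str:eq:parity-identity}, applied to $1\in\mathcal R_W$, shows that $x^{2a+1}=0$ forces $x^{2a}=0$. Here we use that $\partial_\alpha$ preserves $S_+P$, since it is $S$-linear. So the nilpotence exponent cannot be odd. There is no real obstacle in either argument. The only point needing care is the requirement $a\geq1$ in Theorem~\ref{str:thm:parity}, which is secured by $\nu_\alpha\geq2$.
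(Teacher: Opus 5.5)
Your argument is correct and essentially matches the paper. The paper's proof is precisely your ``sanity check'' in $\mathcal R_W$: it applies~\eqref{str:eq:parity-identity} to see that $x^{2a+1}=0$ forces $x^{2a}=0$, and adds the independent remark that Corollary~\ref{geo:cor:minimal-orbit} gives the threshold as $2h^\vee-2$. Your main route repackages that same identity through Theorem~\ref{str:thm:parity} and the nonvanishing criterion of Theorem~\ref{rc:thm:cyclotomic}, and you correctly secure $a\geq1$ from $\nu_\alpha\geq2$.
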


\begin{proof}
On the coinvariant algebra, equation~\eqref{str:eq:parity-identity}
implies that the vanishing of an odd power $x^{2a+1}$ forces the
vanishing of $x^{2a}$. The minimal vanishing exponent cannot be
odd. The counting assertion follows from
Theorem~\ref{rc:thm:threshold}. Corollary~\ref{geo:cor:minimal-orbit}
gives another explanation of this evenness: in each irreducible
component the threshold is twice a dual Coxeter number minus two.
The stronger equality of the quotient ideals themselves is
the content of Theorem~\ref{str:thm:parity}.
\end{proof}

\subsection{Apolarity and projective root-orbit geometry}
\label{str:subsec:rootpower-geometry}

The matrix description of a root-power quotient and the alternating
description of its coefficient ideal admit a common geometric formulation.
The relevant geometry consists of the projective Weyl orbit of a root,
with a degree-dependent vanishing condition at its points.  We use the
classical apolar pairing between symmetric algebras; the relationship
between powers of linear forms and fat points is treated by
Emsalem--Iarrobino~\cite{EmsalemIarrobino} and
Ardila--Postnikov~\cite{ArdilaPostnikovPower}. We prove the required
Weyl-equivariant version below, including the determinant component
and its degree shift.

Assume in this subsection that $\Phi$ is irreducible.
Retain $P$, $S$, $d$, and $\Delta_\Phi$ as above, and put
\[
 N=|\Phi^+|,\qquad T=\Sym(V^*).
\]
Thus $T$ is the ring of polynomial functions on $V$.
Here $\mathsf d_\beta$ and $p(\mathsf d)$ act on $T$ as defined
in the introduction. This action uses the natural pairing between
$V$ and $V^*$; the bilinear form is needed only when we identify
$P$ with $T$ or write the dual discriminant $\Delta_\Phi^\vee$.
The \emph{apolar pairing} is
\[
 \langle p,F\rangle_{\rm ap}
 =\bigl(p(\mathsf d)F\bigr)(0)
 \qquad(p\in P,\ F\in T).
\]
It is $W$-invariant and perfect between $P_n$ and $T_n$ for every
$n\geq0$. To see its normalization explicitly, choose a basis
$\beta_1,\ldots,\beta_r$ of $V$ and let $y_1,\ldots,y_r$ be the
natural dual coordinates in $V^*$. Then
$\mathsf d_{\beta_i}=\partial/\partial y_i$ on $T$, and
\[
 \left\langle
 x_{\beta_1}^{a_1}\cdots x_{\beta_r}^{a_r},
 y_1^{b_1}\cdots y_r^{b_r}
 \right\rangle_{\rm ap}
 =\begin{cases}
   a_1!\cdots a_r!,& (a_1,\ldots,a_r)=(b_1,\ldots,b_r),\\
   0,&\text{otherwise}.
  \end{cases}
\]
We omit the subscript $\rm ap$ below when there is no ambiguity.

For a root $\alpha$ and $m\geq1$, write
\[
 Z(\alpha,m)=S/J^{\rm pol}_{\alpha,m}.
\]
The second argument is the actual exponent; hence the rank-two
notation used below is $Z_{\alpha,a}=Z(\alpha,2a)$.
Define the anti-invariant inverse system
\begin{equation}\label{str:eq:rootpower-inverse-system}
 \mathcal D_{\alpha,m}
 =\left\{F\in T:
   wF=\det(w)F\ (w\in W),\quad
   \mathsf d_{w\alpha}^{\,m}F=0\ (w\in W)\right\}.
\end{equation}
Finally, let
\[
 \Sigma_\alpha=\{[w\alpha]:w\in W\}\subseteq\mathbb P(V),
\]
and denote the homogeneous ideal of $\xi\in\Sigma_\alpha$
in $T$ by $\mathfrak p_\xi$.  We use the convention
$\mathfrak p_\xi^0=T$.

\begin{theorem}[Root-power quotients and the geometry of a root orbit]
\label{str:thm:rootpower-geometry}
With the preceding notation, the following statements hold.
\begin{enumerate}[label=\textup{(\roman*)}]
\item
The algebra $Z(\alpha,m)$ is finite-dimensional and is either zero
or a graded local algebra with residue field $\mathbb C$.  There are
isomorphisms
\begin{equation}\label{str:eq:rootpower-matrix}
 A_{\alpha,m}\cong\Mat_d(Z(\alpha,m)),
 \qquad Z(A_{\alpha,m})\cong Z(\alpha,m).
\end{equation}
Multiplication by $\Delta_\Phi$ induces, for every $k\geq0$,
\begin{equation}\label{str:eq:rootpower-alternating-piece}
 Z(\alpha,m)_k
 \xrightarrow{\ \sim\ }
 (P/I_{\alpha,m})_{N+k}^{\det}.
\end{equation}
These are isomorphisms of the corresponding graded $S$-modules,
with degrees raised by $N$; the right-hand side is an isotypic
component, not in general a subalgebra.

\item
The vector-space dual of each homogeneous component of the center
has the descriptions
\begin{equation}\label{str:eq:rootpower-fat-points}
 \begin{split}
 Z(\alpha,m)_k^*
 &\cong (\mathcal D_{\alpha,m})_{N+k}\\
 &=\left(
    \bigcap_{\xi\in\Sigma_\alpha}
      \mathfrak p_\xi^{\,\max\{N+k-m+1,0\}}
   \right)_{N+k}^{\det}.
 \end{split}
\end{equation}
The pairing defining the first isomorphism is
\begin{equation}\label{str:eq:rootpower-pairing}
 (\overline f,F)\longmapsto
       (\Delta_\Phi f)(\mathsf d)F(0).
\end{equation}
The space $\mathcal D_{\alpha,m}$ is stable under the differential
action of $S$, and the full ideal and center are recovered by
\begin{equation}\label{str:eq:rootpower-differential-annihilator}
 J^{\rm pol}_{\alpha,m}
   =\Ann_S(\mathcal D_{\alpha,m}),
 \qquad
 Z(\alpha,m)
   =S/\Ann_S(\mathcal D_{\alpha,m}),
\end{equation}
where annihilation means $s(\mathsf d)F=0$ for every
$F\in\mathcal D_{\alpha,m}$.

\item
Let $\alpha^+$ be the dominant conjugate of $\alpha$, and set
\[
 J(\alpha)=\{i:\langle\alpha^+,\alpha_i^\vee\rangle=0\},
 \qquad
 d_\alpha=\dim_{\mathbb C}
            \bigl(\mathbf G/\mathbf P_{J(\alpha)}\bigr).
\]
Here $\mathbf G$ and its standard parabolics are as in
Proposition~\ref{geo:prop:flag-threshold}.  Then
\begin{equation}\label{str:eq:rootpower-geometric-threshold}
 A_{\alpha,m}\ne0
 \quad\Longleftrightarrow\quad
 m>d_\alpha,
 \qquad
 d_\alpha=\#\{\beta\in\Phi^+:(\alpha,\beta)\ne0\}.
\end{equation}
Equivalently, $d_\alpha$ is the largest integer $j$ for which
$c_1(\mathcal L_{\alpha^+})^j\ne0$ on this partial flag
variety. Here the ample line bundle is
\[
 \mathcal L_{\alpha^+}
   =\mathbf G\times^{\mathbf P_{J(\alpha)}}
                        \mathbb C_{-\alpha^+}.
\]

\item
If $Z(\alpha,m)\ne0$, then
\begin{equation}\label{str:eq:rootpower-socle-duality}
 \dim_{\mathbb C}\operatorname{Soc}Z(\alpha,m)
 =\dim_{\mathbb C}
    \frac{\mathcal D_{\alpha,m}}
          {S_+(\mathsf d)\mathcal D_{\alpha,m}}.
\end{equation}
In particular, $Z(\alpha,m)$ is Gorenstein if and only if
$\mathcal D_{\alpha,m}$ is cyclic for this differential
$S$-action.  This is also equivalent to the matrix algebra
$A_{\alpha,m}$ being self-injective, Frobenius, or symmetric.
\end{enumerate}
\end{theorem}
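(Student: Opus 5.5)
Most of the theorem is assembled from earlier results, and the genuinely new inputs are the apolar duality in (ii) and the socle count in (iv). I take the parts in order.

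\textbf{Part (i).} Finiteness, locality and the matrix isomorphism come from Theorem~\ref{rc:thm:cyclotomic} with $\Xi=\{\alpha\}$; the center of $\Mat_d(R)$ is $R$. For \eqref{str:eq:rootpower-alternating-piece}, the map $f\mapsto\Delta_\Phi f$ sends $S_k$ into $P^{\det}_{N+k}$.
\begin{itemize}
\item It is surjective, because every anti-invariant is divisible by $\Delta_\Phi$ with invariant quotient.
\item The preimage of $I_{\alpha,m}\cap P^{\det}$ is exactly $J^{\rm pol}_{\alpha,m}$, by \eqref{str:eq:content-orbit}.
\item Since the Reynolds-type projection onto $\det$ preserves $I_{\alpha,m}$, one has $(P/I_{\alpha,m})^{\det}=P^{\det}/(I_{\alpha,m}\cap P^{\det})$.
\item $S$-linearity is clear.
\end{itemize}

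\textbf{Part (ii).} Use the perfect $W$-invariant apolar pairing $P_n\times T_n\to\mathbb C$. The orthogonal complement of $(I_{\alpha,m})_n$ in $T_n$ consists of those $F$ with $\langle x_{w\alpha}^m g,F\rangle=0$ for all $g$. Equivalently $\mathsf d_{w\alpha}^mF$ pairs to zero with all of $P_{n-m}$, i.e.\ $\mathsf d_{w\alpha}^mF=0$. So $(P/I_{\alpha,m})_n^*\cong I_{\alpha,m}^\perp\cap T_n$.

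Since the pairing is $W$-invariant, the $\det$-isotypic parts are dual to each other: $(P/I)^{\det}_n$ pairs perfectly with $(I^\perp)^{\det}_n$, using that $\det$ is real/self-dual. Combining with (i) at $n=N+k$ gives the first isomorphism, with pairing \eqref{str:eq:rootpower-pairing}.

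For the fat-point description I use the classical fact (Emsalem--Iarrobino): for a homogeneous $F$ of degree $n$, $\mathsf d_\beta^mF=0$ iff $F$ vanishes to order $\geq n-m+1$ at $[\beta]$. I would verify this in coordinates with $y_1$ dual to $\beta$. Then $\mathsf d_\beta^mF=0$ means $\deg_{y_1}F\le m-1$, which means every monomial has degree $\ge n-m+1$ in $y_2,\dots,y_r$. These are the generators of $\mathfrak p_{[\beta]}$, and the ideal power is monomial.

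\textbf{Stability and the annihilator.} Stability of $\mathcal D_{\alpha,m}$ under $s(\mathsf d)$ holds because constant-coefficient operators commute, and invariant operators preserve the $\det$ component. For the annihilator: $s\in J^{\rm pol}_{\alpha,m}$ iff $\Delta_\Phi s f\in I_{\alpha,m}$ for all $f\in S$. By the pairing this holds iff $\langle \Delta_\Phi f, s(\mathsf d)F\rangle=0$ for all $f\in S$ and $F\in\mathcal D_{\alpha,m}$, using the adjointness $\langle pq,F\rangle=\langle p,q(\mathsf d)F\rangle$. By perfectness of the $\det$ pairing this says $s(\mathsf d)F=0$ for all such $F$.

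\textbf{Part (iii).} This is Proposition~\ref{geo:prop:flag-threshold} with $\lambda=\alpha$, combined with Theorem~\ref{rc:thm:cyclotomic}. The Chern class statement is read off from its proof: $\eta^d\neq0$ and $\eta^{d+1}=0$.

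\textbf{Part (iv).} The main obstacle is setting up Matlis/Macaulay duality correctly. By (ii), $Z=Z(\alpha,m)$ is finite graded, and $\mathcal D=\mathcal D_{\alpha,m}\cong Z^\vee$ as graded $S$-modules. Here the $S$-action on $Z^\vee$ is transported through the pairing, which is compatible by the adjointness above, and the graded dual is the injective hull-type dual $\Hom_{\mathbb C}(Z,\mathbb C)$.

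Dualising, $\mathrm{Soc}(Z)^*\cong Z^\vee/\mathfrak m Z^\vee$, since $\mathrm{Soc}\,Z=\Hom_Z(\mathbb C,Z)$ is dual to $\mathbb C\otimes_Z Z^\vee$. This gives \eqref{str:eq:rootpower-socle-duality}.

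Gorenstein means the socle is one-dimensional, which means $\mathcal D$ is cyclic. For the matrix algebra, $\Mat_d(Z)$ is Morita equivalent to $Z$, so it is self-injective iff $Z$ is self-injective iff $Z$ is Gorenstein (commutative local Artinian). In that case $Z$ is Frobenius and, being commutative, symmetric. $\Mat_d$ of a symmetric algebra is symmetric via $\mathrm{tr}$ composed with the form, and symmetric implies Frobenius implies self-injective.
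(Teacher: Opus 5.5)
Your proposal is correct and follows essentially the same route as the paper. Part (i) comes from the matrix theorem, the content--alternation identity and exactness of the $\det$ projection. Part (ii) uses the $W$-invariant apolar pairing restricted to $\det$-components, a coordinate check of the fat-point condition, and adjointness for the annihilator. Part (iii) comes from the partial-flag proposition, and part (iv) from dualizing the socle to $\mathcal D_{\alpha,m}/S_+(\mathsf d)\mathcal D_{\alpha,m}$.

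The only minor difference is in (iii). The paper also re-derives the nonvanishing criterion inside the inverse system, via $\Delta_\Phi^\vee\in\mathcal D_{\alpha,m}$, whereas you cite Proposition~\ref{geo:prop:flag-threshold} directly.
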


\begin{proof}
By Theorem~\ref{rc:thm:cyclotomic}, the polynomial quotient is
\[
 A_{\alpha,m}\cong\operatorname{Mat}_d
                   (S/J^{\rm pol}_{\alpha,m}).
\]
Theorem~\ref{str:thm:alternation} gives
\begin{equation}\label{str:eq:rootpower-content-alternation}
 \Delta_\Phi J^{\rm pol}_{\alpha,m}
     = I_{\alpha,m}\cap P^{\det},
 \qquad P^{\det}=\Delta_\Phi S.
\end{equation}
Because $I_{\alpha,m}$ is $W$-stable and averaging over the finite
group $W$ is exact in characteristic zero,
\[
 (P/I_{\alpha,m})^{\det}
 =P^{\det}/(I_{\alpha,m}\cap P^{\det}).
\]
Together these identities prove
\eqref{str:eq:rootpower-alternating-piece}.

The roots in $W\alpha$ span $V$, since the reflection
representation of an irreducible Weyl group is irreducible.
Choose a basis $\beta_1,\ldots,\beta_{\dim V}$ from this orbit.
The ideal $I_{\alpha,m}$ contains all $x_{\beta_i}^m$, so
$P/I_{\alpha,m}$ is finite-dimensional.  Its anti-invariant
component, and hence $S/J^{\rm pol}_{\alpha,m}$, is therefore
finite-dimensional.  This quotient is homogeneous.  If it is
nonzero, its degree-zero part is $\mathbb C$ and its
positive-degree ideal is its nilpotent maximal ideal.  Completion
at $S_+$ consequently has no further effect on it.  This proves
\textup{(i)}.

For \textup{(ii)}, use the perfect $W$-invariant apolar pairing
between $P_n$ and $T_n$ defined above.
If $n\geq m$, then for $p\in P_{n-m}$,
\[
 \langle x_\beta^m p,F\rangle
      =\langle p,\mathsf d_\beta^mF\rangle.
\]
The perfection of the pairing in degree $n-m$ shows that the
orthogonal complement of $(I_{\alpha,m})_n$ consists exactly
of the elements annihilated by every
$\mathsf d_{w\alpha}^{\,m}$.  For $n<m$ the same assertion
holds because both restrictions are vacuous.  Taking the
determinant isotypic component is compatible with duality,
as the determinant character is self-dual.  Composing with
\eqref{str:eq:rootpower-alternating-piece} gives
\[
 Z(\alpha,m)_k^*\cong(\mathcal D_{\alpha,m})_{N+k}
\]
and the pairing~\eqref{str:eq:rootpower-pairing}.

We give the local-coordinate verification of the fat-point
description.  Fix $\beta\ne0$ and choose a basis of $V$ whose
first vector is $\beta$, with dual coordinates $y_1,\ldots,y_r$.
For a homogeneous polynomial $F$ of degree $n$, write
\[
 F=\sum_{j=0}^n y_1^j F_{n-j}(y_2,\ldots,y_r).
\]
Since the characteristic is zero,
$\mathsf d_\beta^mF=\partial_{y_1}^mF=0$ if and only if
every nonzero summand has $j<m$.  For $n\geq m$ this is
equivalent to every summand having degree at least $n-m+1$
in $y_2,\ldots,y_r$, or
\[
 F\in(y_2,\ldots,y_r)^{n-m+1}
       =\mathfrak p_{[\beta]}^{n-m+1}.
\]
For $n<m$ there is no condition, which is expressed by the
exponent zero.  Applying this argument to every line in
$\Sigma_\alpha$ proves~\eqref{str:eq:rootpower-fat-points}.
Thus the right-hand side is the anti-invariant part of the
space of forms vanishing to the indicated order at every
point of the projective root orbit.

If $s\in S$, the operator $s(\mathsf d)$ commutes with the
directional derivatives and with the $W$-action.  It therefore
preserves $\mathcal D_{\alpha,m}$.  Moreover,
\[
 \langle\Delta_\Phi sf,F\rangle
   =\langle\Delta_\Phi f,s(\mathsf d)F\rangle.
\]
The pairing consequently identifies this differential module
with the vector-space dual of the regular $S$-module
$Z(\alpha,m)$.  An element of $S$ annihilates that dual
if and only if it annihilates $Z(\alpha,m)$, and the latter
annihilator is $J^{\rm pol}_{\alpha,m}$.  This proves
\eqref{str:eq:rootpower-differential-annihilator}.  Notice that
differentiation lowers degree: the componentwise duality
above specifies the grading convention explicitly.

For \textup{(iii)}, use the fixed $W$-invariant bilinear form.
Recall that
\[
 \Delta_\Phi^\vee(v)
      =\prod_{\beta\in\Phi^+}(\beta,v)\in T_N.
\]
Then $T^{\det}=\Delta_\Phi^\vee T^W$, so that
$T_N^{\det}=\mathbb C\Delta_\Phi^\vee$.  A homogeneous
quotient of $S$ is nonzero exactly when its degree-zero
part is nonzero.  By \textup{(ii)}, this is equivalent to
$\Delta_\Phi^\vee\in\mathcal D_{\alpha,m}$.  The
anti-invariance of $\Delta_\Phi^\vee$ shows that it suffices
to test the derivative in the direction $\alpha$.

The polynomial
\[
 \Delta_\Phi^\vee(v+t\alpha)
    =\prod_{\beta\in\Phi^+}
           \bigl((\beta,v)+t(\beta,\alpha)\bigr)
\]
has exact degree
$b_\alpha=\#\{\beta\in\Phi^+:(\beta,\alpha)\ne0\}$
in $t$.  Its leading coefficient is a nonzero constant
times the product of the remaining nonzero linear forms
$(\beta,v)$, and hence is not the zero polynomial.
It follows that
\[
 \mathsf d_\alpha^m\Delta_\Phi^\vee=0
       \quad\Longleftrightarrow\quad m>b_\alpha.
\]
Proposition~\ref{geo:prop:flag-threshold} identifies
$b_\alpha$ with $d_\alpha$ and proves that
$\mathcal L_{\alpha^+}$ is ample, with
\[
 c_1(\mathcal L_{\alpha^+})^{d_\alpha}\ne0,
 \qquad
 c_1(\mathcal L_{\alpha^+})^{d_\alpha+1}=0.
\]
This proves \textup{(iii)} and makes the nonvanishing
criterion visible directly in the inverse system.

Finally, let $Z=Z(\alpha,m)\ne0$ and let $\mathfrak m$
be its maximal ideal.  Under the duality just constructed,
the orthogonal complement in $Z$ of
$S_+(\mathsf d)\mathcal D_{\alpha,m}$ consists of the
elements annihilated by $\mathfrak m$.  This is exactly
$\operatorname{Soc}Z$.  Hence
\[
 \bigl(\mathcal D_{\alpha,m}/
        S_+(\mathsf d)\mathcal D_{\alpha,m}\bigr)^*
       \cong\operatorname{Soc}Z.
\]
Nakayama's lemma identifies the dimension on the right of
\eqref{str:eq:rootpower-socle-duality} with the minimal
number of generators of the differential module.
A commutative Artinian local algebra is Gorenstein exactly
when its socle is one-dimensional, equivalently when its
regular-module dual is cyclic.  In this case a Frobenius
functional gives a symmetric form $(z,z')\mapsto\ell(zz')$.
The usual matrix trace transports this form to
$\Mat_d(Z)$.  Conversely, self-injectivity
passes through Morita equivalence to $Z$, and for an
Artinian commutative local algebra it implies the same
one-dimensional-socle condition.  This proves \textup{(iv)}.
\end{proof}

\subsection{Removing the discriminant and the first nonzero quotient}

The anti-invariant inverse system can be reduced to a condition on
invariant polynomials. This reduction will give calculations in
arbitrary rank in the next section. For $0\ne H\in T$ and
$0\ne\beta\in V$, define its directional degree by
\[
 \deg_\beta H=\deg_t H(v+t\beta),
\]
where the expression on the right is a polynomial in $t$ with
coefficients in $T$. Equivalently, this is the largest $j$ for
which $\mathsf d_\beta^jH\ne0$. The characteristic-zero assumption
is used in this equivalence. Put
\[
 \mathcal E_{\alpha,b}
 =\{H\in T^W:\mathsf d_{w\alpha}^{\,b+1}H=0
                         \text{ for every }w\in W\}
 \qquad(b\geq0).
\]
We regard this as a graded vector space. Multiplication of its
elements need not preserve it.

\begin{theorem}[Discriminant removal]
\label{str:thm:discriminant-removal}
Assume that $\Phi$ is irreducible and let
$\nu_\alpha=1+\#\{\beta\in\Phi^+:(\alpha,\beta)\ne0\}$.
For $m\geq\nu_\alpha$, multiplication by the dual discriminant
gives an equality of graded vector spaces
\begin{equation}\label{str:eq:discriminant-removal}
 \mathcal D_{\alpha,m}
 =\Delta_\Phi^\vee\mathcal E_{\alpha,m-\nu_\alpha}.
\end{equation}
Consequently, for every $k\geq0$,
\begin{equation}\label{str:eq:invariant-directional-dual}
 Z(\alpha,m)_k^*
 \cong(\mathcal E_{\alpha,m-\nu_\alpha})_k.
\end{equation}
Equivalently, if
$I_{\alpha,b+1}=(x_{w\alpha}^{b+1}:w\in W)\subseteq P$, then
\begin{equation}\label{str:eq:shifted-invariant-hilbert}
 H_{Z(\alpha,m)}(q)
 =H_{(P/I_{\alpha,m-\nu_\alpha+1})^W}(q).
\end{equation}
These statements identify graded vector spaces and Hilbert series.
The multiplication in the center is recovered from the differential
action on $\mathcal D_{\alpha,m}$ in
Theorem~\ref{str:thm:rootpower-geometry}.
\end{theorem}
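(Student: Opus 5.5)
The plan is to prove \eqref{str:eq:discriminant-removal} by a directional-degree count that reduces to a one-variable statement. Since $T^{\det}=\Delta_\Phi^\vee T^W$, as used in the proof of Theorem~\ref{str:thm:rootpower-geometry}(iii), every $F\in\mathcal D_{\alpha,m}$ can be written uniquely as $F=\Delta_\Phi^\vee H$ with $H\in T^W$. The task is then to show that, for such $F$,
\[
 \mathsf d_{w\alpha}^{\,m}F=0\ (w\in W)
 \quad\Longleftrightarrow\quad
 \deg_{w\alpha}H\leq m-\nu_\alpha\ (w\in W).
\]

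The key step is additivity of directional degree under products. For nonzero $G,H\in T$ and $\beta\neq0$, the polynomial $(GH)(v+t\beta)=G(v+t\beta)H(v+t\beta)$ lies in $T[t]$. Its top coefficient in $t$ is the product of the top coefficients of the factors. Since $T$ is a domain, this product is nonzero, so $\deg_\beta(GH)=\deg_\beta G+\deg_\beta H$.

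Apply this with $G=\Delta_\Phi^\vee$ and $\beta=w\alpha$. The computation in the proof of Theorem~\ref{str:thm:rootpower-geometry}(iii) gives $\deg_\alpha\Delta_\Phi^\vee=\nu_\alpha-1$. Anti-invariance gives $\Delta_\Phi^\vee(wv)=\pm\Delta_\Phi^\vee(v)$, so the same degree holds in every direction $w\alpha$. Alternatively, the root count is $W$-invariant, as shown in Proposition~\ref{geo:prop:flag-threshold}. Hence
\[
 \deg_{w\alpha}F=\nu_\alpha-1+\deg_{w\alpha}H.
\]
The condition $\mathsf d_{w\alpha}^mF=0$ means $\deg_{w\alpha}F\leq m-1$, which is equivalent to $\deg_{w\alpha}H\leq m-\nu_\alpha$, that is, $\mathsf d_{w\alpha}^{\,m-\nu_\alpha+1}H=0$. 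Here we use the characteristic-zero equivalence between the directional degree and the vanishing order of $\mathsf d_\beta$. The case $H=0$ is trivial. The hypothesis $m\geq\nu_\alpha$ ensures $b=m-\nu_\alpha\geq0$, so $\mathcal E_{\alpha,b}$ is defined. Multiplication by $\Delta_\Phi^\vee$ raises degree by $N$, which matches the grading.

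For \eqref{str:eq:invariant-directional-dual}, combine this with \eqref{str:eq:rootpower-fat-points}. We get $Z(\alpha,m)_k^*\cong(\mathcal D_{\alpha,m})_{N+k}=\Delta_\Phi^\vee(\mathcal E_{\alpha,b})_k$, and multiplication by the nonzero element $\Delta_\Phi^\vee$ in a domain is injective.

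For \eqref{str:eq:shifted-invariant-hilbert}, redo the apolarity argument from part (ii) of Theorem~\ref{str:thm:rootpower-geometry} with the trivial character in place of $\det$. The perfect $W$-invariant pairing $P_n\times T_n$ identifies the orthogonal complement of $(I_{\alpha,b+1})_n$ with the forms killed by all $\mathsf d_{w\alpha}^{b+1}$. Taking $W$-invariants commutes with duality because averaging is exact, so $(P/I_{\alpha,b+1})^W_k$ is dual to $(\mathcal E_{\alpha,b})_k$. Comparing dimensions in each degree gives the Hilbert series identity.

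I expect no serious obstacle. The only point needing care is the uniformity of $\deg_{w\alpha}\Delta_\Phi^\vee$ over the orbit, and this follows from anti-invariance.
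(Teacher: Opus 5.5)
Your proposal is correct and follows essentially the same route as the paper. It writes $F=\Delta_\Phi^\vee H$ uniquely and uses additivity of $\deg_\beta$ via leading coefficients in the domain $T$ together with $\deg_{w\alpha}\Delta_\Phi^\vee=\nu_\alpha-1$. It then invokes the fat-point duality of Theorem~\ref{str:thm:rootpower-geometry}(ii) and the trivial-character version of the apolarity argument for the Hilbert series. The only cosmetic difference is that you get orbit-uniformity of $\deg_{w\alpha}\Delta_\Phi^\vee$ from anti-invariance, whereas the paper applies the root-count computation directly to each $\beta\in W\alpha$.
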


\begin{proof}
Every anti-invariant polynomial is uniquely $\Delta_\Phi^\vee H$
with $H\in T^W$. For a root $\beta\in W\alpha$, the root-count
calculation in Theorem~\ref{rc:thm:threshold} gives
\[
 \deg_\beta\Delta_\Phi^\vee=\nu_\alpha-1.
\]
For nonzero $H$, the leading coefficients of
$\Delta_\Phi^\vee(v+t\beta)$ and $H(v+t\beta)$ are nonzero
polynomials. Their product is nonzero because $T$ is a domain.
Thus
\[
 \deg_\beta(\Delta_\Phi^\vee H)
 =\nu_\alpha-1+\deg_\beta H.
\]
The defining equation
$\mathsf d_\beta^m(\Delta_\Phi^\vee H)=0$ is therefore
equivalent to $\deg_\beta H\leq m-\nu_\alpha$, or to
$\mathsf d_\beta^{m-\nu_\alpha+1}H=0$.
Applying this to every $\beta\in W\alpha$ proves
\eqref{str:eq:discriminant-removal}.
The degree shift in
Theorem~\ref{str:thm:rootpower-geometry}\textup{(ii)} cancels
the degree $N$ of the discriminant and gives
\eqref{str:eq:invariant-directional-dual}.

The ordinary apolar pairing identifies the inverse system of
$I_{\alpha,b+1}$ with the polynomials killed by all
$\mathsf d_{w\alpha}^{b+1}$. Taking invariants commutes with
this duality, by averaging in characteristic zero. Its invariant
part is exactly $\mathcal E_{\alpha,b}$. This proves
\eqref{str:eq:shifted-invariant-hilbert}. Multiplication by
$\Delta_\Phi^\vee$ is used here only as a vector-space map;
it does not identify the two differential actions or the two
quotient algebra structures.
\end{proof}

\begin{proposition}[The differential action after discriminant removal]
\label{str:prop:conjugated-action}
For $s\in S$ and $H\in T^W$, put
\[
 \mathscr D_s(H)=(\Delta_\Phi^\vee)^{-1}
                     s(\mathsf d)(\Delta_\Phi^\vee H).
\]
This is a polynomial in $T^W$, and $s\mapsto\mathscr D_s$
is an action of $S$ on $T^W$. For $m\geq\nu_\alpha$ it
preserves $\mathcal E_{\alpha,m-\nu_\alpha}$, and
\[
 J^{\rm pol}_{\alpha,m}
 =\{s\in S:\mathscr D_s(H)=0\text{ for every }
                    H\in\mathcal E_{\alpha,m-\nu_\alpha}\}.
\]
Thus discriminant removal also gives the center algebra when
equipped with this conjugated differential action.
\end{proposition}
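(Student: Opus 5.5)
The proof is a direct transport of Theorem~\ref{str:thm:rootpower-geometry}(ii) through the vector-space bijection of Theorem~\ref{str:thm:discriminant-removal}. It has three steps.

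\emph{Step 1: $\mathscr D_s$ is a well-defined action on $T^W$.} For $s\in S$, the operator $s(\mathsf d)$ has constant coefficients and commutes with the $W$-action on $T$. This uses $W$-invariance of $s$ and of the pairing: $w\circ s(\mathsf d)\circ w^{-1}=(ws)(\mathsf d)$. Hence $s(\mathsf d)$ maps $T^{\det}$ into $T^{\det}$. Let $H\in T^W$. Then $\Delta_\Phi^\vee H$ is anti-invariant, so $s(\mathsf d)(\Delta_\Phi^\vee H)$ is anti-invariant. By the identity $T^{\det}=\Delta_\Phi^\vee T^W$ used in the proof of Theorem~\ref{str:thm:rootpower-geometry}(iii), it equals $\Delta_\Phi^\vee H'$ for a unique $H'\in T^W$. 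This $H'$ is $\mathscr D_s(H)$, so $\mathscr D_s(H)$ is an invariant polynomial; uniqueness holds because $T$ is a domain.

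The assignment $s\mapsto s(\mathsf d)$ is an algebra homomorphism from $S\subseteq P$ to operators on $T$, as noted in the introduction. Conjugating by the injective map $H\mapsto\Delta_\Phi^\vee H$ from $T^W$ onto $T^{\det}$ preserves this. Hence $\mathscr D_{ss'}=\mathscr D_s\mathscr D_{s'}$, $\mathscr D_1=\mathrm{id}$, and $\mathscr D$ is linear in $s$. So $\mathscr D$ is an $S$-action on $T^W$; it is simply the differential action on $T^{\det}$ transported along the bijection $\Delta_\Phi^\vee\cdot$.

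\emph{Step 2: stability of $\mathcal E_{\alpha,m-\nu_\alpha}$.} Fix $m\geq\nu_\alpha$. Theorem~\ref{str:thm:rootpower-geometry}(ii) says that $s(\mathsf d)$ preserves $\mathcal D_{\alpha,m}$. Equation \eqref{str:eq:discriminant-removal} gives $\mathcal D_{\alpha,m}=\Delta_\Phi^\vee\mathcal E_{\alpha,m-\nu_\alpha}$. Take $H\in\mathcal E_{\alpha,m-\nu_\alpha}$. Then $\Delta_\Phi^\vee H\in\mathcal D_{\alpha,m}$, so $s(\mathsf d)(\Delta_\Phi^\vee H)\in\mathcal D_{\alpha,m}$. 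Therefore it equals $\Delta_\Phi^\vee H''$ for some $H''\in\mathcal E_{\alpha,m-\nu_\alpha}$. By the uniqueness in Step 1, $H''=\mathscr D_s(H)$, so $\mathscr D_s(H)\in\mathcal E_{\alpha,m-\nu_\alpha}$.

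\emph{Step 3: the annihilator.} Since $\Delta_\Phi^\vee$ is a nonzero element of a domain, $\mathscr D_s(H)=0$ if and only if $s(\mathsf d)(\Delta_\Phi^\vee H)=0$. As $H$ ranges over $\mathcal E_{\alpha,m-\nu_\alpha}$, the product $\Delta_\Phi^\vee H$ ranges over all of $\mathcal D_{\alpha,m}$, again by \eqref{str:eq:discriminant-removal}. So the set of $s$ killing all of $\mathcal E_{\alpha,m-\nu_\alpha}$ under $\mathscr D$ is exactly $\Ann_S(\mathcal D_{\alpha,m})$. By \eqref{str:eq:rootpower-differential-annihilator} this is $J^{\rm pol}_{\alpha,m}$.

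Finally, $S/J^{\rm pol}_{\alpha,m}$ acts faithfully on $\mathcal E_{\alpha,m-\nu_\alpha}$. Under the pairing \eqref{str:eq:rootpower-pairing} this module is dual to the regular module, which yields the center algebra.

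Every step is formal given the earlier results. The only point needing care is Step 1: one must check that $s(\mathsf d)$ preserves anti-invariance, which requires $W$-equivariance of the identification $P\simeq T$ coming from the invariant form. The rest follows from unique divisibility by $\Delta_\Phi^\vee$.
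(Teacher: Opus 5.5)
Your proposal is correct and follows essentially the same route as the paper. Invariant constant-coefficient operators preserve anti-invariants, and divisibility by $\Delta_\Phi^\vee$ then gives the action on $T^W$; stability and the annihilator identity are transported from Theorem~\ref{str:thm:rootpower-geometry}\textup{(ii)} through $\mathcal D_{\alpha,m}=\Delta_\Phi^\vee\mathcal E_{\alpha,m-\nu_\alpha}$. (A minor remark on your closing comment: the $W$-equivariance of $s(\mathsf d)$ comes from the natural pairing between $V$ and $V^*$, and the invariant form is needed only to define $\Delta_\Phi^\vee$.)
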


\begin{proof}
An invariant constant-coefficient differential operator preserves
anti-invariants. Its value on $\Delta_\Phi^\vee H$ is therefore
divisible by $\Delta_\Phi^\vee$, with invariant polynomial
quotient. Conjugating composition gives
$\mathscr D_{st}=\mathscr D_s\mathscr D_t$ and
$\mathscr D_1=1$. The space $\mathcal D_{\alpha,m}$ is stable
under $S(\mathsf d)$ by
Theorem~\ref{str:thm:rootpower-geometry}\textup{(ii)}.
Transport this action through
$\mathcal D_{\alpha,m}=\Delta_\Phi^\vee
\mathcal E_{\alpha,m-\nu_\alpha}$ and use the annihilator
identity in that theorem. This proves both assertions.
\end{proof}

\begin{proposition}[A criterion for a weighted invariant basis]
\label{str:prop:weighted-invariants}
Suppose $T^W=\mathbb C[f_1,\ldots,f_l]$, with each $f_j$
homogeneous of polynomial degree $d_j$. Fix a root $\alpha$.
Assume that $\deg_\alpha f_j=2\omega_j$, where the
$\omega_j$ are positive integers and $\omega_1=1$.
Let $a_j$ be the leading coefficient of $f_j(v+t\alpha)$
as a polynomial in $t$. Suppose $a_1$ is a nonzero constant
and $a_2,\ldots,a_l$ are algebraically independent.
Then, for every nonzero polynomial $F$,
\[
 \deg_\alpha F(f_1,\ldots,f_l)=2\deg_\omega F,
\]
where $\deg_\omega$ assigns weight $\omega_j$ to the $j$th
argument. Put $r=\lfloor(m-\nu_\alpha)/2\rfloor$. For
$m\geq\nu_\alpha$, a basis of the graded dual center is
indexed by
\[
 f_1^{b_1}\cdots f_l^{b_l},\qquad
 b_j\geq0,\quad\sum_j\omega_jb_j\leq r.
\]
In particular,
\begin{equation}\label{str:eq:weighted-hilbert-series}
 \sum_{r\geq0}H_{Z(\alpha,\nu_\alpha+2r)}(q)t^r
 =\frac{1}{(1-t)\prod_{j=1}^l(1-t^{\omega_j}q^{d_j})}.
\end{equation}
The monomials describe a vector-space basis; multiplication in
the center is given by Proposition~\ref{str:prop:conjugated-action}.
\end{proposition}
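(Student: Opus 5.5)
The proof has three steps: compute the directional degree of a polynomial in the $f_j$, use Weyl invariance to reduce the orbit conditions to the single direction $\alpha$, and then count monomials.

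\emph{Step 1: directional degree.} Fix a nonzero polynomial $F$. Substituting $v+t\alpha$, each $f_j(v+t\alpha)$ is a polynomial in $t$ of degree $2\omega_j$ with leading coefficient $a_j$. A monomial $f^b=\prod_j f_j^{b_j}$ therefore has $t$-degree $2\sum_j\omega_jb_j=2\deg_\omega(f^b)$, with leading coefficient $a^b=\prod_j a_j^{b_j}$. Write $F=\sum_b c_bf^b$, let $D=\deg_\omega F$, and let $F_D$ be the $\omega$-homogeneous top part. The coefficient of $t^{2D}$ in $F(f)(v+t\alpha)$ is $\sum_{\deg_\omega b=D}c_ba^b$, and no monomial contributes in higher degree.

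It remains to show this coefficient is nonzero. Since $a_1$ is a nonzero constant, this coefficient is $F_D(a_1,a_2,\dots,a_l)$ with $a_1\in\mathbb C^\times$ fixed. The polynomial $G(y_2,\dots,y_l)=F_D(a_1,y_2,\dots,y_l)$ is nonzero. Indeed, $F_D$ is $\omega$-homogeneous with $\omega_1=1$, so distinct monomials $f^b$ in $F_D$ have distinct exponent vectors $(b_2,\dots,b_l)$, because $b_1$ is determined by them. Algebraic independence of $a_2,\dots,a_l$ then gives $G(a_2,\dots,a_l)\neq0$. This is the main point, and the only place the hypotheses on the $a_j$ are used.

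\emph{Step 2: reduction to one direction.} Every $H\in T^W$ satisfies $\deg_{w\alpha}H=\deg_\alpha(w^{-1}H)=\deg_\alpha H$. Hence $\mathcal E_{\alpha,b}$ is the set of $H\in T^W$ with $\deg_\alpha H\le b$, together with $0$. By Step 1 this space is spanned by the monomials $f^b$ with $2\deg_\omega f^b\le b$; more precisely, $H=F(f)$ satisfies $\deg_\alpha H=2\deg_\omega F$. Take $b=m-\nu_\alpha$. Since $\deg_\alpha H$ is even, the condition $\deg_\alpha H\le m-\nu_\alpha$ is equivalent to $\deg_\omega F\le r=\lfloor(m-\nu_\alpha)/2\rfloor$. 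The monomials in the $f_j$ form a basis of $T^W$. The condition is a bound on the $\omega$-degree of $F$, so it cuts out the span of the monomials with $\sum_j\omega_jb_j\le r$, and these monomials are a basis of $\mathcal E_{\alpha,m-\nu_\alpha}$. Theorem~\ref{str:thm:discriminant-removal} identifies $(\mathcal E_{\alpha,m-\nu_\alpha})_k$ with $Z(\alpha,m)_k^*$, which gives the stated basis of the graded dual center.

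\emph{Step 3: generating function.} By Step 2, $H_{Z(\alpha,\nu_\alpha+2r)}(q)=\sum q^{\sum_j d_jb_j}$, summed over $b$ with $\sum_j\omega_jb_j\le r$. Multiplying by $t^r$ and summing over $r\ge0$ counts each $b$ once for every $r\ge\deg_\omega b$. The weight of each $b$ is therefore $t^{\deg_\omega b}q^{\sum_j d_jb_j}/(1-t)$, and the sum factors as the product in \eqref{str:eq:weighted-hilbert-series}. The remark about multiplication is just a pointer to Proposition~\ref{str:prop:conjugated-action} and needs no proof.
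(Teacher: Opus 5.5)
Your proposal is correct and follows the paper's proof essentially step for step: you evaluate the top $\omega$-weighted part at the leading coefficients, get nonvanishing from $\omega_1=1$ and the algebraic independence of $a_2,\ldots,a_l$, reduce to the single direction $\alpha$ by Weyl invariance, obtain the basis by discriminant removal, and sum the geometric series to get the generating function. One minor point: you use $b$ both for the exponent vector and for the integer bound in $\mathcal E_{\alpha,b}$, so rename one of them.
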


\begin{proof}
Let $F_L$ be the highest part of $F$ for the auxiliary weights.
The coefficient of $t^{2L}$ in the translated polynomial is
$F_L(a_1,\ldots,a_l)$. It is nonzero. Indeed, in a polynomial
of fixed weighted degree the exponents of the last $l-1$
arguments determine the exponent of the first, since
$\omega_1=1$. Substituting a nonzero constant in that first
argument therefore cannot make a nonzero weighted homogeneous
polynomial vanish identically. Algebraic independence of the
remaining leading coefficients finishes the argument.
Weyl invariance gives the same directional degree at every root
in $W\alpha$. Discriminant removal now gives the displayed
monomial basis. A monomial of auxiliary weight $w$ contributes
in every $r\geq w$; summing these geometric series proves
\eqref{str:eq:weighted-hilbert-series}.
\end{proof}

\begin{corollary}[The first nonzero root-power quotient]
\label{str:cor:first-rootpower}
For every root in an irreducible root system,
\begin{equation}\label{str:eq:first-rootpower}
 Z(\alpha,\nu_\alpha)=\mathbb C,
 \qquad
 A_{\alpha,\nu_\alpha}=A_{\alpha,\nu_\alpha+1}
 \cong\End_{\mathbb C}(\mathcal R_W)
 \cong\Mat_{|W|}(\mathbb C).
\end{equation}
In particular, the first nonzero quotient is semisimple and
has dimension $|W|^2$. Its polynomial content ideal is $S_+$.
\end{corollary}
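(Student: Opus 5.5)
We take $m=\nu_\alpha$ in discriminant removal (Theorem~\ref{str:thm:discriminant-removal}). Then $\mathcal E_{\alpha,0}$ consists of the $W$-invariant $H\in T^W$ with $\mathsf d_{w\alpha}H=0$ for all $w\in W$. Since $\Phi$ is irreducible, the orbit $W\alpha$ spans $V$. Hence such an $H$ is annihilated by every directional derivative and is constant, so $\mathcal E_{\alpha,0}=\mathbb C$, concentrated in degree $0$. By \eqref{str:eq:invariant-directional-dual} we get $Z(\alpha,\nu_\alpha)_k^*=0$ for $k>0$ and $Z(\alpha,\nu_\alpha)_0^*\cong\mathbb C$.

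Theorem~\ref{rc:thm:cyclotomic} gives that $Z(\alpha,\nu_\alpha)$ is nonzero, since $m=\nu_\alpha$ satisfies \eqref{rc:eq:criterion}. We conclude $Z(\alpha,\nu_\alpha)=\mathbb C$, concentrated in degree zero. Equivalently, the homogeneous content ideal $J^{\rm pol}_{\alpha,\nu_\alpha}$ is proper and contains $S_k$ for all $k>0$, so it equals $S_+$.

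Next we use parity. By Corollary~\ref{str:cor:even-threshold}, $\nu_\alpha=2a$ is even with $a\geq1$. Theorem~\ref{str:thm:parity} then gives $\langle M_{x_\alpha^{2a}}\rangle=\langle M_{x_\alpha^{2a+1}}\rangle$, hence $A_{\alpha,\nu_\alpha}=A_{\alpha,\nu_\alpha+1}$. The matrix isomorphism \eqref{rc:eq:cyclotomic-matrix} then yields $A_{\alpha,\nu_\alpha}\cong\Mat_{|W|}(S/S_+)=\Mat_{|W|}(\mathbb C)$. This algebra is semisimple of dimension $|W|^2$.

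To obtain the canonical form $\End_{\mathbb C}(\mathcal R_W)$, note that the polynomial action descends because the quotient kills $S_+$. We then reduce $\End_S(P)$ modulo $S_+$, which gives $\End_{\mathbb C}(P/S_+P)=\End_{\mathbb C}(\mathcal R_W)$ by finite freeness (Lemma~\ref{lem:completed-nilhecke}). This matches the unique simple module described in Theorem~\ref{rc:thm:cyclotomic}.

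The only point needing care is the identification $\mathcal E_{\alpha,0}=\mathbb C$, which relies on the spanning property of the root orbit. Everything else is direct bookkeeping from the cited results. Alternatively, one could avoid discriminant removal: $J^{\rm pol}_{\alpha,\nu_\alpha}\subseteq S_+$ by nonvanishing, and the reverse inclusion follows from \eqref{str:eq:rootpower-fat-points}, since anti-invariants of degree $N+k$ with $k>0$ vanishing to order $k+1$ along $\Sigma_\alpha$ are zero by the same directional-degree count.
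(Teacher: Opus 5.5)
Your proof is correct and follows the paper's argument essentially step for step. It uses discriminant removal at $m=\nu_\alpha$ and the spanning of $W\alpha$ to force $\mathcal E_{\alpha,0}=\mathbb C$, and then the matrix description, the closed-fiber identification with $\End_{\mathbb C}(\mathcal R_W)$, and evenness of $\nu_\alpha$ together with the parity theorem.

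Only your closing aside is inaccurate. At $m=\nu_\alpha$ the fat-point exponent is $N+k-\nu_\alpha+1=k+\#(\Phi^+\cap\alpha^\perp)$ rather than $k+1$. The directional-degree count you invoke there is discriminant removal itself, so that variant does not actually bypass it.
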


\begin{proof}
At $m=\nu_\alpha$, Theorem~\ref{str:thm:discriminant-removal}
identifies the graded dual of the center with
$\mathcal E_{\alpha,0}$. A polynomial in this space has zero
directional derivative in every direction in $W\alpha$.
Those roots span $V$, so all its first derivatives vanish.
It is constant. Thus $Z(\alpha,\nu_\alpha)=\mathbb C$ and
$J^{\rm pol}_{\alpha,\nu_\alpha}=S_+$.
The matrix description gives the last isomorphism in
\eqref{str:eq:first-rootpower}. The closed-fiber polynomial
action identifies it with $\End_{\mathbb C}(\mathcal R_W)$.
The threshold is even by Corollary~\ref{str:cor:even-threshold},
so Theorem~\ref{str:thm:parity} gives the equality of the
two successive quotients.
\end{proof}

\begin{remark}[Power ideals and the invariant part]
\label{str:rem:power-ideal-comparison}
The inverse system of an ideal generated by powers of linear forms
and its fat-point description are classical
\cite{EmsalemIarrobino,ArdilaPostnikovPower}. The passage to the
determinant component in Theorem~\ref{str:thm:rootpower-geometry}
uses the Weyl-equivariance of this pairing and the content identity
of Theorem~\ref{str:thm:alternation}. Crowley--Larson
\cite[Theorem 1.1]{CrowleyLarsonPower} realize inverse systems
of certain arrangement power ideals as sections of line bundles
on augmented wonderful varieties. Their generators are indexed
by directions in arrangement strata, with bounds attached to
those strata. Here the ideal has a fixed exponent on one Weyl
orbit of root lines. Formula~\eqref{str:eq:shifted-invariant-hilbert}
specifies the invariant part and the exponent shift needed for
the root-power center. The algebra structure still uses the
discriminant-weighted pairing
\eqref{str:eq:rootpower-pairing}.
\end{remark}

\section{The rational Cherednik interpretation}
\label{ch:sec:interpretation}

The root-power quotients also arise from the polynomial
representation of a rational Cherednik algebra at $t=0$.
We give the map, prove a sufficient parameter condition for
surjectivity, and identify the induced central support.
All arguments in this section concern the additive
nil-Hecke algebra; the formal multiplicative comparison
is given in Section~\ref{ii:sec:comparison}.

\subsection{The polynomial action at \texorpdfstring{$t=0$}{t=0}}

Put $\mathfrak h=V^*$, so that $P=\mathbb C[\mathfrak h]$.
A reflection parameter is a family
$\mathbf k=(k_\alpha)_{\alpha\in\Phi}$ satisfying
$k_{w\alpha}=k_\alpha$; in particular $k_{-\alpha}=k_\alpha$.
Use the convention for $H_{t,\mathbf k}(W)$ with commuting
generators $x_\lambda$ ($\lambda\in V$), commuting generators
$y_\xi$ ($\xi\in\mathfrak h$), the natural conjugation
action of $W$, and the mixed relation
\begin{equation}\label{ch:eq:relations}
 [y_\xi,x_\lambda]
 =t\lambda(\xi)
 -\sum_{\alpha\in\Phi^+}
 k_\alpha\alpha(\xi)
       \langle\lambda,\alpha^\vee\rangle s_\alpha.
\end{equation}
Its polynomial action is multiplication by $x_\lambda$,
the usual $W$-action, and
\begin{equation}\label{ch:eq:dunkl}
 y_\xi\longmapsto D_\xi^{(t,\mathbf k)}
 =t\,\partial_\xi^{\mathrm{ord}}
 -\sum_{\alpha\in\Phi^+}k_\alpha\alpha(\xi)
                  \frac{1-s_\alpha}{x_\alpha}.
\end{equation}
Here $\partial_\xi^{\mathrm{ord}}(x_\lambda)=\lambda(\xi)$
is ordinary differentiation. These conventions and the
commutativity of the Dunkl operators are standard; see
\cite[Definition 2.11 and Proposition 3.2]{EtingofMa}.
We use the specialization of this polynomial representation,
without assuming that it is faithful at $t=0$.

For every root put $\partial_\alpha=(1-s_\alpha)/x_\alpha$.
The identities
$s_i=1-x_{\alpha_i}\partial_i$ and
$w\partial_\alpha w^{-1}=\partial_{w\alpha}$ show that
$W$ and all $\partial_\alpha$ belong to $NH_W$.
At $t=0$, formula~\eqref{ch:eq:dunkl} therefore defines
\begin{equation}\label{ch:eq:polynomial-map}
 \phi_{\mathbf k}:H_{0,\mathbf k}(W)\longrightarrow NH_W,
 \qquad
 \mathfrak a_{\mathbf k}:=\ker\phi_{\mathbf k}
                      =\operatorname{Ann}_{H_{0,\mathbf k}(W)}P.
\end{equation}
The image is $S$-linear. Equip both algebras with the
grading $\deg x=1$, $\deg y=-1$, $\deg w=0$; the map
preserves degree.

For $\tau\in\operatorname{Irr}(W)$ with character $\chi_\tau$
define the linear function
\begin{equation}\label{ch:eq:parameter-hyperplanes}
 a_\tau(\mathbf k)=
 \sum_{\alpha\in\Phi^+}k_\alpha
       \left(1-\frac{\chi_\tau(s_\alpha)}{\dim\tau}\right),
 \qquad
 \mathfrak K^\circ=
 \{\mathbf k:a_\tau(\mathbf k)\ne0
                          \text{ for every }\tau\ne\mathbf1\}.
\end{equation}

\begin{theorem}[The polynomial image]
\label{ch:thm:surjectivity}
The map $\phi_{\mathbf k}$ is onto $NH_W$ if and only if
the coinvariant algebra $\mathcal R_W=P/S_+P$ is irreducible
under its induced $H_{0,\mathbf k}(W)$-action.
It is onto for every $\mathbf k\in\mathfrak K^\circ$.
The set $\mathfrak K^\circ$ is a nonempty Zariski-open
subset of the parameter space and contains every family
with $k_\alpha>0$ real.
\end{theorem}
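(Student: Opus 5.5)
The plan has three parts: reduce surjectivity to irreducibility of $\mathcal R_W$ on the closed fiber, prove irreducibility for $\mathbf k\in\mathfrak K^\circ$ with a central Casimir-type element, and then check the shape of $\mathfrak K^\circ$.

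\emph{Reduction to the closed fiber.} The image $\phi_{\mathbf k}(H_{0,\mathbf k}(W))$ contains all $M_f$, and every element of the image is $S$-linear. Moreover, the central elements $x$-invariants and $y$-invariants of $H_{0,\mathbf k}$ map into $S$, so the image $B$ is a graded $S$-subalgebra of $NH_W\cong\operatorname{End}_S(P)$. Since $P$ is finite free, $NH_W$ is a finitely generated graded $S$-module, bounded below in each matrix coordinate after degree shifts.

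By graded Nakayama, $B=NH_W$ if and only if $B+S_+NH_W=NH_W$. Equivalently, the image of $B$ in $NH_W/S_+NH_W\cong\operatorname{End}_{\mathbb C}(\mathcal R_W)$ is everything; this uses the base-change part of Lemma~\ref{lem:completed-nilhecke}. By Burnside's theorem, over $\mathbb C$ a subalgebra of $\operatorname{End}(\mathcal R_W)$ is the whole algebra exactly when $\mathcal R_W$ is irreducible for it. Since the induced action on $\mathcal R_W$ is the $H_{0,\mathbf k}$-action, this gives the stated equivalence.

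\emph{Irreducibility on $\mathfrak K^\circ$.} Let $U\subseteq\mathcal R_W$ be a nonzero submodule. It is graded-stable in the following sense: the Euler-type element $\mathbf h=\sum_\alpha k_\alpha(1-s_\alpha)$ is central in $\mathbb C W$. Using \eqref{ch:eq:relations} at $t=0$ with dual bases, $\sum_i[y_{\xi_i},x_{\lambda_i}]$ equals $-\sum_{\alpha>0}k_\alpha(\alpha,\alpha^\vee)s_\alpha$ up to normalization. From this one gets an element $\mathbf c=\sum_i x_{\lambda_i}y_{\xi_i}+\text{const}\cdot\sum_{\alpha>0} k_\alpha(1-s_\alpha)$ that commutes with $x$, $y$ and $W$; I would verify this by the standard Casimir computation. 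This central element acts on $\mathcal R_W$ as follows: $\sum x y$ lowers and then raises degree. The component $\mathcal R_W^{\tau}\cap(\text{each isotypic part})$ is hit by the group part with scalar $a_\tau(\mathbf k)$, since $\sum_{\alpha>0}k_\alpha(1-s_\alpha)$ acts on $\tau$ by $a_\tau(\mathbf k)$. On $1\in\mathcal R_W$ the $xy$ term kills $1$ (because $y\cdot1=0$) and $\tau=\mathbf 1$ gives $0$, so $\mathbf c$ acts on $\mathcal R_W$ by the scalar $0$.

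Now take $0\ne u\in U$ homogeneous of top degree piece $u_k$, with isotypic component of type $\tau$. Applying $\mathbf c$ and comparing lowest degree terms shows that $\sum x y$ must cancel $a_\tau(\mathbf k)u$. Since $a_\tau\ne0$ for $\tau\ne\mathbf1$, some $y_\xi u\ne0$ whenever $\deg u>0$. Iterating Dunkl operators, which lower degree, produces a nonzero degree-zero element of $U$, namely a multiple of $1$. Since $1$ generates $\mathcal R_W$ under $x$, we get $U=\mathcal R_W$.

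I expect the main obstacle to be making the degree bookkeeping precise: $U$ need not be graded. The remedy is that the $W$-action and $\mathbf c$ respect degree filtrations, and applying $y$'s to the lowest-degree component handles it. Alternatively, one can show that some $y_\xi$ is injective on positive-degree isotypic pieces using $\mathbf c=0$.

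\emph{Open, nonempty, and containing positive parameters.} Each $a_\tau$ is a linear form, so $\mathfrak K^\circ$ is a complement of finitely many hyperplanes, hence Zariski open. For real $k_\alpha>0$ we have $\chi_\tau(s_\alpha)/\dim\tau\in[-1,1]$, so each summand of $a_\tau(\mathbf k)$ is nonnegative. A summand vanishes only if $s_\alpha$ acts trivially on $\tau$. If this happened for all $\alpha$, then, since reflections generate $W$, $\tau$ would be trivial. Hence $a_\tau(\mathbf k)>0$ for $\tau\ne\mathbf1$, so these families lie in $\mathfrak K^\circ$, and in particular $\mathfrak K^\circ$ is nonempty.
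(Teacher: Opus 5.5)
Your proposal is correct and follows essentially the paper's proof: Burnside plus graded Nakayama for the equivalence, then the $t=0$ Euler identity $\sum_j x_jD_{\xi_j}=-C_{\mathbf k}$ (which is exactly what your central element $\mathbf c$ encodes, with constant $1$) together with the scalars $a_\tau(\mathbf k)$ to force a nonzero joint Dunkl-kernel vector of $U$ to be a constant, and finally positivity of $a_\tau(\mathbf k)$ for $k_\alpha>0$ (your eigenvalue count replaces the paper's Hermitian-norm identity). Two small corrections: first, that $\mathbf c$ acts by $0$ on $\mathcal R_W$ should be deduced from $\mathbf c\cdot 1=0$ and cyclicity of $\mathcal R_W$ over $P$ (or directly from the one-line Dunkl computation), not from a Schur-type scalar argument, since irreducibility is what you are proving; second, your fallback claim that a single $y_\xi$ is injective on positive-degree isotypic pieces is false (in type $A_2$ it maps the two-dimensional degree-one piece to $\mathbb C$), but your main descending-degree argument does not need it.
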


\begin{proof}
Let $\mathcal B_{\mathbf k}$ be the image in~\eqref{ch:eq:polynomial-map}.
It contains $S$ and is a graded $S$-submodule of $NH_W$.
The latter is finite free over the noetherian ring $S$, so
$NH_W/\mathcal B_{\mathbf k}$ is a finite graded module,
bounded below. The natural reduction is
\[
 NH_W/S_+NH_W\cong\operatorname{End}_{\mathbb C}(\mathcal R_W).
\]
If $\mathcal R_W$ is irreducible for $\mathcal B_{\mathbf k}$,
Burnside's theorem identifies its image on this vector space
with the full endomorphism algebra. Consequently
$NH_W=\mathcal B_{\mathbf k}+S_+NH_W$.
Graded Nakayama gives $NH_W=\mathcal B_{\mathbf k}$.
Equivalently, a nonzero quotient would have a nonzero
homogeneous element of least degree, incompatible with
being generated from itself by positive-degree invariants.
The converse follows because the natural module for its
full endomorphism algebra is irreducible.

It remains to prove irreducibility under the stated
parameter condition. Each $D_\xi=D_\xi^{(0,\mathbf k)}$
lowers polynomial degree and hence is nilpotent on
$\mathcal R_W$. They commute. Every nonzero invariant
subspace $U$ therefore contains a nonzero vector $v$
killed by all of them: take a nonzero value of a product
of the $D_\xi$ of maximal possible length on $U$.
For dual bases $x_1,\ldots,x_r$ and $\xi_1,\ldots,\xi_r$,
\begin{equation}\label{ch:eq:euler}
 \sum_{j=1}^r x_jD_{\xi_j}
 =-C_{\mathbf k},\qquad
 C_{\mathbf k}=\sum_{\alpha\in\Phi^+}k_\alpha(1-s_\alpha).
\end{equation}
Indeed, $\sum_jx_j\alpha(\xi_j)=x_\alpha$ cancels
the denominator in each summand of~\eqref{ch:eq:dunkl}.
The group-algebra element $C_{\mathbf k}$ is central,
and its scalar on $\tau$ is $a_\tau(\mathbf k)$, by
taking traces. For $\mathbf k\in\mathfrak K^\circ$ its
kernel in $\mathcal R_W$ is therefore
$\mathcal R_W^W=\mathbb C\cdot1$. Thus $v$ is a nonzero
constant. Since multiplication by $P$ generates
$\mathcal R_W$ from $1$, we have $U=\mathcal R_W$.

Finally, take all $k_\alpha>0$ and give any $W$-module
a $W$-invariant Hermitian inner product. Each reflection
is a self-adjoint involution, so
\[
 \langle C_{\mathbf k}v,v\rangle
 =\frac12\sum_{\alpha\in\Phi^+}
                  k_\alpha\|v-s_\alpha v\|^2.
\]
Its kernel is the invariant subspace, because reflections
generate $W$. Hence $a_\tau(\mathbf k)>0$ for
$\tau\ne\mathbf1$. None of these finitely many linear
functions is identically zero; their nonvanishing defines
the asserted nonempty Zariski-open set.
\end{proof}

\begin{remark}[Scope of the parameter condition]
The hyperplane condition is sufficient; we do not assert
that its complement is the exact nonsurjectivity locus.
If $W$ has one conjugacy class of reflections, every
nonzero constant parameter belongs to $\mathfrak K^\circ$,
since $a_\tau(k)=k a_\tau(1)$.
At $\mathbf k=0$ the image is $P\rtimes W$, which is
proper in $NH_W$ for $W\ne1$: the coefficient
$x_{\alpha_i}^{-1}$ in a simple divided difference
cannot occur in a polynomial skew-group element.
\end{remark}

\subsection{Root-power quotients and their central support}

\begin{corollary}[Cherednik presentation of the quotients]
\label{ch:cor:quotients}
For $\mathbf k\in\mathfrak K^\circ$ and any finite set
of imposed root powers, there are algebra isomorphisms
\begin{equation}\label{ch:eq:quotient}
 \frac{H_{0,\mathbf k}(W)}
 {\mathfrak a_{\mathbf k}+
         \langle x_\alpha^{m_\alpha}:\alpha\in\Xi\rangle}
 \cong A_{\Xi,\mathbf m}
 \cong\operatorname{Mat}_{|W|}
               (S/J^{\rm pol}_{\Xi,\mathbf m}).
\end{equation}
For a single root in irreducible type this quotient is
finite-dimensional and is nonzero exactly when
$m\geq\nu_\alpha$. Its center, Hilbert series, and
self-injectivity are therefore those calculated in this paper.
\end{corollary}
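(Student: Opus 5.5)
The plan is to compose the surjectivity theorem with the matrix-quotient theorem. Fix $\mathbf k\in\mathfrak K^\circ$. By Theorem~\ref{ch:thm:surjectivity}, the map $\phi_{\mathbf k}$ of~\eqref{ch:eq:polynomial-map} is onto $NH_W$ with kernel $\mathfrak a_{\mathbf k}$. It therefore induces an isomorphism $H_{0,\mathbf k}(W)/\mathfrak a_{\mathbf k}\cong NH_W$. Under this isomorphism the Cherednik generator $x_\alpha$ maps to $M_{x_\alpha}$, because the polynomial action sends $x_\lambda$ to multiplication by $x_\lambda$. Hence $x_\alpha^{m_\alpha}$ maps to $M_{x_\alpha^{m_\alpha}}$.

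The key step is the compatibility of two-sided ideals with a surjection. If $\phi\colon H\to B$ is a surjective algebra map and $X\subseteq H$ is any subset, then $\phi(\langle X\rangle)=\langle\phi(X)\rangle$. Indeed, the image of a two-sided ideal under a surjection is a two-sided ideal containing $\phi(X)$. Conversely, any element $\sum b_i\phi(x_i)b_i'$ of $\langle\phi(X)\rangle$ lifts once we choose preimages of the $b_i,b_i'$. Consequently the preimage of $\langle M_{x_\alpha^{m_\alpha}}:\alpha\in\Xi\rangle$ is $\mathfrak a_{\mathbf k}+\langle x_\alpha^{m_\alpha}:\alpha\in\Xi\rangle$. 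Passing to quotients gives the first isomorphism in~\eqref{ch:eq:quotient}, namely $H_{0,\mathbf k}(W)/(\mathfrak a_{\mathbf k}+\langle x_\alpha^{m_\alpha}\rangle)\cong NH_W/\langle M_{x_\alpha^{m_\alpha}}\rangle=A_{\Xi,\mathbf m}$.

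The second isomorphism in~\eqref{ch:eq:quotient} is exactly~\eqref{rc:eq:cyclotomic-matrix} of Theorem~\ref{rc:thm:cyclotomic}. For a single root in irreducible type, the same theorem gives the remaining claims. Its spanning hypothesis is automatic, so the center $S/J^{\rm pol}_{\alpha,m}$ is finite-dimensional, and then the matrix algebra over it is finite-dimensional too. The nonvanishing criterion $m\geq\nu_\alpha$ is~\eqref{rc:eq:criterion}, with $\nu_\alpha$ evaluated in Theorem~\ref{rc:thm:threshold}.

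The final sentence of the corollary follows because the Cherednik quotient is isomorphic to $A_{\alpha,m}$ as an algebra. Its center is therefore $Z(\alpha,m)$. Self-injectivity is an algebra invariant, and Theorem~\ref{str:thm:rootpower-geometry}(iv) reduces it to the Gorenstein property of the center. The isomorphism is graded because $\phi_{\mathbf k}$ preserves degree, so Hilbert series are preserved as well.

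There is no real obstacle here; the only point requiring care is the ideal-image identity above. This is elementary but genuinely uses surjectivity, which is where the hypothesis $\mathbf k\in\mathfrak K^\circ$ enters.
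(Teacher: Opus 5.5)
Your proposal is correct and follows essentially the same route as the paper: surjectivity from Theorem~\ref{ch:thm:surjectivity} gives $H_{0,\mathbf k}(W)/\mathfrak a_{\mathbf k}\cong NH_W$, taking two-sided quotients gives the first isomorphism, and Theorems~\ref{rc:thm:cyclotomic} and~\ref{rc:thm:threshold} supply the matrix form, finiteness, and the exponent criterion. You also spell out two points the paper leaves implicit, namely the identity $\phi(\langle X\rangle)=\langle\phi(X)\rangle$ for a surjection and the fact that the degree-preserving isomorphism transports the center, the Hilbert series, and self-injectivity via Theorem~\ref{str:thm:rootpower-geometry}\textup{(iv)}.
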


\begin{proof}
Surjectivity in Theorem~\ref{ch:thm:surjectivity}
identifies $H_{0,\mathbf k}(W)/\mathfrak a_{\mathbf k}$
with $NH_W$ and sends each imposed power to its
multiplication operator. Taking the two-sided quotient
gives the first isomorphism. Theorem~\ref{rc:thm:cyclotomic}
gives the matrix form and finiteness, and
Theorem~\ref{rc:thm:threshold} gives the exponent criterion.
\end{proof}

Write $\mathfrak Z_{\mathbf k}=Z(H_{0,\mathbf k}(W))$
and $X_{\mathbf k}=\operatorname{Spec}\mathfrak Z_{\mathbf k}$.
This is the generalized Calogero--Moser space. The standard
$t=0$ center theory and central supports of Verma modules
are discussed in~\cite[\S8.6]{EtingofMa}
and~\cite{BellamyVerma}. The following description uses
only the central action on the polynomial representation.

\begin{proposition}[A closed section and root-power subschemes]
\label{ch:prop:central-section}
For every reflection parameter $\mathbf k$, there is a
split surjective algebra map
\[
 \chi_{\mathbf k}:\mathfrak Z_{\mathbf k}\longrightarrow S,
 \qquad z\longmapsto z\cdot1,
 \qquad \chi_{\mathbf k}|_S=\operatorname{id}_S.
\]
Its kernel is $\mathfrak a_{\mathbf k}\cap\mathfrak Z_{\mathbf k}$.
It defines a closed section
$\Lambda_{\mathbf k}=\operatorname{Spec}S\hookrightarrow
X_{\mathbf k}$ of the projection to $\mathfrak h/W$.
The scheme $\operatorname{Spec}Z(\alpha,m)$ is the closed
subscheme of $\Lambda_{\mathbf k}$ defined by
$J^{\rm pol}_{\alpha,m}$, with ambient ideal
\[
 \chi_{\mathbf k}^{-1}(J^{\rm pol}_{\alpha,m})
       \subseteq\mathfrak Z_{\mathbf k}.
\]
When it is nonzero and finite-dimensional, it is supported
at the point of this section above the origin of
$\mathfrak h/W$.
\end{proposition}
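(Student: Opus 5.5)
The map $z\mapsto z\cdot 1$ is the natural candidate for $\chi_{\mathbf k}$, and the plan is to show it lands in $S$ and is an algebra map split by $S$. Let $z\in\mathfrak Z_{\mathbf k}$ and put $p=z\cdot 1\in P$.

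\emph{$p$ is invariant.} Since $z$ commutes with $w\in W$ and $w\cdot 1=1$, we get $w(p)=z\cdot 1=p$.

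\emph{$z$ acts on $P$ as $M_p$.} Since $z$ commutes with every $x_\lambda$, and $P$ is generated from $1$ by the $x_\lambda$, we get $z\cdot f=f\,p$ for every $f\in P$. Two consequences follow:
\[
 \chi_{\mathbf k}(zz')=zz'\cdot 1=z\cdot p'=pp',\qquad
 \ker\chi_{\mathbf k}=\mathfrak a_{\mathbf k}\cap\mathfrak Z_{\mathbf k}.
\]
For the kernel, note that $p=0$ means $z$ kills all of $P$, which is exactly membership in $\mathfrak a_{\mathbf k}$.

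\emph{Splitting.} Elements of $S=\mathbb C[\mathfrak h]^W$, viewed inside $H_{0,\mathbf k}(W)$, are central at $t=0$. This is the standard fact that $\mathbb C[\mathfrak h]^W\otimes\mathbb C[\mathfrak h^*]^W$ is central \cite[\S8.6]{EtingofMa}. I would cite it rather than reprove it; a direct check would use that $[y_\xi,s]$ for $s\in S$ acts as a Dunkl commutator, which vanishes on invariants at $t=0$. An element $s\in S$ acts on $1$ by $s$, so $S\hookrightarrow\mathfrak Z_{\mathbf k}\to S$ is the identity. Hence $\chi_{\mathbf k}$ is a split surjection.

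\emph{Geometry.} The split surjection gives a closed embedding $\Lambda_{\mathbf k}=\operatorname{Spec}S\hookrightarrow X_{\mathbf k}$. The composite with the projection $X_{\mathbf k}\to\mathfrak h/W=\operatorname{Spec}S$ is induced by $S\to\mathfrak Z_{\mathbf k}\to S$, which is the identity, so $\Lambda_{\mathbf k}$ is a section. Closed subschemes of $\Lambda_{\mathbf k}$ correspond to ideals of $S$. The subscheme $\operatorname{Spec}(S/J^{\rm pol}_{\alpha,m})$ therefore has ambient ideal $\chi_{\mathbf k}^{-1}(J^{\rm pol}_{\alpha,m})$, because surjectivity gives $\mathfrak Z_{\mathbf k}/\chi_{\mathbf k}^{-1}(J)\cong S/J$.

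\emph{Support.} In the nonzero finite-dimensional case, Theorem~\ref{rc:thm:cyclotomic} shows $S/J$ is graded local with maximal ideal the image of $S_+$. Its unique point is therefore $S_+$, which is the point of the section above $0\in\mathfrak h/W$.

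I expect the only step needing care to be the centrality of $S$ in $H_{0,\mathbf k}(W)$ for arbitrary parameters, not just $\mathbf k\in\mathfrak K^\circ$. This is why I rely on the standard center theory rather than on surjectivity onto $NH_W$.
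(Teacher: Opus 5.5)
Your proof is correct and follows the paper's argument essentially step by step. A central element acts on $P$ by multiplication by $z\cdot 1$, and commutation with $W$ puts this element in $S$; the algebra-map, kernel, splitting, section and support statements then follow exactly as you describe.

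The only difference is in how $S\subseteq\mathfrak Z_{\mathbf k}$ is obtained. The paper derives, by induction on degree from the defining relations, the identity $[y_\xi,f]=-\sum_{\alpha\in\Phi^+}k_\alpha\alpha(\xi)\frac{f-s_\alpha(f)}{x_\alpha}s_\alpha$ in $H_{0,\mathbf k}(W)$ itself, and this vanishes for $f\in S$; you instead cite the standard centrality fact. That citation is fine, but your suggested ``direct check'' must likewise be carried out in the algebra rather than through the Dunkl action, since the polynomial representation need not be faithful at $t=0$.
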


\begin{proof}
The defining relations, by induction on polynomial degree,
give at $t=0$
\[
 [y_\xi,f]=
 -\sum_{\alpha\in\Phi^+}k_\alpha\alpha(\xi)
       \frac{f-s_\alpha(f)}{x_\alpha}s_\alpha
       \qquad(f\in P).
\]
Hence $S\subseteq\mathfrak Z_{\mathbf k}$.
If $z\in\mathfrak Z_{\mathbf k}$, its action on $P$
commutes with every multiplication operator. It is therefore
multiplication by $z\cdot1$. Commutation with $W$ implies
$z\cdot1\in S$. This proves that $\chi_{\mathbf k}$
is an algebra map, and multiplication by an element of
$S$ gives its splitting. Such a central element kills
$P$ exactly when $z\cdot1=0$, proving the kernel formula.
The statements about spectra follow by taking quotients.
The support assertion follows from the graded local
description in Theorem~\ref{rc:thm:cyclotomic}.
\end{proof}

The polynomial module for $A_{\alpha,m}$ is
$P/J^{\rm pol}_{\alpha,m}P$, free of rank $|W|$ over
its center. Via~\eqref{ch:eq:polynomial-map}, its central
annihilator is exactly
$\chi_{\mathbf k}^{-1}(J^{\rm pol}_{\alpha,m})$.
For $\mathbf k\in\mathfrak K^\circ$ it is the faithful
column module of the matrix quotient in~\eqref{ch:eq:quotient}.
Thus the central subscheme description records the full
content ideal, including its nonreduced structure.
It does not require a smoothness or Lagrangian assertion
about the ambient Calogero--Moser space.

\begin{example}[Rank one]
For $W=\{1,s\}$ acting by $s(x)=-x$, relation
\eqref{ch:eq:relations} is $[y,x]=-2ks$ at $t=0$, and
\[
 \phi_k(y)=-k\frac{1-s}{x}=-k\partial.
\]
Thus surjectivity is immediate for $k\ne0$.
The quotient by the polynomial annihilator and $x^m$
is the matrix algebra in Example~\ref{ex:a1-centre}.
For instance, at $m=4,5$ its center is
$\mathbb C[c]/(c^2)$, $c=x^2$, giving a nonreduced
length-two subscheme on the section $\Lambda_k$.
\end{example}

\subsection{Why the finite-dimensional interpretation uses
\texorpdfstring{$t=0$}{t=0}}

\begin{proposition}[An obstruction at nonzero $t$]
\label{ch:prop:trace-obstruction}
Assume $\operatorname{rank}\Phi>0$.
No nonzero finite-dimensional $NH_W$-module admits an
$H_{t,\mathbf k}(W)$-action for $t\ne0$ extending its
given polynomial and $W$-actions.
\end{proposition}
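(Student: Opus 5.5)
The natural tool is a trace argument applied to the Cherednik commutation relation. Suppose $M\neq0$ is a finite-dimensional $NH_W$-module carrying an $H_{t,\mathbf k}(W)$-action with $t\ne0$. The compatibility hypothesis means that $x_\lambda$ acts as the given operator $M_{x_\lambda}$ and $w\in W$ acts by the given $W$-action; the $y_\xi$ act by some operators $Y_\xi\in\End_{\mathbb C}(M)$ about which we know nothing except the defining relations. Fix $\xi\in\mathfrak h$ and $\lambda\in V$, and take traces of both sides of the mixed relation~\eqref{ch:eq:relations} on $M$. The left side is a commutator, so its trace vanishes. We obtain
\[
 0=t\,\lambda(\xi)\dim M
 -\sum_{\alpha\in\Phi^+}k_\alpha\alpha(\xi)
   \langle\lambda,\alpha^\vee\rangle\operatorname{tr}_M(s_\alpha).
\]

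The key point is that $\operatorname{tr}_M(s_\alpha)$ is controlled by the nil-Hecke structure, not by $\mathbf k$. Using $s_\alpha=1-x_\alpha\partial_\alpha$ in $NH_W$ (a conjugate of the identity $s_i=1-x_{\alpha_i}\partial_i$, as noted before~\eqref{ch:eq:polynomial-map}), we need $\operatorname{tr}_M(x_\alpha\partial_\alpha)$. I would show that $x_\alpha$ acts nilpotently on any finite-dimensional $NH_W$-module. The cleanest route is that $S_+$ acts nilpotently, and then $x_\alpha^{|W|}$ satisfies its characteristic polynomial over $S$ (since $P$ is integral over $S$), so $x_\alpha^n\in S_+P$ for large $n$. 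Nilpotence of $S_+$ on $M$ follows from centrality of $S$ in $NH_W\cong\Mat_d(S)$: $M$ decomposes into generalized eigenspaces for the central subalgebra $S$, and each eigenvalue is a maximal ideal of $S$. Here a grading would help, but $M$ need not be graded, so instead I would use the $\Mat_d(S)$-structure. Finite-dimensional simple modules of $\Mat_d(S)$ are $\mathbb C^d$ over points $\mathfrak m$ of $\operatorname{Spec}S$, so $\operatorname{tr}_M$ of any $NH_W$-element is a sum over composition factors of its fiber traces at points $\bar v\in V^*/W$.

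If the eigenvalue is not $0$, the argument changes but still works. At a simple factor indexed by a point, the module is $P\otimes_S\mathbb C_{\mathfrak m}=\mathbb C[W\cdot v]$-type fiber, and $x_\lambda$ acts with trace $\sum_{u\in W v}\lambda(u)$-type values. To avoid this case analysis, I would instead take the trace of the relation with $\lambda$ replaced by a general choice and sum appropriately. Set $\lambda=\xi$ under the identification and sum over an orthonormal basis, or simply choose $\lambda$ in a way that makes the $s_\alpha$-terms cancel. That requires knowing $\operatorname{tr}_M(s_\alpha)$.

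Rather than control $\operatorname{tr}_M(s_\alpha)$, I would use the trace of $x_\alpha\partial_\alpha$ only through $\operatorname{tr}(s_\alpha)=\dim M-\operatorname{tr}(x_\alpha\partial_\alpha)$. On each fiber module $\mathbb C^d=P\otimes_S\mathbb C_{\mathfrak m}$ this is the trace of a reflection on the regular representation $P\otimes_S\mathbb C_{\mathfrak m}$, which is free $W$-equivariantly only at generic points. Still, the coinvariant-type fiber is always isomorphic to the regular representation of $W$ as a $W$-module, because $P\cong S\otimes\mathcal R_W$ is $W$-equivariantly free. So $\operatorname{tr}(s_\alpha)=0$ on every composition factor, and hence $\operatorname{tr}_M(s_\alpha)=0$. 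This gives the main step with no nilpotence needed: $M$ has a composition series by $\Mat_d(S)$-simples, each carrying the regular $W$-representation, since $W\subset NH_W$ acts through the $S$-linear action on $P\otimes_S\mathbb C_{\mathfrak m}$. Substituting, we get $0=t\,\lambda(\xi)\dim M$. Choose $\lambda(\xi)\neq0$, which is possible since the rank is positive. Then $t\ne0$ forces $\dim M=0$, a contradiction.

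The main obstacle is justifying that each finite-dimensional simple $NH_W$-module is a fiber $P\otimes_S\mathbb C_{\mathfrak m}$ on which $W$ acts by the regular representation. This uses Lemma~\ref{lem:completed-nilhecke} (Morita equivalence with $S$ and the fact that simples are $\mathbb C^d$ at maximal ideals), together with the $W$-equivariant freeness $P\cong S\otimes\mathcal R_W$ with $\mathcal R_W$ regular.
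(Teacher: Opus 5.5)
Your proposal is correct and is essentially the paper's argument. You take traces in the mixed relation, and every reflection has trace zero because $W$ acts on any finite-dimensional $NH_W$-module through copies of the regular representation, by $P\cong S\otimes\mathcal R_W$; this leaves $0=t\,\lambda(\xi)\dim M$. The only difference is that you get the regular-representation statement from composition factors $P\otimes_S\mathbb C_{\mathfrak m}$ and additivity of traces, whereas the paper uses the Morita equivalence to write $M\cong P\otimes_S L$ directly. Your detours about nilpotence of $x_\alpha$ and choosing $\lambda$ to cancel the $s_\alpha$-terms are unnecessary and can be deleted.
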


\begin{proof}
The harmonic decomposition gives a $W$-equivariant
$S$-module isomorphism $P\cong S\otimes\mathcal R_W$,
where $\mathcal R_W$ is the regular $W$-module
\cite{Steinberg}. By the nil-Hecke Morita equivalence,
every finite-dimensional module is
$P\otimes_S L$ for some finite-dimensional $S$-module $L$.
Its restriction to $W$ is therefore a direct sum of
regular representations. In particular, every reflection
has trace zero. If the asserted action existed, taking
traces in~\eqref{ch:eq:relations} would give
\[
 0=t\lambda(\xi)\dim_{\mathbb C}M.
\]
Choose $\lambda(\xi)=1$. This contradicts $t\ne0$
and $M\ne0$ over $\mathbb C$.
\end{proof}

\section{Reflection subgroups and finite differential models}
\label{str:sec:structural-transfer}

The content identity applies to any homogeneous invariant ideal.
This gives a comparison between reflection groups before any
type-specific calculation. We then describe the directional
filtration and a finite complex that computes the relations and
socles of the centers. These constructions keep track of the
module structure as well as the Hilbert function.

\subsection{Content for invariant ideals}

Let $W'\subseteq W$ be a subgroup generated by reflections, acting
on the same space $V$. Put $S_W=P^W$ and $S_{W'}=P^{W'}$.
Choose products of defining linear forms for their reflecting
hyperplanes so that
\[
 \Delta_W=\Delta_{W'}\delta.
\]
The factors are determined up to nonzero scalars. Their degrees
are denoted by $N_W,N_{W'}$, and $e=N_W-N_{W'}$, respectively.
The quotient $\delta$ is $W'$-invariant, since both discriminants
transform by the determinant character on $W'$.

For a homogeneous $W$-stable ideal $I\subseteq P$, define
\[
 J_W(I)=\bigl(\varphi(f):f\in I,\ 
                         \varphi\in\Hom_{S_W}(P,S_W)\bigr),
 \qquad Z_W(I)=S_W/J_W(I),
\]
and define $J_{W'}(I)$ and $Z_{W'}(I)$ in the same way. For
$I=I_{\alpha,m}$, the ideal $J_W(I)$ equals
$J^{\rm pol}_{\alpha,m}$. Indeed, content is constant on the
$W$-orbit of $x_\alpha^m$, and multiplying an element of $P$
cannot enlarge its content ideal.

\begin{theorem}[Reflection-subgroup transfer]
\label{str:thm:subgroup-transfer}
With the preceding notation,
\begin{equation}\label{str:eq:general-content}
 \Delta_W J_W(I)=I\cap P^{\det_W},\qquad
 \Delta_{W'}J_{W'}(I)=I\cap P^{\det_{W'}}.
\end{equation}
Consequently,
\begin{equation}\label{str:eq:subgroup-colon}
 J_W(I)=\bigl(J_{W'}(I):_{S_{W'}}\delta\bigr)\cap S_W.
\end{equation}
Multiplication by $\delta$ gives an injective $S_W$-linear map
\[
 Z_W(I)\longrightarrow Z_{W'}(I)
\]
raising degrees by $e$.

Assume in addition that $W'$ is normal in $W$, and put
$\Gamma=W/W'$. Then $\delta$ is a relative invariant for a
character $\chi:\Gamma\to\{1,-1\}$. If a superscript $\chi$
denotes the subspace on which $\gamma$ acts as multiplication
by $\chi(\gamma)$, then
\begin{equation}\label{str:eq:subgroup-character}
 S_{W'}^\chi=\delta S_W,\qquad
 Z_W(I)_k\xrightarrow{\ \sim\ }
                   Z_{W'}(I)_{k+e}^\chi.
\end{equation}
The second map is a graded $S_W$-module identification with the
indicated degree shift. It does not identify multiplication with
the ordinary product of elements in the character component.
\end{theorem}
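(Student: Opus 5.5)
We first prove \eqref{str:eq:general-content}, then deduce the colon formula, and finally treat the normal case. For $W$ we repeat the argument of Theorem~\ref{str:thm:alternation} with $x_\alpha^m$ replaced by an arbitrary $f\in I$. The divided-alternation pairing $(f,g)\mapsto\operatorname{tr}_\Phi(fg)$ is perfect over $S_W$. Hence $J_W(I)$ is generated by the elements $\operatorname{Alt}_W(fg)/\Delta_W$ with $f\in I$ and $g\in P$. Since $I$ is an ideal, these are the elements $\operatorname{Alt}_W(h)/\Delta_W$ with $h\in I$. As $I$ is $W$-stable and we are in characteristic zero, $\operatorname{Alt}_W(I)=I\cap P^{\det_W}$. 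This gives the first identity.

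For $W'$ the same argument works, with one extra input. A reflection subgroup acting on $V$ has polynomial invariants $S_{W'}$ (Chevalley), and $P$ is free over $S_{W'}$. Anti-invariants for $W'$ are divisible by $\Delta_{W'}$. The trace pairing is perfect by the same degree argument on the coinvariant algebra of $W'$. If $W'$ does not act essentially, it acts on its essential part tensored with a polynomial ring on which it acts trivially, and the statement persists. The ideal $I$ is $W'$-stable, so the second identity follows.

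For \eqref{str:eq:subgroup-colon}, take $s\in S_W$. Then $s\in J_W(I)$ if and only if $\Delta_W s\in I$, because $\Delta_Ws$ is automatically $W$-anti-invariant. Now $\Delta_W s=\Delta_{W'}\delta s$, and $\delta s\in S_{W'}$. By the $W'$ identity, $\Delta_{W'}(\delta s)\in I$ if and only if $\delta s\in J_{W'}(I)$. This proves the colon formula. The map $s\mapsto\delta s$ sends $J_W(I)$ into $J_{W'}(I)$, and the colon formula is exactly injectivity on the quotients. The degree shift is $\deg\delta=e$.

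In the normal case, each $\gamma\in W$ normalizes $W'$ and so permutes its reflecting hyperplanes. Therefore $\gamma\Delta_{W'}=c_\gamma\Delta_{W'}$ with $c_\gamma=\pm1$, and $W'$ acts on $\Delta_{W'}$ by $\det$. It follows that $\delta$ transforms under $W$ by $\chi:=\det\cdot c^{-1}$, which is trivial on $W'$. Thus $\chi$ is a character of $\Gamma$, with values $\pm1$ because $\delta^2$ is $W$-invariant.

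We next show $S_{W'}^\chi=\delta S_W$. Take $u\in S_{W'}^\chi$. Then $\Delta_{W'}u$ is $W$-anti-invariant, since its character is $c\chi=\det$. Hence $\Delta_{W'}u=\Delta_W s=\Delta_{W'}\delta s$ for some $s\in S_W$, so $u=\delta s$. The reverse inclusion is clear.

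Surjectivity onto the $\chi$-component comes from exactness of isotypic projection for the finite group $\Gamma$. Since $J_{W'}(I)$ is $\Gamma$-stable (as $I$ and $W'$ are), we have $Z_{W'}(I)^\chi=S_{W'}^\chi/J_{W'}(I)^\chi$. Applying $S_{W'}^\chi=\delta S_W$ to $J_{W'}(I)^\chi=J_{W'}(I)\cap S_{W'}^\chi$ gives $J_{W'}(I)^\chi=\delta\,(J_{W'}(I):\delta)\cap\delta S_W=\delta J_W(I)$. Hence $Z_{W'}(I)^\chi=\delta S_W/\delta J_W(I)$, and this is isomorphic to $Z_W(I)$ via $\delta$ with the degree shift $e$.

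The main obstacle is the $W'$ version of the perfect trace pairing, because $W'$ may be reducible or non-essential. We expect to handle it by the essential-part reduction above.
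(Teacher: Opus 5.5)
Your proposal is correct and follows the paper's proof in all essentials: the perfect divided-alternation pairing for both $W$ and $W'$, the chain $s\in J_W(I)\Leftrightarrow\Delta_W s\in I\Leftrightarrow\delta s\in J_{W'}(I)$ for the colon formula, and exactness of the $\chi$-projection for the final identification. The one local difference is the proof of $S_{W'}^\chi=\delta S_W$. You multiply by $\Delta_{W'}$ and use $P^{\det_W}=\Delta_W S_W$. The paper instead shows that every reflection outside $W'$ negates $\delta$, so elements of $S_{W'}^\chi$ vanish on the complementary hyperplanes. Your version is slightly shorter; the sign $c_\gamma=\pm1$ is cleanest justified by noting that $c$ is a character of a group generated by involutions.
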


\begin{proof}
The divided-alternation pairing for $P/S_W$ is perfect, as in
the proof of Theorem~\ref{str:thm:alternation}. The same proof
applies to $W'$, including a reducible reflection subgroup or a
subgroup with a fixed subspace. Every functional on $P$ is
therefore multiplication by a polynomial followed by divided
alternation. Since $I$ is an ideal, its content is the image of
its divided alternants. Taking the determinant component is
exact in characteristic zero. This proves
\eqref{str:eq:general-content}.

For $f\in S_W$, these identities give
\[
 \begin{aligned}
 f\in J_W(I)
 &\Longleftrightarrow \Delta_W f\in I\\
 &\Longleftrightarrow \Delta_{W'}\delta f\in I\\
 &\Longleftrightarrow \delta f\in J_{W'}(I).
 \end{aligned}
\]
This proves the colon formula and the injectivity assertion.

Now suppose that $W'$ is normal. The set of reflecting
hyperplanes of $W'$ is $W$-stable, so $\Delta_{W'}$ is a
relative invariant under $W$. Hence so is $\delta$. Its
character is trivial on $W'$, and therefore factors through
$\Gamma$; call the resulting character $\chi$.
For a reflection $s$ in $W'$, one has $s\delta=\delta$.
If $s$ is a reflection in $W\setminus W'$, its hyperplane is
not a reflecting hyperplane of $W'$. Evaluating the relative
invariant $\Delta_{W'}$ at a generic point on that hyperplane
shows that $s\Delta_{W'}=\Delta_{W'}$. Thus $s\delta=-\delta$.
Since reflections generate $W$, the character takes values
in $\{1,-1\}$.

If $h\in S_{W'}^\chi$, then $h$ changes sign under every
reflection in $W\setminus W'$. It vanishes on each corresponding
hyperplane and is divisible by their product $\delta$.
The quotient $h/\delta$ is $W$-invariant. This proves
$S_{W'}^\chi=\delta S_W$.
The ideal $J_{W'}(I)$ is $\Gamma$-stable: this follows either
from its content definition or from
\eqref{str:eq:general-content}. Exactness of the character
projection now gives
\[
 Z_{W'}(I)^\chi
 =\delta S_W/\bigl(J_{W'}(I)\cap\delta S_W\bigr).
\]
The colon formula identifies the denominator with
$\delta J_W(I)$, proving~\eqref{str:eq:subgroup-character}.
\end{proof}

When $W'\alpha=W\alpha$, both groups have the same root-power
ideal, so the theorem compares their individual root-power
centers. This hypothesis is essential if one writes
$J^{W'}_{\alpha,m}$ for the content of a single root power.
For a general subgroup the theorem still applies to the common
$W$-stable ideal, which can contain several $W'$-orbits.

\begin{corollary}[Transfer of equivariant resolutions]
\label{str:cor:resolution-transfer}
Assume $W'$ is normal, and let
\[
 \cdots\longrightarrow F_1\longrightarrow F_0
       \longrightarrow Z_{W'}(I)\longrightarrow0,
 \qquad F_i=S_{W'}\otimes_{\mathbb C}E_i,
\]
be a graded $\Gamma$-equivariant free resolution, with finite
graded generator spaces $E_i$. Taking the $\chi$ component
and lowering every degree by $e$ gives an $S_W$-free resolution
of $Z_W(I)$.

More explicitly, choose a graded $W$-stable space $\mathcal H_W$
of coinvariant lifts in $P$. The free generator space in its
$i$th term, before the degree shift, is
\[
 \bigl(\mathcal H_W^{W'}\otimes E_i\bigr)^\chi.
\]
The transferred resolution need not be minimal; scalar entries
must be cancelled before reading off the minimal Betti numbers.
\end{corollary}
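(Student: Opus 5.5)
The plan is to apply the exact functor of $\chi$-isotypic projection to the given resolution, and then identify each term with a graded free $S_W$-module. Since $\Gamma=W/W'$ is finite and we are in characteristic zero, the projection $M\mapsto M^\chi$ is exact on $\Gamma$-equivariant $S_{W'}$-modules. Applied to the resolution it gives an exact complex
\[
 \cdots\to F_1^\chi\to F_0^\chi\to Z_{W'}(I)^\chi\to 0 .
\]
Each term is an $S_W$-module, because $S_W=S_{W'}^\Gamma$ acts $\Gamma$-equivariantly, and the differentials are $S_W$-linear. By~\eqref{str:eq:subgroup-character}, the augmentation target is $Z_W(I)$ shifted up by $e$. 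Lowering all degrees by $e$ therefore yields a complex resolving $Z_W(I)$. What remains is to show that each $F_i^\chi$ is $S_W$-free, with the stated generator space.

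For freeness, I would first show that $S_{W'}$ is a free graded $S_W$-module with a $\Gamma$-stable basis. Take the graded $W$-stable space $\mathcal H_W$ of coinvariant lifts. Lemma~\ref{lem:completed-nilhecke} (via Steinberg's harmonic decomposition) gives a $W$-equivariant isomorphism $P\cong S_W\otimes\mathcal H_W$. Taking $W'$-invariants, which is exact and commutes with tensoring by the $W$-invariant factor $S_W$, gives
\[
 S_{W'}\cong S_W\otimes\mathcal H_W^{W'}.
\]
This is $\Gamma$-equivariant, since $W'$ is normal and so $\Gamma$ acts on $\mathcal H_W^{W'}$. Hence
\[
 F_i=S_{W'}\otimes E_i\cong S_W\otimes(\mathcal H_W^{W'}\otimes E_i),
\]
with $\Gamma$ acting only on the second factor because $S_W$ is $\Gamma$-fixed. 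Taking the $\chi$ component gives
\[
 F_i^\chi\cong S_W\otimes(\mathcal H_W^{W'}\otimes E_i)^\chi,
\]
which is free on the displayed generator space. The generator space is finite-dimensional and graded, so after the shift by $-e$ this is a graded free $S_W$-resolution.

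The main point needing care is the equivariance of $P\cong S_W\otimes\mathcal H_W$. The multiplication map $S_W\otimes\mathcal H_W\to P$ is $W$-equivariant because $\mathcal H_W$ is $W$-stable and $S_W$ is invariant. It is bijective by graded Nakayama, since $\mathcal H_W$ maps isomorphically onto $P/S_{W,+}P$ and $P$ is free over $S_W$. For non-minimality, I would simply note that the differentials of $F_\bullet^\chi$ may have nonzero scalar entries. These arise when $\mathcal H_W^{W'}$ contributes a degree-zero generator $1$ paired with generators from two homological degrees. Such entries must be cancelled before reading off Betti numbers, and no minimality claim is made.
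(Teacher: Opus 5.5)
Your proposal is correct and follows essentially the same route as the paper: exact $\chi$-projection of the given resolution, the $\Gamma$-equivariant identification $S_{W'}\cong S_W\otimes\mathcal H_W^{W'}$ obtained by taking $W'$-invariants of $P\cong S_W\otimes\mathcal H_W$, and Theorem~\ref{str:thm:subgroup-transfer} for the augmentation term. Your explanation of where scalar entries come from is a little loose. They arise when an entry of the original differential, multiplied by one basis element of $\mathcal H_W^{W'}$, has a nonzero constant coefficient on another, e.g.\ an entry $v$ in the $A_2\subset G_2$ transfer; but the statement only asserts that minimality may fail, so nothing more is required.
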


\begin{proof}
A $W$-equivariant graded section of the coinvariant quotient
exists by averaging. The usual homogeneous-basis argument gives
$P\cong S_W\otimes\mathcal H_W$. Taking $W'$-invariants gives
\[
 S_{W'}\cong S_W\otimes\mathcal H_W^{W'}
\]
as graded $\Gamma$-modules, since $\Gamma$ acts trivially on
$S_W$. It follows that every $F_i^\chi$ is free over $S_W$
with the stated generator space. Character projection is exact,
and Theorem~\ref{str:thm:subgroup-transfer} identifies the last
term after shifting. Equivariant resolutions can be obtained
by choosing equivariant generator lifts successively, using
averaging at each step. Exactness does not exclude degree-zero
entries after restriction of scalars, so minimality requires
the stated cancellations.
\end{proof}

\begin{corollary}[The $D_n$ to $B_n$ comparison]
\label{str:cor:dn-bn-transfer}
Let $n\geq3$, with $D_3$ understood as $A_3$. Use
$P=\mathbb C[\xi_1,\ldots,\xi_n]$ and
$\alpha=\epsilon_1-\epsilon_2$. Put
\[
 c_j=e_j(\xi_1^2,\ldots,\xi_n^2),\qquad
 \eta=\xi_1\cdots\xi_n.
\]
For the two Weyl groups,
\[
 S_D=\mathbb C[c_1,\ldots,c_{n-1},\eta],\qquad
 S_B=\mathbb C[c_1,\ldots,c_{n-1},\eta^2].
\]
If $J^D_{\alpha,m}$ and $J^B_{\alpha,m}$ denote their
polynomial content ideals, then
\begin{equation}\label{str:eq:dn-bn-colon}
 J^B_{\alpha,m}=(J^D_{\alpha,m}:\eta)\cap S_B,
 \qquad
 Z_B(\alpha,m)_k\cong Z_D(\alpha,m)_{k+n}^{-},
\end{equation}
where the superscript $-$ means the negative eigenspace for
$\eta\mapsto-\eta$. For odd $n$ this gives
\[
 H_{Z_B(\alpha,m)}(q)
 =q^{-n}\frac{H_{Z_D(\alpha,m)}(q)
                  -H_{Z_D(\alpha,m)}(-q)}2.
\]
For even $n$, the eigenspace is still defined by the sign of
$\eta$ and must not be replaced by total-degree parity.
The same comparison applies to the short-root orbit in $C_n$.
\end{corollary}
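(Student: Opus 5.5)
The plan is to apply Theorem~\ref{str:thm:subgroup-transfer} with $W=W(B_n)$ and $W'=W(D_n)$, acting on $V=\mathbb C^n$ with coordinates $\xi_i$. First I would verify the hypotheses. $W(D_n)$ is a normal reflection subgroup of index two in $W(B_n)$. Every long root $\pm\epsilon_i\pm\epsilon_j$ of $B_n$ is a root of $D_n$, and $W(D_n)$ already acts transitively on these roots for $n\geq3$. Hence $W'\alpha=W\alpha$, and the two groups have the same ideal $I_{\alpha,m}$. By the remark following Theorem~\ref{str:thm:subgroup-transfer}, their content ideals $J_W(I_{\alpha,m})$ and $J_{W'}(I_{\alpha,m})$ are the root-power content ideals $J^B_{\alpha,m}$ and $J^D_{\alpha,m}$. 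The discriminants factor as
\[
 \Delta_B=\Bigl(\prod_{i<j}(\xi_i^2-\xi_j^2)\Bigr)\xi_1\cdots\xi_n=\Delta_D\,\eta,
\]
so $\delta=\eta$ and $e=n$.

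The invariant rings are the classical ones: $S_B=\mathbb C[c_1,\ldots,c_n]$ with $c_n=\eta^2$, and $S_D=\mathbb C[c_1,\ldots,c_{n-1},\eta]$. The quotient $\Gamma=W/W'$ has order two. It is generated by the class of a sign change $\xi_1\mapsto-\xi_1$, which fixes each $c_j$ and sends $\eta\mapsto-\eta$. So $\chi$ is the sign character, and $S_D^\chi$ is exactly the $\eta$-odd part $\eta S_B$. Substituting these into \eqref{str:eq:subgroup-colon} and \eqref{str:eq:subgroup-character} gives both formulas in \eqref{str:eq:dn-bn-colon}. For $n=3$ the root system $D_3$ is $A_3$, and the same identifications hold.

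For the Hilbert series with $n$ odd, I would use that the generator $\gamma$ of $\Gamma$ acts on $S_D$ by $c_j\mapsto c_j$ and $\eta\mapsto-\eta$. Since $\deg c_j=2j$ is even and $\deg\eta=n$ is odd, $\gamma$ acts on each homogeneous piece $(S_D)_k$ by $(-1)^k$. This action descends to $Z_D(\alpha,m)$ because $J^D_{\alpha,m}$ is $\Gamma$-stable. Therefore $Z_D(\alpha,m)^-$ is the odd-degree part, whose Hilbert series is $\tfrac12\bigl(H(q)-H(-q)\bigr)$, and the shift by $n$ contributes the factor $q^{-n}$. For $n$ even, $\eta$ has even degree, so degree parity no longer detects the sign of $\eta$. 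This is the caveat recorded in the statement.

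For $C_n$, I would observe that its short roots $\pm\epsilon_i\pm\epsilon_j$, its Weyl group, its invariant ring and its discriminant up to a scalar all coincide with the $B_n$ long-root data. The same argument therefore applies verbatim. The only step needing real care is checking that $W(D_n)$ is transitive on the long roots, so that the content ideals match; this is standard.
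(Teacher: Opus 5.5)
Your proposal is correct and follows essentially the same route as the paper. Both arguments apply Theorem~\ref{str:thm:subgroup-transfer} to the normal index-two subgroup $W(D_n)\subseteq W(B_n)$, using the common orbit $\{\pm\epsilon_i\pm\epsilon_j\}$, the relative discriminant $\delta=\eta$, and the quotient character given by the sign of $\eta$; for odd $n$ that sign agrees with total-degree parity, and your added checks (transitivity of $W(D_n)$ on these roots for $n\geq3$, and the $C_n$ data agreeing with $B_n$ up to scalars) only make explicit what the paper's short proof asserts.
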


\begin{proof}
The even signed-permutation group $W(D_n)$ is normal of index
two in $W(B_n)$. Both groups have the orbit
$\{\pm\epsilon_i\pm\epsilon_j:i<j\}$ on $\alpha$.
Their discriminants have ratio $\eta$, and the quotient group
fixes each $c_j$ and changes the sign of $\eta$.
Apply Theorem~\ref{str:thm:subgroup-transfer}. When $n$ is odd,
this action agrees with total-degree parity because the $c_j$
have even degrees and $\eta$ has degree $n$.
The Weyl action and chosen root direction in $C_n$ are the same.
\end{proof}

\subsection{Several root orbits}

The intrinsic quotients of Section~\ref{rc:sec:cyclotomic}
also fit the discriminant description. In an irreducible root
system it is enough to impose one exponent on each selected
Weyl orbit of roots: within a fixed orbit, only the smallest
imposed exponent matters.

\begin{theorem}[Simultaneous directional bounds]
\label{str:thm:multi-orbit}
Let $\alpha_1,\ldots,\alpha_s$ represent distinct Weyl orbits
of roots in an irreducible root system, and let $m_j\geq1$.
Put
\[
 I_{\mathbf m}=\sum_{j=1}^s I_{\alpha_j,m_j},\qquad
 J_{\mathbf m}=\sum_{j=1}^s J^{\rm pol}_{\alpha_j,m_j},
 \qquad Z_{\mathbf m}=S/J_{\mathbf m}.
\]
The corresponding polynomial root-power quotient is
$\Mat_{|W|}(Z_{\mathbf m})$. It is nonzero exactly when
$m_j\geq\nu_{\alpha_j}$ for every $j$. In this range put
$b_j=m_j-\nu_{\alpha_j}$ and
\[
 \mathcal E_{\mathbf b}
 =\{H\in T^W:\deg_{w\alpha_j}H\leq b_j
                   \text{ for every }w\in W,\ 1\leq j\leq s\}.
\]
There are perfect pairings
\begin{equation}\label{str:eq:multi-orbit-duality}
 Z_{\mathbf m,k}\times\mathcal E_{\mathbf b,k}
 \longrightarrow\mathbb C,\qquad
 (\bar f,H)\longmapsto
       (\Delta_\Phi f)(\mathsf d)(\Delta_\Phi^\vee H)(0).
\end{equation}
Moreover,
\[
 J_{\mathbf m}=\Ann_S(\mathcal E_{\mathbf b})
\]
for the conjugated action $\mathscr D$ of
Proposition~\ref{str:prop:conjugated-action}. If some $b_j=0$,
then $Z_{\mathbf m}=\mathbb C$.
\end{theorem}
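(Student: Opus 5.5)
The proof will run the single-orbit arguments in parallel and then add over orbits. The first step uses the matrix description of Theorem~\ref{rc:thm:cyclotomic} with $\Xi=\{\alpha_1,\ldots,\alpha_s\}$. It identifies the quotient with $\Mat_{|W|}(S/J_{\mathbf m})$. It also shows nonvanishing exactly when each $m_j\geq\nu_{\alpha_j}$, with $\nu_{\alpha_j}$ computed in Theorem~\ref{rc:thm:threshold}. Only one exponent per orbit is needed, since content is $W$-invariant: $J^{\rm pol}_{w\alpha,m}=J^{\rm pol}_{\alpha,m}$.

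Next I will extend the alternation identity to the sum. Theorem~\ref{str:thm:subgroup-transfer} applies to the $W$-stable ideal $I_{\mathbf m}$ and gives $\Delta_\Phi J_W(I_{\mathbf m})=I_{\mathbf m}\cap P^{\det}$. The content of a sum of ideals is the sum of contents, so $J_W(I_{\mathbf m})=J_{\mathbf m}$. As in the proof of Theorem~\ref{str:thm:rootpower-geometry}(i), exactness of the determinant projection then gives $Z_{\mathbf m,k}\cong(P/I_{\mathbf m})^{\det}_{N+k}$. The apolar pairing is perfect in each degree. The orthogonal complement of $(I_{\mathbf m})_n$ is the intersection of the complements of the $(I_{\alpha_j,m_j})_n$, namely the forms killed by all $\mathsf d_{w\alpha_j}^{m_j}$. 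Its anti-invariant part is therefore $\bigcap_j\mathcal D_{\alpha_j,m_j}$, paired with $Z_{\mathbf m}$ via \eqref{str:eq:rootpower-pairing}.

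For discriminant removal, I write each anti-invariant form uniquely as $\Delta_\Phi^\vee H$ with $H\in T^W$. The degree additivity $\deg_\beta(\Delta_\Phi^\vee H)=\nu_{\alpha_j}-1+\deg_\beta H$ for $\beta\in W\alpha_j$ is the step from the proof of Theorem~\ref{str:thm:discriminant-removal}. Applying it orbit by orbit gives $\bigcap_j\mathcal D_{\alpha_j,m_j}=\Delta_\Phi^\vee\mathcal E_{\mathbf b}$, and hence the perfect pairing \eqref{str:eq:multi-orbit-duality}.

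For the annihilator statement, $S(\mathsf d)$ preserves each $\mathcal D_{\alpha_j,m_j}$, so it preserves their intersection. The adjunction $\langle\Delta_\Phi sf,F\rangle=\langle\Delta_\Phi f,s(\mathsf d)F\rangle$ identifies this intersection with the graded dual of the regular module $S/J_{\mathbf m}$. Transporting by $\Delta_\Phi^\vee$, as in Proposition~\ref{str:prop:conjugated-action}, gives $J_{\mathbf m}=\Ann_S(\mathcal E_{\mathbf b})$ for $\mathscr D$.

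Finally, suppose some $b_j=0$. Then every $H\in\mathcal E_{\mathbf b}$ has vanishing derivatives in all directions of $W\alpha_j$. These directions span $V$ by irreducibility, so $H$ is constant. Thus $\mathcal E_{\mathbf b}=\mathbb C$ and $Z_{\mathbf m}=\mathbb C$, using nonvanishing from the first step.

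The main point needing care is the additivity of content under sums, $J_W(\sum_j I_j)=\sum_j J_W(I_j)$, which I must match with the paper's definition of $J_{\mathbf m}$. It follows directly from the definition of $J_W(I)$ as generated by values $\varphi(f)$ with $f\in I$: each $f\in I_{\mathbf m}$ is a sum of elements of the $I_j$, and each $\varphi$ is additive. Everything else is a direct repetition of the single-orbit proofs.
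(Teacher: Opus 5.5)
Your proposal is correct and follows essentially the same route as the paper. You take the matrix description and threshold from Theorem~\ref{rc:thm:cyclotomic}, identify the inverse system of the sum as the intersection of the single-orbit inverse systems, remove the discriminant orbit by orbit, transport the differential action, and use the spanning argument when some $b_j=0$; your explicit use of the identity $\Delta_W J_W(I)=I\cap P^{\det_W}$ from Theorem~\ref{str:thm:subgroup-transfer}, together with additivity of content over sums of ideals, just spells out what the paper compresses into ``the content identity and apolarity.''
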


\begin{proof}
The matrix description follows from
Theorem~\ref{rc:thm:cyclotomic}.
The exponent criterion is the criterion in that theorem.
Alternatively, below a threshold one summand of $J_{\mathbf m}$
is the unit ideal. If all thresholds are met, all summands
are homogeneous proper ideals of $S$ and their sum lies in $S_+$.

The inverse system of a sum of ideals is the intersection of
their inverse systems. Taking the determinant component and
using Theorem~\ref{str:thm:discriminant-removal} identifies it
with $\Delta_\Phi^\vee\mathcal E_{\mathbf b}$. The content
identity and apolarity give~\eqref{str:eq:multi-orbit-duality}.
Transporting the differential action proves the annihilator
statement. If $b_j=0$, all derivatives of $H$ along
$W\alpha_j$ vanish. This orbit spans $V$, so $H$ is constant.
Finally, content ideals are constant on a root orbit and
$J^{\rm pol}_{\alpha,m+1}\subseteq J^{\rm pol}_{\alpha,m}$.
Thus repeated conditions on the same orbit reduce to the
smallest exponent, as asserted before the theorem.
\end{proof}

\subsection{The directional filtration}

Assume that $\Phi$ is irreducible. For a fixed root $\alpha$, set
\[
 \mathcal F_r T^W=\{H\in T^W:\deg_\alpha H\leq2r\},
 \qquad r\geq0,\qquad \mathcal F_{-1}T^W=0.
\]
This is an exhaustive multiplicative filtration. Its degree-zero
part is $\mathbb C$: invariance propagates a zero directional
degree to the spanning orbit $W\alpha$. The filtration index
is separate from the ordinary polynomial grading.

\begin{proposition}[Leading coefficients and the root stabilizer]
\label{str:prop:leading-stabilizer}
Let $W_\alpha=\{w\in W:w\alpha=\alpha\}$. Write
$v=u+t\alpha$ with $u\in\alpha^\perp$. If
$H\in\mathcal F_rT^W$, let $a_r(H)$ be the coefficient of
$t^{2r}$ in $H(u+t\alpha)$. Then
\begin{equation}\label{str:eq:leading-embedding}
 \operatorname{gr}_{\mathcal F}T^W
 \lhook\joinrel\longrightarrow
 \mathbb C[\alpha^\perp]^{W_\alpha}[\zeta],\qquad
 [H]_r\longmapsto a_r(H)\zeta^r
\end{equation}
is an injective bigraded algebra map, where $\zeta$ has
polynomial degree two and filtration degree one.
\end{proposition}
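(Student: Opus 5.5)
The plan is to verify four things directly from the definitions: that $a_r$ is well defined, that it lands in the stabilizer invariants of $\alpha^\perp$, that it is multiplicative, and that it is injective on associated graded pieces. The filtration is by $\deg_\alpha H\leq 2r$, so first I will check that $\deg_\alpha H$ is even for every nonzero $H\in T^W$. Otherwise the coefficient of $t^{2r}$ would not capture the leading part. Since $s_\alpha\in W$ and $s_\alpha(u+t\alpha)=u-t\alpha$ for $u\in\alpha^\perp$, invariance gives $H(u+t\alpha)=H(u-t\alpha)$. Hence only even powers of $t$ occur, and $\deg_\alpha H=2r$ exactly when $a_r(H)\neq0$. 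In particular $a_r$ vanishes on $\mathcal F_{r-1}T^W$, so it descends to a linear map on $\operatorname{gr}_r$ that is injective by construction.

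Next I will show that $a_r(H)$, viewed as a polynomial function on $\alpha^\perp$, is $W_\alpha$-invariant. For $w\in W_\alpha$, $w$ preserves $\alpha^\perp$ because $W$ is orthogonal, and it fixes $\alpha$. So $H(wu+t\alpha)=H(w(u+t\alpha))=H(u+t\alpha)$, and comparing coefficients of $t^{2r}$ gives $a_r(H)(wu)=a_r(H)(u)$. Homogeneity is bookkeeping: if $H$ has polynomial degree $n$, then $a_r(H)$ has degree $n-2r$ on $\alpha^\perp$. Thus $a_r(H)\zeta^r$ has polynomial degree $n$ when $\deg\zeta=2$, and filtration degree $r$. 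This matches the bigrading.

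For the algebra structure I will take $H\in\mathcal F_r$ and $K\in\mathcal F_s$. The product $HK$ lies in $\mathcal F_{r+s}$, which also confirms that the filtration is multiplicative. The coefficient of $t^{2r+2s}$ in $H(u+t\alpha)K(u+t\alpha)$ is $a_r(H)a_s(K)$, because no higher terms occur in either factor. This coefficient may vanish, which just means $[H]_r[K]_s=0$ in $\operatorname{gr}$; since $\mathbb C[\alpha^\perp]$ is a domain, that product of leading coefficients is nonzero whenever both are. The unit maps to $1$.

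Every step is elementary, and there is no real obstacle. The only point needing care is the evenness of $\deg_\alpha H$, which comes from $s_\alpha$. That evenness is what makes the indexing by $2r$ and the degree of $\zeta$ consistent, and what makes the map on $\operatorname{gr}$ injective rather than merely well defined.
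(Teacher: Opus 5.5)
Your proposal is correct and follows essentially the same route as the paper. The paper also uses $s_\alpha$ to show only even powers of $t$ occur, so the kernel of $a_r$ on $\mathcal F_rT^W$ is $\mathcal F_{r-1}T^W$. It then uses that $W_\alpha$ fixes $\alpha$ and preserves $\alpha^\perp$, that leading coefficients multiply, and the degree count $d-2r$ for the bigrading.
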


\begin{proof}
The reflection $s_\alpha$ fixes $u$ and replaces $t$ by $-t$.
Thus every invariant has only even powers of $t$ in these
coordinates. Its $t^{2r}$ coefficient is zero exactly when
it belongs to $\mathcal F_{r-1}T^W$. This proves that the map
is well defined and injective. Every element of $W_\alpha$
fixes the direction $\alpha$ and preserves $\alpha^\perp$,
so the coefficient lies in the stated invariant ring.
Leading coefficients multiply, proving the algebra assertion.
For a homogeneous $H$ of degree $d$, its coefficient has
degree $d-2r$, giving the asserted bigrading.
\end{proof}

\begin{proposition}[Filtered polynomial structure]
\label{str:prop:filtered-invariants}
Under the hypotheses of
Proposition~\ref{str:prop:weighted-invariants},
\[
 \mathcal F_r T^W
 =\operatorname{span}_{\mathbb C}
       \{f_1^{a_1}\cdots f_l^{a_l}:\textstyle\sum_j\omega_ja_j\leq r\}.
\]
Its Rees algebra and associated graded algebra have presentations
\begin{equation}\label{str:eq:rees-invariants}
 \bigoplus_{r\geq0}(\mathcal F_rT^W)t^r
   =\mathbb C[t,f_1t^{\omega_1},\ldots,f_lt^{\omega_l}],
 \qquad
 \operatorname{gr}_{\mathcal F}T^W
   \cong\mathbb C[\bar f_1,\ldots,\bar f_l].
\end{equation}
All displayed generators are algebraically independent. The
bidegrees are $(0,1)$ for $t$ and $(d_j,\omega_j)$ for the
other generators. The Rees algebra is flat over $\mathbb C[t]$,
with fibers $T^W$ at $t=1$ and
$\operatorname{gr}_{\mathcal F}T^W$ at $t=0$.
If the nonconstant leading coefficients $a_2,\ldots,a_l$
generate $\mathbb C[\alpha^\perp]^{W_\alpha}$, then the
embedding~\eqref{str:eq:leading-embedding} becomes an equality
after inverting $\zeta$.
\end{proposition}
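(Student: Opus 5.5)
The plan is to derive everything from the weighted-degree identity already established in Proposition~\ref{str:prop:weighted-invariants}. Under its hypotheses, every nonzero $F$ satisfies
\[
 \deg_\alpha F(f_1,\ldots,f_l)=2\deg_\omega F.
\]
\emph{Spanning.} Take $H\in T^W$ and write it uniquely as $H=F(f_1,\ldots,f_l)$. Then $H\in\mathcal F_rT^W$ exactly when $\deg_\omega F\leq r$. This holds exactly when $F$ is a combination of monomials of weight at most $r$, which gives the spanning description of $\mathcal F_rT^W$. Since the $f_j$ are algebraically independent, these monomials form a basis of $\mathcal F_rT^W$.

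\emph{Rees algebra.} The graded piece of degree $r$ in $t$ of $\bigoplus_r(\mathcal F_rT^W)t^r$ is spanned by $f^{\mathbf a}t^r=t^{\,r-\sum\omega_ja_j}\prod_j(f_jt^{\omega_j})^{a_j}$ with $\sum_j\omega_ja_j\leq r$. So the Rees algebra equals $\mathbb C[t,f_1t^{\omega_1},\ldots,f_lt^{\omega_l}]$. For algebraic independence, note that distinct monomials $t^c\prod_j(f_jt^{\omega_j})^{a_j}$ correspond to distinct pairs $(\mathbf a,\,c+\sum_j\omega_ja_j)$. These are linearly independent in $T^W[t]$ because the $f^{\mathbf a}$ are. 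The same monomial basis shows that the Rees algebra is a free $\mathbb C[t]$-module, hence flat. Its fiber at $t=1$ is $T^W$; this is standard, since $t-1$ inverts the filtration. Its fiber at $t=0$ is $\operatorname{gr}_{\mathcal F}T^W$.

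\emph{Associated graded.} Dividing the polynomial ring $\mathbb C[t,u_1,\ldots,u_l]$ by $t$ yields $\mathbb C[\bar f_1,\ldots,\bar f_l]$, with $\bar f_j=[f_j]_{\omega_j}$ algebraically independent. Alternatively, under \eqref{str:eq:leading-embedding} the classes $\bar f_j$ map to $a_j\zeta^{\omega_j}$. The hypothesis that $a_1$ is a nonzero constant and $a_2,\ldots,a_l$ are algebraically independent, combined with the weight argument of Proposition~\ref{str:prop:weighted-invariants}, shows that the images $a_j\zeta^{\omega_j}$ are algebraically independent. The bidegrees are read off directly: $f_j$ has degree $d_j$ and filtration index $\omega_j$, and $t$ has bidegree $(0,1)$.

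\emph{Equality after inverting $\zeta$.} The image of the embedding is $\mathbb C[a_1\zeta,a_2\zeta^{\omega_2},\ldots,a_l\zeta^{\omega_l}]$. Since $a_1$ is a nonzero scalar, inverting $\zeta$ makes $\zeta^{\pm1}$ available, and then each $a_j$ with $j\geq2$ lies in the localized image. If these $a_j$ generate $\mathbb C[\alpha^\perp]^{W_\alpha}$, the localized image contains $\mathbb C[\alpha^\perp]^{W_\alpha}[\zeta^{\pm1}]$, which is the localization of the target.

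The main obstacle is the algebraic independence of the Rees generators including $t$. This is handled by the bijection between monomials and pairs $(\mathbf a,r)$ described above, which reduces it to the independence of the $f^{\mathbf a}$ in $T^W$.
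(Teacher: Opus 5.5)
Your proposal is correct and follows the paper's proof essentially step for step. The shared steps are: the description of $\mathcal F_rT^W$ from the noncancellation identity of Proposition~\ref{str:prop:weighted-invariants}; the unique rewriting $t^{r-\sum_j\omega_ja_j}\prod_j(f_jt^{\omega_j})^{a_j}$, which gives generation and, by comparing powers of $t$ and monomials in the $f_j$, independence; the two specializations of the resulting polynomial algebra over $\mathbb C[t]$; and recovery of each $a_j$ from $a_1\zeta$ and $a_j\zeta^{\omega_j}$ once $\zeta$ is inverted. Your additional check of the independence of the $\bar f_j$ through the embedding~\eqref{str:eq:leading-embedding} is valid but not needed.
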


\begin{proof}
The equality for $\mathcal F_r$ is the noncancellation
conclusion of Proposition~\ref{str:prop:weighted-invariants}.
Each filtered monomial multiplied by $t^r$ can be written
uniquely as
\[
 t^{r-\sum_j\omega_ja_j}
       \prod_j(f_jt^{\omega_j})^{a_j}.
\]
This proves generation. Independence follows from the
independence of $f_1,\ldots,f_l$ in $T^W[t]$, by comparing
the powers of $t$ and the monomials in the $f_j$.
The Rees algebra is therefore a polynomial algebra over
$\mathbb C[t]$. Its two specializations give the assertions.
For the final statement, the image contains $a_1\zeta$,
with $a_1\in\mathbb C^\times$, and all
$a_j\zeta^{\omega_j}$. After inverting $\zeta$ it therefore
contains each $a_j$, and hence the entire stated target.
\end{proof}

This proposition describes the invariant filtration whose finite
pieces are dual to the centers. It does not assert a flat
degeneration of the center algebras to monomial quotients:
their multiplication is encoded by the conjugated differential
action, not by multiplication inside $\mathcal F_rT^W$.

\subsection{Finite complexes for relations and socles}

Fix a nonzero root-power center $Z=S/J$, with
$S=\mathbb C[c_1,\ldots,c_l]$ and $\deg c_i=d_i$.
Let $E=\mathcal E_{\alpha,m-\nu_\alpha}$ and put
$D_i=\mathscr D_{c_i}|_E$. Thus
$D_i:E_k\to E_{k-d_i}$, and the $D_i$ commute.
The same construction works for $E=\mathcal E_{\mathbf b}$
and the simultaneous quotient in
Theorem~\ref{str:thm:multi-orbit}.

\begin{proposition}[A differential complex for the graded Betti numbers]
\label{str:prop:betti-complex}
For each integer $j$, form the cochain complex
\[
 C^i(E)_j=\bigoplus_{\substack{A\subseteq\{1,\ldots,l\}\\|A|=i}}
            E_{j-d_A},\qquad d_A=\sum_{a\in A}d_a,
\]
For $b\notin A$, the differential from the $A$ component to
the $A\cup\{b\}$ component is
\[
 (-1)^{\#\{a\in A:a<b\}}D_b.
\]
Then
\begin{equation}\label{str:eq:betti-complex}
 H^i(C^\bullet(E)_j)
 \cong\operatorname{Tor}_i^S(Z,\mathbb C)_j^*.
\end{equation}
In particular, $H^1$ determines the degrees and numbers of
minimal generators of $J$. If $d_\Sigma=\sum_i d_i$, then
\begin{equation}\label{str:eq:socle-complex}
 (\operatorname{Soc}Z)_k^*
 \cong H^l(C^\bullet(E)_{k+d_\Sigma})
 \cong\frac{E_k}{\sum_iD_i(E_{k+d_i})}.
\end{equation}
Thus the minimal Betti numbers and the graded socle can be
computed by ranks of finite matrices on the explicit invariant
inverse system.
\end{proposition}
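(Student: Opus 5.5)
The plan is to dualize the Koszul resolution of $\mathbb C$ over $S$. Let $K_\bullet=S\otimes\Lambda^\bullet(e_1,\ldots,e_l)$ be the Koszul complex on $c_1,\ldots,c_l$, with $\deg e_i=d_i$. Since the $c_i$ form a regular sequence in the polynomial ring $S$, $K_\bullet$ is a minimal graded free resolution of $\mathbb C$. Hence $\operatorname{Tor}^S_i(Z,\mathbb C)_j=H_i(Z\otimes_SK_\bullet)_j$. The $i$-th term is $\bigoplus_{|A|=i}Z(-d_A)$, and its degree-$j$ piece is $\bigoplus_{|A|=i}Z_{j-d_A}$. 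The differential from the $A$ component to the $A\setminus\{b\}$ component is multiplication by $\pm c_b$, with the usual Koszul sign $(-1)^{\#\{a\in A:a<b\}}$.

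Next, apply graded vector-space duality degreewise. Since every $Z_k$ is finite-dimensional (Theorem~\ref{str:thm:rootpower-geometry}(i)), dualization is exact, and homology of the dual complex is the dual of homology. By Theorem~\ref{str:thm:discriminant-removal} and Proposition~\ref{str:prop:conjugated-action}, we have $Z_k^*\cong E_k$. Under the pairing \eqref{str:eq:rootpower-pairing}, transported through $\mathcal D_{\alpha,m}=\Delta_\Phi^\vee E$, the transpose of multiplication by $c_b\colon Z_{k-d_b}\to Z_k$ is $D_b\colon E_k\to E_{k-d_b}$. This is the identity $\langle\Delta_\Phi sf,F\rangle=\langle\Delta_\Phi f,s(\mathsf d)F\rangle$ from the proof of that theorem.

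There is one bookkeeping point to settle. As written, the dual of $Z_{j-d_A}$ is $E_{j-d_A}$, and the transposed maps go from the $A\setminus\{b\}$ component to the $A$ component, so they raise $|A|$ and give a cochain complex. However, $D_b$ lowers degree, whereas the indexing $E_{j-d_A}$ with $j$ fixed would require the map $E_{j-d_{A\setminus b}}\to E_{j-d_A}$ to lower degree by exactly $d_b$. It does: $j-d_{A\setminus b}-d_b=j-d_A$. So the dual complex is literally $C^\bullet(E)_j$, with the transposed signs matching those in the statement. This yields \eqref{str:eq:betti-complex}. The statement about $H^1$ follows because $\operatorname{Tor}_1^S(Z,\mathbb C)\cong J/S_+J$ records the minimal generators of $J$.

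For the socle, use the standard fact $\operatorname{Tor}^S_l(Z,\mathbb C)\cong H_l(Z\otimes K)$. This group is the set of elements of $Z(-d_\Sigma)$ killed by all the $c_i$, namely $\operatorname{Soc}Z$ shifted by $d_\Sigma$. In degree $k+d_\Sigma$ it is $(\operatorname{Soc}Z)_k$. Dually, $H^l(C^\bullet(E))_{k+d_\Sigma}$ is the cokernel of $\bigoplus_iE_{k+d_i}\to E_k$, which gives \eqref{str:eq:socle-complex}. This agrees with Theorem~\ref{str:thm:rootpower-geometry}(iv).

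The only delicate step is this degree and sign bookkeeping in the dualization; everything else is formal. For the simultaneous case $E=\mathcal E_{\mathbf b}$, the same argument applies using the pairing \eqref{str:eq:multi-orbit-duality} of Theorem~\ref{str:thm:multi-orbit}.
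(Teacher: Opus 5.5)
Your proposal is correct and follows the paper's proof: both identify $C^\bullet(E)_j$ as the degreewise dual of the Koszul complex $Z\otimes_S K_\bullet$ through the adjunction $\langle c_if,H\rangle=\langle f,D_iH\rangle$ coming from the discriminant-weighted pairing. Both then read off $\operatorname{Tor}$, $J/S_+J$ (which uses $J\subseteq S_+$, i.e.\ $Z\neq0$) and the socle from the top Koszul homology. Your explicit check of the degree shifts and signs is a useful addition, and it makes $d^2=0$ automatic without separately invoking commutativity of the $D_i$.
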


\begin{proof}
The pairing in Theorem~\ref{str:thm:rootpower-geometry}, after
discriminant removal, identifies $E_k$ with $Z_k^*$.
For $f\in Z$ and $H\in E$, it satisfies
\[
 \langle c_i f,H\rangle=\langle f,D_iH\rangle.
\]
The displayed cochain complex is therefore the degreewise
vector-space dual of the Koszul complex on
$c_1,\ldots,c_l$ with coefficients in $Z$. Commutativity
of the $D_i$ makes its differential square to zero.
The Koszul complex resolves the residue field over $S$,
giving~\eqref{str:eq:betti-complex}. Since $J\subseteq S_+$,
$\operatorname{Tor}_1^S(Z,\mathbb C)=J/S_+J$.
The last Koszul homology is the socle, with degrees raised
by $d_\Sigma$. Dualizing its last differential proves
\eqref{str:eq:socle-complex}.
\end{proof}

The homological construction is the usual dual Koszul complex.
Its use here is that the spaces $E_k$ have the explicit bases
provided by the directional calculation, while each $D_i$ is
the concrete operator
$(\Delta_\Phi^\vee)^{-1}c_i(\mathsf d)\Delta_\Phi^\vee$.
This provides one finite procedure for computing the algebraic
invariants left open by a Hilbert-series calculation.

\section{Centers in the classical root systems}
\label{str:sec:classical-centers}

We determine the Hilbert series for every classical root orbit
and every exponent. Type $A_n$ also admits an exact classification
of the self-injective quotients, and the coordinate-root families
in $B_n,C_n$ have Grassmannian centers. For the remaining
classical roots, adapted invariant generators turn the directional
filtration into a weighted monomial filtration. We then use the
conjugated differential action to determine the first nonsemisimple
center algebras, including the exceptional socle dimension in $D_4$.
We continue over $\mathbb C$ and put $a=\lfloor m/2\rfloor$.
For a finite-dimensional graded algebra $Z$ we write
$H_Z(q)=\sum_k(\dim_{\mathbb C}Z_k)q^k$.

\subsection{Type \texorpdfstring{$A_n$}{An}: Hilbert series and self-injectivity}

\begin{theorem}[Type $A$ in every rank and exponent]
\label{str:thm:an-centers}
Let $\Phi=A_n$, $n\geq1$, and let $\alpha$ be any root.
The quotient $A_{\alpha,m}$ is nonzero exactly when $a\geq n$.
In that range its center has Hilbert series and dimension
\begin{equation}\label{str:eq:an-center-hilbert}
 \begin{split}
 H_{Z(\alpha,m)}(q)
 &=\sum_{b_2+\cdots+b_{n+1}\leq a-n}
                  q^{\,2b_2+3b_3+\cdots+(n+1)b_{n+1}},\\
 \dim Z(\alpha,m)&=\binom an,
 \qquad
 \dim A_{\alpha,m}=((n+1)!)^2\binom an.
 \end{split}
\end{equation}
The sum is over nonnegative integers. If $n\geq2$, the
nonzero center is Gorenstein exactly when $a=n$.
Equivalently, $A_{\alpha,m}$ is self-injective, Frobenius, or
symmetric exactly when $m=2n$ or $2n+1$.
For $n=1$ all nonzero centers are
$\mathbb C[c_2]/(c_2^a)$, with $\deg c_2=2$.
\end{theorem}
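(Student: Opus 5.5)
The plan has two parts: the Hilbert series, obtained from Proposition~\ref{str:prop:weighted-invariants} with the power sums as weighted invariant basis, and the Gorenstein classification, obtained from the socle criterion of Theorem~\ref{str:thm:rootpower-geometry}(iv).

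\emph{Hilbert series.} Realize $A_n$ in the hyperplane $\sum y_i=0$ of $\mathbb C^{n+1}$ and take $\alpha=\epsilon_1-\epsilon_2$. Use the power sums $f_j=p_j$, $2\le j\le n+1$, restricted to $V$; they generate $T^W$. Translate by $t\alpha$ and write $y_1\mapsto y_1+t$, $y_2\mapsto y_2-t$. The top $t$-coefficient of $p_j$ is then $(1+(-1)^j)t^j$. This fails the hypothesis $\deg_\alpha f_j=2\omega_j$ with $\omega_j=1$ for odd $j$. For odd $j$ the $t^{j-1}$ coefficient is $j(y_1+y_2)$, so $\deg_\alpha=j-1$, which is still not $2$. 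I would therefore replace the power sums by adapted generators. Let $u=y_1+y_2$, let $e_k'$ be the elementary symmetric functions of $y_3,\dots,y_{n+1}$, and let $\pi=y_1y_2$. Then $e_j=\pi e'_{j-2}+u e'_{j-1}+e'_j$. Under translation, $\pi\mapsto\pi+t(y_2-y_1)-t^2$, while $u$ and the $e'_k$ are fixed. Hence $\deg_\alpha e_j=2$ for every $j\ge2$, with leading coefficient $-e'_{j-2}$. These coefficients are a nonzero constant for $j=2$ and $-e'_1,\dots,-e'_{n-1}$ for $3\le j\le n+1$. On $\alpha^\perp\cap V$ the coordinate $u$ equals $-e'_1$, and the functions $e'_1,\dots,e'_{n-1}$ restricted there are algebraically independent, since the variables $y_3,\dots,y_{n+1}$ have $n-1$ free coordinates once $y_1=y_2$. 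Proposition~\ref{str:prop:weighted-invariants} then applies with all $\omega_j=1$ and $d_j=j$. The monomial basis $\sum b_j\le r=a-n$ gives \eqref{str:eq:an-center-hilbert}, and stars and bars gives the dimension $\binom{a}{n}$, using $\nu_\alpha=2n$ from Theorem~\ref{rc:thm:threshold}. The remaining claims follow quickly: nonvanishing is Theorem~\ref{rc:thm:cyclotomic}, $\dim A$ comes from the matrix form, the $n=1$ case is Example~\ref{ex:a1-centre}, and parity is Theorem~\ref{str:thm:parity}.

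\emph{Gorenstein classification.} At $a=n$ the center is $\mathbb C$. For $a>n$ and $n\ge2$ I would show that the socle has dimension at least two, using \eqref{str:eq:socle-complex}: $\operatorname{Soc}Z_k^*\cong E_k/\sum D_i E_{k+d_i}$. The plan is to exhibit two independent elements of $E=\mathcal E_{\alpha,2(a-n)}$ that are not in the image of any $D_i$. The top polynomial degree of $E$ is $(n+1)(a-n)$, spanned by $e_{n+1}^{a-n}$. This degree is automatically socle, since nothing in $E$ lies above it. I would then look for a second socle class in degree $(n+1)(a-n)-1$, or perhaps in degree $n(a-n)$ spanned by monomials in $e_2,\dots,e_n$. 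The key check is whether $D_i=\mathscr D_{c_i}$, the conjugated operators, can map the higher part of $E$ onto it. The first candidate is degree $(n+1)(a-n)-1$: its $E$-piece is spanned by $e_ne_{n+1}^{a-n-1}$, and it is hit only via $D_1$ (degree $2$) from degree $(n+1)(a-n)+1$, which is empty. So it is socle whenever it is nonzero, which holds for $a>n$ and $n\ge2$, because then $e_n\ne e_{n+1}$. This gives two socle dimensions, hence non-Gorenstein.

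The main obstacle is confirming the adapted-generator leading-coefficient computation, namely the independence of $e'_1,\dots,e'_{n-1}$ on $\alpha^\perp$ after imposing $\sum y_i=0$. The degree bookkeeping in the socle step is simple, but the step must also use that the minimal degree of $S_+$ is $2$.
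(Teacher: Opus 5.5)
Your proposal is correct. The Hilbert-series part follows the paper's own proof: the elementary symmetric generators $e_2,\dots,e_{n+1}$ have directional degree two with leading coefficients $-e'_{j-2}$, so all weights are $1$ and discriminant removal gives the monomial basis with $\sum b_j\le a-n$.

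Your non-Gorenstein step uses a slightly different, more direct argument. The paper invokes palindromicity of Gorenstein Hilbert series: with $D=(n+1)(a-n)$, the $q^1$ coefficient is zero while the $q^{D-1}$ coefficient, coming from $e_ne_{n+1}^{a-n-1}$, is not. You instead note that $S_+$ starts in degree $2$, so $Z_D$ and $Z_{D-1}\ne0$ both lie in the socle. This rests on the same numerical fact but avoids the perfect-pairing lemma and exhibits the two socle degrees explicitly.
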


\begin{proof}
Use dual coordinates to realize
\[
 T=\mathbb C[z_1,\ldots,z_{n+1}]/(z_1+\cdots+z_{n+1}),
 \qquad W=\mathfrak S_{n+1},
\]
with $W$ permuting the coordinates. If $e_j$ is the elementary
symmetric polynomial of degree $j$, then
$T^W=\mathbb C[e_2,\ldots,e_{n+1}]$.
Choose the root direction $\alpha=\epsilon_1-\epsilon_2$.
Translation in this direction replaces $z_1,z_2$ by
$z_1+t,z_2-t$ and fixes the other coordinates.
Let $e_j'$ be elementary symmetric in $z_3,\ldots,z_{n+1}$,
with $e_0'=1$. The identity
\[
 e_j=e_j'+(z_1+z_2)e_{j-1}'+z_1z_2e_{j-2}'
\]
shows that the coefficient of $t^2$ in the translated $e_j$
is $-e_{j-2}'$ for $2\leq j\leq n+1$.

Write a nonzero invariant as $H=f(e_2,\ldots,e_{n+1})$, and
let $L$ be the ordinary total degree of $f$ in its $n$
arguments, each argument being assigned degree one for this
purpose. If $f_L$ is its highest part for that degree, the
coefficient of $t^{2L}$ in $H(v+t\alpha)$ is
\[
 f_L(-1,-e_1',\ldots,-e_{n-1}').
\]
This coefficient is nonzero. Indeed, $e_1',\ldots,e_{n-1}'$
are algebraically independent, and a nonzero homogeneous
polynomial cannot vanish identically after its first argument
is set to $-1$. To see the latter assertion, scale any point
with nonzero first coordinate to that hyperplane; vanishing
there would imply vanishing on a dense open set. Hence
\begin{equation}\label{str:eq:an-directional-degree}
 \deg_\alpha H=2\deg_{\rm tot} f.
\end{equation}
Weyl invariance gives the same bound in every root direction.

Here $\nu_\alpha=2n$ by
Corollary~\ref{geo:cor:minimal-orbit}, since
$h^\vee(A_n)=n+1$. Theorem~\ref{str:thm:discriminant-removal}
and~\eqref{str:eq:an-directional-degree} show that the
invariant polynomials indexing the dual center are precisely
\[
 e_2^{b_2}\cdots e_{n+1}^{b_{n+1}},
 \qquad b_2+\cdots+b_{n+1}\leq a-n.
\]
This proves the Hilbert series. The number of these monomials
is $\binom an$, and $|W|=(n+1)!$ gives the algebra dimension.
The nonvanishing assertion also follows from the threshold.

Suppose $n\geq2$ and $a>n$, and put $r=a-n$.
The largest degree in~\eqref{str:eq:an-center-hilbert} is
$D=(n+1)r$, with coefficient one. The coefficient of $q$ is
zero, but that of $q^{D-1}$ is nonzero: the monomial
$e_n e_{n+1}^{r-1}$ contributes to it.
An Artinian graded local Gorenstein algebra has a perfect
multiplication pairing between degrees $k$ and $D-k$.
For completeness, a nonzero homogeneous element generates an
ideal meeting the one-dimensional socle in degree $D$;
therefore its product with some element of complementary
degree is nonzero. This proves perfection and symmetry of
the Hilbert series. The two coefficients just found violate
that symmetry, so the center is not Gorenstein.
At $a=n$ its Hilbert series is $1$, so it is $\mathbb C$.
The equivalences in Theorem~\ref{str:thm:rootpower-geometry}
\textup{(iv)} prove the asserted classification.
For $n=1$, $P=\mathbb C[x]$ is free over
$S=\mathbb C[x^2]$ with basis $1,x$; the content of
$x^{2a}$ is $((x^2)^a)$. Parity gives the final assertion.
\end{proof}

\subsection{Coordinate roots in types \texorpdfstring{$B_n$ and $C_n$}{Bn and Cn}}

\begin{theorem}[Grassmannian centers in every rank]
\label{str:thm:bc-grassmannian}
Let $n\geq2$. Suppose that $\Phi=B_n$ and $\alpha$ is short,
or that $\Phi=C_n$ and $\alpha$ is long. Then $A_{\alpha,m}$
is nonzero exactly when $a\geq n$. Choose invariant coordinates
$c_1,\ldots,c_n$, of degrees $2,4,\ldots,2n$, and define
$h_b$ by
\begin{equation}\label{str:eq:bc-complete-symmetric}
 \sum_{b\geq0}h_b u^b
 =(1-c_1u+c_2u^2-\cdots+(-1)^n c_nu^n)^{-1}.
\end{equation}
For $a\geq n$ the polynomial content ideal and center are
\begin{equation}\label{str:eq:bc-grassmannian-center}
 \begin{split}
 J^{\rm pol}_{\alpha,m}&=(h_{a-n+1},\ldots,h_a),\\
 Z(\alpha,m)&\cong
 \mathbb C[c_1,\ldots,c_n]/(h_{a-n+1},\ldots,h_a)
 \cong H^*(\operatorname{Gr}(n,a);\mathbb C).
 \end{split}
\end{equation}
The last isomorphism preserves the displayed degrees and the
ordinary cohomological grading. In particular,
\begin{equation}\label{str:eq:bc-center-hilbert}
 \begin{split}
 H_{Z(\alpha,m)}(q)
 &=\prod_{j=1}^n\frac{1-q^{2(a-n+j)}}{1-q^{2j}},\\
 \dim Z(\alpha,m)&=\binom an,
 \qquad
 \dim A_{\alpha,m}=(2^n n!)^2\binom an.
 \end{split}
\end{equation}
Every nonzero center is a complete intersection, with
one-dimensional socle in degree $2n(a-n)$.
Every corresponding nil-Hecke quotient is symmetric.
\end{theorem}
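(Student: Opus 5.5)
The plan is to compute the content ideal directly through an intermediate invariant ring. The Hilbert series then comes out as a consistency check, using the weighted-basis criterion of Proposition~\ref{str:prop:weighted-invariants}.

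\emph{Setup.} Take coordinates $x_1,\ldots,x_n$ on $P$, with $W$ the signed permutation group in both cases. The root orbit consists of the coordinate lines: $\alpha=\epsilon_1$ in $B_n$ and $\alpha=2\epsilon_1$ in $C_n$, and these give the same $x_\alpha$ up to a scalar. Put $y_i=x_i^2$ and $c_j=e_j(y_1,\ldots,y_n)$. Then \eqref{str:eq:bc-complete-symmetric} gives $h_b=h_b(y_1,\ldots,y_n)$. By Corollary~\ref{geo:cor:minimal-orbit} and the threshold table, $\nu_\alpha=2n$, so nonvanishing holds exactly for $a\geq n$. By Theorem~\ref{str:thm:parity} we may take $m=2a$.

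\emph{Computing the content.} Let $W_1\subseteq W$ be the subgroup of signed permutations that preserve the line of $x_1$. Then $B':=P^{W_1}=S[y_1]$. Since $y_1$ is a root of $\prod_i(Y-y_i)=\sum_j(-1)^jc_jY^{n-j}$, the ring $B'$ is free over $S$ with basis $1,y_1,\ldots,y_1^{n-1}$. Moreover $P$ is free over $B'$ with a homogeneous basis containing $1$.

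For $b\in B'$, every $S$-functional on $B'$ extends to $P$ by composing with the $B'$-coordinate of the basis vector $1$. Conversely, every $S$-functional on $P$ restricts to $B'$. Hence $c_S(b)$ computed in $P$ equals $c_S(b)$ computed in $B'$. By Lemma~\ref{str:lem:content}, $J^{\rm pol}_{\alpha,2a}$ is therefore generated by the coordinates $g_0,\ldots,g_{n-1}\in S$ in $y_1^a=\sum_{i<n}g_iy_1^i$.

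The key step, and the one I expect to need the most care, is identifying these coordinates. I would use the generating-function identity
\[
 \sum_{k\geq0}y_1^k u^k\cdot\prod_i(1-y_iu)=\prod_{i\geq2}(1-y_iu).
\]
Iterating the reduction $y_1^n=\sum_j(-1)^{j+1}c_jy_1^{n-j}$ should then give
\[
 g_{n-1-i}=\sum_{j=0}^{i}(-1)^je_j'\,h_{a-n+1+i-j},
\]
where $e_j'$ denotes elementary symmetric functions of $y_2,\ldots,y_n$. After the substitution $e_j'=\sum_k(-1)^kc_{j-k}h_k$, these coordinates become unitriangular combinations of $h_{a-n+1},\ldots,h_a$ with coefficients in $S$. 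I would first verify this by the Lagrange-interpolation formula $h_{a-n+1}=\sum_i y_i^a/\prod_{k\neq i}(y_i-y_k)$ and its analogues, to make sure the triangularity is over $S$ and not merely over $B'$. This yields $J^{\rm pol}_{\alpha,m}=(h_{a-n+1},\ldots,h_a)$.

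\emph{Hilbert series and duality.} The quotient $\mathbb C[c_1,\ldots,c_n]/(h_{a-n+1},\ldots,h_a)$ is the standard Borel presentation of $H^*(\operatorname{Gr}(n,a))$, with $c_j$ the Chern classes of the tautological rank-$n$ bundle, in cohomological degree $2j$. This gives the Gaussian-binomial Hilbert series and the dimension $\binom an$.

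As an independent check, translating $z_1\mapsto z_1+t$ gives $c_j=e_j'+z_1^2e_{j-1}'$. So each $c_j$ has directional degree $2$, with leading coefficients $1,e_1',\ldots,e_{n-1}'$, which are algebraically independent. Proposition~\ref{str:prop:weighted-invariants} with all $\omega_j=1$ then counts monomials of total degree at most $a-n$, namely partitions in an $(a-n)\times n$ box, which matches.

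The $n$ generators cut out a finite quotient of an $n$-variable polynomial ring, so the center is a complete intersection. Its socle lies in degree $2\bigl(\sum_i(a-n+i)-\sum_i i\bigr)=2n(a-n)$. Theorem~\ref{str:thm:rootpower-geometry}(iv) then gives that $A_{\alpha,m}$ is symmetric, and $|W|=2^nn!$ gives the dimension of $A_{\alpha,m}$.
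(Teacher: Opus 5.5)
Your overall route is the paper's. Your $B'=P^{W_1}$ is its ring $R_0=S[t]$. Your comparison of contents via the $B'$-linear projection onto the coordinate of the basis vector $1$ is a cleaner form of its basis-containing-$1$ argument. The threshold, Grassmannian presentation, complete-intersection, socle-degree and symmetry steps also match.

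The problem is the key step, which is wrong as written. Your generating-function identity is correct, but what it yields is
\[
 y_1^a=\sum_{j=0}^{n-1}(-1)^j e_j'\,h_{a-j}.
\]
This is not yet an expansion in the $S$-basis $1,y_1,\ldots,y_1^{n-1}$, because $e_j'\notin S$. The coordinate formula you then state, with $e_j'$ as coefficients, is false. For $n=a=2$ it gives $g_0=h_2-e_1'h_1=y_1^2$, whereas $y_1^2=c_1y_1-c_2$ forces $g_0=-c_2$. Your proposed repair $e_j'=\sum_k(-1)^kc_{j-k}h_k$ is also false. Since $\bigl(\sum_j(-1)^jc_ju^j\bigr)\bigl(\sum_k h_ku^k\bigr)=1$, its right side equals $0$ for every $j\geq1$. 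As written, the triangularity over $S$ is therefore not established.

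The fix comes from the identity you already have. Expanding $\prod_{i\geq2}(1-y_iu)=\prod_i(1-y_iu)\cdot(1-y_1u)^{-1}$ gives
\[
 e_j'=\sum_{k=0}^{j}(-1)^k c_{j-k}\,y_1^{k}.
\]
So in your substitution, $h_k$ must be the complete symmetric function of $y_1$ alone, namely $y_1^k$. Substitute this into the display above and collect the coefficient of $y_1^{n-1-i}$. This is legitimate, since $e_j'$ has $y_1$-degree $j\leq n-1$. You obtain
\[
 g_{n-1-i}=\sum_{k=0}^{i}(-1)^k c_k\,h_{a-n+1+i-k}\qquad(0\leq i\leq n-1).
\]
This is unitriangular over $S$ in $h_{a-n+1},\ldots,h_a$ with diagonal coefficient $1$, and it is exactly the paper's remainder formula. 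With it, the remainder of your argument goes through unchanged.

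A smaller point: a monic relation of degree $n$ only shows that $1,y_1,\ldots,y_1^{n-1}$ span $B'$ over $S$. For freeness you also need independence. You can get it from the rank count $[W:W_1]=n$, or from the explicit inverse substitution the paper uses to verify the presentation.
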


\begin{proof}
In both cases $W$ acts by signed permutations on
$P=\mathbb C[\xi_1,\ldots,\xi_n]$. The chosen root coordinate
is a nonzero scalar multiple of $\xi_1$: it is $\xi_1$ in
the usual $B_n$ normalization and $2\xi_1$ in type $C_n$.
Scaling does not change content. Put
\[
 z_i=\xi_i^2,\qquad B_0=\mathbb C[z_1,\ldots,z_n],
 \qquad c_j=e_j(z_1,\ldots,z_n).
\]
Thus $S=P^W=\mathbb C[c_1,\ldots,c_n]$.
Let $c_j'=e_j(z_2,\ldots,z_n)$ and $t=z_1$.
The ring
\[
 R_0=B_0^{\mathfrak S_{\{2,\ldots,n\}}}
     =\mathbb C[t,c_1',\ldots,c_{n-1}']
\]
equals $S[t]$, because $c_j=c_j'+tc_{j-1}'$ recursively
expresses every $c_j'$ in that ring. More precisely,
\begin{equation}\label{str:eq:bc-characteristic-polynomial}
 R_0\cong
 S[t]/(t^n-c_1t^{n-1}+c_2t^{n-2}-\cdots+(-1)^n c_n).
\end{equation}
The inverse substitution is
\[
 c_j'=\sum_{k=0}^j(-t)^k c_{j-k},\qquad c_0=1,
\]
which verifies the presentation directly.
Thus $R_0$ is free over $S$ with basis $1,t,\ldots,t^{n-1}$.

The polynomial ring in $z_2,\ldots,z_n$ is free over its
symmetric subring. Choose a homogeneous coinvariant basis
including $1$. Extending that basis by $t$ makes $B_0$ free
over $R_0$ with a basis including $1$.
Also $P$ is free over $B_0$ with basis
$(\xi_1^{\epsilon_1}\cdots\xi_n^{\epsilon_n})_{
\epsilon_i\in\{0,1\}}$, again including $1$.
Combining these bases shows that the content of
$\xi_1^{2a}=t^a$ in $P/S$ is its content in $R_0/S$.
This use of a basis containing $1$ ensures that there are
no additional coordinates outside this direct summand.

Let $q_j(a)$ be the coefficient of $t^j$ in the remainder
of $t^a$ modulo the polynomial in
\eqref{str:eq:bc-characteristic-polynomial}.
For $0\leq j\leq n-1$, expansion of
$(1-tu)^{-1}$ in that quotient gives
\begin{equation}\label{str:eq:bc-remainder-coefficients}
 q_{n-1-j}(a)
 =\sum_{k=0}^j(-1)^k c_k h_{a-n+1+j-k},
\end{equation}
where $h_b=0$ for $b<0$. To check the expansion directly,
multiply
\[
 \sum_{j=0}^{n-1}u^j
       \sum_{k=0}^j(-1)^k c_k t^{j-k}
\]
by $1-tu$; the result is the denominator on the right of
\eqref{str:eq:bc-complete-symmetric}, using the relation
for $t^n$. Formula~\eqref{str:eq:bc-remainder-coefficients}
is triangular in $h_{a-n+1},\ldots,h_a$, with diagonal
coefficient one. Consequently its coefficients generate
exactly $(h_{a-n+1},\ldots,h_a)$.
Lemma~\ref{str:lem:content} and
Theorem~\ref{str:thm:parity} give the first two statements
in~\eqref{str:eq:bc-grassmannian-center}.
If $a<n$, the remainder $t^a$ has a coordinate equal to
$1$, so the content ideal is the unit ideal.

The usual Chern-class presentation of the Grassmannian is
the last quotient in~\eqref{str:eq:bc-grassmannian-center};
see~\cite[Section 3.6]{Lauda}. Here $c_j$ has polynomial
degree $2j$, which equals the cohomological degree of the
$j$th Chern class. This grading is separate from the
degree-doubling characteristic map for the original flag variety.
For $a\geq n$ the quotient is nonzero and finite-dimensional,
by its root-power interpretation and the threshold
$\nu_\alpha=2n$. Its $n$ homogeneous defining generators
therefore form a regular sequence in the $n$-variable
polynomial ring. Their degrees are
$2(a-n+1),\ldots,2a$, giving
\eqref{str:eq:bc-center-hilbert} and socle degree $2n(a-n)$.
The complete intersection is Gorenstein. Equivalently, integration
on the Grassmannian supplies a Frobenius functional; matrix trace
transports its symmetric form to the nil-Hecke quotient.
The Weyl-group order is $2^n n!$, completing the proof.
\end{proof}

\begin{corollary}[Directional form of the coordinate-root Hilbert series]
\label{str:cor:coordinate-hilbert}
Under the hypotheses of Theorem~\ref{str:thm:bc-grassmannian},
put $r=a-n\geq0$. Then
\[
 \begin{aligned}
 H_{Z(\alpha,m)}(q)
 &=\sum_{b_1+\cdots+b_n\leq r}q^{2b_1+4b_2+\cdots+2nb_n},\\
 \sum_{r\geq0}H_{Z(\alpha,2n+2r)}(q)t^r
 &=\frac{1}{(1-t)\prod_{j=1}^n(1-tq^{2j})}.
 \end{aligned}
\]
\end{corollary}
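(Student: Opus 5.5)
The first formula can be read off directly from the Hilbert series \eqref{str:eq:bc-center-hilbert} in Theorem~\ref{str:thm:bc-grassmannian}. The second formula then follows by summing over $r$. I would not repeat the Grassmannian computation. Instead I would identify the product $\prod_{j=1}^n(1-q^{2(r+j)})/(1-q^{2j})$ with the generating function of monomials $c_1^{b_1}\cdots c_n^{b_n}$ with $\sum b_j\le r$. I would also give a second, independent route through Proposition~\ref{str:prop:weighted-invariants}, which serves as a check and matches the weighted pattern of Theorem~\ref{intro:thm:classical}.

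The combinatorial route substitutes $q^2\mapsto q$ and uses the standard fact that $\prod_{j=1}^n(1-q^{r+j})/(1-q^j)$ is the Gaussian binomial $\binom{n+r}{n}_q$. This Gaussian binomial is the generating function of partitions fitting in an $n\times r$ box. A partition with at most $r$ columns and at most $n$ rows is encoded by its conjugate. Let $b_j$ be the number of columns of length exactly $j$, for $1\le j\le n$. Then the size is $\sum_j jb_j$ and the number of columns is $\sum_j b_j\le r$. Restoring $q^2$ gives exactly the exponent $2b_1+4b_2+\cdots+2nb_n$. This proves the first identity, because $m=2a$ or $2a+1$ and $a=n+r$. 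The threshold $\nu_\alpha=2n$ and parity (Theorem~\ref{str:thm:parity}) ensure that $Z(\alpha,2n+2r)$ has $a=n+r$.

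For the generating function, each monomial with $w=\sum_j b_j$ contributes $q^{\sum 2jb_j}$ for every $r\ge w$, that is, with weight $t^w/(1-t)$. Summing over the $b_j$ gives $\frac{1}{1-t}\prod_j\frac{1}{1-tq^{2j}}$. This is \eqref{str:eq:weighted-hilbert-series} with $d_j=2j$ and $\omega_j=1$.

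As an alternative check, I would verify the hypotheses of Proposition~\ref{str:prop:weighted-invariants} directly. Take $T^W=\mathbb C[c_1,\ldots,c_n]$ with $c_j=e_j(y_1^2,\ldots,y_n^2)$, and translate in the coordinate direction $y_1\mapsto y_1+t$. Then $y_1^2$ has leading term $t^2$, so $c_j$ has $t$-degree $2$ and leading coefficient $c'_{j-1}=e_{j-1}(y_2^2,\ldots,y_n^2)$. Thus $a_1=1$, and $a_2,\ldots,a_n$ equal $c'_1,\ldots,c'_{n-1}$, which are algebraically independent. All the $\omega_j$ equal $1$. The proposition then yields the monomial basis of the dual center and the same generating function. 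The only point needing care is normalization: the $C_n$ long root is $2\xi_1$, a scalar multiple of the coordinate direction, and rescaling the direction does not change directional degrees. I expect no real obstacle; the main thing to check carefully is the partition bijection and its degree bookkeeping.
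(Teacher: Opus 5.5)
Your proposal is correct. Your second route is the paper's proof: the paper takes the generators $p_j=e_j(z_1^2,\ldots,z_n^2)$ of $T^W$, notes that each has directional degree two along $\epsilon_1$ with leading coefficient $e_{j-1}(z_2^2,\ldots,z_n^2)$, observes that rescaling to $2\epsilon_1$ changes nothing, and applies Proposition~\ref{str:prop:weighted-invariants} with $d_j=2j$, $\omega_j=1$, $\nu_\alpha=2n$.

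Your primary route is genuinely different. You deduce the first identity from the product formula \eqref{str:eq:bc-center-hilbert} using $\prod_{j=1}^n(1-Q^{r+j})/(1-Q^j)=\binom{n+r}{n}_Q$ and the encoding of box partitions by their column-length multiplicities $b_j$. You then obtain the generating function by summing geometric series. This is valid. It also makes explicit something the paper leaves implicit: the content computation of Theorem~\ref{str:thm:bc-grassmannian} and the inverse-system computation via discriminant removal agree precisely by the $q$-binomial box identity. The content computation uses remainders, complete symmetric functions and a regular sequence.

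The cost is twofold. First, this route inherits its validity from the regular-sequence step in Theorem~\ref{str:thm:bc-grassmannian}, whereas the directional route is independent of it. Second, it yields only the Hilbert function. It does not give the explicit basis $p_1^{b_1}\cdots p_n^{b_n}$, $\sum_j b_j\le r$, of the dual space $\mathcal E_{\alpha,m-2n}$, nor the verified weighted-basis hypotheses. Those are what Theorem~\ref{intro:thm:classical} cites this corollary for. Since you include both routes, your write-up covers both aspects.

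One small point: appealing to the threshold and to parity to get $a=n+r$ at $m=2n+2r$ is unnecessary. That is just $a=\lfloor m/2\rfloor$.
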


\begin{proof}
In standard dual coordinates use
$p_j=e_j(z_1^2,\ldots,z_n^2)$ as invariant generators.
For translation in the direction $\epsilon_1$, every $p_j$
has directional degree two. Its leading coefficient is
$e_{j-1}(z_2^2,\ldots,z_n^2)$, with $e_0=1$.
Apart from the first constant coefficient these are algebraically
independent. Scaling the root to $2\epsilon_1$ changes no
directional degree. Apply
Proposition~\ref{str:prop:weighted-invariants}, with
$d_j=2j$, $\omega_j=1$, and $\nu_\alpha=2n$.
\end{proof}

\begin{remark}[The two root-length families]
\label{str:rem:classical-root-lengths}
The Grassmannian theorem concerns the coordinate-root orbits
$\{\pm\epsilon_i\}$ in $B_n$ and
$\{\pm2\epsilon_i\}$ in $C_n$.
The other root-length orbit has threshold $4n-4$ and is treated
in Theorem~\ref{str:thm:bd-hilbert}. For $n\geq3$ these
thresholds already distinguish the two families. In rank two an
additional normalizer symmetry compares their centers, as proved in
Proposition~\ref{str:prop:root-independence}.
\end{remark}

\subsection{The remaining classical root orbits}

We now compute the Hilbert series for long roots in $B_n$,
short roots in $C_n$, and all roots in $D_n$. Together with
the preceding theorems this covers every root orbit in every
irreducible classical type.

Use standard dual coordinates $z_1,\ldots,z_n$ and put
\[
 p_j=e_j(z_1^2,\ldots,z_n^2),\qquad
 g_4=p_1^2-4p_2,\qquad \eta=z_1\cdots z_n.
\]
The subscript on $g_4$ records its polynomial degree.
The change from $p_2$ to $g_4$ is invertible over $\mathbb C$.
Thus the invariant rings and auxiliary weights are
\[
\begin{array}{c|c|c}
 W&T^W&\text{auxiliary weights}\\ \hline
 B_n=C_n&\mathbb C[p_1,g_4,p_3,\ldots,p_n]
          &(1,1,2,\ldots,2)\\
 D_n&\mathbb C[p_1,g_4,p_3,\ldots,p_{n-1},\eta]
          &(1,1,2,\ldots,2,1).
\end{array}
\]
Generators with empty index ranges are omitted.

\begin{theorem}[The remaining classical root orbits]
\label{str:thm:bd-hilbert}
\begin{enumerate}[label=\textup{(\roman*)}]
\item Suppose $\Phi=B_n$ and $\alpha$ is long, or
$\Phi=C_n$ and $\alpha$ is short, with $n\geq2$.
Put $r=a-2n+2$. The quotient is nonzero exactly when $r\geq0$,
and then
\begin{equation}\label{str:eq:bc-other-hilbert}
 H_{Z(\alpha,m)}(q)
 =\sum_{b_1+b_2+2\sum_{j=3}^n b_j\leq r}
          q^{2b_1+4b_2+\sum_{j=3}^n2jb_j}.
\end{equation}
Equivalently,
\[
 \sum_{r\geq0}H_{Z(\alpha,4n-4+2r)}(q)t^r
 =\frac{1}{(1-t)(1-tq^2)(1-tq^4)
                  \prod_{j=3}^n(1-t^2q^{2j})}.
\]
\item Suppose $\Phi=D_n$, $n\geq4$, and let $\alpha$ be any root.
Put $r=a-2n+3$. The quotient is nonzero exactly when $r\geq0$,
and then
\begin{equation}\label{str:eq:d-hilbert}
 H_{Z(\alpha,m)}(q)
 =\sum_{b_1+b_2+2\sum_{j=3}^{n-1}b_j+c\leq r}
 q^{2b_1+4b_2+\sum_{j=3}^{n-1}2jb_j+nc}.
\end{equation}
Equivalently,
\[
 \sum_{r\geq0}H_{Z(\alpha,4n-6+2r)}(q)t^r
 =\frac{1}{(1-t)(1-tq^2)(1-tq^4)(1-tq^n)
                  \prod_{j=3}^{n-1}(1-t^2q^{2j})}.
\]
\end{enumerate}
All summation indices are nonnegative integers; an empty product
is $1$, and an empty sum is $0$. The dimension of the nil-Hecke quotient is $|W|^2$ times
the value at $q=1$.
\end{theorem}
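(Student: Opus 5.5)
The plan is to apply Proposition~\ref{str:prop:weighted-invariants} with the generators in the displayed table. The threshold has already been computed in Corollary~\ref{geo:cor:minimal-orbit}: $\nu_\alpha=4n-4$ for long $B_n$ and short $C_n$, and $\nu_\alpha=4n-6$ for $D_n$. With $m=\nu_\alpha+2r+\epsilon$ and $\epsilon\in\{0,1\}$, this gives $r=a-2n+2$, respectively $r=a-2n+3$. Theorem~\ref{str:thm:parity} handles odd exponents. The long roots of $B_n$ and the short roots of $C_n$ form the orbit $\{\pm\epsilon_i\pm\epsilon_j\}$, up to scaling, for the same signed-permutation action. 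So it suffices to take $\alpha=\epsilon_1-\epsilon_2$, hence translation $z_1\mapsto z_1+t$, $z_2\mapsto z_2-t$, in both cases. Scaling the root changes no directional degree.

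The first step is to compute directional degrees and leading coefficients. Put $s=z_1+z_2$ and $d=z_1-z_2$, and let $e_j'$ be the elementary symmetric functions of $z_3^2,\ldots,z_n^2$. After translation, $z_1^2+z_2^2=\tfrac12(s^2+(d+2t)^2)$ has $t$-degree $2$ with leading coefficient $2$. The product $z_1^2z_2^2=\tfrac1{16}(s^2-(d+2t)^2)^2$ has $t$-degree $4$ with leading coefficient $1$. From $p_j=e_j'+(z_1^2+z_2^2)e_{j-1}'+z_1^2z_2^2e_{j-2}'$:
\begin{itemize}
\item $p_1$ has degree $2$ with constant leading coefficient $2$;
\item $p_j$ has degree $4$ with leading coefficient $e_{j-2}'$ for $j\ge 2$;
\item for $p_2$ this leading coefficient is the constant $1$.
\end{itemize}
The $t^4$ terms cancel in $g_4=p_1^2-4p_2$: the $t^4$ coefficient of $(z_1^2+z_2^2)^2-4z_1^2z_2^2=(z_1^2-z_2^2)^2=s^2(d+2t)^2$ vanishes. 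Also $p_1^2-4p_2=(z_1^2-z_2^2)^2+2(z_1^2+z_2^2)e_1'\cdot(\ldots)$ plus lower terms. Carefully, $g_4=(z_1^2-z_2^2)^2-4(z_1^2+z_2^2)e_1'+\ldots$, so its $t^2$ coefficient is $4s^2-8e_1'$ (up to normalization), a nonconstant invariant of $W_\alpha$. For $D_n$, $\eta=z_1z_2\cdot z_3\cdots z_n$ with $z_1z_2=\tfrac14(s^2-(d+2t)^2)$, so it has degree $2$ with leading coefficient $-z_3\cdots z_n$. These give the weights $(1,1,2,\ldots,2)$ and $(1,1,2,\ldots,2,1)$.

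The main obstacle is the algebraic independence hypothesis on $a_2,\ldots,a_l$. The candidates are:
\begin{itemize}
\item for $B_n$: $c\,s^2+\text{const}\cdot e_1'$, together with $e_1',\ldots,e_{n-2}'$ from $p_3,\ldots,p_n$;
\item for $D_n$: the same together with $z_3\cdots z_n$, where $p_{n-1}$ contributes $e_{n-3}'$ and $e'_{n-2}=(z_3\cdots z_n)^2$ is replaced by $\eta$'s coefficient.
\end{itemize}
These are functions of $s$ and $z_3,\ldots,z_n$, that is, on $\alpha^\perp$, of dimension $n-1$. Since $s$ appears only in the first of them, independence reduces to the independence of $e_1',\ldots,e_{n-2}'$, respectively $e_1',\ldots,e_{n-4}',e'_{n-3},z_3\cdots z_n$. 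These are the basic invariants of $B_{n-2}$, respectively $D_{n-2}$, on $z_3,\ldots,z_n$, so they are independent.

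Two small points need care. For $D_n$, $p_{n-1}$ gives $e'_{n-3}$, so the set is $e_1',\ldots,e'_{n-3}$ and the Pfaffian $z_3\cdots z_n$, which is exactly a basic set for $D_{n-2}$. When $n=4$, $e_1'=z_3^2+z_4^2$ and $z_3z_4$ are still independent. The small-rank cases, $B_2$ where no $p_j$ with $j\ge3$ occurs and $D_4$, are checked directly in the same way. The content of the verification is checking that the $t^2$ coefficient of $g_4$ genuinely involves $s^2$; this is the step I would verify first by explicit expansion. Proposition~\ref{str:prop:weighted-invariants} then yields the monomial bases, the generating functions, and~\eqref{str:eq:bc-other-hilbert}, \eqref{str:eq:d-hilbert}. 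The dimension statement is Theorem~\ref{rc:thm:cyclotomic}.
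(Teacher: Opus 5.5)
Your proposal is correct and follows the paper's proof step for step. Both use the root $\epsilon_1-\epsilon_2$, the same table of directional degrees and leading coefficients, independence by isolating $s^2$, and then Proposition~\ref{str:prop:weighted-invariants} with the minimal-orbit thresholds and parity; your appeal to the basic invariants of $D_{n-2}$ replaces the paper's even/odd splitting in $z_3\cdots z_n$.

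One arithmetic slip is harmless. In fact $g_4=(z_1^2-z_2^2)^2-2(z_1^2+z_2^2)e_1'+e_1'^2-4e_2'$, so its $t^2$ coefficient is $4(s^2-e_1')$ rather than $4s^2-8e_1'$. This does not affect the independence argument, which only needs $s^2$ to appear with a nonzero coefficient.
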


\begin{proof}
In each case we may take $\alpha=\epsilon_1-\epsilon_2$.
Translation replaces $z_1,z_2$ by $z_1+t,z_2-t$.
Put $s=z_1+z_2$ and let
$E_j=e_j(z_3^2,\ldots,z_n^2)$, with $E_0=1$ and
$E_j=0$ for $j<0$ or $j>n-2$.
Since
\[
 p_j=E_j+(z_1^2+z_2^2)E_{j-1}+z_1^2z_2^2E_{j-2},
\]
the leading translation coefficients are
\begin{equation}\label{str:eq:bd-leading-coefficients}
 \begin{array}{c|c|c}
 \text{invariant}&\text{directional degree}&
                  \text{leading coefficient}\\ \hline
 p_1&2&2\\
 g_4&2&4(s^2-E_1)\\
 p_j\ (j\geq3)&4&E_{j-2}\\
 \eta&2&-z_3\cdots z_n.
 \end{array}
\end{equation}
For $g_4$, the fourth- and third-degree terms cancel; its
quadratic coefficient is the nonzero polynomial displayed.

For $B_n,C_n$, the remaining leading coefficients are
$4(s^2-E_1),E_1,\ldots,E_{n-2}$. They are algebraically
independent: the elementary symmetric polynomials in the
$n-2$ squares are independent, and $s$ is an independent
variable. For $D_n$, the remaining coefficients are
$4(s^2-E_1),E_1,\ldots,E_{n-3},-z_3\cdots z_n$.
The last $n-2$ polynomials are independent as well.
To see this, separate a proposed relation into even and odd
powers of $z_3\cdots z_n$ and change the sign of $z_3$.
Both parts vanish separately. Replacing its square by
$E_{n-2}$ then contradicts the independence of
$E_1,\ldots,E_{n-2}$. Adjoining $s^2-E_1$ preserves
independence.

Proposition~\ref{str:prop:weighted-invariants} now applies
with the displayed weights. The thresholds are $4n-4$ and
$4n-6$, respectively, by
Corollary~\ref{geo:cor:minimal-orbit}. Substituting the
polynomial degrees and auxiliary weights gives both formulas.
Parity gives the same answer at the following odd exponent.
\end{proof}

\begin{corollary}[Dimensions in the remaining classical families]
\label{str:cor:bd-dimensions}
The center dimensions in Theorem~\ref{str:thm:bd-hilbert}
are the coefficients of $t^r$ in
\[
 \frac{1}{(1-t)^3(1-t^2)^{n-2}}
 \quad\text{and}\quad
 \frac{1}{(1-t)^4(1-t^2)^{n-3}},
\]
respectively. In particular, each dimension sequence is a
quasipolynomial in $r$ of degree $n$ and period dividing two.
For the first family with $n\geq3$ it is
\[
 \sum_{k=0}^{\lfloor r/2\rfloor}
       \binom{n+k-3}{k}\binom{r-2k+2}{2},
\]
and for $D_n$ it is
\[
 \sum_{k=0}^{\lfloor r/2\rfloor}
       \binom{n+k-4}{k}\binom{r-2k+3}{3}.
\]
For $B_2,C_2$ the first expression reduces to $\binom{r+2}{2}$.
\end{corollary}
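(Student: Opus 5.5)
Since Theorem~\ref{str:thm:bd-hilbert} expresses each generating function as $1/((1-t)\prod_j(1-t^{\omega_j}q^{d_j}))$, I will set $q=1$ and collect the factors by weight. For the first family (long roots in $B_n$, short roots in $C_n$) the weights are $1,1$ from $p_1,g_4$ and $2$ for each of $p_3,\ldots,p_n$. Together with the factor $1-t$ this gives
\[
 \frac{1}{(1-t)^3(1-t^2)^{n-2}}.
\]
For $D_n$ the weight-one generators are $p_1$, $g_4$ and $\eta$, while the weight-two generators are $p_3,\ldots,p_{n-1}$. This gives
\[
 \frac{1}{(1-t)^4(1-t^2)^{n-3}}.
\]
By construction, the coefficient of $t^r$ in the original series is $H_{Z(\alpha,\nu_\alpha+2r)}(1)=\dim Z(\alpha,m)$ with $r$ as in Theorem~\ref{str:thm:bd-hilbert}. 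This proves the first assertion.

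For the explicit sums I expand each series as a product of two series. The standard identities are
\[
 (1-t^2)^{-s}=\sum_k\binom{s+k-1}{k}t^{2k},
 \qquad
 (1-t)^{-p}=\sum_j\binom{j+p-1}{p-1}t^j .
\]
In the first family take $s=n-2$ and $p=3$. The coefficient of $t^r$ is then $\sum_{k\le\lfloor r/2\rfloor}\binom{n+k-3}{k}\binom{r-2k+2}{2}$. For $D_n$ take $s=n-3$ and $p=4$, which gives $\sum_k\binom{n+k-4}{k}\binom{r-2k+3}{3}$. For $B_2,C_2$ there are no weight-two generators, so the series is $(1-t)^{-3}$ and its coefficient is $\binom{r+2}{2}$. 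This also matches the displayed sum, since $\binom{k-1}{k}=0$ for $k\ge1$ and the sum reduces to its $k=0$ term.

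For the quasipolynomial claim I will use the standard fact that the coefficients of $1/((1-t)^{p}(1-t^2)^{s})$ agree, for $r\ge0$, with a quasipolynomial of period dividing $2$. To see this, rewrite the series as $(1+t)^{s}/(1-t^2)^{p+s}$. The coefficient of $t^r$ then splits by the parity of $r$, and on each parity class it is a polynomial in $r$ of degree $p+s-1$. In both families $p+s-1=n$, so the degree is $n$. To check that the degree is exactly $n$, I look at the pole of order $p+s=n+1$ at $t=1$. Its leading coefficient $2^{-s}$ is nonzero and is the same on both parity classes, so each constituent polynomial has exact degree $n$.

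The only real care needed is the bookkeeping of which generators carry weight two. In $D_n$ the generator $\eta$ has weight one, which is why the exponent of $1-t$ is four rather than three. When $n=4$ there are no weight-two generators, and the $D_4$ count must be read with the convention $\binom{k}{k}\to\binom{0+k}{k}$, namely $\binom{n+k-4}{k}=\binom{k}{k}=1$. In that case the formula gives $\sum_k\binom{r-2k+3}{3}$. This is not the coefficient of $(1-t)^{-4}$, so I need to recheck this against the sum: for $n=4$ the exponent of $1-t^2$ is $n-3=1$, not $0$, so there is one weight-two generator, $p_3$, and no discrepancy. The main obstacle is therefore simply verifying the weight count table case by case.
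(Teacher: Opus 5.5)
Your proof is correct and follows essentially the paper's route. You set $q=1$, expand the binomial series to get the explicit sums, and read off the quasipolynomial from the poles at $t=\pm1$; the pole of order $n+1$ at $t=1$ has leading coefficient $2^{-s}$ and dominates the lower-order pole at $-1$, so the degree is exactly $n$. Two slips to fix: the rewriting should be $1/((1-t)^p(1-t^2)^s)=(1+t)^p/(1-t^2)^{p+s}$, not $(1+t)^s/(1-t^2)^{p+s}$, though this does not affect the parity-split conclusion. Also, the opening claim of your last paragraph that $D_4$ has no weight-two generator is false, as you then note yourself: $p_3$ has weight two, matching the exponent $n-3=1$.
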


\begin{proof}
Set $q=1$ and expand the two binomial series. Their only
poles are at $1$ and $-1$; the pole at $1$ has order $n+1$.
Partial fractions therefore give the stated quasipolynomials.
\end{proof}

\subsection{The first nonsemisimple centers in types
\texorpdfstring{$B$, $C$, and $D$}{B, C, and D}}

The preceding formulas determine dimensions. The conjugated
differential action also determines the multiplication at the
first exponent pair above the semisimple quotient.

\begin{theorem}[First nonsemisimple centers]
\label{str:thm:first-bd-centers}
Let $m=\nu_\alpha+2$ or $\nu_\alpha+3$.
\begin{enumerate}[label=\textup{(\roman*)}]
\item For a long root in $B_n$ or a short root in $C_n$,
$n\geq2$,
\[
 Z(\alpha,m)\cong\mathbb C[u]/(u^3),\qquad\deg u=2.
\]
The corresponding matrix quotient is symmetric.
\item In type $D_4$,
\[
 Z(\alpha,m)\cong\mathbb C[u,v,w]/(u,v,w)^2,
 \qquad(\deg u,\deg v,\deg w)=(2,4,4).
\]
Its socle has dimension three.
\item In type $D_n$, $n\geq5$,
\[
 Z(\alpha,m)\cong\mathbb C[u,v]/(u^3,uv,v^2),
 \qquad(\deg u,\deg v)=(2,n).
\]
Its socle is spanned by $u^2,v$, and has dimension two.
\end{enumerate}
Thus the first nonsemisimple root-power quotient in every
type $D_n$ is not self-injective.
\end{theorem}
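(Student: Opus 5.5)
The plan is to work entirely on the invariant side, through discriminant removal. At $m=\nu_\alpha+2$ we have $r=1$, and Theorem~\ref{str:thm:parity} reduces the exponent $\nu_\alpha+3$ to the same case. By Theorem~\ref{str:thm:discriminant-removal} and Proposition~\ref{str:prop:weighted-invariants}, with the generators and weights of Theorem~\ref{str:thm:bd-hilbert}, the space $E=\mathcal E_{\alpha,2}$ is spanned by $1$ and the weight-one generators:
\begin{itemize}
\item[] for $B_n,C_n$: $E=\langle 1,p_1,g_4\rangle$, so $H_Z=1+q^2+q^4$;
\item[] for $D_n$: $E=\langle 1,p_1,g_4,\eta\rangle$, so $H_Z=1+q^2+q^4+q^n$.
\end{itemize}
For $D_4$ this last series is $1+q^2+2q^4$. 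The multiplication is then read off from the conjugated action $\mathscr D$ of Proposition~\ref{str:prop:conjugated-action}. For the socle we use \eqref{str:eq:socle-complex}, which gives $(\operatorname{Soc}Z)^*_k\cong E_k/\sum_i D_i(E_{k+d_i})$.

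The key tool is the conjugated Laplacian $\mathscr L=\mathscr D_{c}$, where $c$ is the quadratic invariant. Since $\Delta_\Phi^\vee$ is harmonic (Lemma~\ref{rc:lem:annihilator}), I expect
\[
 \mathscr L(H)=\Delta H+2\sum_{\beta\in\Phi^+}\frac{(\beta,\nabla H)}{(\beta,v)}
\]
up to normalisation. I will evaluate $\mathscr L$ on $p_1$, $g_4$ and $\eta$. For $\eta$, each $D$-type pair $\epsilon_i\pm\epsilon_j$ contributes
\[
 \frac{\eta}{z_iz_j}\left(\frac{z_j-z_i}{z_i-z_j}+\frac{z_j+z_i}{z_i+z_j}\right)=0,
\]
and $\Delta\eta=0$, so $\mathscr L(\eta)=0$ in every $D_n$. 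Next, $\mathscr L(p_1)$ is a nonzero positive constant. Finally, $\mathscr L(g_4)=\kappa\,p_1$ for an explicit constant $\kappa$ depending on $\Phi$ and $n$. The short roots $\pm\epsilon_i$ contribute extra terms in $B_n$, and the long roots $\pm2\epsilon_i$ in $C_n$, while the $D$-part is common to all three families.

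The main obstacle, and the only genuinely computational step, is showing that $\kappa\ne0$ for $B_n,C_n$ and for $D_n$ with $n\ge5$, while $\kappa=0$ for $D_4$. I would first write $g_4=\sum z_i^4-2\sum_{i<j}z_i^2z_j^2$, so that $\partial_i g_4=8z_i^3-4z_ip_1$. I would then sum the paired root contributions to get $\kappa$ as an affine function of $n$, and add the coordinate-root term in types $B$ and $C$. The expectation is that the $D$-value vanishes exactly at $n=4$, which is consistent with triality permuting $g_4$ and $\eta$ up to scalars. As a cross-check, triality forces $\mathscr L(g_4)$ to vanish in $D_4$ once $\mathscr L(\eta)=0$.

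Granting this computation, the three cases follow.
\begin{itemize}
\item[(i)] In $B_n,C_n$, the map $\mathscr L$ is injective on $E_4\to E_2\to E_0$. Hence $u=\bar c$ satisfies $u^2\ne0$, the Hilbert series $1+q^2+q^4$ forces $Z\cong\mathbb C[u]/(u^3)$, and Gorensteinness together with Theorem~\ref{str:thm:rootpower-geometry}(iv) gives symmetry.
\item[(ii)] In $D_4$, $\mathscr L$ kills $E_4$, so $u^2=0$ and $E_2+E_4$ lies in the cokernel of the $D_i$. The socle therefore has dimension three, and the Hilbert series forces $\mathbb C[u,v,w]/(u,v,w)^2$. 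Here $v,w$ are images of degree-four generators of $S$, which pair nondegenerately with $E_4\to E_0$.
\item[(iii)] In $D_n$ with $n\ge5$, we have $u^2\ne0$ and $\mathscr L(\eta)=0$, so $uv=0$. The relation $v^2=0$ holds for degree reasons, since $2n>4$. In the case $n=6$, $v$ has degree $6=2+4$, and $uv=0$ is exactly the statement $\mathscr L(\eta)=0$ with no $g_4$ component. The socle is then $\langle u^2,v\rangle$, and a socle of dimension greater than one rules out self-injectivity via Theorem~\ref{str:thm:rootpower-geometry}(iv).
\end{itemize}
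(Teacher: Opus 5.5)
Your route is the paper's. You restrict to the weight-$\le 1$ basis of $\mathcal E_{\alpha,2}$, apply the conjugated Laplacian to $p_1,g_4,\eta$, and let the Hilbert series force everything else. The step you leave as an expectation, however, is the entire computational content of the paper's proof, so the proposal is incomplete exactly there. Carrying it out gives
\[
 \mathscr L(g_4)=4n(7-2n)\,p_1\quad(B_n,C_n),\qquad
 \mathscr L(g_4)=-4(2n-1)(n-4)\,p_1\quad(D_n).
\]
This is quadratic in $n$, not affine:
\begin{itemize}
\item[] the Laplacian term gives $(16-4n)p_1$;
\item[] each pair $\epsilon_i\pm\epsilon_j$ gives $2\bigl(16(z_i^2+z_j^2)-8p_1\bigr)$, and these sum to $(32-8n)(n-1)p_1$;
\item[] the coordinate roots add $(16-8n)p_1$, and the factor $2$ in $2\epsilon_i$ for $C_n$ cancels in $(\beta,\nabla H)/(\beta,z)$.
\end{itemize}
The paper reaches the same numbers differently: it uses Euler's identity for $p_1,p_1^2$ and a paired computation of $\mathscr L(\sum_iz_i^4)$, together with $g_4=2\sum_iz_i^4-p_1^2$.

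Two of your ingredients differ from the paper and both work. Your pairwise cancellation showing $\mathscr L(\eta)=0$ replaces the paper's remark that $\Delta\eta$ is the $B_n$ discriminant and is therefore harmonic. Your triality check is a valid independent proof that the $D_4$ coefficient vanishes. The reflection $\sigma$ of Lemma~\ref{str:lem:f4-invariants} normalizes $W(D_4)$, fixes $\Delta^\vee_{D_4}$ up to sign, and so commutes with $\mathscr L$. Since $\sigma(\eta)=-\frac1{16}g_4+\frac12\eta$, the identity $\mathscr L(\eta)=0$ then forces $\mathscr L(g_4)=0$.

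In (iii) the bookkeeping is wrong. The relations $uv=0$ and $v^2=0$ hold by degree, because $n+2$ and $2n$ both exceed $\max\{4,n\}$. What $\mathscr L(\eta)=0$ actually says is that $u\cdot Z_{n-2}=0$ inside $Z_n$, since $Z_n^*=\mathbb C\eta$. This is exactly what gives $u^3=0$ when $n=6$, the only $n\ge5$ for which degree six is occupied; for every other $n\ge5$, $u^3=0$ holds by degree. You should state $u^3=0$ explicitly, because both conclusions depend on it:
\begin{itemize}
\item[] the presentation, since it gives $Z_n\cap\mathfrak m^2=0$, so any spanning element of $Z_n$ is a genuinely new generator $v$;
\item[] the socle description $\langle u^2,v\rangle$.
\end{itemize}
With these corrections your argument is complete. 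It does not need the paper's positivity argument identifying $v$ with the class of $\xi_1\cdots\xi_n$.
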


\begin{proof}
Use the coordinates of the preceding subsection and the dual
coordinates $\xi_i$ on $P$. The invariant
$c_1=\sum_i\xi_i^2$ acts on the inverse system by the
ordinary Laplacian $\mathsf L=\sum_i\partial_{z_i}^2$.
Write $\Delta=\Delta_\Phi^\vee$ and $N=\deg\Delta$.
The discriminant is harmonic: $\mathsf L\Delta$ is an
anti-invariant of degree $N-2$, whereas every anti-invariant
is divisible by $\Delta$.
Consequently the action after removal of the discriminant is
\[
 \mathscr L(H)=\Delta^{-1}\mathsf L(\Delta H)
 =\mathsf L H+2\sum_i(\partial_{z_i}\log\Delta)
                                      \partial_{z_i}H.
\]
The output is polynomial by
Proposition~\ref{str:prop:conjugated-action}.
Euler's identity gives
\[
 \mathscr L(p_1)=2n+4N,\qquad
 \mathscr L(p_1^2)=4(n+2+2N)p_1.
\]
For signed permutations take
$\Delta=(\prod_i z_i)\prod_{i<j}(z_i^2-z_j^2)$,
up to a nonzero constant; for $D_n$ omit $\prod_i z_i$.
Pairing the terms with indices $i,j$ in its logarithmic
derivative gives
\[
 \mathscr L\left(\sum_i z_i^4\right)
 =\begin{cases}
 4(4n+1)p_1,&B_n,C_n,\\
 4(4n-1)p_1,&D_n.
 \end{cases}
\]
Since $g_4=2\sum_i z_i^4-p_1^2$, it follows that
\begin{equation}\label{str:eq:first-center-laplacian}
 \mathscr L(g_4)=
 \begin{cases}
 4n(7-2n)p_1,&B_n,C_n,\\
 -4(2n-1)(n-4)p_1,&D_n.
 \end{cases}
\end{equation}

At the exponents in the statement the invariant inverse
system has basis $1,p_1,g_4$ in the first family. Its Hilbert
series is $1+q^2+q^4$. The first coefficient in
\eqref{str:eq:first-center-laplacian} is nonzero for every
integer $n\geq2$, so $\mathscr L^2(g_4)\ne0$.
Thus the class $u$ of $c_1$ has $u^2\ne0$. All degrees
above four vanish, and $1,u,u^2$ give the required presentation.

In type $D_n$ the inverse system has basis $1,p_1,g_4,\eta$.
Moreover $\Delta\eta$ is the $B_n$ discriminant, so it is
harmonic by the same minimum-degree argument. Hence
$\mathscr L(\eta)=0$.
For $n=4$, equation~\eqref{str:eq:first-center-laplacian}
shows that $\mathscr L^2$ kills this entire basis. Thus
$u^2=0$. The Hilbert series is $1+q^2+2q^4$, so every
other product of positive-degree elements vanishes by degree.
This proves \textup{(ii)}.

For $n\geq5$, the same equation gives $u^2\ne0$, while
$\mathscr L^3$ kills the basis and hence $u^3=0$.
The class $v$ of $\xi_1\cdots\xi_n$ is nonzero. Indeed,
choose matching real normalizations of the two discriminants.
The pairing~\eqref{str:eq:rootpower-pairing} of this class
with $\Delta\eta$ is the apolar pairing of
$\Delta_\Phi(\xi)\prod_i\xi_i$ with its dual polynomial
$\Delta\eta$. It is a positive sum of squared coefficients
times factorials.
The Hilbert series is $1+q^2+q^4+q^n$; products $uv,v^2$
vanish by degree. Thus $1,u,u^2,v$ are a basis with exactly
the stated relations. The socles follow directly from these
presentations. The equivalence between Gorenstein centers
and self-injective matrix quotients is
Theorem~\ref{str:thm:rootpower-geometry}\textup{(iv)}.
\end{proof}

\subsection{A rank-three comparison}

\begin{example}[The first non-Gorenstein center in type $A_3$]
\label{str:ex:a3-first}
Use $P=\mathbb C[\xi_1,\xi_2,\xi_3,\xi_4]/
(\xi_1+\xi_2+\xi_3+\xi_4)$ and
$S=\mathbb C[c_2,c_3,c_4]$, where $c_j=e_j(\xi_1,\ldots,\xi_4)$.
For $\alpha=\epsilon_1-\epsilon_2$ and $m=8,9$,
\begin{equation}\label{str:eq:a3-first-center}
 \begin{split}
 J^{\rm pol}_{\alpha,m}
  &=(20c_4-41c_2^2,c_2c_3,c_2^3,c_3^2),\\
 Z(\alpha,m)&\cong
   \mathbb C[u,v]/(u^3,uv,v^2),\qquad\deg(u,v)=(2,3).
 \end{split}
\end{equation}
Thus $A_{\alpha,8}=A_{\alpha,9}\cong
\Mat_{24}(\mathbb C[u,v]/(u^3,uv,v^2))$ has dimension
$2304$. Its center has Hilbert series $1+q^2+q^3+q^4$
and socle $\mathbb C v\oplus\mathbb C u^2$.

Here is a direct verification of the presentation.
Take $\Delta_\Phi=\prod_{i<j}(\xi_i-\xi_j)$ and
$f=(\xi_1-\xi_2)(\xi_3-\xi_4)$.
The divided alternation in
Theorem~\ref{str:thm:alternation} gives
\[
 \operatorname{tr}_\Phi(x_\alpha^8 f)
       =984c_2^2-480c_4.
\]
To verify the constants, the left side is an invariant of
degree four, hence a linear combination of $c_2^2,c_4$.
At $(3,1,-1,-3)$ its value is $94080$, with
$(c_2^2,c_4)=(100,9)$; at $(4,2,-1,-5)$ its value is
$501336$, with $(c_2^2,c_4)=(529,40)$.
The two evaluations determine the displayed coefficients.
Theorem~\ref{str:thm:an-centers} gives the stated Hilbert
series, so all homogeneous elements of degree greater than
four vanish. It follows that the four displayed generators
belong to the content ideal. Their quotient has basis
$1,c_2,c_3,c_2^2$, with exactly that Hilbert series, so
there can be no further relations. Parity gives $m=9$.
\end{example}

For a coordinate root in $B_3$ or $C_3$, at the same exponents
$m=8,9$, Theorem~\ref{str:thm:bc-grassmannian} instead gives
\[
 Z(\alpha,m)\cong H^*(\operatorname{Gr}(3,4);\mathbb C)
 \cong\mathbb C[c]/(c^4),\qquad\deg c=2.
\]
Its Hilbert series is $1+q^2+q^4+q^6$ and its socle is
one-dimensional. Thus the two centers have the same dimension
four and different Gorenstein behavior already in rank three.
The corresponding $B_3$ or $C_3$ matrix quotient has size
$48$ and dimension $9216$.

\section{Type \texorpdfstring{$F_4$}{F4}: triality and all exponents}
\label{str:sec:f4}

The normal reflection subgroup $W(D_4)$ gives an exceptional-type
application of the structural comparison. Triality supplies
invariant generators adapted to the directional filtration.
They determine the Hilbert series for every exponent and both
root lengths. The differential action then gives the complete
self-injectivity classification.

\subsection{Triality and the relative discriminant}

Use an orthonormal basis $\epsilon_1,\ldots,\epsilon_4$ and
the standard realization
\[
 \begin{split}
 \Phi_{\rm long}&=\{\pm\epsilon_i\pm\epsilon_j:i<j\},\\
 \Phi_{\rm short}&=\{\pm\epsilon_i:1\leq i\leq4\}
 \ \cup\ 
 \{\tfrac12(\pm\epsilon_1\pm\epsilon_2
                         \pm\epsilon_3\pm\epsilon_4)\}.
 \end{split}
\]
The long roots form a subsystem of type $D_4$. In the dual
coordinates $z_1,\ldots,z_4$, put
\[
 p_j=e_j(z_1^2,z_2^2,z_3^2,z_4^2),\qquad
 \eta=z_1z_2z_3z_4,\qquad g=p_1^2-4p_2.
\]
The even signed permutations form $W(D_4)$, and
$T^{W(D_4)}=\mathbb C[p_1,p_2,p_3,\eta]$.
Adjoin the reflections $\tau$ and $\sigma$ in
$\epsilon_1$ and $\frac12(\epsilon_1+\epsilon_2+
\epsilon_3+\epsilon_4)$, respectively. The latter acts by
\[
 \sigma(z_i)=z_i-\tfrac12(z_1+z_2+z_3+z_4).
\]
Together with $W(D_4)$ these generate $W(F_4)$. The subgroup
$W(D_4)$ is normal, and its quotient is the triality group
$\mathfrak S_3$.

\begin{lemma}[Adapted triality invariants]
\label{str:lem:f4-invariants}
Define
\begin{equation}\label{str:eq:f4-generators}
 \begin{aligned}
 f_2&=p_1,&
 f_6&=p_3+\frac1{24}p_1g,\\
 f_8&=g^2+192\eta^2,&
 f_{12}&=g^3-576g\eta^2.
 \end{aligned}
\end{equation}
Then
\[
 T^{W(F_4)}=\mathbb C[f_2,f_6,f_8,f_{12}],
 \qquad
 T^{W(D_4)}=\mathbb C[f_2,f_6,g,\eta].
\]
The triality group fixes $f_2,f_6$ and acts on $g,\eta$ by
\begin{equation}\label{str:eq:f4-triality-action}
 \begin{aligned}
 \tau(g)&=g,&\tau(\eta)&=-\eta,\\
 \sigma(g)&=-\tfrac12g-12\eta,&
 \sigma(\eta)&=-\tfrac1{16}g+\tfrac12\eta.
 \end{aligned}
\end{equation}
Up to a nonzero scalar, the relative discriminant is
\begin{equation}\label{str:eq:f4-relative-disc}
 \delta=\Delta_{F_4}/\Delta_{D_4}
       =\eta(g^2-64\eta^2),\qquad\deg\delta=12,
\end{equation}
where the same expressions in the corresponding coordinates
are used in $P$ or $T$.
\end{lemma}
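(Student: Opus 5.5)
The plan is to verify the triality action on the quadratic pair $(g,\eta)$ first, then deduce both invariant rings and the relative discriminant from it. Since $W(D_4)$ is normal in $W(F_4)$ with quotient $\mathfrak S_3$ generated by the images of $\tau$ and $\sigma$, we have $T^{W(F_4)}=(T^{W(D_4)})^{\mathfrak S_3}$. So I would work inside $T^{W(D_4)}=\mathbb C[p_1,p_2,p_3,\eta]$. Replacing $p_2$ by $g=p_1^2-4p_2$ and $p_3$ by $f_6=p_3+\frac1{24}p_1g$ is a triangular, invertible change of generators. This gives $T^{W(D_4)}=\mathbb C[f_2,f_6,g,\eta]$ immediately.

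\emph{Step 1: the action of $\tau$ and $\sigma$.} The element $\tau$ negates $z_1$. It therefore fixes every $p_j$ and sends $\eta\mapsto-\eta$, so $\tau$ fixes $f_2,f_6,g$. For $\sigma$, note that $f_2=p_1$ is the $W$-invariant quadratic form, so $\sigma$ fixes it. Also $\sigma$ normalizes $W(D_4)$ and preserves degree, so it preserves $T^{W(D_4)}_4=\operatorname{span}(p_1^2,g,\eta)$, and it fixes $p_1^2$. I would then compute $\sigma(g)$ and $\sigma(\eta)$ by evaluating at a few convenient points, for instance $z=(1,0,0,0)$, $(1,1,0,0)$ and $(1,1,1,1)$. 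At these points $\sigma$ acts explicitly, for example $(1,1,1,1)\mapsto(-1,-1,-1,-1)$ and $(1,0,0,0)\mapsto(\frac12,-\frac12,-\frac12,-\frac12)$. The coefficient of $p_1^2$ must vanish, because $\sigma$ is an involution fixing $p_1^2$ whose restriction to the $2\times2$ block must square to the identity. This would yield \eqref{str:eq:f4-triality-action}.

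Next I check $f_6$ in degree six. The space $T^{W(D_4)}_6$ is spanned by $p_1^3,p_1g,p_1\eta,p_3$, and the point of the coefficient $\frac1{24}$ is that $f_6$ is $\sigma$-fixed. I would confirm this by the same point-evaluation method, writing $\sigma(p_3)=p_3+(\text{combination of }p_1^3,p_1g,p_1\eta)$ and checking that the correction cancels. I expect this degree-six verification and the constant $\frac1{24}$ to be the main computational obstacle. An alternative check is that $\sigma$ must fix a complement to $p_1\cdot(\text{degree }4)$, because $F_4$ has a basic invariant in degree six.

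\emph{Step 2: the invariants of $\mathfrak S_3$ on $(g,\eta)$.} The span of $g,\eta$ is a faithful two-dimensional reflection representation of $\mathfrak S_3$, with $\tau$ and $\sigma$ acting as reflections. Set $\eta'=8\sqrt3\,\eta$, so that $f_8=g^2+\eta'^2$ becomes the invariant quadratic form. Then $f_{12}$ corresponds, up to a scalar, to $\operatorname{Re}(g+i\eta')^3=g^3-3g\eta'^2$, and $3\cdot192=576$ matches the coefficient in $f_{12}$. By Chevalley's theorem for $\mathfrak S_3$ acting as a dihedral group of order six, $\mathbb C[g,\eta]^{\mathfrak S_3}=\mathbb C[f_8,f_{12}]$. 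As $\mathfrak S_3$ fixes $f_2,f_6$, we get $T^{W(F_4)}=\mathbb C[f_2,f_6,f_8,f_{12}]$. The degrees $2,6,8,12$ agree with the known degrees of $F_4$, which gives a consistency check.

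\emph{Step 3: the relative discriminant.} By Theorem~\ref{str:thm:subgroup-transfer}, $\delta=\Delta_{F_4}/\Delta_{D_4}$ spans the $\operatorname{sgn}$-isotypic part of $\mathfrak S_3$ in the lowest degree. It has degree $24-12=12$, the number of short positive roots, and it lies in $\mathbb C[g,\eta]$ times $S_W$. In this setting the sign-relative invariant of the dihedral group is the product of the three reflecting lines, namely $\operatorname{Im}(g+i\eta')^3\propto\eta'(3g^2-\eta'^2)$. Substituting $\eta'=8\sqrt3\,\eta$ gives, up to a scalar, $\eta(g^2-64\eta^2)$. This polynomial has degree $4\cdot3=12$, which confirms \eqref{str:eq:f4-relative-disc}.
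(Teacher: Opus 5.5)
Your proposal is correct and follows essentially the paper's route. Like the paper, you use the triangular change of generators for $T^{W(D_4)}$ and compute the triality action explicitly (you by point evaluation, the paper by expanding the substitutions). You then use the standard reflection representation of $\mathfrak S_3$ on $(g,8\sqrt3\,\eta)$ and identify $\delta$ with the triality discriminant. Your appeal to $S_{W'}^{\chi}=\delta S_W$ makes that last identification slightly more explicit than the paper does.

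Two small repairs are needed.

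\begin{itemize}
\item Your involution argument for the vanishing of the $p_1^2$-coefficient of $\sigma(g)$ is invalid. An involution fixing $p_1^2$ can have a nonzero such coefficient when its $2\times2$ block is a reflection. The argument is also unnecessary: evaluation at $(1,1,0,0)$, where $g=\eta=0$ and which $\sigma$ sends to $(0,0,-1,-1)$, already forces that coefficient to vanish. Conceptually, $g$ and $\eta$ are harmonic and $\sigma$ commutes with the Laplacian.
\item In degree six there are four unknown coefficients in $\sigma(p_3)$. You therefore need a fourth point such as $(1,1,1,0)$, or a justification that the coefficient of $p_3$ in $\sigma(p_3)$ is $1$.
\end{itemize}
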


\begin{proof}
Expanding the two substitutions gives
\eqref{str:eq:f4-triality-action} and shows that $f_6$ is
fixed. The changes from $p_2$ to $g$ and from $p_3$ to
$f_6$ are triangular and invertible. Thus the displayed
presentation of $T^{W(D_4)}$ follows.

On the variables $g,8\sqrt3\eta$, the two transformations
in~\eqref{str:eq:f4-triality-action} are reflections whose
product has order three. They give the standard reflection
representation of $\mathfrak S_3$. Its quadratic and cubic
invariants are $g^2+192\eta^2$ and
$g^3-576g\eta^2$. For example, diagonalizing the order-three
rotation identifies this calculation with the dihedral
invariant ring $\mathbb C[zw,z^3+w^3]$.
Taking invariants proves the first presentation.

The polynomial $\eta(g^2-64\eta^2)$ changes sign under both
reflections and has degree three in the triality variables.
It is their discriminant. Viewed in $T$, its degree is twelve.
Alternatively, the product of the four coordinate-root forms
and the eight positive half-root forms is a nonzero constant
times this expression. These are precisely the short-root
hyperplanes omitted by $D_4$, proving
\eqref{str:eq:f4-relative-disc}.
\end{proof}

\begin{corollary}[The $D_4$ to $F_4$ transfer]
\label{str:cor:d4-f4-transfer}
Let $\alpha=\epsilon_1-\epsilon_2$. Write $S_D=P^{W(D_4)}$
and $S_F=P^{W(F_4)}$. For every $m\geq1$,
\[
 J^{F_4}_{\alpha,m}
   =(J^{D_4}_{\alpha,m}:\delta)\cap S_F,
 \qquad
 Z_{F_4}(\alpha,m)_k
   \cong Z_{D_4}(\alpha,m)_{k+12}^{\operatorname{sgn}},
\]
where the sign component is taken for the triality action.
Equivariant $S_D$-free resolutions transfer to $S_F$-free
resolutions by taking that component and lowering degrees by
twelve, followed by any necessary cancellations.
\end{corollary}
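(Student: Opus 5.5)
This corollary is a direct application of Theorem~\ref{str:thm:subgroup-transfer} and Corollary~\ref{str:cor:resolution-transfer} with $W'=W(D_4)$ and $W=W(F_4)$. The proof has three steps. First, verify the hypotheses: $W(D_4)$ is a normal reflection subgroup and $W'\alpha=W\alpha$. Second, identify the relative discriminant and its character. Third, read off the degree shift.

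For the hypotheses, $W(D_4)$ is generated by the reflections in the long roots $\pm\epsilon_i\pm\epsilon_j$. It is normal in $W(F_4)$ because $W(F_4)$ preserves root lengths and hence permutes the long-root reflections. This is the normality recorded before Lemma~\ref{str:lem:f4-invariants}. Next, the $W(D_4)$-orbit of $\alpha=\epsilon_1-\epsilon_2$ is all of $\{\pm\epsilon_i\pm\epsilon_j\}$, since even signed permutations act transitively on these vectors for $n=4$. The $W(F_4)$-orbit is the set of long roots, which is the same set. Hence $I_{\alpha,m}$ is the common orbit ideal for both groups. By the remark after Theorem~\ref{str:thm:subgroup-transfer}, $J_W(I_{\alpha,m})$ and $J_{W'}(I_{\alpha,m})$ are the root-power content ideals $J^{F_4}_{\alpha,m}$ and $J^{D_4}_{\alpha,m}$.

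The colon formula~\eqref{str:eq:subgroup-colon} then gives the first identity, with $\delta$ as in~\eqref{str:eq:f4-relative-disc}. The isomorphism~\eqref{str:eq:subgroup-character} gives the second, with $e=\deg\delta=12$. That degree equals $N_{F_4}-N_{D_4}=24-12$, matching Lemma~\ref{str:lem:f4-invariants}. The resolution statement is Corollary~\ref{str:cor:resolution-transfer}, applied verbatim with $\Gamma=W/W'\cong\mathfrak S_3$ and the same shift of $12$.

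The only point needing care is the identification of the character $\chi$ with the sign character of $\mathfrak S_3$. Theorem~\ref{str:thm:subgroup-transfer} shows that $\chi(s)=-1$ for every reflection $s\in W\setminus W'$. The images of $\tau$ and $\sigma$ in $\Gamma$ are transpositions, because by~\eqref{str:eq:f4-triality-action} they act as reflections on the triality variables and generate $\mathfrak S_3$. So $\chi$ is $-1$ on transpositions, and since these generate, $\chi=\operatorname{sgn}$. This can also be checked directly: $\delta=\eta(g^2-64\eta^2)$ changes sign under both substitutions~\eqref{str:eq:f4-triality-action}.
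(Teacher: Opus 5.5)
Your proposal is correct and follows the paper's own argument. Both note that $W(D_4)$ and $W(F_4)$ share the orbit of $\alpha$ (all long roots), identify the relative character as the sign character via the triality formulas and $\delta=\eta(g^2-64\eta^2)$, and then apply Theorem~\ref{str:thm:subgroup-transfer} and Corollary~\ref{str:cor:resolution-transfer} with $e=24-12=12$. Your checks of normality and of $\chi(\tau)=\chi(\sigma)=-1$ just spell out what the paper's short proof leaves implicit.
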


\begin{proof}
The $D_4$ and $F_4$ orbits of $\alpha$ are both the set of
long roots. The relative character is the sign character
by~\eqref{str:eq:f4-triality-action} and
\eqref{str:eq:f4-relative-disc}. Apply
Theorem~\ref{str:thm:subgroup-transfer} and
Corollary~\ref{str:cor:resolution-transfer}.
\end{proof}

\subsection{The Hilbert series for both root lengths}

\begin{theorem}[Type $F_4$ in every exponent]
\label{str:thm:f4-hilbert}
Let $\alpha$ be any root of $F_4$, and put
$r=\lfloor m/2\rfloor-8$. The root-power quotient is nonzero
exactly when $m\geq16$. In that range,
\begin{equation}\label{str:eq:f4-hilbert}
 H_{Z(\alpha,m)}(q)
 =\sum_{a+2b+2c+3d\leq r}q^{2a+6b+8c+12d},
\end{equation}
where all four indices are nonnegative integers. Equivalently,
\begin{equation}\label{str:eq:f4-series}
 \sum_{r\geq0}H_{Z(\alpha,16+2r)}(q)t^r
 =\frac1{(1-t)(1-tq^2)(1-t^2q^6)
                    (1-t^2q^8)(1-t^3q^{12})}.
\end{equation}
The centers for the two root lengths are isomorphic as graded
$\mathbb C$-algebras, although their ideals in fixed invariant
coordinates need not be equal. The nil-Hecke quotient dimension is
$1152^2\dim Z(\alpha,m)$.
\end{theorem}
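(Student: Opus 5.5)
The plan is to apply Proposition~\ref{str:prop:weighted-invariants} with the triality generators $f_2,f_6,f_8,f_{12}$ of Lemma~\ref{str:lem:f4-invariants}, first for the long root $\alpha=\epsilon_1-\epsilon_2$. Then I transport the result to the short orbit by a symmetry of the invariant ring. The threshold $\nu_\alpha=16$ for both lengths is Corollary~\ref{geo:cor:minimal-orbit}, since $h^\vee(F_4)=9$. With $\nu_\alpha=16$ one has $r=\lfloor(m-16)/2\rfloor=\lfloor m/2\rfloor-8$. Vanishing for $m<16$ and the parity statement come from Theorems~\ref{rc:thm:cyclotomic} and~\ref{str:thm:parity}. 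The dimension formula uses $|W(F_4)|=1152$.

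\emph{Directional degrees for the long root.} Translating $z_1\mapsto z_1+t$, $z_2\mapsto z_2-t$, the table~\eqref{str:eq:bd-leading-coefficients} for $D_4$ ($n=4$) gives the following. The invariant $p_1$ has degree $2$ with leading coefficient $2$. The invariant $g$ has degree $2$ with leading coefficient $4(s^2-E_1)$, where $s=z_1+z_2$ and $E_1=z_3^2+z_4^2$. The invariant $\eta$ has degree $2$ with leading coefficient $-z_3z_4$. Finally $p_3$ has degree $4$ with leading coefficient $E_1$.

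For $f_6=p_3+\frac1{24}p_1g$, the two $t^4$ coefficients are $E_1$ and $\frac1{24}\cdot2\cdot4(s^2-E_1)$. Their sum $\tfrac23E_1+\tfrac13s^2$ is nonzero, so $\deg_\alpha f_6=4$ and $\omega=2$.

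For $f_8=g^2+192\eta^2$ the leading coefficient is $16(s^2-E_1)^2+192z_3^2z_4^2$, which is nonzero, so $\omega=2$. For $f_{12}=g^3-576g\eta^2$ the $t^6$ coefficient is $4(s^2-E_1)\bigl(16(s^2-E_1)^2-576z_3^2z_4^2\bigr)$, again nonzero, so $\omega=3$.

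This yields the pairs $(2,1),(6,2),(8,2),(12,3)$, with $a_1=2$ a nonzero constant. It remains to check that the three leading coefficients
\[
 a_6=\tfrac13(s^2+2E_1),\quad
 a_8=16(s^2-E_1)^2+192e,\quad
 a_{12}=64(s^2-E_1)^3-2304(s^2-E_1)e,
\]
with $e=z_3^2z_4^2$, are algebraically independent.

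\emph{Independence.} The main obstacle is checking that no cancellation occurs in these leading coefficients. The variables $u=s^2$, $E_1$ and $e$ are algebraically independent, since $s,z_3,z_4$ are independent variables and $E_1,e$ are the elementary symmetric functions of $z_3^2,z_4^2$. So it suffices to show that the Jacobian of $(a_6,a_8,a_{12})$ with respect to $(u,E_1,e)$ is not identically zero.

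To organize the computation, substitute $X=u-E_1$. Then $a_6$ is linear in $u$ with the remaining dependence in $E_1$, while $a_8$ and $a_{12}$ depend only on $(X,e)$. The pair $(a_8,a_{12})$ consists of $16X^2+192e$ and $64X^3-2304Xe$. Its Jacobian determinant in $(X,e)$ is
\[
 32X\cdot(-2304X)-192\,(192X^2-2304e),
\]
which is a nonzero polynomial. Since $a_6$ involves $E_1$ independently of $X$, the full Jacobian is nonzero. Proposition~\ref{str:prop:weighted-invariants} then gives~\eqref{str:eq:f4-hilbert} and~\eqref{str:eq:f4-series} for long roots. I expect the main risk here to be arithmetic slips in the leading coefficients, and I would double-check them using the triality formulas~\eqref{str:eq:f4-triality-action}.

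\emph{Short roots.} For the short orbit I use the outer automorphism of $F_4$ that exchanges root lengths. This is a linear map $\theta$ of $V$, a similitude normalizing $W$, which carries $W\alpha_{\rm long}$ onto scalar multiples of $W\alpha_{\rm short}$. For instance, $\theta$ sends $\epsilon_1-\epsilon_2$ to a multiple of $\epsilon_1$.

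Conjugation by $\theta$ preserves $T^W$ and the apolar and differential structures up to scalars. It maps $\Delta^\vee_\Phi$ to a multiple of itself, since it permutes the root hyperplanes. Scaling a direction does not change directional degrees. Hence $\theta$ identifies $\mathcal E_{\alpha,b}$ for the two orbits, together with the conjugated action of Proposition~\ref{str:prop:conjugated-action}, after twisting $S$ by the induced graded automorphism $\theta^*$.

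Consequently $J^{\rm pol}$ for the short root is $\theta^*$ of the long-root ideal. So the two centers are isomorphic as graded algebras, with equal Hilbert series, although the ideals themselves need not coincide in fixed coordinates. I would check that $\theta$ normalizes $W$ and respects the inner product up to scale, which is the only input needed.
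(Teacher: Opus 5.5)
Your proposal is correct and follows the paper's proof essentially step for step. You use the same triality generators and the same leading coefficients $2$, $(s^2+2E_1)/3$, $16(q_0^2+12E_2)$ and $64(q_0^3-36q_0E_2)$. Your Jacobian reduction is the same, and your determinant is $1024$ times the paper's $-108(q_0^2-4E_2)$. For the short orbit you also use a length-exchanging similitude normalizing $W$.

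The differences are minor. The paper writes the similitude explicitly as the orthogonal block matrix $U$, which you should supply rather than merely assert. It transports the content ideal directly by conjugating the functionals in $\Hom_S(P,S)$, which is shorter than your transport of the inverse system and conjugated action. It also reads off $\nu_\alpha=16$ from the root count $24-9+1$ rather than from $h^\vee(F_4)=9$.
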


\begin{proof}
First take the long root $\alpha=\epsilon_1-\epsilon_2$.
Translation in its direction replaces $z_1,z_2$ by
$z_1+t,z_2-t$. Put
\[
 s=z_1+z_2,\quad E_1=z_3^2+z_4^2,\quad
 E_2=z_3^2z_4^2,\quad q_0=s^2-E_1.
\]
The leading-coefficient calculation from
\eqref{str:eq:bd-leading-coefficients} gives
\[
 \begin{array}{c|c|c}
  \text{generator}&\text{directional degree}&
                     \text{leading coefficient}\\ \hline
  f_2&2&2\\
  f_6&4&(s^2+2E_1)/3\\
  f_8&4&16(q_0^2+12E_2)\\
  f_{12}&6&64(q_0^3-36q_0E_2).
 \end{array}
\]
For example, the leading coefficients of $p_3$ and $p_1g$
are $E_1$ and $8(s^2-E_1)$, giving the second row.

The polynomials $s^2,E_1,E_2$ are algebraically independent.
The change from $s^2,E_1$ to
$(s^2+2E_1)/3,q_0$ is invertible. Finally,
\[
 \det\frac{\partial(q_0^2+12E_2,\ q_0^3-36q_0E_2)}
              {\partial(q_0,E_2)}
       =-108(q_0^2-4E_2)\ne0.
\]
The three nonconstant leading coefficients are therefore
algebraically independent. Proposition~\ref{str:prop:weighted-invariants}
applies with degree--weight pairs
\[
 (2,1),\quad(6,2),\quad(8,2),\quad(12,3).
\]
There are $24$ positive $F_4$ roots. Those orthogonal to a
long root form a rank-three subsystem with $9$ positive roots,
so Theorem~\ref{rc:thm:threshold} gives
$\nu_\alpha=24-9+1=16$. Discriminant removal proves
\eqref{str:eq:f4-hilbert} and~\eqref{str:eq:f4-series}; parity
gives the same answer at the following odd exponent.

For the comparison of root lengths, consider the orthogonal map
\[
 U=\frac1{\sqrt2}
 \begin{pmatrix}
  1&1&0&0\\1&-1&0&0\\0&0&1&1\\0&0&1&-1
 \end{pmatrix}.
\]
It sends every short-root line to a long-root line, and
conversely. This follows directly from the root list: a
coordinate root is sent to a scalar multiple of
$\epsilon_i\pm\epsilon_j$, and a half root has one nonzero
coordinate in each of the two coordinate pairs after applying
$U$. Since $U^2=1$, the converse follows too.
Conjugation sends a root reflection to the reflection in the
image line. Thus $U$ normalizes $W(F_4)$.

The induced automorphism of $P$ preserves $S$ and transports
$c_S(x_\alpha^m)$ to $c_S(x_{U\alpha}^m)$: conjugate every
functional in $\Hom_S(P,S)$ by the two induced ring maps.
Scaling a root coordinate by a nonzero constant does not
change content. Weyl transitivity within each root length
therefore proves the graded algebra isomorphism across the
two lengths. Finally, $|W(F_4)|=|W(D_4)|\cdot6=192\cdot6=1152$,
and the matrix theorem gives the nil-Hecke quotient dimension.
\end{proof}

\begin{corollary}[Dimensions and growth]
\label{str:cor:f4-dimensions}
For $r=\lfloor m/2\rfloor-8\geq0$,
\[
 \dim Z(\alpha,m)
 =\sum_{d=0}^{\lfloor r/3\rfloor}
    \sum_{k=0}^{\lfloor(r-3d)/2\rfloor}
           (k+1)(r-3d-2k+1).
\]
Its generating function is
\[
 \frac1{(1-t)^2(1-t^2)^2(1-t^3)}.
\]
The dimension is a quasipolynomial of degree four and period
dividing six, with leading term $r^4/288$.
For $r=0,1,2,3,4$, the dimensions are $1,2,5,9,16$.
\end{corollary}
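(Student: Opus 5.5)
The starting point is Theorem~\ref{str:thm:f4-hilbert}, specialized at $q=1$. By \eqref{str:eq:f4-hilbert}, $\dim Z(\alpha,m)$ equals the number of nonnegative integer solutions of $a+2b+2c+3d\leq r$. Equivalently, by \eqref{str:eq:f4-series} at $q=1$, it is the coefficient of $t^r$ in
\[
 \frac1{(1-t)^2(1-t^2)^2(1-t^3)}.
\]
The factor $(1-t)^2$ comes from $(1-t)(1-tq^2)$. The factor $(1-t^2)^2$ comes from the two weight-two generators $f_6,f_8$, and $(1-t^3)$ from $f_{12}$. This proves the generating-function assertion at once.

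For the explicit double sum, fix $d$ and the combined weight-two index $k=b+c$. There are $k+1$ pairs $(b,c)$ with $b+c=k$. The remaining condition is $a\leq r-3d-2k$, which gives $r-3d-2k+1$ choices of $a$ when this number is nonnegative. Summing over $0\leq d\leq\lfloor r/3\rfloor$ and $0\leq k\leq\lfloor(r-3d)/2\rfloor$ gives the stated formula. The small values for $r=0,\dots,4$ then follow either from this sum or from expanding the series. For instance, at $r=2$ the terms are $(k=0)\colon 3$ and $(k=1)\colon 2\cdot1=2$, totalling $5$. At $r=3$ they are $4+2\cdot2$ from $d=0$ and $1$ from $d=1$, totalling $9$. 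At $r=4$ they are $5+2\cdot3+3\cdot1$ from $d=0$ and $2$ from $d=1$, totalling $16$.

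For the quasipolynomial assertion, use partial fractions, as in Corollary~\ref{str:cor:bd-dimensions}. The poles of the rational function lie at roots of unity of orders $1$, $2$ and $3$. Their orders dividing six gives period dividing six. The pole at $t=1$ has order $2+2+1=5$, and every other pole has order at most two, so the coefficient sequence is a quasipolynomial of degree four.

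The leading term is the only step requiring a small computation. Near $t=1$ the function behaves like $\bigl(2^2\cdot 3\,(1-t)^5\bigr)^{-1}=1/\bigl(12(1-t)^5\bigr)$. The coefficient of $t^r$ in $(1-t)^{-5}$ is $\binom{r+4}{4}\sim r^4/24$, which gives leading term $r^4/(12\cdot24)=r^4/288$. As a consistency check, the volume of the region $\{a+2b+2c+3d\leq r\}$ is $r^4/(4!\cdot1\cdot2\cdot2\cdot3)=r^4/288$. I expect no real obstacle; the only care needed is confirming that the lower-order poles do not affect the degree-four coefficient.
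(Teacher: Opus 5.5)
Your proposal is correct and follows essentially the same route as the paper: set $q=1$ in the $F_4$ Hilbert series, count $(b,c)$ pairs with $b+c=k$ and the remaining choices of $a$, then use partial fractions with the order-five pole at $t=1$ (local factor $12(1-t)^5$) to get the leading term $r^4/(12\cdot 4!)=r^4/288$. Your explicit checks of the small values $1,2,5,9,16$, the bound on the other pole orders, and the volume cross-check are all accurate.
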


\begin{proof}
Set $q=1$ in~\eqref{str:eq:f4-series}. For fixed $d$ and
$k=b+c$, there are $k+1$ choices of $b,c$ and
$r-3d-2k+1$ choices of $a$, proving the sum.
Every pole of the generating function is a root of unity of
order dividing six. Its pole at $1$ has order five and leading coefficient
$1/12$ in powers of $(1-t)^{-1}$. Hence the leading term
of the coefficient quasipolynomial is $r^4/(12\cdot4!)$.
\end{proof}

\subsection{Multiplication and the self-injectivity range}

\begin{theorem}[Complete $F_4$ self-injectivity classification]
\label{str:thm:f4-symmetry}
For either root length, the first nonzero centers are
\begin{equation}\label{str:eq:f4-small-centers}
 \begin{array}{c|c}
 m&Z(\alpha,m)\\ \hline
 16,17&\mathbb C\\
 18,19&\mathbb C[u]/(u^2)\\
 20,21&\mathbb C[u]/(u^5),
 \end{array}
 \qquad\deg u=2.
\end{equation}
For every $m\geq22$ the center is not Gorenstein.
Thus, among the nonzero root-power quotients in type $F_4$,
self-injectivity, the Frobenius property, and symmetry are
equivalent and hold precisely for $16\leq m\leq21$.
\end{theorem}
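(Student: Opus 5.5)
By the root-length isomorphism in Theorem~\ref{str:thm:f4-hilbert}, it suffices to treat the long root $\alpha=\epsilon_1-\epsilon_2$. Parity (Theorem~\ref{str:thm:parity}) lets me restrict to even $m=16+2r$. The dual center is then $E=\mathcal E_{\alpha,2r}$. By Propositions~\ref{str:prop:weighted-invariants} and~\ref{str:prop:filtered-invariants}, $E$ has the monomial basis $f_2^af_6^bf_8^cf_{12}^d$ with $a+2b+2c+3d\leq r$. Multiplication is read off through the conjugated action $\mathscr D$ of Proposition~\ref{str:prop:conjugated-action}, and Gorensteinness through the cyclicity criterion of Theorem~\ref{str:thm:rootpower-geometry}(iv), or equivalently the socle formula~\eqref{str:eq:socle-complex}.

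For the small cases the Hilbert series settles most of the structure. At $r=0$ one has $Z=\mathbb C$ by Corollary~\ref{str:cor:first-rootpower}. At $r=1$ the basis is $1,f_2$, so $Z=\mathbb C[u]/(u^2)$ with $u=\bar c_1$ of degree two. At $r=2$ the basis of $E$ is $1,f_2,f_2^2,f_6,f_8$, with Hilbert series $1+q^2+q^4+q^6+q^8$. The claim $Z\cong\mathbb C[u]/(u^5)$ is then equivalent to $\mathscr L^4$ being nonzero on $E_8$, where $\mathscr L=\mathscr D_{c_1}=\Delta^{-1}\mathsf L\Delta$ as in the proof of Theorem~\ref{str:thm:first-bd-centers}. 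In that case $\mathscr L$ is a nilpotent operator of maximal rank on a one-dimensional-per-degree space, so $E$ is cyclic generated by the degree-eight vector. Concretely, I would compute $\mathscr L$ on $f_8$ (or on any element of $E_8$), then on its image in $E_6$, and so on down to a nonzero constant. The Euler-type identities and the logarithmic-derivative pairing for $\Delta_{F_4}=\Delta_{D_4}\delta$ used before give explicit formulas. The triality coordinates $g,\eta$ together with the relative discriminant~\eqref{str:eq:f4-relative-disc} keep these computations manageable.

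For $r\geq3$ I would show that the socle has dimension at least two, following the Hilbert-function asymmetry used in Theorem~\ref{str:thm:an-centers}. The top degree of $E$ is attained by the unique monomial $f_{12}^{r/3}$, or by the appropriate mix of monomials of maximal weight degree. I would compare $\dim Z_1$-type coefficients near the bottom with those near the top, and find a degree $k$ with $\dim Z_k\neq\dim Z_{D-k}$. The series~\eqref{str:eq:f4-hilbert} has $\dim Z_2=1$, since $f_2$ is the only monomial in degree two. Near the top there are typically several monomials of weight $r$ in degree $D-2$, so $\dim Z_{D-2}\geq2$ should give the asymmetry. If the top degree is not unique for some residue of $r$ modulo $3$, then $\dim Z_D\geq2$, and $Z$ already fails to be Gorenstein because the socle contains the top component. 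I therefore plan a case split by $r\bmod 6$. In each case I would list the maximal-degree monomials under $a+2b+2c+3d\leq r$ and the monomials two degrees lower, and check a violation of palindromicity. The borderline cases $r=3,4,5$ need a direct check: the data $1,2,5,9,16$ give Hilbert series that I would write out explicitly.

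I expect the main obstacle to be twofold. First, the nonvanishing of $\mathscr L^4$ at $r=2$ requires a genuine computation with the $F_4$ discriminant, and if coefficients cancel, the palindromic Hilbert series alone would not decide the answer. Second, there may be some small $r$ whose Hilbert series happens to be symmetric. For any such case I would fall back on Proposition~\ref{str:prop:betti-complex}: I would compute $E_k/\sum_iD_i(E_{k+d_i})$ in one low degree, most likely $k=6$ or $k=8$, and exhibit a second socle element there.
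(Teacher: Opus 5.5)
Your plan is the paper's proof: reduce to a long root, settle $r\le 2$ with the conjugated Laplacian, and exclude $r\ge 3$ by a top graded piece of dimension at least two or by a non-palindromic Hilbert series. For $r=2$, the paper evaluates at $\epsilon_1$ to get $\mathscr L(f_8)=3072f_6$, $\mathscr L(f_6)=7f_2^2$, $\mathscr L(f_2^2)=216f_2$, $\mathscr L(f_2)=104$, so no cancellation occurs.

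One correction for your enumeration: $f_8$, not only $f_{12}$, has degree four times its weight. So the top degree is $4r$, with multiplicity $\#\{(c,d):2c+3d=r\}$; this is at least two for $r=6$ and for all $r\ge 8$. The cases $r=4,5,7$ then follow from $\dim Z_2=1<\dim Z_{4r-2}$. For $r=3$ one has $\dim Z_{10}=1$, so compare degrees $4$ and $8$ instead, where the dimensions are $1$ and $2$.
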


\begin{proof}
It is enough to use a long root, by
Theorem~\ref{str:thm:f4-hilbert}. Put
$\Delta=\Delta_{F_4}^\vee$ and
$\mathsf L=\sum_{i=1}^4\partial_{z_i}^2$.
The conjugated action of $c_2=\sum_i\xi_i^2\in S$ is
\[
 \mathscr L(H)=\Delta^{-1}\mathsf L(\Delta H)
   =\mathsf L H+2\sum_{\beta\in\Phi^+}
                  \frac{\mathsf d_\beta H}{(\beta,z)}.
\]
Here $\mathsf L\Delta=0$, since it would be an anti-invariant
of degree less than $\deg\Delta=24$. Each quotient in the
last sum is polynomial when $H$ is invariant: its numerator
changes sign under $s_\beta$.

The relevant operator identities are
\begin{equation}\label{str:eq:f4-laplacian}
 \mathscr L(f_2)=104,\qquad
 \mathscr L(f_2^2)=216f_2,\qquad
 \mathscr L(f_6)=7f_2^2,\qquad
 \mathscr L(f_8)=3072f_6.
\end{equation}
We give details that fix their constants. The first two are
Euler's identities
$\mathscr L(p_1)=2\dim V+4\deg\Delta$ and
$\mathscr L(p_1^2)=4(\dim V+2+2\deg\Delta)p_1$.
For the other two, stability of the inverse system at $r=2$
and the Hilbert series show that the outputs are scalar
multiples of $f_2^2$ and $f_6$, respectively.
They can therefore be evaluated at $z=\epsilon_1$.
Separate the root sum into the coordinate, long, and half
roots in the displayed realization. Direct differentiation
of~\eqref{str:eq:f4-generators} gives
\[
 \begin{array}{c|r|r|r|r|r}
 H&\mathsf L H&\text{coordinate}&\text{long}&\text{half}
                                      &\mathscr L(H)\\ \hline
 f_6&1&0&2&4&7\\
 f_8&32&-32&0&128&128.
 \end{array}
\]
All entries are evaluated at $\epsilon_1$, and the three
middle entries include the factor $2$ in the root sum.
If $(\beta,\epsilon_1)=0$, evaluate the polynomial quotient by
\[
 \left.\frac{\mathsf d_\beta H}{(\beta,z)}
                    \right|_{z=\epsilon_1}
   =\frac{\mathsf d_\beta^2H(\epsilon_1)}{(\beta,\beta)}.
\]
This follows by approaching along $\epsilon_1+t\beta$.
Since $f_2^2(\epsilon_1)=1$ and $f_6(\epsilon_1)=1/24$,
the table proves the remaining constants in
\eqref{str:eq:f4-laplacian}.

At $r=0$ the center is $\mathbb C$. At $r=1$ the invariant
inverse system has basis $1,f_2$, and
$\mathscr L(f_2)\ne0$. Thus the class $u$ of $c_2$ is
nonzero and gives $\mathbb C[u]/(u^2)$.
At $r=2$ the basis is
\[
 1,f_2,f_2^2,f_6,f_8.
\]
Equation~\eqref{str:eq:f4-laplacian} gives
$\mathscr L^4(f_8)=3072\cdot7\cdot216\cdot104\ne0$.
Hence $u^4\ne0$. Degrees above eight vanish, so $u^5=0$.
The five distinct-degree elements $1,u,u^2,u^3,u^4$ exhaust
the five-dimensional center, proving
\eqref{str:eq:f4-small-centers}.

It remains to exclude every $r\geq3$. Write
$H_r(q)$ for~\eqref{str:eq:f4-hilbert}. Its highest degree is
$4r$ for $r\geq2$, since $r$ is then a nonnegative
combination of $2$ and $3$. The coefficient of $q^{4r}$ is
\[
 p(r)=\#\{(c,d)\in\mathbb Z_{\geq0}^2:2c+3d=r\}.
\]
Writing $r=6b+s$, with $0\leq s\leq5$, gives $p(r)=b+1$
for $s\ne1$ and $p(r)=b$ for $s=1$. Thus it is at least
two for $r=6$ and every $r\geq8$.
The top graded piece already has dimension
at least two in these cases, so the center is not Gorenstein.

For the remaining values $r=3,4,5,7$, the top coefficient
is one. At $r=3$, the coefficients of $q^4$ and $q^8$ are
$1$ and $2$. For $r=4,5,7$, the coefficient of $q^2$ is $1$,
whereas the coefficient of $q^{4r-2}$ is
$p(r-1)+p(r-2)=2,2,3$, respectively.
The Hilbert series is therefore not palindromic.
A graded Artinian Gorenstein algebra has a perfect pairing
in complementary degrees, so these cases are also excluded.
The equivalences with the properties of the matrix quotient
follow from Theorem~\ref{str:thm:rootpower-geometry}\textup{(iv)}.
\end{proof}

\section{Rank-two root-power quotients and their centers}
\label{str:sec:ranktwo}

Theorems~\ref{str:thm:an-centers} and~\ref{str:thm:bc-grassmannian}
already determine the dimensions and Gorenstein behavior in
types $A_2$ and $B_2$. We now obtain the detailed rank-two
presentations, resolutions, and socles, including type $G_2$.
The projective root orbits lie on $\mathbb P^1$. We first use
this observation to obtain all three Hilbert series by one argument.
We then determine explicit presentations and resolutions, the socle,
and the Gorenstein range. The matrix description transfers these
results to the corresponding root-power nil-Hecke quotients.

\subsection{Independence of the chosen root}

For a root $\alpha\in\Phi$ and $a\geq1$, put
\[
 Z_{\alpha,a}:=Z(\alpha,2a)=S/J^{\rm pol}_{\alpha,2a}.
\]
This is a finite-dimensional graded algebra by
Theorem~\ref{str:thm:rootpower-geometry}. We first justify suppressing
the chosen root from the notation.

\begin{proposition}[Independence of the chosen root]
\label{str:prop:root-independence}
Let $\Phi$ be an irreducible crystallographic root system of
rank two. For every $\alpha,\beta\in\Phi$ and $a\geq1$,
there is a graded automorphism $\sigma$ of $S$ such that
\begin{equation}\label{str:eq:root-independence}
 \sigma(J^{\rm pol}_{\alpha,2a})=J^{\rm pol}_{\beta,2a},
 \qquad Z_{\alpha,a}\cong Z_{\beta,a}.
\end{equation}
If $\alpha$ and $\beta$ are in the same $W$-orbit, the
ideals are equal and $\sigma$ can be taken to be the identity.
For different root lengths in types $B_2$ and $G_2$, $\sigma$
is induced by a linear transformation normalizing $W$.
Thus the graded isomorphism class of $Z_{\alpha,a}$ depends
only on $\Phi$ and $a$.
\end{proposition}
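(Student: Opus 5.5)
The proof splits into two steps: the same-orbit case, and a transport argument along a linear map normalizing $W$ when the two roots have different lengths.

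\textbf{Same orbit.} If $\beta=w\alpha$ with $w\in W$, then $x_\beta^{2a}=w(x_\alpha^{2a})$. The action of $w$ on $P$ is $S$-linear. Hence every functional $\varphi\in\Hom_S(P,S)$ gives another one $\varphi\circ w$, and
\[
 \varphi(x_\beta^{2a})=(\varphi\circ w)(x_\alpha^{2a}).
\]
This gives $J^{\rm pol}_{\beta,2a}\subseteq J^{\rm pol}_{\alpha,2a}$, and symmetry gives equality. This is the remark already used in Theorem~\ref{str:thm:alternation} that content is constant on $W$-orbits. So here $\sigma=\mathrm{id}$ works. Since $-\alpha\in W\alpha$ and scaling $x_\alpha$ by a nonzero constant does not change content, this also settles type $A_2$, where there is a single orbit.

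\textbf{Different lengths ($B_2$ or $G_2$).} I will construct a linear automorphism $U$ of $V$ that normalizes $W$ and sends the line of a long root to the line of a short root. For $B_2$, in coordinates where the roots are $\pm\epsilon_i$ and $\pm\epsilon_1\pm\epsilon_2$, take
\[
 U=\tfrac1{\sqrt2}\begin{pmatrix}1&1\\1&-1\end{pmatrix}.
\]
This exchanges the coordinate-root lines with the lines $\epsilon_1\pm\epsilon_2$, exactly as in the $F_4$ argument of Theorem~\ref{str:thm:f4-hilbert}. For $G_2$, take the rotation by $\pi/6$ composed with the scaling by the appropriate factor $\sqrt3^{\pm1}$, or simply a rotation. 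The twelve roots lie on six lines at angles $k\pi/6$, alternating between long and short. A rotation by $\pi/6$ permutes these lines and interchanges the two lengths.

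In either case $U$ permutes the set of reflecting lines. Conjugation sends the reflection $s_\gamma$ to $s_{U\gamma}$, so $UWU^{-1}=W$. The induced graded ring automorphism $\hat U$ of $P$ therefore preserves $S$; set $\sigma=\hat U|_S$. Given $\varphi\in\Hom_S(P,S)$, the map $\sigma\circ\varphi\circ\hat U^{-1}$ is again $S$-linear. Hence
\[
 \sigma(c_S(x_\alpha^{2a}))=c_S(\hat U(x_\alpha^{2a}))=c_S(x_{U\alpha}^{2a}).
\]
Here $x_{U\alpha}$ is a nonzero scalar multiple of the coordinate of some short root $\beta'$. Scaling leaves content unchanged, so $\sigma(J^{\rm pol}_{\alpha,2a})=J^{\rm pol}_{\beta',2a}$. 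The same-orbit step then replaces $\beta'$ by $\beta$. Composing with the identity, or with $\sigma^{-1}$, covers every ordered pair $(\alpha,\beta)$.

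The final assertion follows from these two steps, since any two roots are related through their orbits and at most one application of $U$. The main point to check carefully is that $U$ really normalizes $W$. It must permute the root lines, though not necessarily the roots themselves, and the mismatch in lengths is absorbed by the scaling invariance of content. Because of this, no crystallographic compatibility of $U$ is needed.
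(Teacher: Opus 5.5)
Your proof is correct and follows essentially the paper's route. Both use the transport identity $c_S(g(f))=\sigma(c_S(f))$ for a linear map $g$ normalizing $W$, with $g\in W$ giving equality of ideals within an orbit and scaling invariance of content absorbing the length mismatch.

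The differences are cosmetic. The paper takes $g$ to be the rotation by $\pi/h$ in complex dihedral coordinates and checks normalization on the generators $r_0,s_0$. You use an orthogonal map (a reflection in type $B_2$) together with $Us_\gamma U^{-1}=s_{U\gamma}$. The reverse inclusion in your transport step, which the paper obtains from bijectivity of $\varphi\mapsto\sigma\circ\varphi\circ g^{-1}$, follows by applying your argument to $U^{-1}$.
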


\begin{proof}
Let $g\in\operatorname{GL}(V)$ normalize $W$. Its extension
to $P=\Sym_{\mathbb C}(V)$ preserves $S=P^W$; denote its
restriction to $S$ by $\sigma$. For
$\varphi\in\Hom_S(P,S)$ the map
$\sigma\circ\varphi\circ g^{-1}$ is again $S$-linear, and
this assignment is bijective. The content definition therefore
gives, for every $f\in P$,
\begin{equation}\label{str:eq:content-transport}
 c_S(g(f))=\sigma(c_S(f)).
\end{equation}
If $g(x_\alpha)=c\,x_\beta$ with $c\in\mathbb C^\times$,
apply this to $f=x_\alpha^{2a}$. Multiplication by $c^{2a}$
does not change the generated ideal, so
\eqref{str:eq:root-independence} follows. When $\beta=w\alpha$
with $w\in W$, take $g=w$. Then $\sigma=\operatorname{id}_S$,
which proves equality of the ideals.

In type $A_2$, all roots lie in one $W$-orbit. For the other
two types write $W=I_2(h)$, where $h=4$ for $B_2$ and
$h=6$ for $G_2$. Choose a primitive $2h$-th root of unity
$\eta$ and put $\zeta=\eta^2$. In linear coordinates $z,w$,
the group is generated by
\[
 r_0(z)=\zeta z,\quad r_0(w)=\zeta^{-1}w,\qquad
 s_0(z)=w,\quad s_0(w)=z.
\]
The transformation $g(z)=\eta z$, $g(w)=\eta^{-1}w$ satisfies
\[
 gr_0g^{-1}=r_0,\qquad gs_0g^{-1}=r_0s_0.
\]
Thus $g$ normalizes $W$. The root line for $r_0^k s_0$ is
$\mathbb C(z-\zeta^{-k}w)$, its $-1$ eigenspace. The two
$W$-orbits of root lines are indexed by the parity of $k$.
Since
\[
 g(z-\zeta^{-k}w)=\eta(z-\zeta^{-(k+1)}w),
\]
$g$ exchanges those orbits. Composing with an element of $W$
therefore sends any chosen root of one length to a nonzero
scalar multiple of any prescribed root of the other length.
Equation~\eqref{str:eq:content-transport} proves the claim.
In the invariant coordinates
$u=zw$, $v=z^h+w^h$, this particular normalizing map induces
$u\mapsto u$, $v\mapsto-v$.
\end{proof}

Fix one root $\alpha_\Phi$ for each of the three types and write
\[
 Z_{\Phi,a}:=Z_{\alpha_\Phi,a}.
\]
This denotes a chosen graded algebra, whose isomorphism class
is independent of the choice by
Proposition~\ref{str:prop:root-independence}. By
Theorems~\ref{str:thm:alternation} and~\ref{str:thm:parity},
\begin{equation}\label{str:eq:ranktwo-matrix}
 \begin{gathered}
 Z(A_{\alpha,m})\cong Z_{\alpha,a}\cong Z_{\Phi,a},\\
 A_{\alpha,m}\cong\Mat_{|W|}(Z_{\Phi,a}),
 \qquad a=\lfloor m/2\rfloor,\quad m\geq2.
 \end{gathered}
\end{equation}
The exponent $m=1$ gives the zero algebra. The grading on the
center in~\eqref{str:eq:ranktwo-matrix} comes from its polynomial
quotient presentation. Across root lengths the isomorphism need
not respect the identity map on $S$; the proposition gives
isomorphism of graded $\mathbb C$-algebras, not equality of
ideals in fixed invariant coordinates.

\subsection{Uniform Hilbert series from the projective root orbit}

For a finite-dimensional graded algebra $Z$, put
$H_Z(q)=\sum_{k\geq0}(\dim_{\mathbb C}Z_k)q^k$.
The variable $q$ in this series records degree and is independent
of the affine Hecke parameter. The following consequence of
Theorem~\ref{str:thm:rootpower-geometry} supplies the common
calculation behind the three rank-two Hilbert series.

\begin{corollary}[Uniform rank-two Hilbert series]
\label{str:cor:root-orbit-ranktwo}
Let $\Phi$ have type $A_2$, $B_2$, or $G_2$, and put
$a=\lfloor m/2\rfloor$.  There are homogeneous generators
$u,v$ of $T^W$ such that a vector-space basis of
$\mathcal D_{\alpha,m}$ is given by the following table:
\[
\begin{array}{c|c|c}
 \Phi & (\deg u,\deg v)&
   \text{basis elements }\Delta_\Phi^\vee u^iv^j
       \quad(i,j\geq0)\\ \hline
 A_2&(2,3)&i+j\leq a-2\\
 B_2&(2,4)&i+j\leq a-2\\
 G_2&(2,6)&i+2j\leq a-3
\end{array}
\]
The generators $u,v$ here belong to the dual invariant ring
$T^W$; they need not coincide with the normalized generators of $S$
used in the recurrence presentations below. An empty index set
means that the quotient is zero.
Consequently, with $Z_{\Phi,a}$ as fixed above,
\begin{align}
 H_{Z_{A_2,a}}(q)
  &=\sum_{i+j\leq a-2}q^{2i+3j},
 &\dim Z_{A_2,a}&=\binom a2\quad(a\geq2),
       \label{str:eq:root-orbit-a2-sum}\\
 H_{Z_{B_2,a}}(q)
  &=\sum_{i+j\leq a-2}q^{2i+4j},
 &\dim Z_{B_2,a}&=\binom a2\quad(a\geq2),
       \label{str:eq:root-orbit-b2-sum}\\
 H_{Z_{G_2,a}}(q)
  &=\sum_{i+2j\leq a-3}q^{2i+6j},
 &\dim Z_{G_2,a}&=\left\lfloor\frac{(a-1)^2}{4}\right\rfloor
                       \quad(a\geq3).
       \label{str:eq:root-orbit-g2-sum}
\end{align}
All sums are over nonnegative integers.  These formulas
hold for either root length in $B_2$ and $G_2$.
The quotient is nonzero precisely when $m\geq4$ in
types $A_2,B_2$, and when $m\geq6$ in type $G_2$.
The corresponding nil-Hecke quotient dimensions are $|W|^2$ times
the displayed center dimensions.
\end{corollary}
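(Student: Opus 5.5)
The plan is to apply Proposition~\ref{str:prop:weighted-invariants} in each of the three types, and then read off the basis of $\mathcal D_{\alpha,m}$ through Theorem~\ref{str:thm:discriminant-removal}. Root-length independence comes from Proposition~\ref{str:prop:root-independence}, so it suffices to treat one root in each type. The thresholds $\nu_\alpha=4,4,6$ come from Example~\ref{rc:ex:thresholds}; they also follow from Theorem~\ref{rc:thm:threshold}, since $N=3,4,6$ and exactly one positive root is orthogonal to $\alpha$ in $B_2$ and $G_2$, and none in $A_2$. Writing $m=\nu_\alpha+2r$ or $\nu_\alpha+2r+1$ gives $r=a-2$ for $A_2,B_2$ and $r=a-3$ for $G_2$. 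With these values, the weight conditions $i\omega_u+j\omega_v\le r$ are exactly the table rows, provided the weights are $(1,1)$ for $A_2$ and $B_2$ and $(1,2)$ for $G_2$.

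For $A_2$ the required invariants are available from Theorem~\ref{str:thm:an-centers}: take $u=e_2$, $v=e_3$, both of directional degree two, with leading coefficients $-1$ and $-e_1'$. For $B_2$, Corollary~\ref{str:cor:coordinate-hilbert} handles the short root with $p_1,p_2$. Alternatively, the long-root case $n=2$ of Theorem~\ref{str:thm:bd-hilbert} uses $p_1$ and $g_4$, again of weights $(1,1)$.

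For $G_2$ a direct computation is needed. Work in the dihedral coordinates $z,w$ of Proposition~\ref{str:prop:root-independence} with $T^W=\mathbb C[zw,z^6+w^6]$, or in the realization of $G_2$ on the plane $\sum z_i=0$ with the invariants $e_2$ and a sextic. Take $u$ to be the quadratic invariant; it has directional degree two and constant leading coefficient. For the sextic, the translate $v(x+t\alpha)$ has degree at most six in $t$, and its $t^6$ coefficient is the constant $v(\alpha)$. I would first subtract a multiple of $u^3$ so that this coefficient vanishes. Evenness in $t$ (Proposition~\ref{str:prop:leading-stabilizer}) then forces directional degree $\le4$.

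The key claim is that this degree equals four, with nonzero leading coefficient $a_2$. The stabilizer $W_\alpha$ acts on the line $\alpha^\perp$ by $\pm1$, so $a_2$ is a multiple of $y^2$, where $y$ is a coordinate on $\alpha^\perp$. One must check that the multiple is nonzero. In the $A_2\subset G_2$ realization, with $\alpha=\epsilon_1-\epsilon_2$ and $v=e_3^2+c\,e_2^3$ normalized, I would compute the $t^4$ coefficient explicitly as a nonzero multiple of $(z_1+z_2)^2$. A single algebraically independent nonconstant coefficient then suffices for Proposition~\ref{str:prop:weighted-invariants}, giving weights $(1,2)$.

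The Hilbert series and dimensions then follow by summing. In $A_2$ and $B_2$ the number of pairs with $i+j\le a-2$ is $\binom a2$. For $G_2$, counting $i+2j\le a-3$ gives $\sum_j(a-2-2j)=\lfloor (a-1)^2/4\rfloor$. Nonvanishing exactly for $m\ge\nu_\alpha$ is Theorem~\ref{rc:thm:cyclotomic}, and the nil-Hecke dimension is $|W|^2$ times the center dimension by the matrix description. The main obstacle I expect is the $G_2$ sextic: choosing the normalization that kills the $t^6$ term and verifying that the $t^4$ coefficient does not also cancel. I would settle this by evaluating at a single generic point, in the manner of Example~\ref{str:ex:a3-first}.
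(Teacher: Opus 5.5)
Your proof is correct, but it takes a different route from the paper's.

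\textbf{The paper's route.} The paper does not invoke Proposition~\ref{str:prop:weighted-invariants} here. It works directly with the anti-invariant fat-point description of Theorem~\ref{str:thm:rootpower-geometry}\textup{(ii)}. In rank two the intersection of the fat-point ideals at the orbit points is principal, $(Q_\alpha^t)$. Every anti-invariant is $\Delta_\Phi^\vee h(u,v)$, with $v$ a power of $Q_\alpha$. Membership then reduces to the termwise inequality $e+cj\geq N+2i+bj-m+1$ on exact vanishing orders at the root points. Noncancellation holds because distinct $j$ give distinct orders. Since $\deg_\beta F\leq m-1$ is equivalent to vanishing to order $n-m+1$ at $[\beta]$, this is the vanishing-order form of your directional-degree count, carried out with the discriminant still present rather than after removing it.

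\textbf{What your route buys.} It places rank two inside the weighted-basis framework used for the other types. You reuse Theorem~\ref{str:thm:an-centers}, Corollary~\ref{str:cor:coordinate-hilbert} and Theorem~\ref{str:thm:bd-hilbert} for $A_2$ and $B_2$. This is not circular, since none of them depends on the corollary. You also obtain directly the pairs $(2,1),(6,2)$ quoted for $G_2$ in Theorem~\ref{intro:thm:classical}. The only new input is the $G_2$ leading coefficient, and your key claim holds. With $\alpha=\epsilon_1-\epsilon_2$ the normalized sextic is $e_3^2$ itself, and its $t^4$ coefficient is $z_3^2=(z_1+z_2)^2$, which is nonconstant.

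\textbf{What the paper's route buys.} It is self-contained and needs no leading-coefficient independence. It also treats both root orbits at once, by interchanging $Q_\alpha$ and $R$.

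\textbf{One small point.} Proposition~\ref{str:prop:root-independence} is stated for content ideals and centers. It therefore gives the Hilbert series for the other $G_2$ root length, but not immediately the basis assertion about $\mathcal D_{\alpha,m}$ itself. You can close this in either of two ways.
\begin{enumerate}[label=\textup{(\alph*)}]
\item Note that the normalizing map $g$ preserves the determinant character. Hence it carries $\mathcal D_{\alpha,m}$ to $\mathcal D_{g\alpha,m}$ and generators of $T^W$ to generators.
\item Repeat your computation with the sextic vanishing on the other orbit, namely the discriminant $\prod_{i<j}(z_i-z_j)^2$. Along $2\epsilon_1-\epsilon_2-\epsilon_3$ its $t^4$ coefficient is $81(z_2-z_3)^2$.
\end{enumerate}
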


\begin{proof}
Use the fixed $W$-invariant bilinear form to identify $V$ and
$V^*$. In rank two the orbit $\Sigma_\alpha$ is a finite
set of points on $\mathbb P^1$.  Write $Q_\alpha$ for
the product of one linear equation for each of these
points.  Since distinct point ideals in
$\mathbb C[x,y]$ are generated by relatively prime
linear forms, for every $t\geq0$,
\begin{equation}\label{str:eq:root-orbit-principal-fat-points}
 \bigcap_{\xi\in\Sigma_\alpha}\mathfrak p_\xi^t
       =(Q_\alpha^t).
\end{equation}

The required invariant generators can be seen explicitly
in real orthogonal coordinates.  In type $A_2$ take
\[
 u=x^2+y^2,\qquad Q_\alpha=y(3x^2-y^2),
 \qquad R=x(x^2-3y^2).
\]
The root lines are the zero set of $Q_\alpha$, whereas
the reflecting lines are the zero set of $R$.  Thus,
up to a nonzero scalar,
\[
 \Delta_\Phi^\vee=R,\qquad T^W=\mathbb C[u,Q_\alpha],
 \qquad v=Q_\alpha.
\]
The root points and the reflecting points are disjoint.
For type $G_2$ the same coordinates give
\[
 \Delta_\Phi^\vee=RQ_\alpha,
 \qquad T^W=\mathbb C[u,Q_\alpha^2],
 \qquad v=Q_\alpha^2.
\]
The other root orbit is obtained by interchanging
$Q_\alpha$ and $R$.  The identity
$Q_\alpha^2+R^2=u^3$ shows that either choice gives
generators of the invariant ring.  In type $B_2$ take
\[
 u=x^2+y^2,\qquad Q_\alpha=xy,
 \qquad \Delta_\Phi^\vee=xy(x^2-y^2),
 \qquad v=Q_\alpha^2.
\]
Then $T^W=\mathbb C[u,v]$.  For the other root orbit
use $Q_\alpha=x^2-y^2$ and $v=Q_\alpha^2$;
the identity $(x^2-y^2)^2=u^2-4x^2y^2$ again gives
the same invariant ring.  The asserted invariance of
these generators follows directly from the planar
reflection actions; their degrees are the fundamental
degrees of the respective dihedral groups.

Every anti-invariant form is uniquely
$\Delta_\Phi^\vee h(u,v)$.  Let $e$ be the vanishing
order of $\Delta_\Phi^\vee$ at each point of
$\Sigma_\alpha$, let $c$ be that of $v$, and put
$b=\deg v$.  The preceding coordinates give
\[
\begin{array}{c|c|c|c|c}
 \Phi&N&e&c&b\\ \hline
 A_2&3&0&1&3\\
 B_2&4&1&2&4\\
 G_2&6&1&2&6
\end{array}
\]
The quadratic form $u$ is nonzero at every root point,
since the roots are real and the chosen inner product
is positive-definite.  Consequently,
$\Delta_\Phi^\vee u^iv^j$ has exact vanishing order
$e+cj$ at each root point and total degree
$n=N+2i+bj$.

In a homogeneous combination of such monomials, distinct
values of $j$ give distinct vanishing orders.  There is
at most one corresponding $i$ for each $j$, so the term
with least order cannot cancel.  Membership in the
fat-point space in~\eqref{str:eq:rootpower-fat-points}
is therefore equivalent to the termwise inequalities
\[
 e+cj\geq N+2i+bj-m+1,
 \qquad\text{or equivalently}\qquad
 2i+(b-c)j\leq m-1-N+e.
\]
There is no need to retain the maximum with zero,
because the left side of the first inequality is
already nonnegative.  Substitution of the table yields
\[
 \begin{array}{ll}
 A_2,B_2:&2i+2j\leq m-4,\\
 G_2:&2i+4j\leq m-6.
 \end{array}
\]
These are exactly the bounds in the statement when
$a=\lfloor m/2\rfloor$.  This proves the basis
description.  The degree shift in
\eqref{str:eq:rootpower-fat-points} subtracts $N$,
so the same basis gives the displayed Hilbert series.
The vectors listed in the table form a basis of the
\emph{dual space}; no assertion that the center has a
monomial defining ideal is being made.

For $A_2$ and $B_2$, the number of pairs with
$i+j\leq a-2$ is $a(a-1)/2$.  For $G_2$ it is
\[
 \sum_{j=0}^{\lfloor(a-3)/2\rfloor}(a-2-2j)
    =\left\lfloor\frac{(a-1)^2}{4}\right\rfloor.
\]
The first admissible pair is $(0,0)$, which gives the
nonvanishing thresholds.  Finally,
Theorem~\ref{str:thm:parity} identifies the quotients
for exponents $2a$ and $2a+1$, and
\eqref{str:eq:rootpower-matrix} gives their dimensions.
\end{proof}

The formulas in Corollary~\ref{str:cor:root-orbit-ranktwo}
determine the dimensions in every exponent before any choice of a
multiplication matrix. Summing the finite geometric series gives
the rational Hilbert series stated in the type-specific theorems
below. The differential action in
\eqref{str:eq:rootpower-differential-annihilator} contains the
additional multiplication and socle information. We now make that
information explicit through recurrence presentations and resolutions.

\subsection{Explicit recurrence presentations}

Realize the dihedral group $I_2(h)$ on $P=\mathbb C[z,w]$ by
\[
 r_0(z)=\zeta z,\quad r_0(w)=\zeta^{-1}w,\quad
 s_0(z)=w,\quad s_0(w)=z,
 \qquad \zeta^h=1,
\]
where $\zeta$ is primitive, $h=3,4,6$, respectively. We may scale
the chosen root coordinate $x_{\alpha_\Phi}$ to $x=z-w$.
Scaling it by a nonzero constant does not change its content
ideal. Put $y=z+w$ and $t=x^2$. These $x,y$ are degree-one generators of $P$.
This fixes all constants in the following formulas.

\begin{theorem}[Uniform recurrence presentations]
\label{str:thm:recurrences}
The ideal defining $Z_{\Phi,a}$ is generated by the entries of
$F_\Phi^a e_1$, where $e_1$ is the first coordinate vector and
the coefficient rings and matrices are
\begin{align}
 S_{A_2}&=\mathbb C[u,v],&
 F_{A_2}&=\begin{pmatrix}-4u&v&0\\0&-u&v\\1&0&-u\end{pmatrix},
 &\deg(u,v)&=(2,3),\label{str:eq:recurrence-a}\\
 S_{B_2}&=\mathbb C[p,r],&
 F_{B_2}&=\begin{pmatrix}0&r\\1&p\end{pmatrix},
 &\deg(p,r)&=(2,4),\label{str:eq:recurrence-b}\\
 S_{G_2}&=\mathbb C[p,r],&
 F_{G_2}&=\begin{pmatrix}0&0&r\\1&0&-p^2/4\\0&1&p\end{pmatrix},
 &\deg(p,r)&=(2,6).\label{str:eq:recurrence-g}
\end{align}
In particular, two or three scalar recurrences suffice for every
cyclotomic exponent.
\end{theorem}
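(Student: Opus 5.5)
The plan is to copy the proof of Theorem~\ref{str:thm:bc-grassmannian}. We find an intermediate ring $S\subseteq R\subseteq P$ with three properties: it contains $t=x^2$, it is free over $S$ on a homogeneous basis $e_1=1,e_2,\dots$, and $P$ is free over $R$ on a basis containing $1$. Combining the two bases gives an $S$-basis of $P$ that contains $1$ and the $e_j$. The coordinates of $t^a=x^{2a}$ in that basis are then its coordinates in the $R$-part. If $F_\Phi$ is the matrix of multiplication by $t$ on $R$ in the basis $(e_j)$, those coordinates are the entries of $F_\Phi^a e_1$. Lemma~\ref{str:lem:content} identifies the ideal they generate with $J^{\rm pol}_{\alpha_\Phi,2a}$, which is the ideal defining $Z_{\Phi,a}$. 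No parity argument is needed, since $Z_{\Phi,a}=Z(\alpha_\Phi,2a)$ already has even exponent.

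For $R$ we take the invariants of the stabilizer of the root line $\mathbb C x$ in $W$. The reflection $s_0$ exchanges $z,w$, so it sends $x\mapsto -x$ and fixes $y$.
\begin{itemize}
\item In type $A_2$ the stabilizer is $\{1,s_0\}$, so $R=\mathbb C[y,t]$, which has rank $6/2=3$ over $S$.
\item In types $B_2$ and $G_2$ the element $-1=r_0^{h/2}$ also lies in the stabilizer. We then take the Klein group $\{\pm1,\pm s_0\}$, so $R=\mathbb C[y^2,t]$, which has rank $|W|/4=2$ or $3$ over $S$.
\end{itemize}
These ranks match the sizes of the displayed matrices. Freeness of $P$ over $R$ with a basis containing $1$ holds for a polynomial ring over the invariants of a reflection group: the basis is $1,x$ in type $A_2$ and $1,x,y,xy$ in the other two types. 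Freeness of $R$ over $S$ holds because $R$ is a graded Cohen--Macaulay, indeed polynomial, ring finite over the polynomial ring $S$. A homogeneous basis containing $1$ exists because $1$ spans the degree-zero part of $R/S_+R$.

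Next we write the generators of $S$ in the variables $y,t$. In type $A_2$ we have $u=zw=(y^2-t)/4$, and $v$ is the normalized cubic invariant; the convention for $v$ is dictated by the matrix entries. In types $B_2$ and $G_2$ we express $p$ and $r$ through $y^2$ and $t$. The first column of each $F_\Phi$ fixes $e_2$ and $e_3$. For example, in type $A_2$ the first column says $te_1=-4ue_1+e_3$, so $e_3=t+4u=y^2$. For $B_2$ and $G_2$ it gives $e_2=t$, and for $G_2$ also $e_3=t^2$; in those cases the later columns amount to the statement that $t$ satisfies the characteristic polynomial of $F_\Phi$ over $S$. We then verify the remaining columns by direct expansion. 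In type $A_2$ this means checking $te_2=ve_1-ue_2$ and $te_3=ve_2-ue_3$, where $e_2$ is determined by those relations. The basis property follows from unit triangularity in degree, together with a comparison of Hilbert series: $H_R/H_S$ must equal $\sum_j q^{\deg e_j}$.

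The main obstacle is this explicit bookkeeping: choosing the normalizations of $u,v$ and of $p,r$ so that the multiplication matrices come out exactly as displayed, with constants such as $-4u$ and $-p^2/4$. I would fix them by evaluating at convenient points, for instance $z=1,w=0$ or $z=1,w=\zeta$, and by solving for the characteristic polynomial of $t$ over $S$. The characteristic polynomial is the product of $T-x_\beta^2$ over representatives $\beta$ of the root lines in the orbit, with $\pm$ identified. The final step is the remark that iterating $F_\Phi$ gives a linear recurrence of length two or three for the coordinate vectors of $t^a$, and that no other input is used.
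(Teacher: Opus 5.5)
Your proposal is correct and follows essentially the same route as the paper. The paper uses the same intermediate rings: $P^{\langle s_0\rangle}=\mathbb C[y,t]$ with basis $1,y,y^2$ for $A_2$, and for $B_2,G_2$ the ring $S[t]=\mathbb C[u,t]$, which is your Klein-group invariant ring $\mathbb C[y^2,t]$, with basis $1,t$ or $1,t,t^2$. It extends these to an $S$-basis of $P$ containing $1$ and then applies Lemma~\ref{str:lem:content}.

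The only difference is in presentation. The paper fixes $u=zw$, $v=z^h+w^h$ (with $p=-4u$, $r=v-2u^2$ for $B_2$ and $p=-6u$, $r=v-2u^3$ for $G_2$). It obtains freeness from the explicit monic relations $y^3-3uy=v$, $t^2=pt+r$, and $t^3=pt^2-(p^2/4)t+r$. You instead invoke Cohen--Macaulay freeness and recover these normalizations by reading them off the matrices.
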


\begin{proof}
For $A_2$, take $u=zw$ and $v=z^3+w^3$. Then
$y^3-3uy=v$ and $t=y^2-4u$. The ring $P^{\langle s_0\rangle}$
is free over $S$ with basis $(1,y,y^2)$, and $P$ is free over
$S$ with basis $(1,y,y^2,x,xy,xy^2)$. Multiplication by $t$
on the first three basis elements is precisely
$F_{A_2}$. Hence the entries of $F_{A_2}^a e_1$ are the
nonzero coordinates of $x^{2a}$ in this basis.

For $B_2$, first put $u=zw$ and $v=z^4+w^4$. The identity
\[
 v=t^2+4ut+2u^2
\]
gives $t^2=pt+r$ after setting $p=-4u$ and $r=v-2u^2$.
The ring $P$ is free over $S$ with basis
$(t^i x^j y^k)_{0\leq i,j,k\leq1}$: use this monic quadratic
for $t$, followed by $x^2=t$ and $y^2=t+4u$.
The coordinates of $t^a$ therefore lie in the subbasis $(1,t)$
and are $F_{B_2}^a e_1$.

For $G_2$, with $u=zw$ and $v=z^6+w^6$, one has
\[
 v=t^3+6ut^2+9u^2t+2u^3.
\]
Set $p=-6u$ and $r=v-2u^3$. Then
$t^3=pt^2-(p^2/4)t+r$. A free $S$-basis of $P$ is
$(t^i x^j y^k)_{0\leq i\leq2,\,0\leq j,k\leq1}$, by the
same monic-extension argument. The subbasis $(1,t,t^2)$ gives
$F_{G_2}$. Lemma~\ref{str:lem:content} proves each presentation.
The same presentations apply to either root length after the
coordinate transport in
Proposition~\ref{str:prop:root-independence}.
\end{proof}

\subsection{Type \texorpdfstring{$A_2$}{A2}: the resolution of the center}

Corollary~\ref{str:cor:root-orbit-ranktwo} already gives the
Hilbert series. We now determine a resolution and the socle.
We use the convention that $S(-b)$ is free with a generator in
degree $b$, and write
$\operatorname{Soc}(Z)=\{z:S_+z=0\}$.

\begin{theorem}[Type $A_2$ in every exponent]
\label{str:thm:a2}
For $a\geq2$ the center $Z_{A_2,a}$ has a graded free resolution
over $S=\mathbb C[u,v]$, $\deg(u,v)=(2,3)$,
\begin{equation}\label{str:eq:a2-resolution}
\begin{split}
0\longrightarrow&S(-(3a-2))\oplus S(-(3a-1))\\
\longrightarrow&S(-(2a-2))\oplus S(-(2a-1))\oplus S(-2a)\\
\longrightarrow&S\longrightarrow Z_{A_2,a}\longrightarrow0.
\end{split}
\end{equation}
It is minimal for $a\geq3$. The Hilbert series of
Corollary~\ref{str:cor:root-orbit-ranktwo} can be written as
\begin{equation*}\label{str:eq:a2-hilbert}
 \begin{gathered}
 H_{Z_{A_2,a}}(q)=
 \frac{1-q^{2a-2}-q^{2a-1}-q^{2a}+q^{3a-2}+q^{3a-1}}
 {(1-q^2)(1-q^3)},\\
 \dim Z_{A_2,a}=\binom a2.
 \end{gathered}
\end{equation*}
The center at $a=2$ is $\mathbb C$. For every $a\geq3$ its
socle has dimension two, with one generator in each of degrees
$3a-7$ and $3a-6$.
\end{theorem}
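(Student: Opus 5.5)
Combining the recurrence presentation of Theorem~\ref{str:thm:recurrences} with the Hilbert series of Corollary~\ref{str:cor:root-orbit-ranktwo}, I would first write down three explicit generators of $J=J^{\rm pol}_{\alpha,2a}$, then exhibit a Hilbert--Burch matrix, and finally read off the socle from the last syzygy module. Put $F_{A_2}^a e_1=(A_a,B_a,C_a)^{T}$. From the matrix one gets the recursions
\[
 A_{a+1}=-4uA_a+vB_a,\qquad B_{a+1}=-uB_a+vC_a,\qquad C_{a+1}=A_a-uC_a.
\]
These entries are homogeneous of degrees $2a,2a-3,2a-6$ after accounting for the basis shifts $(1,y,y^2)$ of degrees $0,1,2$. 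Those degrees do not match the generator degrees $2a-2,2a-1,2a$ in \eqref{str:eq:a2-resolution}. The first step is therefore to pin down the correct degree bookkeeping. The coordinates of $x^{2a}$ with respect to $1,y,y^2$ have degrees $2a,2a-1,2a-2$, which is exactly the claimed generator list. I would recheck the grading of $F_{A_2}$ using $\deg t=2$ and $\deg y=1$.

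Next, Hilbert--Burch. I would look for a $3\times2$ matrix $M$ over $S$ whose maximal minors are $A_a,B_a,C_a$ up to sign. The degrees forced by \eqref{str:eq:a2-resolution} are: syzygies in degrees $3a-2$ and $3a-1$, so the entries of $M$ have degrees $a,a-1,a-2$ in one column and $a+1,a,a-1$ in the other. The natural candidate comes from the recursion itself. The vector $e_1,\ F e_1,\dots$ satisfies the Cayley--Hamilton relation of $F$. Since the characteristic polynomial of $F$ in $\lambda$ is $(\lambda+4u)(\lambda+u)^2-v^2$, one gets $\chi_F(F)=0$, and the triple $(A_a,B_a,C_a)$ obeys a linear recurrence of order three. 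I would instead prove inductively that $(A_a,B_a,C_a)$ is the vector of $2\times2$ minors of the two columns $\bigl(F^{k}e_1\bigr)$-type vectors of lower index. Failing that, I would construct the two syzygies directly by recursion in $a$ and verify the induction step from the three displayed recursions. This step is the main obstacle. If a clean closed form resists, an alternative is to prove that $J$ is a perfect codimension-two ideal with three generators. It is codimension two because $Z$ is Artinian (Theorem~\ref{str:thm:rootpower-geometry}), and it is perfect automatically over the regular ring $\mathbb C[u,v]$. Hilbert--Burch then gives a resolution $0\to S^2\to S^3\to S$, provided $J$ is minimally three-generated. The syzygy degrees are then forced by comparing with the Hilbert series of Corollary~\ref{str:cor:root-orbit-ranktwo}: the numerator in the displayed formula must equal $(1-q^2)(1-q^3)H_Z(q)$, which I would verify by summing the geometric series $\sum_{i+j\le a-2}q^{2i+3j}$.

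For minimality when $a\geq3$, I would check that no entry of $M$ is a nonzero constant. The entry degrees $a-2\geq1$ already guarantee this, while at $a=2$ a degree-zero entry appears, which is consistent with $Z=\mathbb C$ there. Alternatively, I would check that the three generators are minimal because the Hilbert function of $S$ in degrees below $2a$ agrees with that of $Z$ (Corollary~\ref{str:cor:root-orbit-ranktwo}). For the socle, over a two-variable polynomial ring one has $\operatorname{Soc}Z\cong\operatorname{Tor}_2^S(Z,\mathbb C)$ shifted by $-(\deg u+\deg v)=-5$. This gives socle degrees $3a-2-5=3a-7$ and $3a-1-5=3a-6$, each one-dimensional, so the socle has dimension two; this is consistent with the non-Gorenstein statement of Theorem~\ref{str:thm:an-centers}.
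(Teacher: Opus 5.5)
Your proof is correct, but it takes a different route from the paper's.

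\textbf{The paper's route.} The paper works upstairs in $P$. After a linear change of variables the orbit ideal becomes $(X^{2a},Y^{2a},(X+Y)^{2a})$. An $\mathfrak{sl}_2$ Lefschetz maximal-rank argument gives $H_{P/I}=(1-3q^{2a}+2q^{3a})/(1-q)^2$, hence two syzygies in degree $3a$. A graded-character evaluation then identifies the $W$-module of syzygies with the standard representation. Finally, taking the determinant component of the equivariant resolution and dividing by $\Delta_\Phi$ (Theorem~\ref{str:thm:alternation}) yields the shifts $2a-2,2a-1,2a$ and $3a-2,3a-1$.

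\textbf{Your route.} You work directly over $S=\mathbb C[u,v]$.
\begin{enumerate}
\item Theorem~\ref{str:thm:recurrences} gives generators $A_a,B_a,C_a$ of degrees $2a,2a-1,2a-2$. Your corrected bookkeeping is right; the first guess $2a,2a-3,2a-6$ was not.
\item Since $S$ has global dimension two, the syzygy module is free of rank two.
\item The identity $(1-q^2)(1-q^3)\sum_{i+j\le a-2}q^{2i+3j}=1-q^{2a-2}-q^{2a-1}-q^{2a}+q^{3a-2}+q^{3a-1}$, which does hold for every $a\ge2$, forces the two syzygy degrees.
\item Minimality for $a\ge3$ follows from entry degrees, and the socle follows from $\operatorname{Tor}_2$ shifted by $5$.
\end{enumerate}

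\textbf{Comparison.} Your argument is shorter and uses only results already proved. However, it takes the Hilbert series of Corollary~\ref{str:cor:root-orbit-ranktwo} as an input. The paper's equivariant argument rederives that series independently. It also explains the relation degrees $3a-2,3a-1$ as anti-invariant shifts of the degree-$3a$ upstairs syzygies, in the spirit of Corollary~\ref{str:cor:resolution-transfer}.

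\textbf{Corrections to the write-up.}
\begin{itemize}
\item The explicit Hilbert--Burch matrix you call the main obstacle is unnecessary.
\item The proviso that $J$ be minimally three-generated is also unnecessary. The kernel of any graded surjection $S(-(2a-2))\oplus S(-(2a-1))\oplus S(-2a)\to J$ is free of rank two because $\operatorname{gl.dim}S=2$. This is exactly what produces the stated, nonminimal, resolution at $a=2$. There $F_{A_2}^2e_1=(16u^2,v,-5u)$, so $J=(u,v)$ is only two-generated and your proviso would fail.
\item Your fallback justification of minimality is misstated. $J$ is nonzero in degrees $2a-2$ and $2a-1$, so the Hilbert functions of $S$ and $Z$ agree only below $2a-2$, not below $2a$. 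Use the entry-degree argument instead: first-differential entries have degree at least $2a-2$, and second-differential entries have degree at least $a-2\ge1$.
\end{itemize}
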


\begin{proof}
There are three root lines. After a linear change of variables
their even-power orbit ideal is
\[
 I=(X^{2a},Y^{2a},(X+Y)^{2a})\subseteq P=\mathbb C[X,Y].
\]
We first compute its equivariant resolution. Multiplication by
$(X+Y)^{2a}$ has maximal rank in every degree of
$P/(X^{2a},Y^{2a})$. For completeness, this follows from the
elementary $\mathfrak{sl}_2$ structure on
$\mathbb C[X]/(X^d)$, $d=2a$: on the basis $X^j$ use
\[
 E(X^j)=X^{j+1},\quad
 F(X^j)=j(d-j)X^{j-1},\quad
 H(X^j)=(2j-d+1)X^j,
\]
with the boundary values zero. On the tensor square $E$ is
multiplication by $X+Y$. Complete reducibility gives Lefschetz
strings centered in the same degree, so every power of $E$ has
maximal rank between graded pieces. Subtracting the relevant
Hilbert functions now gives
\[
 H_{P/I}(q)=\frac{1-3q^{2a}+2q^{3a}}{(1-q)^2}.
\]
The three generators are minimal. The graded Hilbert--Burch
theorem, or the minimal resolution over the two-variable
polynomial ring, therefore gives two syzygies, both in degree
$3a$.

Let $U$ be the permutation representation on the three root
lines. The even powers of the roots give this representation
without signs. In characteristic zero the resolution can be
chosen $W$-equivariantly and has the form
\begin{equation}\label{str:eq:a2-equivariant}
 0\longrightarrow P(-3a)\otimes K
 \longrightarrow P(-2a)\otimes U
 \longrightarrow I\longrightarrow0,
\end{equation}
where $\dim K=2$. We determine $K$ rather than assuming it.
For $g\in W\cong\mathfrak S_3$ the graded character of $P/I$ is
\[
 \frac{1-\operatorname{tr}_U(g)q^{2a}
          +\operatorname{tr}_K(g)q^{3a}}
 {\det(1-qg\mid V)}.
\]
It is a polynomial. At a reflection the denominator is $1-q^2$
and $\operatorname{tr}_U(g)=1$; evaluating its numerator at
$q=1$ gives $\operatorname{tr}_K(g)=0$. At a three-cycle the
denominator is $1+q+q^2$ and $\operatorname{tr}_U(g)=0$;
evaluation at a primitive cube root gives
$\operatorname{tr}_K(g)=-1$. Hence $K$ is the standard
two-dimensional representation, and $U=\mathbf1\oplus K$.

The graded coinvariant representation of $\mathfrak S_3$ has
the trivial representation in degree zero, the standard
representation in degrees one and two, and the sign representation
in degree three. This also follows directly from
$P=\mathbb C[z,w]$ with invariants of degrees two and three.
Taking anti-invariants in~\eqref{str:eq:a2-equivariant} is exact.
The $S$-module $(P\otimes U)^{\det}$ has generator degrees
$1,2,3$, and $(P\otimes K)^{\det}$ has generator degrees $1,2$.
Divide the resulting anti-invariant ideal by $\Delta_\Phi$, of
degree three, and use Theorem~\ref{str:thm:alternation}. This
gives precisely~\eqref{str:eq:a2-resolution}.

For $a\geq3$ every possible entry of its first differential
has positive degree; its second differential also has
positive-degree entries. The resolution is therefore minimal.
Taking alternating sums gives the displayed rational form of
the Hilbert series, in agreement with
Corollary~\ref{str:cor:root-orbit-ranktwo}. That corollary also
gives the dimension $a(a-1)/2$. For a finite-length module over
$\mathbb C[u,v]$, the last Koszul homology is its socle shifted
by $\deg u+\deg v=5$. The two last free summands consequently
give socle degrees $3a-7$ and $3a-6$. Finally,
\eqref{str:eq:recurrence-a} at $a=2$ gives
$J^{\rm pol}_{\alpha,4}=(u,v)$.
\end{proof}

\begin{example}[The first non-Gorenstein center]
\label{str:ex:a2-first}
At $a=3$ the vector in~\eqref{str:eq:recurrence-a} is
\[
 \begin{pmatrix}-64u^3+v^2\\-6uv\\21u^2\end{pmatrix}.
\]
It generates $(u^2,uv,v^2)$. Thus
\begin{equation*}\label{str:eq:a2-radical-square-zero}
 A_{\alpha,6}\cong A_{\alpha,7}
 \cong\Mat_6\bigl(\mathbb C[u,v]/(u,v)^2\bigr).
\end{equation*}
The same algebra structure follows directly from
Corollary~\ref{str:cor:root-orbit-ranktwo}: its Hilbert series is
$1+q^2+q^3$, so every product of two positive-degree elements
vanishes. These algebras have dimension $108$ and are not
self-injective.
\end{example}

\subsection{Type \texorpdfstring{$B_2$}{B2}: Grassmannian centers}

\begin{theorem}[Type $B_2$ in every exponent]
\label{str:thm:b2}
Define polynomials in $\mathbb C[p,r]$ by
\[
 U_0=0,\qquad U_1=1,\qquad U_{b+1}=pU_b+rU_{b-1}.
\]
For $a\geq2$ one has
\begin{equation*}\label{str:eq:b2-grassmannian}
 Z_{B_2,a}\cong\mathbb C[p,r]/(U_a,U_{a+1})
 \cong H^*(\operatorname{Gr}(2,a),\mathbb C),
\end{equation*}
where $p$ has degree two and $r$ degree four. In particular,
\begin{equation*}\label{str:eq:b2-hilbert}
 H_{Z_{B_2,a}}(q)=
 \frac{(1-q^{2a-2})(1-q^{2a})}{(1-q^2)(1-q^4)},
 \qquad \dim Z_{B_2,a}=\binom a2.
\end{equation*}
Every nonzero center is a complete intersection with
one-dimensional socle, and every corresponding matrix quotient
$A_{\alpha,m}$ is symmetric. In the displayed cohomology
isomorphism, the center degrees $\deg p=2$ and $\deg r=4$
match the usual degrees of the first two Chern classes.
\end{theorem}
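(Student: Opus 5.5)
The plan is to derive everything from the recurrence presentation of Theorem~\ref{str:thm:recurrences} for $B_2$. That theorem says the content ideal of $x^{2a}$ is generated by the entries of $F_{B_2}^a e_1$, with
\[
 F_{B_2}=\begin{pmatrix}0&r\\1&p\end{pmatrix}.
\]
This matrix is the companion matrix of $t^2=pt+r$. By induction on $a$ one gets $F_{B_2}^a e_1=(rU_{a-1},U_a)^{T}$; equivalently, $t^a=U_a t+rU_{a-1}$ in $S[t]/(t^2-pt-r)$. The step is immediate, since $t\cdot(U_at+rU_{a-1})=U_a(pt+r)+rU_{a-1}t=U_{a+1}t+rU_a$. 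Hence
\[
 J^{\rm pol}_{\alpha,2a}=(rU_{a-1},U_a).
\]
The recurrence $U_{a+1}=pU_a+rU_{a-1}$ then gives $(rU_{a-1},U_a)=(U_a,U_{a+1})$, which is the first presentation. Parity, Theorem~\ref{str:thm:parity}, covers exponent $2a+1$. Proposition~\ref{str:prop:root-independence} transports the result to the other root length up to a graded automorphism of $S$.

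For the Grassmannian identification I would not compare Chern-class presentations directly. Instead I would observe that this is the $n=2$ case of Theorem~\ref{str:thm:bc-grassmannian}, since in $B_2$ the short root is a coordinate root. That theorem gives $Z\cong H^*(\operatorname{Gr}(2,a);\mathbb C)$ together with the complete intersection property, and I need only check that its ideal $(h_{a-1},h_a)$ matches $(U_a,U_{a+1})$ under a change of generators. Setting $c_1=p$ and $c_2=-r$ gives
\[
 (1-c_1u+c_2u^2)^{-1}=(1-pu-ru^2)^{-1},
\]
whose coefficients satisfy $h_b=ph_{b-1}+rh_{b-2}$ with $h_0=1$. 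So $h_b=U_{b+1}$ and $(h_{a-1},h_a)=(U_a,U_{a+1})$ exactly. The relation between $c_1,c_2$ here and the coordinates $p=-4u$, $r=v-2u^2$ of Theorem~\ref{str:thm:recurrences} is the main bookkeeping obstacle. I would resolve it by noting that $t=x^2$ for the coordinate root satisfies $t^2-pt-r=0$. In Theorem~\ref{str:thm:bc-grassmannian} it satisfies $t^2-c_1t+c_2=0$, after rescaling so that $x=z-w$ is a coordinate $\xi_1$ in suitable real coordinates. Comparing these two monic relations forces $c_1=p$ and $c_2=-r$.

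The remaining conclusions are then routine. The generators have degrees $2a-2$ and $2a$ in a two-variable ring with $\deg p=2$ and $\deg r=4$. The quotient is finite-dimensional by Theorem~\ref{rc:thm:cyclotomic}, so the two generators form a regular sequence. This gives the displayed Hilbert series and, at $q=1$, the dimension $(a-1)a/2=\binom a2$, which agrees with Corollary~\ref{str:cor:root-orbit-ranktwo}. A complete intersection is Gorenstein, so the socle is one-dimensional. Theorem~\ref{str:thm:rootpower-geometry}\textup{(iv)} then makes $A_{\alpha,m}$ symmetric. The degree matching with Chern classes holds because $c_1=p$ has degree $2$ and $c_2=-r$ has degree $4$.
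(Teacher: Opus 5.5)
Your proposal is correct and uses the same ingredients as the paper: the $n=2$ case of Theorem~\ref{str:thm:bc-grassmannian}, the substitution $c_1=p$, $c_2=-r$ giving $h_b=U_{b+1}$, the recurrence coordinates of~\eqref{str:eq:recurrence-b}, and Proposition~\ref{str:prop:root-independence} for the other root length. The difference is one of emphasis: you read the ideal $(rU_{a-1},U_a)=(U_a,U_{a+1})$ directly off $F_{B_2}^a e_1$ and match coordinates by comparing the two monic quadratics satisfied by $t=x^2$, whereas the paper takes the ideal from the Grassmannian theorem and checks agreement with the recurrence coordinates via the explicit substitution $X=z-w$, $Y=\sqrt{-1}(z+w)$.
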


\begin{proof}
Apply Theorem~\ref{str:thm:bc-grassmannian} with $n=2$ to
the coordinate-root orbit, and use
Proposition~\ref{str:prop:root-independence} for the other
length. In its coordinates, set $p=c_1$, $r=-c_2$.
The complete symmetric polynomials satisfy
$h_b=c_1h_{b-1}-c_2h_{b-2}$, so $U_a=h_{a-1}$ and
$U_{a+1}=h_a$. The content ideal is therefore
$(U_a,U_{a+1})$.

This also agrees explicitly with the normalized coordinates
of~\eqref{str:eq:recurrence-b}. Put $X=z-w$ and
$Y=\sqrt{-1}(z+w)$. Then, in the signed-permutation
coordinates $X,Y$,
\[
 c_1=X^2+Y^2=-4zw=p,
 \qquad -c_2=-X^2Y^2=z^4+w^4-2(zw)^2=r.
\]
Thus the two presentations use the same invariant generators.
The Grassmannian identification, Hilbert series, and
one-dimensional socle follow from
Theorem~\ref{str:thm:bc-grassmannian}; its symmetric form
gives the final assertion.
\end{proof}

\subsection{Type \texorpdfstring{$G_2$}{G2}: an odd-part transfer
from type \texorpdfstring{$A_2$}{A2}}

The root-orbit corollary gives the Hilbert series in type $G_2$.
The following comparison with type $A_2$ determines the defining
ideal and transfers its resolution, so it also determines the socle. Let $W'$ be the subgroup generated by
reflections in the chosen $G_2$ root orbit. Then $W'\cong W(A_2)$
and $W=W'\times\langle-1\rangle$. In the dihedral coordinates,
\[
 P^{W'}=\mathbb C[u,v],\quad u=zw,\quad v=z^3+w^3,
 \qquad P^W=\mathbb C[u,b],\quad b=v^2.
\]
The notation $(-)_{\rm odd}$ below means the $-1$ eigenspace for
$v\mapsto-v$, equivalently the sum of the odd total-degree pieces.

\begin{theorem}[Type $G_2$ in every exponent]
\label{str:thm:g2}
Let $J^A_a\subseteq\mathbb C[u,v]$ be the ideal in
Theorem~\ref{str:thm:a2}, with the same chosen root coordinate.
The corresponding type-$G_2$ content ideal is
\begin{equation}\label{str:eq:g2-colon}
 J^G_a=(J^A_a:v)\cap\mathbb C[u,v^2].
\end{equation}
Multiplication by $v$ induces a graded $\mathbb C[u,v^2]$-module
isomorphism from $Z_{G_2,a}$ to $(Z_{A_2,a})_{\rm odd}$ with
degrees raised by three. In particular, for $a\geq3$,
\begin{equation}\label{str:eq:g2-odd-hilbert}
 H_{Z_{G_2,a}}(q)
 =q^{-3}\frac{H_{Z_{A_2,a}}(q)-H_{Z_{A_2,a}}(-q)}2.
\end{equation}
Put $b_1=\lfloor3a/2\rfloor-2$ and
$b_2=\lceil3a/2\rceil-1$. Over $S=\mathbb C[u,b]$, of
generator degrees $2,6$, there is a graded resolution
\begin{equation}\label{str:eq:g2-resolution}
\begin{split}
0\longrightarrow&S(-2b_1)\oplus S(-2b_2)\\
\longrightarrow&S(-(2a-4))\oplus S(-(2a-2))\oplus S(-2a)\\
\longrightarrow&S\longrightarrow Z_{G_2,a}\longrightarrow0.
\end{split}
\end{equation}
Consequently
\begin{align}
 H_{Z_{G_2,a}}(q)
 &=\frac{1-q^{2a-4}-q^{2a-2}-q^{2a}+q^{2b_1}+q^{2b_2}}
 {(1-q^2)(1-q^6)},\notag \\
 \dim Z_{G_2,a}&=\left\lfloor\frac{(a-1)^2}{4}\right\rfloor.
 \label{str:eq:g2-length}
\end{align}
The resolution is minimal for $a\geq6$, when the socle has
dimension two and degrees $2b_1-8$, $2b_2-8$. For $a=3,4,5$
the centers are, respectively,
\begin{equation}\label{str:eq:g2-small}
 \mathbb C,\qquad \mathbb C[p]/(p^2),\qquad
 \mathbb C[p]/(p^4),\qquad \deg p=2.
\end{equation}
\end{theorem}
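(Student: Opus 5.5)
The proof will be an application of Theorem~\ref{str:thm:subgroup-transfer} and Corollary~\ref{str:cor:resolution-transfer} to the normal reflection subgroup $W'\cong W(A_2)$ generated by the reflections in the chosen root orbit. The first step is to check the hypotheses. $W'$ is normal because $W=W'\times\langle-1\rangle$. The orbits agree, $W'\alpha=W\alpha$, because $-1$ maps the root orbit to itself. The relative discriminant $\delta$ is the product of the linear forms for the other root orbit; in dihedral coordinates this is $v=z^3+w^3$ up to a scalar. The relative character is the sign of $-1$, which acts on $v$ by $-1$; equivalently it is the parity of total degree, since $u$ has even degree and $v$ odd degree. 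Theorem~\ref{str:thm:subgroup-transfer} then gives the colon formula~\eqref{str:eq:g2-colon} and the graded isomorphism $Z_{G_2,a}\cong(Z_{A_2,a})_{\rm odd}$ with shift $3$. Here I must make sure that the $A_2$ content ideal for $W'$ in the same root coordinate is exactly the ideal $J^A_a$ of Theorem~\ref{str:thm:a2}; the recurrence presentation uses the same $x=z-w$, which is legitimate. Taking the odd part of a Hilbert series gives~\eqref{str:eq:g2-odd-hilbert}.

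Next I transfer the resolution~\eqref{str:eq:a2-resolution}. Over $\mathbb C[u,v]=S\oplus vS$, the module $S_{W'}(-c)$ has $\chi$-component $vS(-c-3)$ if $c$ is even, with generator $v$, and $S(-c)$ if $c$ is odd. I apply this to the terms $S_{W'}$, $S_{W'}(-(2a-2))\oplus S_{W'}(-(2a-1))\oplus S_{W'}(-2a)$ and $S_{W'}(-(3a-2))\oplus S_{W'}(-(3a-1))$, then lower all degrees by $3$. The middle term gives generator degrees $2a-2,\,2a-4,\,2a$. The last term gives one generator in degree $3a-3$ or $3a-4$ and one in degree $3a-2$ or $3a-1$, depending on parity. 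These are precisely $2b_1$ and $2b_2$; I will check this separately for $a$ even and odd. The term $S_{W'}$ contributes $vS(-3)$, which shifts back to $S$ in degree $0$. This needs a $\Gamma$-equivariant resolution, and I will get one by choosing~\eqref{str:eq:a2-equivariant}'s descent equivariantly for $-1$, as Corollary~\ref{str:cor:resolution-transfer} permits. The resolution~\eqref{str:eq:g2-resolution} follows, and the rational Hilbert series follows by alternating sums. The dimension formula~\eqref{str:eq:g2-length} is already Corollary~\ref{str:cor:root-orbit-ranktwo}.

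The main obstacle is minimality and the small cases. For minimality, the entries of the differentials are homogeneous of the degree differences. The first differential has degrees $2a-4,2a-2,2a$, which are positive once $a\geq3$. The second differential has differences $2b_i-(2a-4)$ and similar. For $a\geq6$ every difference is positive, because $2b_1\geq 3a-4>2a$, so no unit entries can appear and the resolution is minimal. The socle is then the last Koszul homology shifted by $\deg u+\deg b=8$, which gives degrees $2b_i-8$. For $a=3,4,5$ some differences are at most zero, and I expect cancellations. Rather than tracking them, I will compute these centers from the Hilbert series $1$, $1+q^2$, and $1+q^2+q^4+q^6$ given by Corollary~\ref{str:cor:root-orbit-ranktwo}. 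Such a center is cyclic in $p$ exactly when the powers of $p=u$ are nonzero up to the top degree, and I will verify this by the conjugated Laplacian of Proposition~\ref{str:prop:conjugated-action} acting on the basis $\Delta^\vee u^iv^j$. The delicate check is $a=5$: I need $\mathscr L^3\neq0$ on the top element $u^3$, or on the combination involving $Q_\alpha^2$, which I would obtain from Euler-type identities analogous to~\eqref{str:eq:f4-laplacian}. Alternatively I can read it off from the recurrence~\eqref{str:eq:recurrence-g}, by checking that $F_{G_2}^5e_1$ generates $(p^4,r)$ modulo degree considerations.
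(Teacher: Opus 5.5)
Your proposal is correct and follows essentially the paper's own proof. It applies the transfer theorem with relative discriminant $v$ and takes odd parts of the $A_2$ resolution, shifted down by three. That resolution is automatically parity-equivariant because its maps are homogeneous, so no extra equivariant choice is needed. Minimality comes from comparing shifts, the socle from the last Koszul term, and the small cases from the cubic recurrence.

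Fix a few slips when writing it up.
\begin{itemize}
\item The transferred relation shifts are $3a-4,3a-2$ for even $a$ and $3a-5,3a-1$ for odd $a$. So $2b_1\geq 3a-5$, which exceeds $2a$ exactly when $a\geq 6$.
\item At $a=5$ the degree-six element of the invariant inverse system is $Q_\alpha^2$, not $u^3$.
\item The recurrence gives $J^G_5=(r+p^3/2,\,p^4)$ rather than $(p^4,r)$; this still yields $\mathbb C[p]/(p^4)$.
\end{itemize}
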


\begin{proof}
The orbit ideal of $x^{2a}$ is the same for $W'$ and $W$,
because $-1$ fixes its even-power generators. Up to a nonzero
constant their discriminants satisfy
\[
 \Delta_W=(z^6-w^6)=(z^3-w^3)(z^3+w^3)=\Delta_{W'}v.
\]
Theorem~\ref{str:thm:subgroup-transfer}, with relative
discriminant $v$, gives~\eqref{str:eq:g2-colon} and the
module isomorphism. Here the relative character is the odd
character of $v\mapsto-v$. Since $u$ has even degree and
$v$ has odd degree, its character projection gives
\eqref{str:eq:g2-odd-hilbert}. This is the index-two case of
the general reflection-subgroup comparison.

All maps in~\eqref{str:eq:a2-resolution} are homogeneous and
are therefore equivariant for total-degree parity.
Corollary~\ref{str:cor:resolution-transfer} applies; concretely,
taking their odd parts is exact. A free summand $\mathbb C[u,v](-\delta)$
contributes an odd part generated over $S$ in degree $\delta$
if $\delta$ is odd, and in degree $\delta+3$ if $\delta$ is
even. Subtract three from all shifts, using multiplication by
$v$ on the final free term. The three generator shifts become
$2a-4,2a-2,2a$, and the two relation shifts become $2b_1,2b_2$.
This proves~\eqref{str:eq:g2-resolution}. Its alternating sum gives
the displayed rational Hilbert series, in agreement with
\eqref{str:eq:root-orbit-g2-sum}. Counting the pairs in that finite
sum gives~\eqref{str:eq:g2-length}.

If $a\geq6$, the smallest last shift is strictly larger than
the largest generator shift; hence the resolution is minimal.
The last Koszul homology shifts the socle by $2+6=8$, proving
the socle assertion. For the remaining nonzero cases the cubic
recurrence gives, in the coordinates of~\eqref{str:eq:recurrence-g},
\[
 J^G_3=(p,r),\qquad J^G_4=(p^2,r),\qquad
 J^G_5=(r+p^3/2,p^4).
\]
This proves~\eqref{str:eq:g2-small}.
\end{proof}

\subsection{Dimensions and self-injectivity}

\begin{corollary}[Complete rank-two symmetry classification]
\label{str:cor:symmetry}
Let $\Phi$ be irreducible of rank two, let $m\geq1$, and put
$a=\lfloor m/2\rfloor$. For the nonzero quotients the dimensions
and self-injectivity conditions are as follows:
\[
\begin{array}{c|c|c|c}
\Phi&\text{nonzero range}&\dim A_{\alpha,m}
 &\text{self-injective range}\\ \hline
A_2&a\geq2&36\binom a2&a=2\\
B_2&a\geq2&64\binom a2&\text{all }a\geq2\\
G_2&a\geq3&144\left\lfloor (a-1)^2/4\right\rfloor&3\leq a\leq5
\end{array}
\]
For these algebras, being self-injective, being Frobenius, and
being symmetric are equivalent. Outside the indicated
self-injective ranges the center has a two-dimensional socle.
Under the comparison hypothesis of
Corollary~\ref{rc:cor:sharp-hecke}, the same conclusions hold
for the corresponding zero-orbit cyclotomic Hecke summand;
it is a single block whenever it is nonzero.
\end{corollary}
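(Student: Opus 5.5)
The plan is to assemble the corollary from the three type-specific theorems together with the matrix description and Theorem~\ref{str:thm:rootpower-geometry}\textup{(iv)}. By \eqref{str:eq:ranktwo-matrix} we have $A_{\alpha,m}\cong\Mat_{|W|}(Z_{\Phi,a})$ with $a=\lfloor m/2\rfloor$. The Weyl group orders are $6,8,12$, which give the factors $36,64,144$. The center dimensions and nonzero ranges are those of Corollary~\ref{str:cor:root-orbit-ranktwo}. By Proposition~\ref{str:prop:root-independence} it suffices to treat one root per type.

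For self-injectivity, Theorem~\ref{str:thm:rootpower-geometry}\textup{(iv)} reduces everything to the Gorenstein property of $Z_{\Phi,a}$. It also gives the equivalence of the self-injective, Frobenius and symmetric conditions.

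\textbf{Type $A_2$.} At $a=2$ the center is $\mathbb C$. For $a\geq3$, Theorem~\ref{str:thm:a2} gives a two-dimensional socle, in degrees $3a-7$ and $3a-6$.

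\textbf{Type $B_2$.} Theorem~\ref{str:thm:b2} gives Grassmannian complete intersections for every $a\geq2$, so all these centers are Gorenstein.

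\textbf{Type $G_2$.} For $a=3,4,5$, \eqref{str:eq:g2-small} shows the centers are truncated polynomial rings in one variable, hence Gorenstein. For $a\geq6$, Theorem~\ref{str:thm:g2} gives minimality of the resolution and a socle of dimension two.

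This also proves the claim that outside the self-injective range the socle is two-dimensional. In type $B_2$ that range is empty. In type $A_2$ it is $a\geq3$, and in type $G_2$ it is $a\geq6$. The socle count comes from the last free term in the minimal resolution via the Koszul shift, as already recorded in those theorems.

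The remaining assertion concerns the zero-orbit cyclotomic Hecke summand. Here I would invoke the comparison hypothesis of Corollary~\ref{rc:cor:sharp-hecke}, stated later in Section~\ref{ii:sec:comparison}. Under that hypothesis the summand is identified, up to Morita equivalence or isomorphism, with $A_{\alpha,m}$ or its completion. By Theorem~\ref{rc:thm:cyclotomic}, completion does not change the quotient. Self-injectivity, the Frobenius property, symmetry and socle dimension of the center are all Morita invariant for these algebras. The single-block claim follows because $A_{\alpha,m}$ is a matrix algebra over a local ring, so it has a unique simple module by Theorem~\ref{rc:thm:cyclotomic}.

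The only delicate point, and the step I expect to be the main obstacle, is this final transfer. It depends on the exact form of Corollary~\ref{rc:cor:sharp-hecke}, which appears later in the paper. The danger is that the comparison there is only a Morita equivalence rather than an isomorphism. In that case I would argue with Morita-invariant notions throughout: the center, and self-injectivity. Symmetry transfers for Morita equivalent algebras only when the equivalence is implemented suitably, so I would use that symmetric algebras are preserved under Morita equivalence.
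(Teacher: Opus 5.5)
Your proposal is correct and follows the paper's proof: the dimensions and nonzero ranges come from Corollary~\ref{str:cor:root-orbit-ranktwo} and \eqref{str:eq:ranktwo-matrix}, and the self-injective ranges and two-dimensional socles come from Theorems~\ref{str:thm:a2}--\ref{str:thm:g2} combined with Theorem~\ref{str:thm:rootpower-geometry}\textup{(iv)}, with the Hecke statement taken from Corollary~\ref{rc:cor:sharp-hecke}. Your concern about a mere Morita equivalence does not arise, because that corollary gives an actual isomorphism with $A_{\alpha,m}$, so every conclusion transfers directly, and the single-block claim follows from the locality of the center.
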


\begin{proof}
The nonzero ranges and dimensions follow from
Corollary~\ref{str:cor:root-orbit-ranktwo} and
\eqref{str:eq:ranktwo-matrix}. The socle calculations in
Theorems~\ref{str:thm:a2}--\ref{str:thm:g2}, together with
Theorem~\ref{str:thm:rootpower-geometry}\textup{(iv)}, give
the self-injective ranges and all the stated equivalences. The final assertion
uses the stated Hecke orbit-summand isomorphism, with its
hypotheses unchanged. Its block assertion follows from the
locality of the computed center.
\end{proof}

\begin{remark}[Comparison with ordinary cyclotomic nil-Hecke algebras]
\label{str:rem:ordinary-cyclotomic}
Ordinary cyclotomic nil-Hecke algebras are matrix algebras over
Grassmannian cohomology rings and admit symmetrizing forms
\cite{Lauda,HuLiang}. The quotient here imposes a power of a
root coordinate. Even in type $A$, this is a different condition
from imposing a power of an individual strand variable. In
particular, in type $A_n$ with $n\geq2$, all root-power
quotients with $m\geq2n+2$, and the type-$G_2$ quotients
for $m\geq12$, cannot be Morita equivalent
to a nonzero ordinary cyclotomic nil-Hecke algebra. The same
holds for the first nonsemisimple type-$D_n$ quotients, at
$m=4n-4,4n-3$, by
Theorem~\ref{str:thm:first-bd-centers}. Self-injectivity is
Morita invariant, so the computed socles give the obstruction.
For coordinate roots in $B_n,C_n$, the Grassmannian
identification follows instead from the explicit content
calculation of Theorem~\ref{str:thm:bc-grassmannian}. The root-lattice setting of~\cite{KongLiND} likewise
distinguishes its Hecke algebras from the usual type-$A$
Brundan--Kleshchev setting.
\end{remark}

\begin{remark}[Scope of the cohomological interpretation]
\label{geo:rem:center-cohomology}
The general geometry of the center is the fat-point description
in~\eqref{str:eq:rootpower-fat-points}. It describes a graded dual
space, and its differential action recovers the center through
\eqref{str:eq:rootpower-differential-annihilator}. The partial flag
variety gives the exact nonvanishing threshold.

Theorem~\ref{str:thm:bc-grassmannian} identifies the
coordinate-root $B_n,C_n$ centers with Grassmannian cohomology. Its socle is the top cohomology,
and integration gives a Frobenius functional. This center
grading is separate from the degree-doubling characteristic
map in~\eqref{geo:eq:characteristic}; no map between flag
varieties is required for the algebra isomorphism.

The non-Gorenstein centers in type $A_n$ for $n\geq2$,
the first nonsemisimple type-$D_n$ centers, the type-$F_4$
centers for $m\geq22$, and the non-Gorenstein type-$G_2$
centers cannot be the full ordinary
cohomology rings of smooth connected complex projective varieties.
Poincar\'e duality would give a one-dimensional socle, contrary
to Theorems~\ref{str:thm:an-centers},
\ref{str:thm:first-bd-centers}, \ref{str:thm:f4-symmetry},
and~\ref{str:thm:g2}. Thus the flag-variety
interpretation of the threshold does not identify every
cyclotomic center with a smooth projective cohomology ring.
\end{remark}

\section{The zero-residue interpretation}
\label{ii:sec:comparison}

The preceding results were proved for the polynomial nil-Hecke
algebra. We now identify their completed multiplicative form.
This also states exactly what is needed to transfer them to a
cyclotomic Hecke summand.

Let $B_0=\mathbb C[Q]$, with characters $X_\lambda$ satisfying
$X_\lambda X_\mu=X_{\lambda+\mu}$, and put
$Y_\lambda=1-X_\lambda$. The identity character sends every
$X_\lambda$ to $1$. Its completed local ring is
$\widehat B_0=\mathbb C[[Y_{\alpha_i}:i\in I]]$.
The completed zero-residue operator algebra is generated by
these multiplication operators and
\[
 C_i(f)=\frac{f-s_i(f)}{1-X_{\alpha_i}}.
\]
These quotients preserve the coefficient ring. Write this algebra
as $\widehat{\mathcal A}_0$. Its definition is independent of
any residue-quiver arrows: at the singleton zero residue all
crossings have the displayed form, up to sign.

\begin{proposition}[Formal comparison of root-power quotients]
\label{ii:prop:formal-comparison}\label{thm:completed-matrix}
The formal exponential identifies
\[
 \widehat{\mathcal A}_0
 \cong\operatorname{End}_{\widehat S}(\widehat P)
 \cong\widehat S\otimes_S NH_W.
\]
If $W\Xi$ spans $V$, it induces an algebra isomorphism
\[
 \widehat{\mathcal A}_0/
       \langle M_{Y_\alpha^{m_\alpha}}:\alpha\in\Xi\rangle
 \cong A_{\Xi,\mathbf m}.
\]
\end{proposition}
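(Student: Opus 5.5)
The plan is to construct a $W$-equivariant ring isomorphism between completions and transport every operator through it. Define the formal exponential $\varepsilon\colon\widehat B_0\to\widehat P$ by $X_\lambda\mapsto\exp(x_\lambda)$. It is well defined because $X_\lambda X_\mu=X_{\lambda+\mu}$ matches $\exp(x_\lambda)\exp(x_\mu)=\exp(x_{\lambda+\mu})$. It is continuous, since $Y_\lambda\mapsto 1-e^{x_\lambda}\in -x_\lambda+\widehat P_+^2$. It is an isomorphism of complete local rings, because the images of the generators $Y_{\alpha_i}$ have linearly independent linear parts $-x_{\alpha_i}$. It is $W$-equivariant, since $w$ acts on both sides through $\lambda\mapsto w\lambda$.

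Equivariance gives $\varepsilon(\widehat B_0^W)=\widehat P^W$. The next step is to identify $\widehat P^W$ with $\widehat S$: averaging commutes with the $P_+$-adic completion, and Lemma~\ref{lem:completed-nilhecke} identifies the $S_+$-adic and $P_+$-adic completions of $P$.

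Now conjugate the operators. Conjugating $M_f$ gives $M_{\varepsilon(f)}$, and conjugating $C_i$ gives
\[
 \varepsilon C_i\varepsilon^{-1}=M_{u_i}\,\partial_i,
 \qquad u_i=\frac{x_{\alpha_i}}{1-e^{x_{\alpha_i}}},
\]
where $u_i$ is a unit of $\widehat P$. Hence the conjugated generators lie in $\operatorname{End}_{\widehat S}(\widehat P)$, using $\widehat S$-linearity of $\partial_i$ and of multiplication. For the reverse inclusion, conjugating back shows that $\varepsilon^{-1}\partial_i\varepsilon=M_{\varepsilon^{-1}(u_i)^{-1}}C_i$ lies in $\widehat{\mathcal A}_0$. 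The conjugated algebra therefore contains all $M_f$ for $f\in\widehat P$ and all $\partial_i$.

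The main obstacle is showing that this conjugated algebra is all of $\operatorname{End}_{\widehat S}(\widehat P)\cong\widehat S\otimes_S NH_W$, rather than a dense subalgebra. The issue is what "generated" means: with arbitrary power series multiplication operators, an algebraic span may be all that is available. I would argue as follows. $NH_W$ is a finite free $S$-module of rank $d^2$ with a basis of words $M_{b}\partial_w$, where $b$ runs over coinvariant lifts and $\partial_w$ over reduced-word divided differences; this is the classical nil-Hecke basis underlying Lemma~\ref{lem:completed-nilhecke}. Every element of $\widehat S\otimes_S NH_W$ is then a finite $\widehat S$-combination of these words. Each such word lies in the algebra generated by the conjugated $C_i$ and the $M_f$, since $\widehat S\subseteq\widehat P$ acts by multiplication operators. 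So the conjugated algebra contains $\widehat S\otimes_SNH_W$ and is contained in it, giving equality. I would interpret $\widehat{\mathcal A}_0$ as the algebra generated over $\widehat B_0$, and that interpretation is what makes this step work.

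For the quotient, $M_{Y_\alpha^{m}}$ is sent to $M_{(1-e^{x_\alpha})^{m}}=M_{x_\alpha^{m}}M_{v}$ with $v$ a unit. The two-sided ideals $\langle M_{Y_\alpha^{m_\alpha}}\rangle$ and $\langle M_{x_\alpha^{m_\alpha}}\rangle$ therefore coincide in $\widehat S\otimes_SNH_W$. Since $\widehat S\otimes_S NH_W\cong\operatorname{Mat}_d(\widehat S)$, the quotient is $\operatorname{Mat}_d(\widehat S/J^{\rm pol}_{\Xi,\mathbf m}\widehat S)$ by the entry-ideal argument and Lemma~\ref{str:lem:content}, which commutes with base change. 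Under the spanning hypothesis, Theorem~\ref{rc:thm:cyclotomic} says completion does not change the quotient, which yields $A_{\Xi,\mathbf m}$.
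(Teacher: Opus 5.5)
Your proposal is correct and follows essentially the same route as the paper. The formal exponential conjugates each $C_i$ to a unit multiple of $\partial_i$ and sends each $Y_\alpha$ to a unit multiple of $x_\alpha$, so the generated algebras and the two-sided ideals coincide, and Theorem~\ref{rc:thm:cyclotomic} shows that completion does not change the quotient. Your explicit check that the conjugated algebra is all of $\widehat S\otimes_S NH_W$, rather than a dense subalgebra, only expands the paper's one-line generation claim drawn from Lemma~\ref{lem:completed-nilhecke}.
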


\begin{proof}
The substitutions
\[
 X_\lambda\longmapsto e^{x_\lambda},\qquad
 x_{\alpha_i}\longmapsto\log(1-Y_{\alpha_i})
\]
are inverse continuous $W$-equivariant isomorphisms. Formal
power series are meaningful in the maximal-ideal topology;
no analytic convergence is required. Under this isomorphism,
\[
 C_i=\frac{x_{\alpha_i}}{1-e^{x_{\alpha_i}}}\partial_i,
 \qquad
 Y_\alpha=x_\alpha u_\alpha,
 \quad u_\alpha=\frac{1-e^{x_\alpha}}{x_\alpha}
                         \in\widehat P^\times.
\]
Both coefficients are units. Hence the completed crossings
generate the completed nil-Hecke algebra from
Lemma~\ref{lem:completed-nilhecke}. Also
$M_{Y_\alpha^{m_\alpha}}
=M_{x_\alpha^{m_\alpha}}M_{u_\alpha^{m_\alpha}}$,
and the second factor is an invertible endomorphism.
The corresponding two-sided ideals therefore agree.
Theorem~\ref{rc:thm:cyclotomic} shows that completion does
not change the polynomial quotient under the spanning
hypothesis, proving the result.
\end{proof}

In particular, the exponent parity, Hilbert series, center
presentations, and self-injectivity classifications above
apply to the completed zero-residue QDA quotients.
The matrix dimensions are unchanged, and the grading on
the center is transported from its polynomial presentation.

\begin{corollary}[Transfer to a Hecke orbit summand]
\label{rc:cor:sharp-hecke}
Assume $\Phi$ is irreducible. Suppose that a specified
cyclotomic affine Hecke orbit summand has been identified
with the multiplicative zero-residue quotient by $Y_\alpha^m$
and that its simple root coordinates are nilpotent.
Then it is isomorphic to $A_{\alpha,m}$. It is nonzero
exactly when $m\geq\nu_\alpha$; when nonzero it is a single
block with center $Z(\alpha,m)$ and a unique simple module
of dimension $|W|$.
\end{corollary}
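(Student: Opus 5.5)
The plan is to combine the assumed identification with Proposition~\ref{ii:prop:formal-comparison}, and then read off the remaining assertions from Theorem~\ref{rc:thm:cyclotomic}. By hypothesis the Hecke orbit summand is isomorphic to the multiplicative quotient $\widehat{\mathcal A}_0/\langle M_{Y_\alpha^m}\rangle$. Irreducibility of $\Phi$ gives the spanning condition for $\Xi=\{\alpha\}$: the orbit $W\alpha$ spans $V$ because the reflection representation is irreducible. So Proposition~\ref{ii:prop:formal-comparison} applies with $\Xi=\{\alpha\}$, $m_\alpha=m$, and gives an algebra isomorphism with $A_{\alpha,m}$.

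The nilpotence hypothesis on the simple root coordinates is there to make the completed setting legitimate. It guarantees that the summand is a module over $\widehat B_0$, or that its action factors through the completion at the identity character. Hence the identification with a quotient of $\widehat{\mathcal A}_0$, rather than of an uncompleted algebra, is meaningful. I expect this to be the only delicate point. I would check that the nilpotence of the $Y_{\alpha_i}$ lets the formal exponential $x_{\alpha_i}\mapsto\log(1-Y_{\alpha_i})$ act through finite sums. This is also consistent with the completion-invariance statement in Theorem~\ref{rc:thm:cyclotomic}.

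The remaining assertions transfer across the isomorphism from Theorem~\ref{rc:thm:cyclotomic} and the threshold results:
\begin{enumerate}[label=\textup{(\alph*)}]
\item Nonvanishing holds exactly when $m\geq\nu_\alpha$, by criterion~\eqref{rc:eq:criterion} together with Theorem~\ref{rc:thm:threshold}.
\item The center is $Z(\alpha,m)$, by the matrix isomorphism~\eqref{rc:eq:cyclotomic-matrix}.
\item When the quotient is nonzero, $Z(\alpha,m)$ is a graded local Artinian algebra with residue field $\mathbb C$.
\item Since the center is local, it has no nontrivial idempotents, so the algebra is a single block.
\item The unique simple module is $\mathcal R_W$, of dimension $|W|$, as stated in Theorem~\ref{rc:thm:cyclotomic}.
\end{enumerate}
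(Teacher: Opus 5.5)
Your proposal is correct and follows the paper's proof. Nilpotence of the simple root coordinates makes every formal coefficient act through a finite truncation: if $Y_{\alpha_i}^{n_i}=0$, then every monomial of degree at least $1+\sum_i(n_i-1)$ vanishes, so completion leaves the quotient unchanged. Proposition~\ref{ii:prop:formal-comparison}, Theorem~\ref{rc:thm:cyclotomic}, the exact threshold, and locality of the center then give every assertion. The one step you leave as ``I would check'' is exactly this truncation bound, which the paper supplies in a single line.
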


\begin{proof}
If powers $Y_i^{n_i}$ vanish, then every monomial of degree
at least $1+\sum_i(n_i-1)$ in these commuting coordinates
vanishes. Each formal coefficient therefore acts through a
finite polynomial truncation, so completion leaves the quotient
unchanged. Apply Proposition~\ref{ii:prop:formal-comparison},
Theorem~\ref{rc:thm:cyclotomic}, and the exact threshold.
A matrix algebra over a local Artinian center has no nontrivial
central idempotents, which gives the block assertion.
\end{proof}

The hypothesis in this corollary is a comparison hypothesis.
The actual degenerate and non-degenerate Hecke isomorphisms,
with their field, parameter, residue, and path conditions, are
proved in~\cite{KongLiD,KongLiND}. No identification with an
arbitrary Hecke target is needed in this paper. In particular,
the zero-residue comparison above does not imply an unrestricted
nonzero-residue Hecke comparison.

\vskip5pt
\noindent\textbf{Declaration of generative AI and AI-assisted
technologies in the manuscript preparation process.}
ChatGPT-5.6 Sol was used during manuscript preparation for assistance
with literature searches and
editorial revision of the exposition. ChatGPT-6 Astra was used during manuscript preparation for checking of proofs.
Responsibility for verifying
the mathematical statements, references, and final content of the
manuscript rests with the author.

\end{document}